\newcommand{\ann}{\mathrm{an}}
\newcommand{\sprolop}{\overline{\sigma}}
\newglossaryentry{sprolneq}{name={\ensuremath{\sprolN[\neq j](c)}},description={c\ldots \sigma^{j-1}(c)\sigma^{j}(c)\ldots \sigma^{n}(c)},sort={\nabla\neq}}
\newcommand{\sprolneq}[2]{\glsonlyfirst{sprolneq}{\sprolN[\neq#2](#1)}}
\newcommand{\sprolto}[3]{\sprolneq{#1}{#3},#2_{#3}}
\newglossaryentry{sprolleq}{name={\ensuremath{\sprolN[\leq j](c)}},description={$c\ldots \sigma^{j}(c)$},sort={\nabla\leq}}
\newcommand{\sprolleq}[2]{\glsonlyfirst{sprolleq}{\sprolN[\leq#2](#1)}}
\newglossaryentry{genans}{name={\ensuremath{\langle\_\rangle_{\sigma}}},description={generated $\LAQs$-structure},sort={<>sigma}}
\NewDocumentCommand{\genans}{om}{\glsonlyfirst{genans}{\IfNoValueTF{#1}{}{#1}\langle#2\rangle_{\sigma}}}
\newglossaryentry{TAsHacfr}{name={\ensuremath{\TAsHN[p]<\acN,e-\fr>}},description={theory of analytic $\sigma$-Henselian fields with an automorphism and angular components in finitely ramified mixed characteristic},sort={TAsHacefr}}
\NewDocumentCommand{\TAsHacfr}{mm}{\glsonlyfirst{TAsHacfr}{\TAsHN[#2]<\acN,#1-\fr>}}
\newcommand{\Coarop}{\mathfrak{C}}
\newcommand{\UCoarop}{\mathfrak{UC}}
\newglossaryentry{infty}{name={\ensuremath{\__{\infty}}},description={coarsening in mixed characteristic},sort={\infty}}
\newcommand{\valinf}{\glsonlyfirst{infty}{\val_{\infty}}}
\newcommand{\resinf}{\glshyperlink[{\resN[\infty]}]{infty}}
\newcommand{\Valinf}{\glshyperlink[\Val_{\infty}]{infty}}
\newcommand{\Midinf}{\glshyperlink[\Mid_{\infty}]{infty}}
\newcommand{\ltinf}{\glshyperlink[{\ltN[\infty]}]{infty}}
\newcommand{\ltfinf}{\glshyperlink[{\ltf[\infty]}]{infty}}
\newglossaryentry{Delta}{name={\ensuremath{\_^{\Delta}}},description={coarsening},sort={Delta}}
\newcommand{\valC}[1]{\glsonlyfirst{Delta}{\val^{#1}}}
\newcommand{\ValC}[1]{\glshyperlink[\Val^{#1}]{Delta}}
\newcommand{\MidC}[1]{\glshyperlink[\Mid^{#1}]{Delta}}
\NewDocumentCommand{\resC}{om}{\glshyperlink[{\resN[#1]<#2>}]{Delta}}
\NewDocumentCommand{\resfC}{om}{\glshyperlink[{\resfN[#1]<#2>}]{Delta}}
\newcommand{\resvalC}[1]{\glshyperlink[\widetilde{\val}^{#1}]{Delta}}
\newcommand{\resValC}[1]{\glshyperlink[\widetilde{\Val}^{#1}]{Delta}}
\NewDocumentCommand{\resresfC}{om}{\glshyperlink[{\widetilde{\resfN}_{#1}^{#2}}]{Delta}}
\NewDocumentCommand{\ltfC}{om}{\glshyperlink[{\ltfN[#1]<#2>}]{Delta}}
\NewDocumentCommand{\ltC}{om}{\glshyperlink[{\ltN[#1]<#2>}]{Delta}}
\NewDocumentCommand{\valltC}{om}{\glshyperlink[{\vallt[#1]^{#2}}]{Delta}}
\newcommand{\diffop}{\mathrm{d}}
\newglossaryentry{diff}{name={\ensuremath{\diffop f_{\uple{x}}}},description={differential of $f$ at $\uple{x}$},sort={dfx}}
\newglossaryentry{diffpart}{name={\ensuremath{\partial f/\partial x_{i}(\uple{x})}},description={partial differential of $f$ at $\uple{x}$ along x_{i}},sort={dfdxix}}
\NewDocumentCommand{\diff}{omm}{%
\IfNoValueTF{#1}{\glsonlyfirst{diff}{\diffop{#2}_{#3}}}%
{\glsonlyfirst{diffpart}{\partial #2/\partial x_{#1}(#3)}}}
\newcommand{\Full}{\mathfrak{F}}
\newcommand{\injresop}{\mathrm{i}}
\RenewDocumentCommand{\sec}{o}{\secop\IfNoValueTF{#1}{}{_{#1}}}
\newcommand{\secinjop}{\mathrm{t}}
\newcommand{\ltsece}[1]{#1^{\secop,e}}
\newcommand{\Lltsece}{\ltsece{\LL}}
\newcommand{\Tace}{\rT^{\acN,e}}
\newcommand{\Lace}{\LL^{\acN,e}}
\newcommand{\Lltinf}{\LL^{\lt[\infty]}}
\newcommand{\Termop}{\mathcal{T}}
\NewDocumentCommand{\Term}{om}{\Termop\IfNoValueTF{#1}{}{_{\!\!#1}}(#2)}
\renewcommand{\resN}{\ResN}
\renewcommand{\res}{\Res}
\renewcommand{\vallt}{\val}
\author{Silvain Rideau\thanks{Partially supported by ANR MODIG (ANR-09-BLAN-0047) and ValCoMo (ANR-13-BS01-0006)}
}
\title{Some properties of analytic difference valued fields}
\date{\today}
\begin{document}
\maketitle

\begin{abstract}
We prove field quantifier elimination for valued fields endowed with both an analytic structure and an automorphism that are $\sigma$-Henselian. From this result we can deduce various Ax-Kochen-Ersov type results with respect to completeness and the $\NIP$ property. The main example we are interested in is the field of Witt vectors on the algebraic closure of $\Ff_{p}$ endowed with its natural analytic structure and the lifting of the Frobenius. It turns out we can give a (reasonable) axiomatization of its first order theory and that this theory is $\NIP$.
\end{abstract}

\emph{Keywords:} Valued fields with an automorphism, analytic structure, separated power series, $\sigma$-Henselianity, Witt vectors, $\NIP$.\medskip

\emph{Mathematics Subject Classification (2010):} Primary 03C10, 12H10; Secondary 03C45, 32P05, 32B05.

\phantomsection
\addcontentsline{toc}{section}{Introduction}
\section*{Introduction}
\markright{Introduction}

Since the work of Ax, Kochen and Eršov on valued fields (e.g. \cite{AxKo}) and their proof that the theory of a Henselian valued field is essentially controlled (in equicharacteristic zero) by the theory of the residue field and the value group, model theory of Henselian valued fields has been very active and productive. Among later developments one may note Macintyre's result in \cite{MacQp} of elimination of quantifiers for $p$-adic fields and the proof by Pas of valued fields quantifier elimination for equicharacteristic zero Henselian fields with angular components in \cite{PasEQ}, which implies the Ax-Kochen-Eršov principle. Another notable result is the one by Basarab and Kuhlmann (see \cite{Bas,BK,Kuh}) of valued field quantifier elimination for Henselian valued fields with amc-congruences, a language that does not make the class of definable sets grow (whereas angular components might). Another result in the Ax-Kochen-Eršov spirit is the proof by Delon in \cite{DelonNIP} --- extended by Bélair in \cite{BelairNIP} --- that Henselian valued fields do not have the independence property if and only if their residue field does not have it (their value group never has the independence property by \cite{GuSchNIP}.)

But model theorists have not limited themselves to giving an increasingly refined description of the model theory of Henselian valued fields. There have also been attempts at extending those results to valued fields with more structure. The two most notable enrichments that have been studied are, on the one hand, analytic structures as initiated by \cite{DvdD} and studied thereafter by a great number of people (among many others \cite{vdDAnAKE,vdDHM,LRSep,LRUnif,CLR,CluLipAnn}) and, on the other hand, $\mathcal{D}$-structures (a generalization of both difference and differential structures), first for derivations and certain isometries in \cite{ScaDvalF} but also for greater classes of isometries in \cite{ScaRelFrob,BMS,AzgvdD} and then for automorphisms that might not be isometries \cite{AzgOmeInc,Pal,HruFrob,GuzPoi,DurOna}. The model theory of valued differential fields is also quite central to the model theoretic study of transseries (see for example \cite{ADHtrans}) but the techniques and results in this last field seem quite orthogonal to those in other references given above and to our work here.

The goal of the present paper is to study valued fields with \emph{both} an analytic structure and an automorphism. The main result of this paper is Theorem\,\ref{thm:EQ TAsH}, which states that $\sigma$-Henselian (cf. Definition\,\ref{def:sigma-Hensel}) valued fields with analytic structure and any automorphism $\sigma$ eliminate field quantifiers resplendently in the leading term language (cf. Definition\,\ref{def:Llt}). We then deduce various Ax-Kochen-Eršov type results for analytic difference valued fields (both with respect to the theory and the independence property). We also try to give a systematic and comprehensive approach to quantifier elimination in (enriched) valued fields through some more abstract considerations (mainly found in the appendix).

In \cite{ScaADF}, Scanlon already attempted to study analytic difference valued fields in the case of an isometry, but the definition of $\sigma$-Henselianity given there is too weak to actually work, although some incorrect computations hide this fact. The axiomatization and all the proofs had to be redone entirely but, as stated earlier, this paper does not only contain a corrected version of the results in \cite{ScaADF}, it also generalizes these results from the isometric case to the case of any valued field automorphism.

Some ideas from \cite{ScaADF} could be salvaged though, among them the fact that Weierstrass preparation (see Definition\,\ref{def:Weierstrass prep}) allows us to be close enough to the polynomial case to adapt the proofs from the purely valued difference setting. Nevertheless this adaptation is not as straightforward as one would hope, essentially because Weierstrass preparation only holds in one variable, but one variable in the difference world actually gives rise to many variables in the non difference world. The main ingredient to overcome this obstacle is a careful study of differentiability of terms in many variables (see Definition\,\ref{def:lin approx def}) that allows us to give a new definition of $\sigma$-Henselianity in \ref{def:sigma-Hensel}. These techniques can probably be used to prove results in greater generality, for example: valued fields with both analytic structure and $\mathcal{D}$-structure.

Our interest in the model theory of valued fields with both analytic structure and difference structure is not simply a wish to see Ax-Kochen-Eršov type results extended to more and more complicated structures and in particular to the combination of two structures where things are known to work well. It is also motivated by possible applications to diophantine and number theoretic problems. In particular, it is the right model-theoretic setting in which to understand Buium's $p$-differential geometry. More precisely any $p$-differential function over $\Wittf(\alg{\Ff_{p}})$ is definable in $\Wittf(\alg{\Ff_{p}})$ equipped with the lifting of the Frobenius and symbols for all $p$-adic analytic functions $\sum a_{I}x^{I}$ where $\val(a_{I})\to\infty$ as $\card{I}\to\infty$. See \cite[Section\,4]{ScaADF} for an example of how a good model theoretic understanding of this structure can help to show uniformity of certain diophantine results.

The organization of this text is as follows. Section\,\ref{sec:lang} is a description of valued field languages, with either angular components or $\lt$-structure. In Section\,\ref{sec:coar}, we show that it is possible to transfer elimination of quantifier results from equicharacteristic zero to mixed characteristic (using the theoretical framework of Appendix\,\ref{sec:cat}). Sections\,\ref{sec:ann} and \ref{sec:isom} describe the class of analytic difference valued fields we will be studying. Section\,\ref{sec:alg} is concerned with purely analytical matters, it describes the link between analytic $1$-types and the underlying algebraic $1$-type. In Section\,\ref{sec:EQ} we prove the main result of this paper, Theorem\,\ref{thm:EQ TAsH}, a field quantifier elimination result for $\sigma$-Henselian analytic difference valued fields. We also prove an Ax-Kochen-Eršov principle for these fields. Finally Section\,\ref{sec:NIP} shows how this quantifier elimination result also allows us to give conditions on the residue field and the value group for such fields to have (or not have) the independence property. The appendix contains an account of the more abstract model theory at work in the rest of the paper to help smooth out the arguments. Appendix\,\ref{sec:cat}, in particular, sets up a general setting for transfer of elimination of quantifier results.

I would like to thank Élisabeth Bouscaren and Tom Scanlon for our numerous discussions. Without them none of the mathematics presented here would be understandable, correct or even exist. I also want to thank Raf Cluckers for having so readily answered all my questions about analytic structures as I was discovering them. Finally, I would like to thank Koushik Pal for taking the time to discuss the non-isometric case with me. Our discussions led to the generalization of the proofs to the non-isometric case.

\section{Languages of valued fields}\label{sec:lang}

We will be considering valued fields of \emph{characteristic zero}. They will mainly be considered in two kinds of languages. On the one hand, the language with leading terms, also known in the work of Basarab and Kuhlmann (cf. \cite{Bas,BK,Kuh}) as amc-congruences and in later work as $\lt$-sorts (e.g. \cite{HruKaz}) and on the other hand the language with angular components, also known as the Denef-Pas language.

\begin{definition}[Llt]($\Llt$, the leading term language)
The language $\Llt$ has the following sorts: a sort $\K!$ and a family of sorts $(\lt![n])_{n\in\Nn_{>0}}$. On the sort $\K$, the language consists of the ring language. The language also contains functions $\ltf[n] : \K\to\lt[n]$ for all $n\in\Nn_{>0}$ and $\ltf[m,n] : \lt[n]\to\lt[m]$ for all $m|n$.
\end{definition}

Any valued field can be considered as an $\Llt$-structure by interpreting $\K$ as the field and $\lt[n]$ as  $(\inv*{K}/1+n\Mid) \cup \{0\}$ where $\Mid$ is the maximal ideal of the valuation ring $\Val$. We will write $\inv*{lt}[n]$ for $(\inv*{K}/1+n\Mid) = \lt[n]\sminus \{0\}$. Then $\ltf[n]$ is interpreted as the canonical surjection $\inv*{K}\to\inv*{lt}[n]$ and it sends $0$ to $0$; $\ltf[n,m]$ is interpreted likewise. Note that the sorts $\lt[n]$ have a rich structure given by the following commutative diagram (where $\Def{\res[n]}{\Val/n\Mid}$, $\Valgp!$ denotes the value group and all the lines are exact):

\[\xymatrix@R=10pt@C=20pt{
1\ar[r]&\inv{\Val}\ar[r]\ar[dd]^{\resf[n]}\ar@/_15pt/[ddd]|!{[ddl];[dd]}\hole_{\resf[m]}&\inv*{K}\ar[dd]^{\ltf[n]}\ar[rd]^{\val!}\ar@/_15pt/[ddd]|!{[ddl];[dd]}\hole_{\ltf[m]}\\
&&&\Valgp\ar[r]&0\\
1\ar[r]&\inv*{res}[n]\ar[r]\ar[d]^{\resf[m,n]}&\inv*{lt}[n]\ar[ru]_{\vallt[n]}\ar[d]^{\ltf[m,n]}\\
1\ar[r]&\inv*{res}[m]\ar[r]&\inv*{lt}[m]\ar@/_15pt/_{\vallt[m]}[uur]
}\]

We will denote the $\Llt$-theory of characteristic zero valued fields by $\Tvf$. If we need to specify the residual characteristic, we will write $\Tvf[0,0]$ or  $\Tvf[0,p]$. Let $\Def{\lt}{\bigcup_{n}\lt[n]}$. These sorts are closed in $\Llt$ (see Definition\,\ref{def:cl sorts}). In order to eliminate $\K$-quantifiers, we will have to add some structure on the $\lt$ sorts.

\begin{definition}
The language $\Lltplus!$ is the enrichment of $\Llt$ with, on each $\lt[n]$, the language of (multiplicative) groups $\{1_{n},\cdot_{n}\}$, a symbol $0_{n}$ and a binary predicate $\Div[n]$, and functions $+_{m,n}:\lt[n]<2>\to\lt[m]$ for all $m|n$.
\end{definition}

The multiplicative structure on $\lt[n]$ is interpreted as its multiplicative (semi-)group structure, i.e. the group structure of $\inv*{lt}[n]$ and $0_{n} \cdot_{n} x = x \cdot_{n} 0_{n} = 0_{n}$. The relation $x\Div[n] y$ is interpreted as $\vallt[n](x) \leq \vallt[n](y)$. For all $x,y\in\K$ such that $\val(x+y) \leq \min\{\val(x),\val(y)\} + \val(n)-\val(m)$, $\ltf[n](x) +_{m,n} \ltf[n](y)$ is interpreted as $\ltf[m](x+y)$ and $0_{n}$ otherwise. This is well defined.

We will denote by $\THen$ the theory of characteristic zero Henselian valued fields in $\Lltplus$.

\begin{remark}
\begin{thm@enum}
\item If $K$ has equicharacteristic zero, then for all $m|n$, $\ltf[m,n]$ is an isomorphism. Hence if we are working in equicharacteristic zero, we will only need to consider $\lt[1]$. In that case we also have that $\res[1]  = \inv*{res}[1]\cup\{0\}\subseteq \inv*{lt}[1]\cup\{0\} = \lt[1]$. The additive structure is also simpler: we only need to consider the $+_{1,1}$ function on $\lt[1]$. It extends the additive structure of $\res[1]$ and makes every fiber of $\vallt[1]$ into an $\res[1]$-vector space of dimension $1$ (if we consider $0_{1}$ to be the zero of every fiber).
\item If $K$ has mixed characteristic $p$, then whenever $m|n$ and $\val(n) = \val(m)$ (i.e. when $p$ does not divide $n/m$) $\ltf[m,n]$ is an isomorphism. In particular for all $n\in\Nn_{>0}$, $\ltf[n,p^{\val(n)}]$ is an isomorphism (where we identify $\val(p)$ and $1$).
\item One could wonder then why consider all the $\lt[n]$ when the only relevant ones are the $\lt[p^{n}]$ in mixed characteristic $p$ and $\lt[1]$ in equicharacteristic zero. The main reason is that we want enough uniformity to be able to talk of $\Tvf$ without specifying the residual characteristic or adding a constant for the characteristic exponent (in particular if one wishes to consider ultraproducts of valued fields with growing residual characteristic, although we will not do so here).
\end{thm@enum}
\end{remark}

The use of this language is mainly motivated by the following result that originates in \cite{Bas,BK}, although the phrasing in terms of resplendence first appears in \cite{ScaPhD}. By resplendent quantifier elimination relative to $\lt$, we mean that quantifiers on the sorts other than those in $\lt$ can be eliminated (namely the field quantifiers here) and that this result is true for any enrichment on the $\lt$-sorts (see Appendix\,\ref{sec:resplEQ} for precise definitions).

\begin{theorem}[EQ THen]
The theory $\THen$ eliminates $\K$-quantifiers resplendently relatively to $\lt$.
\end{theorem}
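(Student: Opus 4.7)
The plan is to prove this by a back-and-forth. Fix an arbitrary resplendent expansion $\THen^{+}$ of $\THen$ with extra structure only on the $\lt$-sorts, and take $\kappa$-saturated models $\mathcal{K},\mathcal{L}\models\THen^{+}$. By a standard criterion, it suffices to show: given a valued subfield $A$ of $\mathcal{K}$ and a partial $\Lltplus^{+}$-isomorphism $f\colon A\cup\lt(\mathcal{K})\to B\cup\lt(\mathcal{L})$ (so $f$ is a full isomorphism on $\lt$-sorts and a valued-field isomorphism between $A$ and $B$), for every $c\in\K(\mathcal{K})$ there is $c'\in\K(\mathcal{L})$ extending $f$ to a partial $\Lltplus^{+}$-isomorphism. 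Because the $\lt$-data is already matched, the problem reduces to realizing a compatible $\Llt$-type in $\mathcal{L}$.

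Then I would split into three standard cases according to how $c$ sits over $A$. \textbf{Algebraic:} If $c$ is algebraic over $A$, Henselianity of $\mathcal{L}$ lets me locate a root $c'\in\K(\mathcal{L})$ of $f(P)$ (where $P$ is the minimal polynomial of $c$) with matching leading-term behaviour, extracted from the $\lt$-data $\{\ltf[n](P(a))\}_{a\in A,n}$ and $\ltf[n](P'(c))$ that $f$ already transports; Krasner's lemma then identifies the two valued-field extensions. \textbf{Ramified/residual transcendental:} If $c$ is transcendental over $A$ and some $\ltf[n](c-a)$ with $a\in A$ lies outside $\lt(B)$ --- in which case $c$ enlarges the value group or a residue field --- the $\Llt$-type of $c$ over $A$ is determined by the induced cut of $A$ inside $\lt(\mathcal{K})$, and saturation of $\mathcal{L}$ supplies a corresponding $c'$. \textbf{Immediate:} If $c$ is transcendental and every $\ltf[n](c-a)$ already belongs to $\lt(B)$, one builds from $A$ a pseudo-Cauchy sequence pseudo-converging to $c$ simultaneously in every $\lt[n]$, transports it via $f$ to $B$, and uses saturation to produce a pseudo-limit $c'$.

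The main obstacle is the immediate case in mixed characteristic, where Kaplansky's uniqueness theorem for maximal immediate extensions fails: matching $\res$ and $\Valgp$ alone does not determine the extension up to isomorphism. The full family $(\lt[n])_{n}$ together with the connecting maps $\ltf[m,n]$ is precisely the refinement needed, and the correct statement --- that two $\lt$-immediate Henselian extensions of a common base, generated by pseudo-Cauchy sequences with matching breadths in every $\lt[n]$, are valued-field isomorphic --- is the core content of Basarab--Kuhlmann's theorem in \cite{Bas,BK}. Resplendence comes for free: none of the three extension steps invokes any structure outside the already fixed $\lt$-isomorphism and the Hensel/Krasner machinery in $\K$, so the back-and-forth goes through verbatim over any enrichment $\THen^{+}$ of the $\lt$-sorts.
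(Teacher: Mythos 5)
The paper does not prove this theorem: it is stated as an imported result and attributed to \cite{Bas,BK}, with the resplendent phrasing credited to \cite{ScaPhD}. There is therefore no internal proof of the paper's to compare your sketch against.

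Taken on its own, your back-and-forth outline (algebraic, ramified/residual, immediate step) is the standard and correct skeleton, and your closing observation that resplendence ``comes for free'' because each extension step only touches the $\K$-side once the $\lt$-isomorphism is fixed is exactly what Proposition\,\ref{prop:EQ implies respl} establishes in general --- in the paper's framework one would prove non-resplendent relative elimination and then cite that proposition rather than re-argue it inline. But the sketch does not actually yield an independent proof: the one genuinely hard step --- uniqueness of the relevant immediate extensions in mixed characteristic, refined by the whole tower $(\lt[n])_{n}$ rather than by $\res[1]$ and $\Valgp$ alone --- is delegated to the Basarab--Kuhlmann theorem, which is the very source the paper cites for the whole statement, so the argument is circular as a proof attempt. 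Two further points. The case condition ``some $\ltf[n](c-a)$ lies outside $\lt(B)$'' does not make sense as written: you fixed $f$ to be a bijection on the $\lt$-sorts, so $\lt(B)=\lt(\mathcal{L})$ and $f(\ltf[n](c-a))$ always lands there; what you mean is that $\ltf[n](c-a)$ is not the leading term of any element of $A$, i.e.\ that $A(c)/A$ is non-immediate. And the algebraic step is compressed too far: Hensel plus Krasner identify the valued-field extensions, but you must still verify that the resulting embedding matches $\ltf[n]$ on all $\K$-terms over $A$, which in mixed characteristic is precisely where the full $\lt[p^{k}]$-tower (as opposed to just $\res[1]$ and $\Valgp$) earns its keep; that verification is where the real work of Basarab--Kuhlmann lives and cannot be waved through.
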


Later, we will add analytic and difference structures, hence we will consider an enrichment of $\Llt$ by new terms on $\K$ and predicates and terms on $\lt$ (although none on both $\K$ and $\lt$; this is what we call in Appendix\,\ref{sec:resplEQ} an $\lt$-enrichment of a $\K$-term enrichment of $\Llt$). Let $\LL$ be such a language and let $\Sigma_{\lt}$ denote the new sorts coming from the $\lt$-enrichment.

\begin{remark}[decomp form Kterm]
Any quantifier free $\LL$-formula $\phi(\uple{x},\uple{y})$ where $\uple{x}$ are $\K$-variables and $\uple{y}$ are $\lt$-variables, is equivalent modulo $\Tvf$ to a formula of the form $\psi(\ltf[\uple{n}](\uple{u}(\uple{x})),\uple{y})$ where $\psi$ is a quantifier free $\Sortrestr{\LL}{\lt\cup\Sigma_{\lt}}$-formula and $\uple{u}$ are $\Sortrestr{\LL}{\K}$-terms. Indeed the only predicate involving $\K$ is the equality and $t(\uple{x}) = s(\uple{x})$ is equivalent to $\ltf[1](t(\uple{x}) - s(\uple{x})) = 0$. The statement follows immediately.
\end{remark}

Here is an easy corollary that will be very helpful later on to uniformize certain results.

\begin{corollary}[uniformization]
Let $T$ be an $\LL$-theory that eliminates $\K$-quantifiers, $M\models T$, $C\substr! M$  (i.e. $C$ is a substructure of $M$) and $\uple{x}$, $\uple{y}\in\K(M)$ be such that for all $\Sortrestr{\LL}{\K}(C)$-terms $\uple{u}$, and all $n\in\Nn_{>0}$, $\ltf[n](\uple{u}(\uple{x})) = \ltf[n](\uple{u}(\uple{y}))$. Then $\uple{x}$ and $\uple{y}$ have the same $\LL(C)$-type. 
\end{corollary}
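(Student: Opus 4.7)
The plan is to show that $\uple{x}$ and $\uple{y}$ satisfy the same $\LL(C)$-formulas. Let $\phi(\uple{v})$ be any $\LL(C)$-formula whose free variables $\uple{v}$ are $\K$-variables matching the sorts of $\uple{x}$. The approach has three steps.

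First, since $T$ eliminates $\K$-quantifiers, $\phi$ is equivalent modulo $T$ to a formula in prenex form
\[
Q_{1}\uple{w}_{1}\cdots Q_{k}\uple{w}_{k}\,\chi(\uple{v},\uple{w}_{1},\ldots,\uple{w}_{k}),
\]
where each $\uple{w}_{i}$ is a tuple of variables in sorts from $\lt\cup\Sigma_{\lt}$ and $\chi$ is quantifier free. Next, I would apply the preceding remark to $\chi$ (viewing any parameters from $C$ as additional variables to be instantiated), bringing $\chi$ modulo $\Tvf \subseteq T$ into the form $\psi(\ltf[\uple{n}](\uple{u}(\uple{v})),\uple{w}_{1},\ldots,\uple{w}_{k})$, where $\psi$ is a quantifier free $\Sortrestr{\LL}{\lt\cup\Sigma_{\lt}}(C)$-formula and $\uple{u}$ is a tuple of $\Sortrestr{\LL}{\K}(C)$-terms in $\uple{v}$. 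Finally, the hypothesis gives $\ltf[\uple{n}](\uple{u}(\uple{x})) = \ltf[\uple{n}](\uple{u}(\uple{y}))$ (component by component, taking the $n_i$ separately), so substituting $\uple{x}$ or $\uple{y}$ for $\uple{v}$ yields the very same formula in the remaining variables, and hence $M\models\phi(\uple{x}) \iff M\models\phi(\uple{y})$.

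The main point requiring a little care is the second step: to match the form of the hypothesis, one must know that after absorbing parameters from $C$, the $\K$-valued terms $\uple{u}$ really are $\Sortrestr{\LL}{\K}(C)$-terms, i.e.\ only $\K$-parameters from $C$ enter them. This is automatic because $\LL$ is, as stipulated before the remark, an $\lt$-enrichment of a $\K$-term enrichment of $\Llt$, so contains no function symbol whose codomain is $\K$ and whose domain leaves $\K$. Beyond this observation, the argument uses only the preceding remark and the hypothesis, making this a genuine corollary.
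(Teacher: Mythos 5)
Your proof is correct and uses exactly the same two ingredients as the paper (Remark on decomposing quantifier-free formulas, plus $\K$-quantifier elimination), but the packaging differs slightly: the paper defines a single map $f$ that is the identity on $\lt\cup\Sigma_{\lt}(M)$ and sends $u(\uple{x})\mapsto u(\uple{y})$, observes that the Remark makes it a partial $\Morl*{\LL}[\lt]$-isomorphism, and then invokes $\K$-quantifier elimination once to conclude it is elementary. You instead unfold that argument formula by formula — prenex normal form, then the Remark applied to the matrix, then substitute. Your extra remark at the end, that the $\K$-valued terms only pick up $\K(C)$ parameters because $\LL$ is an $\lt$-enrichment of a $\K$-term enrichment of $\Llt$, is a genuine point of care that the paper leaves implicit when it calls $f$ a partial $\Morl*{\LL}[\lt]$-isomorphism.
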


\begin{proof}
Let $f : M\to M$ be the identity on $\lt\cup\Sigma_{\lt}(M)$ and send $u(\uple{x})$ to $u(\uple{y})$ for all $\Sortrestr{\LL}{\K}(C)$-term $u$. By Remark\,\ref{rem:decomp form Kterm}, $f$ is a partial $\Morl*{\LL}[\lt]$-isomorphism. But $\K$-quantifiers elimination implies that $f$ is in fact elementary.
\end{proof}

The other kind of valued field language, the one with angular components, essentially boils down to giving oneself a section of the short sequences defining the $\lt[n]$. That statement is made explicit in \ref{prop:equiv Lsec Lac}. 

\begin{definition}($\Lac$, the angular component language)
The language $\Lac$ has the following sorts: $\K$, $\Valgpinf$ and $(\res[n])_{n\in\Nn_{>0}}$. The sorts $\K$ and $\res[n]$ come with the ring language and the sort $\Valgpinf$ comes with the language of ordered (additive) groups and a constant $\infty$. The language also contains a function $\val : \K \to \Valgpinf$, for all $n$, functions $\ac[n] : \K\to\res[n]$, $\resf[n] : \K\to\res[n]$, $\valres[n]:\res[n]\to\Valgpinf$, $\secres[n] : \Valgpinf\to\res[n]$ and for all $m|n$, functions $\resf[m,n] : \res[n]\to \res[m]$ and $\secinjres[m,n] : \res[n]\to \res[m]$.
\end{definition}

As one might guess, the $\res[n]$ are interpreted as the residue rings $\Val/n\Mid$. As with $\lt$, we will write $\Def{\res}{\bigcup_{n}\res[n]}$. The $\resf[n]$ and $\resf[m,n]$ denote the canonical surjections $\Val \to \res[n]$ and $\res[n]\to\res[m]$, extended by zero outside their domains. The function $\ac![n]$ denotes an angular component, i.e a multiplicative homomorphism $\inv*{K}\to \inv*{res}[n]$ which extends the canonical surjection on $\inv{\Val}$ and sends $0$ to $0_{n}$. Moreover, the system of the $\ac[n]$ should be consistent, i.e. $\resf[m,n]\comp \ac[n] = \ac[m]$. The function $\valres[n]$ is interpreted as the function induced by $\val$ on $\res[n]\sminus\{0\}$ and sending $0_{n}$ to $\infty$. The function $\secres[n]$ is defined by $\secres[n](\val(x)) = \resf[n](x)\ac[n](x)^{-1}$ and $\secres[n](\infty) = 0_{n}$. Finally, the function $\secinjres[m,n]$ is defined by $\secinjres[m,n](\resf[n](x)) = \ac[m](x)$ when $\val(x)\leq \val(n)-\val(m)$ and $0_{m}$ otherwise (this is well-defined).

It should be noted that any valued field that is sufficiently saturated can be endowed with angular components (cf. \cite[Corollary\,1.6]{PasExistAC}).

The following pages, up to Definition\,\ref{def:swiss cheese} contain a (very technical) account of how to deduce quantifier elimination results in the angular component language from results in the leading term language. Although they are essential to prove Ax-Kochen-Eršov type results, angular components only appear again in Sections\,\ref{subsec:EQ} and \ref{sec:NIP}, and a reader mostly interested in the broader picture can safely skip those pages.

Let $\Lltsec$ be the enrichment of $\Lltplus$ obtained by adding a sort $\Valgpinf$, equipped with the language of ordered (additive) groups, a family of sorts $\res[n]$, equipped with the ring language, symbols $\vallt[n]:\lt[n] \to\Valgpinf$ for the functions induced by the valuation, symbols $\injres[n] : \res[n]\to \lt[n]$ for the injection of $\inv*{res}[n]\to \lt[n]$ extended by $0$ outside $\inv*{res}[n]$, symbols $\reslt[n] : \lt[n]\to\res[n]$ for the map sending $a(1 + n\Mid)$ to $a + n\Mid$, $\sec[n] : \Valgpinf\to\lt[n]$ for a coherent system of sections of $\vallt[n]$ compatible with the $\ltf[m,n]$ and symbols $\secinj[n] : \lt[n]\to\res[n]$ interpreted as $\secinj[n](x) = \injres[n]^{-1}(x\sec[n](\vallt[n](x))^{-1})$. Let $\Tvfsec$ be the $\Lltsec$-theory of characteristic zero valued fields and $\Tvfac$ the $\Lac$-theory of characteristic zero valued fields.

Let $\Lltsece$ be an $\lt$-enrichment (with potentially new sorts $\Sigma_{\lt}$) of a $\K$-enrichment (with potentially new sorts $\Sigma_{\K}$) of $\Lltsec$ and $T^{e}$ be an $\Lltsece$-theory extending $\Lltsec$. We define $\Lace$ to be the language containing:
\begin{enumerate}
\item $\Lac\cup\Sortrestr{\Lltsece}{\K\cup\Sigma_{\K}}$;
\item The new sorts $\Sigma_{\lt}$;
\item For each new function symbol $f : \prod S_{i} \to \lt[n]$, two functions symbols $f_{\res} : \prod T_{i}\to\res[n]$ and $f_{\Valgp} : \prod T_{i}\to\Valgpinf$ where $T_{i} = \res[m]\times\Valgpinf$ whenever $S_{i} = \lt[m]$ and $T_{i} = S_{i}$ otherwise;  
\item For each new function symbol $f : \prod S_{i} \to S$, where $S\neq \lt[n]$, the same symbol $f$ but with domain $\prod T_{i}$ as above;
\item For each new predicate $R \subseteq \prod S_{i}$, the same symbol $R$ but as a predicate in $\prod T_{i}$ for $T_{i}$ as above.
\end{enumerate}
We also define $\Tace$ to be the theory containing:
\begin{enumerate}
\item $\Tvfac$;
\item For all new function symbol $f$, whenever $f$ or $f_{\res}$ and $f_{\Valgp}$ (depending on the case) is applied to an argument (corresponding to an $\lt[n]$-variable of $f$) outside of $\inv*{res}[n]\times\Valgp\cup\{0,\infty\}$, then $f$ has the same value as if $f$ were applied to $(0,\infty)$ instead;
\item For all new symbol $f$ with image $\lt[n]$, $\im(f_{\res},f_{\Valgp})\subseteq\inv*{res}[n]\times\Valgp\cup(0,\infty)$;
\item For all new predicate $R$, $R$ applied to an argument outside of $\inv*{res}[n]\times\Valgp\cup\{0,\infty\}$ is equivalent to $R$ applied to $(0,\infty)$ instead;
\item The theory $T^{e}$ translated in $\Lace$ as explained in the following proposition.
\end{enumerate}

In the following proposition, $\Str(T)$ denote the category of substructures of models of $T$, i.e. models of $T_{\forall}$. See Appendix\,\ref{sec:cat}, for precise definitions.

\begin{proposition}[equiv Lsec Lac]
There exist functors $F : \Str(\Tace)\to \Str(T^{e})$ and $G : \Str(T^{e})\to\Str(\Tace)$ that respect models, cardinality and elementary submodels and induce an equivalence of categories between $\Str(\Tace)$ and $\Str(T^{e})$. Moreover $G$ sends $\res\cup\Valgpinf$ to $\lt\cup\res\cup\Valgpinf$.
\end{proposition}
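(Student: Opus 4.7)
The plan is to exploit the fact that an angular component $\ac[n]$ canonically splits the short exact sequence $1 \to \inv*{res}[n] \to \inv*{lt}[n] \to \Valgp \to 0$: the data of the sort $\lt[n]$ together with the section $\sec[n]$ is, up to canonical isomorphism, nothing other than the data of $\res[n]$ and $\Valgpinf$ together with $\ac[n]$. Concretely, for $M \in \Str(\Tace)$ I set $F(M)$ to agree with $M$ on $\K^M$, $\res[n]^M$, $\Valgpinf^M$ and on all $\Sigma_{\K}$-sorts, and I introduce a new sort $\lt[n]^{F(M)} := (\inv*{res}[n]^M \times \Valgp^M) \cup \{(0,\infty)\}$ with componentwise multiplication. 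Then $\vallt[n]$ is the second projection, $\sec[n](\gamma) := (1,\gamma)$, $\injres[n](a) := (a,0)$, and $\ltf[n](x) := (\ac[n](x), \val(x))$ for $x \in \inv*{K}$ (with $\ltf[n](0) := (0,\infty)$); the remaining $\Lltsec$-symbols $\reslt[n]$, $\secinj[n]$, $\ltf[m,n]$, $\Div[n]$, $+_{m,n}$ are then forced by the axioms of $\Tvfsec$. In the reverse direction, for $N \in \Str(T^e)$ I set $G(N)$ to forget the $\lt[n]$-sorts, keep $\K^N$, $\Valgpinf^N$ and $\Sigma_\K$, identify $\res[n]^{G(N)} := \res[n]^N$ (already sitting inside $\lt[n]^N$ via $\injres[n]$), and recover $\ac[n](x) := \secinj[n](\ltf[n](x))$. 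The closing clause of the statement then records that the $\lt$-sorts of $N$ are encoded, via the splitting, in the $\res \cup \Valgpinf$-part of $G(N)$.

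The real technical content is the translation of the enrichment. For each new function symbol $f: \prod_i S_i \to \lt[n]$ I replace $f$ by the pair $(f_\res, f_\Valgp) = (\reslt[n] \circ f, \vallt[n] \circ f)$, and each $\lt[m]$-argument slot by a pair from $\res[m] \times \Valgpinf$ re-injected via $(a,\gamma) \mapsto \sec[m](\gamma) \cdot \injres[m](a)$; predicates and non-$\lt$-valued new symbols are translated analogously by replacing each $\lt[m]$-argument with a pair. The axioms added to $\Tace$ about values outside $\inv*{res}[n] \times \Valgp \cup \{(0,\infty)\}$ are precisely what is needed so that the translation does not depend on the chosen lift of a $\lt[m]$-argument and so that $T^e$ transports faithfully into $\Tace$.

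It remains to verify that $F$ and $G$ extend to morphisms (any substructure embedding commutes with $\ltf[n]$, $\sec[n]$, $\injres[n]$ and $\secinj[n]$ by construction), that $F \circ G$ and $G \circ F$ are naturally isomorphic to the identity (via the bijection $\inv*{res}[n] \times \Valgp \leftrightarrow \inv*{lt}[n]$ given by the splitting), that cardinality is preserved (since $\card{\lt[n]^{F(M)}} = \card{\res[n]^M} \cdot \card{\Valgp^M} + 1$), that models go to models (by the very construction of $\Tace$ from $T^e$), and that elementary submodels transfer (because an elementary embedding commutes with all definable maps, in particular the splitting). None of these verifications is deep; the genuine difficulty lies in the bookkeeping of the enrichment translation, where every function symbol, every predicate, and every argument slot must be translated consistently, and the boundary conventions outside $\inv*{res}[n] \times \Valgp$ must be matched exactly between $\Tace$ and $T^e$.
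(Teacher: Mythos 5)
Your construction is essentially the one the paper uses: identify $\lt[n]$ with $(\inv*{res}[n]\times\Valgp)\cup\{(0,\infty)\}$ via the splitting furnished by $\ac[n]$, translate every enrichment symbol through that identification, and check that the two translations are mutually inverse. There is, however, one concrete slip in the translation of the new function symbols. For $f:\prod_i S_i\to\lt[n]$ you set $f_\res:=\reslt[n]\circ f$; the correct composition is with $\secinj[n]$, not $\reslt[n]$. These are genuinely different maps $\lt[n]\to\res[n]$: $\reslt[n]$ is the residue map (sending $a(1+n\Mid)$ to $a+n\Mid$, hence zero off the valuation ring and possibly non-unit-valued), whereas $\secinj[n](x)=\injres[n]^{-1}(x\sec[n](\vallt[n](x))^{-1})$ is the angular-component projection coming from the splitting, which always lands in $\inv*{res}[n]\cup\{0_n\}$. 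With $\reslt[n]$ the translated pair $(f_\res,f_\Valgp)$ can violate the required axiom $\im(f_\res,f_\Valgp)\subseteq\inv*{res}[n]\times\Valgp\cup\{(0,\infty)\}$ (for instance when $f$ takes a value of negative valuation, $\reslt[n]\circ f=0_n$ while $\vallt[n]\circ f\neq\infty$), and your own re-injection $(a,\gamma)\mapsto\injres[n](a)\sec[n](\gamma)$ would then fail to reconstruct $f$. Replacing $\reslt[n]$ by $\secinj[n]$ recovers the paper's definition $f^{G(D)}_\res(\uple{x})=\secinj[n](f^D(\uple{v}(\uple{x})))$ and makes the argument go through.

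A smaller omission: both $F$ and $G$ should also carry the new $\lt$-enrichment sorts $\Sigma_{\lt}$ along unchanged --- you list only $\K$, $\res$, $\Valgpinf$ and $\Sigma_{\K}$, but $\Lace$ explicitly retains $\Sigma_{\lt}$, and the functors must agree on those sorts too.
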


\begin{proof}
Let $C$ be an $\Lace$-substructure (inside some $M\models \Tace$), we define $F(C)$ to have the same underlying sets for all sorts common to $\Lace$ and $\Lltsece$ and $\lt[n](F(C)) = (\inv*{res}[n](C)\times(\Valgpinf(C)\sminus\{\infty\}))\cup\{(0_{n},\infty)\}$. All the structure on the sorts common to $\Lltsece$ and $\Lace$ is inherited from $C$. We define $\ltf[n](x) = (\ac[n](x),\val(x))$ and $\ltf[m,n](x,\gamma) = (\resf[m,n](x),\gamma)$. The (semi-)group structure on $\lt[n]$ is the product (semi-)group structure, $0_{n}$ is interpreted as $(0_{n},\infty)$. We set $(x,\gamma)\Div[n](y,\delta)$ to hold if and only if $\gamma \leq \delta$ and we define $(x,\gamma) +_{m,n} (y,\delta)$ as $(\resf[m,n](x),\gamma)$ if $\gamma < \delta$, $(\resf[m,n](y),\delta)$ if $\delta < \gamma$ and $(\secinjres[m,n](x+y),\gamma + \valres[n](x+y))$ if $\delta = \gamma$. The functions $\vallt[n]$ are interpreted as the right projections and the functions $\secinj[n]$ as the left projections. Finally, define $\injres[n](x) = (x,0)$ on $\inv*{res}[n]$ and $\injres[n](x) = (0,\infty)$ otherwise, $\reslt[n](x,\gamma) = x\secres[n](\gamma)$, $\sec[n](\gamma) = (1,\gamma)$ if $\gamma\neq \infty$ and $\sec[n](\infty) = (0,\infty)$. For each function $f : \prod S_{i} \to \lt[n]$ for some $n$, define $\uple{u} : \prod S_{i}\to \prod T_{i}$ to be such that $u_{i}(\uple{x}) = x_{i}$ if $S_{i}\neq\lt[m]$ and $u_{i}(\uple{x}) = (\secinj[m](x_{i}),\vallt[m](x_{i}))$ if $S_{i} = \lt[m]$. Then $f^{F(C)}(\uple{x}) = (f^{C}_{\res}(\uple{u}(\uple{x})),f^{C}_{\Valgp}(\uple{u}(\uple{x})))$. If $f : \prod S_{i}\to S$ where $S\neq \lt[n]$ for any $n$, then define $f^{F(C)}(\uple{x}) = f^{C}(\uple{u}(\uple{x}))$ and finally $F(C)\models R(\uple{x})$ if and only if $C\models R(\uple{u}(\uple{x}))$.

If $f : C_{1}\to C_{2}$ is an $\Lace$-isomorphism, we define $F(f)$ to be $f$ on all sorts common to $\Lace$ and $\Lltsece$ and $F(f)(x,\gamma) = (f(x),f(\gamma))$. It is easy to check that $F(f)$ is an $\Lltsece$-isomorphism.

Let $D$ be an $\Lltsece$-structure (inside some $N\models T^{e}$), define $G(D)$ to be the restriction of $D$ to all $\Lace$-sorts enriched with $\val = \vallt[n]\comp\ltf[1]$, $\resf[n] = \reslt[n]\comp \ltf[n]$, $\ac[n] = \secinj[n]\comp\ltf[n]$. Moreover, for any function $f :\prod S_{i} \to \lt[n]$ for some $n$, let $\uple{v} : \prod T_{i}\to \prod S_{i}$ to be such that $v_{i}(\uple{x}) = x_{i}$ if $S_{i}\neq\lt[m]$ for any $m$ and $v_{i}(\uple{x}) = \injres[m](y_{i})\sec[m](\gamma_{i})$ where $x_{i} = (y_{i},\gamma_{i})$, if $S_{i} = \lt[m]$. Then define $f^{G(D)}_{\res}(\uple{x}) = \secinj[n](f^{D}(\uple{v}(\uple{x})))$ and $f^{G(D)}_{\Valgp}(\uple{x}) = \vallt[n](f^{D}(\uple{v}(\uple{x})))$. If $f :\prod S_{i} \to S$ where $S\neq\lt[n]$ for any $n$, then $f^{G(D)}(\uple{x}) = f^{D}(\uple{v}(\uple{x}))$ and finally $G(D)\models R(\uple{x})$ if and only if $D\models R(\uple{v}(\uple{x}))$. If $f : D_{1}\to D_{2}$ is an $\Lltsece$-isomorphism, it is easy to show that the restriction of $f$ to the $\Lace$-sorts is an $\Lace$-isomorphism.

Now, one can check that for any $\Lltsece$-formula $\phi(\uple{x})$ there exists an $\Lace$-formula $\phi^{\ac,e}(\uple{y})$ such that for any $C\in \Str(\Tace)$ and $\uple{c}\in C$, $C\models\phi(\uple{c})$ if and only if $F(C)\models\phi^{\ac,e}(\uple{u}(\uple{c}))$ where $u$ is as above (for the sorts corresponding to $\uple{x}$). Similarly, to any $\Lace$-formula $\psi(\uple{x})$ we can associate an $\Lltsece$-formula $\ltsece{\psi}(\uple{x})$ such that for any $D\in\Str(T)$ and $d\in D$, $D\models\psi(\uple{d})$ if and only if $G(D)\models\ltsece{\psi}(\uple{d})$. One can also check that for all $\Lltsece$-formula $\phi$, $T\models \ltsece{(\phi^{\ac,e})}(\uple{u}(\uple{x}))\iff \phi(\uple{x})$ and for all $\Lace$-formula $\psi$, $\Tace\models(\ltsece{\psi})^{\ac,e}\iff \psi$. The rest of the proposition follows.
\end{proof}

\begin{remark}
\begin{thm@enum}
\item The functions $\secinjres[m,n]$ are actually not needed, if we Morleyize on $\res\cup\Valgpinf$, as they are definable using only quantification in the $\res[n]$.
\item As with the leading terms structure, in equicharacteristic zero, the angular component structure is a lot simpler. We only need $\val$ and $\ac[1]$ (and none of the $\vallt[n]$, $\secres[n]$ or $\secinjres[m,n]$).
\item\label{rem:mixed char ac} In mixed characteristic with finite ramification (i.e. $\Valgp$ has a smallest positive element $1$ and $\val(p) = k \cdot 1$ for some $k\in\Nn_{>0}$) the structure is also simpler. The functions $\valres[n]$, $\secres[n]$ and $\secinjres[m,n]$ can be redefined (without $\K$-quantifiers) knowing only $\secres[n](1)$. Let $\Lacfr$ be the language $(\Lac\sminus\{\valres[n],\secres[n],\secinjres[m,n]\mid m,n\in\Nn_{>0}\})\cup\{c_{n}\}$ where $c_{n}$ will be interpreted as $\secres[n](1)$ --- i.e. as $\resf[n](x)\ac[n](x)^{-1}$ for $x$ with minimal positive valuation. This is the language in which finitely ramified mixed characteristic fields with angular components are usually considered --- and eliminate field quantifiers.
\end{thm@enum}
\end{remark}

To finish this section let us define balls and Swiss cheeses.

\begin{definition}[swiss cheese](Balls and Swiss cheeses)
Let $(K,v)$ be a valued field, $\gamma\in\val(K)$ and $a\in K$. Write $\Def{\oball{\gamma}{a}}{\{x\in \K(M)\mid \val(x-a) > \gamma\}}$ for the open ball of center $a$ and radius $\gamma$, and $\Def{\cball{\gamma}{a}}{\{x\in \K(M)\mid \val(x-a) \geq \gamma\}}$ for the closed ball of center $a$ and radius $\gamma$.

A Swiss cheese is a set of the form $b\sminus(\bigcup_{i=1,\ldots, n} b_{i})$ where $b$ and the $b_i$ are open or closed balls.
\end{definition}

We allow closed balls to have radius $\infty$ --- i.e. singletons are balls --- and we allow open balls to have radius $-\infty$ --- i.e. $\K$ itself is an open ball.

\begin{definition}($\Ldiv$)
The language $\Ldiv$ has a unique sort $\K$ equipped with the ring language and a binary predicate $\Div$.
\end{definition}

In a valued field $(K,\val)$, the predicate $x\Div y$ will denote $\val(x)\leq\val(y)$. If $C\subseteq K$, we will denote by $\SWC(C)$, the set of all quantifier free $\Ldiv(C)$-definable sets in one variable. Note that all those sets are finite unions of swiss cheeses.

Note that later on, our valued fields may be endowed with more than one valuation. In that case, we will write $\oball[\Val]{\gamma}{a}$ or $\SWC<\Val>(C)$ to specify that we are considering the valuation associated to $\Val$. For a tuple $\uple{a}\in\K$, we will extend the notation for balls by writing $\Def{\oball{\gamma}{\uple{a}}}{\{\uple{b}\mid \val(\uple{b}-\uple{a}) > \gamma\}}$ and $\Def{\cball{\gamma}{\uple{a}}}{\{\uple{b}\mid \val(\uple{b}-\uple{a}) \geq \gamma\}}$ where $\Def{\glslink{valuple}{\val(\uple{a})}}{\min_{i}\{\val(a_{i})\}}$.

\section{Coarsening}\label{sec:coar}

The goal of this section is to provide the necessary tools for the reduction to the equicharacteristic zero case. This is a classical method , which underlies most existing proofs of $K$-quantifier elimination for enriched mixed characteristic Henselian fields. We present it here on its own, as a general transfer principle which we will then be able to invoke directly, in order, hopefully, to make the proofs clearer.

\begin{definition}(Coarsening valuations)
Let $(K,\val)$ be a valued field, $\Delta\subseteq\Valgp(K)$ a convex subgroup and $\pi : \Valgp(K)\to \Valgp(K)/\Delta$ the canonical projection. Let $\Def{\valC{\Delta}}{\pi\comp \val}$, extended to $0$ by $\valC{\Delta}(0) = \infty$.
\end{definition}

\begin{remark}
The valuation $\valC{\Delta}$ is a valuation coarser than $\val$. Its valuation ring is $\Def{\ValC{\Delta}}{\{x\in K\mid \exists\delta\in\Delta,\,\delta < \val(x)\}}\supseteq \Val(K)$ and its maximal ideal is $\Def{\MidC{\Delta}}{\{x\in K\mid \val(x) > \Delta\}}\subseteq\Mid(K)$. Its residue field $\resC[1]{\Delta}$ is in fact a valued field for the valuation $\resvalC{\Delta}$ defined by $\resvalC{\Delta}(x+\MidC{\Delta}) := \val(x)$ for all $x\in\ValC{\Delta}\sminus\MidC{\Delta}$ and $\resvalC{\Delta}(\MidC{\Delta}) = \infty$. Then $\resvalC{\Delta}(\resC[1]{\Delta}) = \Delta^{\infty} = \Delta\cup\{\infty\}$. The valuation ring of $\resC[1]{\Delta}$ is $\Def{\resValC{\Delta}}{\Val/\MidC{\Delta}}$, its maximal ideal is $\Mid/\MidC{\Delta}$ and its residue field is $\res[1]$. Moreover, if $\ltfC[n]{\Delta} : K\to \inv{K}/(1+n\MidC{\Delta})\cup\{0\} =: \ltC[n]{\Delta}$ is the canonical projection, $\ltf[n]$ factorizes through $\ltf[n]<\Delta>$; i.e. there is a function $\pi_{n} : \ltC[n]{\Delta} \to \lt[n]$ such that $\ltf[n] = \pi_{n}\comp\ltf[n]<\Delta>$.
\end{remark}

\[\xymatrix{
\inv{\Val}\mono[r]\ar[d]\ar@/_20pt/[dd]_{\resf[1]}&\inv{(\ValC{\Delta})}\mono[r]\ar[d]^{\resfC[1]{\Delta}}&\inv{\K}\ar[d]^{\ltfC[1]{\Delta}}\\
\inv{(\resValC{\Delta})}\mono[r]\ar[d]^{\resresfC[1]{\Delta}}&\inv{(\resC[1]{\Delta})}\mono[r]\ar[d]^{\resvalC{\Delta}}&\inv{(\lt[1]<\Delta>)}\\
\inv*{res}[1]&\Delta
}\hspace{40pt}
\xymatrix@C=7pt{
&\K\ar[d]|{\ltf[n]}\ar[ddl]_{\ltfC[n]{\Delta}}\ar[drr]^{\val}\ar[ddr]|!{[d];[drr]}\hole_(0.7){\valC{\Delta}}\\
&\lt[n]\ar[rr]|{\vallt[n]}&&\Valgpinf\ar[dl]^{\pi}\\
\ltC[n]{\Delta}\ar[rr]_{\valltC[n]{\Delta}}\ar[ur]|{\pi_{n}}&&(\Valgp/\Delta)^{\infty}
}\]

Before we go on let us explain the link between open balls for the coarsened valuations and open balls for the original valuation.

\begin{proposition}[ball sub coarsen]
Let $(K,\val)$ be a valued field and $\Delta$ a convex subgroup of its valuation group. Let $S$ be an $\Val$-Swiss cheese, $b$ an $\ValC{\Delta}$-ball, $c$, $d\in K$ such that $b = \oball[\ValC{\Delta}]{\valC{\Delta}(d)}{c}$. If $b\subseteq S$, there exists $d'\in K$ such that $\valC{\Delta}(d') = \valC{\Delta}(d)$ and $b\subseteq\oball[\Val]{\val(d')}{c}\subseteq S$.
\end{proposition}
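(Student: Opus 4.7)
The plan is to pick $d'$ with $\val(d') = \val(d) + \delta^{*}$ for some $\delta^{*} \in \Delta$, which automatically forces $\valC{\Delta}(d') = \valC{\Delta}(d)$. The case $d = 0$ is trivial, so assume $d \neq 0$ and hence $c \in b$. Observe that $b = \bigcap_{\delta \in \Delta} \oball[\Val]{\val(d) + \delta}{c}$ is a filtered intersection of $\val$-open balls around $c$, so $b \subseteq \oball[\Val]{\val(d')}{c}$ will be automatic. Writing $S = b_{0} \setminus \bigcup_{i=1}^{n} b_{i}$ with each $b_{i}$ a $\val$-ball, it then suffices to find $\delta_{i} \in \Delta$ for $i \geq 1$ such that $\oball[\Val]{\val(d) + \delta_{i}}{c} \cap b_{i} = \emptyset$, and $\delta_{0} \in \Delta$ such that $\oball[\Val]{\val(d) + \delta_{0}}{c} \subseteq b_{0}$. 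Indeed, taking $\delta^{*} := \max_{i} \delta_{i}$ (which lies in $\Delta$ by convexity) and choosing any $d' \in K$ with $\val(d') = \val(d) + \delta^{*}$ (possible since $\val$ maps onto $\Valgp(K)$) then concludes.

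For the disjointness claim, assume for contradiction that for some $i \geq 1$ every $\oball[\Val]{\val(d) + \delta}{c}$ meets $b_{i}$, and pick a witness $z_{\delta}$ in each intersection. Fix a center $y$ of $b_{i}$ and let $\mu$ be its $\val$-radius. The ultrametric inequality gives $\val(c - y) \geq \min(\val(d) + \delta, \mu)$ for every $\delta \in \Delta$. If some $\delta \in \Delta$ satisfies $\mu \leq \val(d) + \delta$, this forces $\val(c - y) \geq \mu$ (strict if $b_{i}$ is open), so $c \in b_{i}$. Otherwise $\mu > \val(d) + \delta$ for every $\delta \in \Delta$, whence $\val(c - y) \geq \val(d) + \delta$ for every $\delta \in \Delta$; since $\Delta$ has no maximum, this upgrades to $\val(c - y) > \val(d) + \Delta$ strictly, so $y \in b$. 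Either alternative contradicts $b \cap b_{i} = \emptyset$.

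For the inclusion claim, since $c \in b \subseteq b_{0}$ we may recenter $b_{0}$ at $c$, presenting $b_{0}$ as an open or closed $\val$-ball of some radius $\gamma_{0}$ around $c$. If $\gamma_{0} - \val(d)$ is not strictly larger than every element of $\Delta$, pick $\delta_{0} \in \Delta$ with $\val(d) + \delta_{0} \geq \gamma_{0}$; the ultrametric then gives $\oball[\Val]{\val(d) + \delta_{0}}{c} \subseteq b_{0}$. In the opposite case, I produce $x \in b \setminus b_{0}$ using that $\val$ is surjective onto $\Valgp(K)$: take $x = c + y$ with $\val(y) = \gamma_{0}$ when $b_{0}$ is open, or $\val(y) = \gamma_{0} - \delta$ for any positive $\delta \in \Delta$ when $b_{0}$ is closed (when $\Delta = \{0\}$, $\valC{\Delta} = \val$ and one just takes $d' = d$). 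This contradicts $b \subseteq b_{0}$, so the exceptional case does not occur. The main obstacle is the disjointness argument, which must balance the strict versus non-strict ultrametric inequalities across open and closed $b_{i}$'s while exploiting the lack of a maximum in $\Delta$; the analogous subtlety for closed $b_{0}$ is resolved by the density provided by the elements $\gamma_{0} - \delta$.
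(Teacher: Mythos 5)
Your proof is correct and follows the same route as the paper's: express $b$ as the filtered intersection of the $\Val$-open balls $\oball[\Val]{\val(d)+\delta}{c}$ for $\delta\in\Delta$, and then show that inclusion in the outer ball of the Swiss cheese and disjointness from each excluded ball are each witnessed at a single radius, so a common witness works for $S$. The paper states the two witnessed equivalences without argument, and your case analysis (split on whether the relevant radius difference lies above all of $\Delta$, using that the center of a ball is a point of it) is exactly the verification the paper leaves implicit.
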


\begin{proof}
Let $(g_{\alpha})$ be a cofinal (ordinal indexed) sequence in $\Delta$. We have $b = \bigcap_{\alpha}\oball[\Val]{\val(dg_{\alpha})}{c}$. Indeed, $\valC{\Delta}(dg_{\alpha}) = \valC{\Delta}(d)$ and hence $b = \oball[\ValC{\Delta}]{\valC{\Delta}(dg_{\alpha})}{c} \subseteq \oball[\Val]{\val(dg_{\alpha})}{c}$. Conversely, if $x\in \bigcap_{\alpha}\oball[\Val]{\val(dg_\alpha)}{c}$, then $\val((x-c)/d) > \val(g_{\alpha})$ for all $\alpha$, hence $(x-c)/d \in\MidC{\Delta}$.

Let $b'$ be any $\Val$-ball, then $b =  \bigcap_{\alpha}\oball[\Val]{\val(dg_{\alpha})}{c} \subseteq b'$ if and only if there exists $\alpha_{0}$ such that $\oball[\Val]{\val(dg_{\alpha_{0}})}{c}\subseteq b'$ and $b\cap b' = \bigcap_{\alpha}\oball[\Val]{\val(dg_{\alpha})}{c} \cap b'= \emptyset$ if and only if there exists $\alpha_{0}$ such that $\oball[\Val]{\val(dg_{\alpha_{0}})}{c}\cap b' = \emptyset$. These statements still hold for Boolean combinations of balls hence there is some $\alpha_{0}$ such that $\oball[\Val]{\val(dg_{\alpha_{0}})}{c}\subseteq S$. 
\end{proof}

When $(K,\val)$ is a mixed characteristic valued field, the coarsened valuation we are interested in is the one associated to $\Delta_{p}$ the convex group generated by $\val(p)$ as $(K,\valC{\Delta_{p}})$ has equicharacteristic zero. We will write $\Def{\valinf}{\valC{\Delta_{p}}}$, $\Def{\resinf}{\resC[1]{\Delta_{p}}}$, $\Def{\Valinf}{\ValC{\Delta_{p}}} = \Val[p^{-1}]$ and $\Def{\Midinf}{\MidC{\Delta_{p}}} = \bigcap_{n\in\Nn} p^{n}\Mid$. As the coarsened field has equicharacteristic zero, all $\ltC[n]{\Delta_{p}}$ are the same and we will write $\Def{\ltinf}{\inv{K}/(1+\Midinf)}\cup\{0\} = \ltC[1]{\Delta_{p}}$.

\begin{remark}[prodef ltinf]
We can --- and we will --- identify $\ltinf$ (canonically) with a subgroup of $\limproj\lt[n]$ and the canonical projection $K\to \ltinf$ then coincides with $\limproj \ltf[n] : K\to\limproj\lt[n]$, in particular, $\ltinf = (\limproj \ltf[n])(K)$. Similarly, $\resValC{\Delta_{p}}$ can be identified with a subring of $\limproj \res[n]$ and $\resinf = \Frac{\resValC{\Delta_{p}}} \subseteq \Frac{\limproj \res[n]} = (\limproj \res[n])[\ltfinf(p)^{-1}]$. The inclusions are equalities if $K$ is $\aleph_{1}$-saturated. In particular, $\limproj \ltf[n]$ is surjective.
\end{remark}

\[\xymatrix{&\K\ar[dl]_{\ltf[m]}\ar[d]|{\ltf[n]}\ar[dr]|{\limproj\ltf[n]}\ar[r]^{\ltfinf}&\ltinf\mono[d]\\
\lt[m]\ar[r]_{\ltf[m,n]}&\lt[n]&\limproj\lt[n]\ar[l]^(0.6){\pi_{n}}\ar@/^20pt/[ll]^{\pi_{m}}}\]

Hence $(K,\valinf)$ is prodefinable --- i.e. a prolimit of definable sets --- in $(K,\val)$  with its $\Llt$-structure.

Let $\LL$ be an $\lt$-enrichment of a $\K$-enrichment of $\Llt$ with new sorts $\Sigma_{\K}$ and $\Sigma_{\lt}$ respectively. Somewhat abusing notation, when writing $\K$ we will mean $\K\cup\Sigma_{\K}$ and when writing $\lt$ we will mean $\bigcup_{n}\lt[n]\cup\Sigma_{\lt}$ (and rely on the context for it to make sense). Let $T\supseteq \Tvf[0,p]$ be an $\LL$-theory. Let $\Lltinf$ be a copy of $\Llt$ (as $\Lltinf$ will only be used in equicharacteristic zero, we will only need its $\lt[1]$, which we will denote $\ltinf$ to avoid confusion with the original $\lt[1]$). Let $\Linf$ be $\Lltinf\cup\Sortrestr{\LL}{\K\cup\Sigma_{K}}\cup\Sortrestr{\LL}{\lt\cup\Sigma_{\lt}}\cup\{\pi_{n}\mid n\in\Nn_{>0}\}$ where $\pi_{n}$ is a function symbol $\ltinf\to\lt[n]$. Let $\Tinf$ be the theory containing:
\begin{itemize}
\item $\Tvf[0,0]<\infty>$, i.e. the theory of equicharacteristic zero valued fields in $\Lltinf$;
\item The translation of $T$ into $\Linf$ by replacing $\ltf[n]$ by $\pi_{n}\comp\ltfinf$.
\end{itemize}

Recall that $\Str(T)$ is the category of substructures of models of $T$ and that whenever  $F : \Str(T_{1})\to\Str(T_{2})$ is a functor and $\kappa$ a cardinal, we denote by $\Str[F,\kappa](T_{2})$ the full subcategory of $\Str(T_{2})$ of structures that embed into some $F(M)$ for $M\models T_{1}$ $\kappa$-saturated. See Appendix\,\ref{sec:cat} for precise definitions.

The main goal of the following proposition is to show that quantifier elimination results in equicharacteristic zero can be transferred to mixed characteristic using result from Appendix\,\ref{sec:cat}.

\begin{proposition}[red zero](Reduction to equicharacteristic zero)
We can define functors $\Coar : \Str(T)\to\Str(\Tinf)$ and $\UCoar:\Str(\Tinf)\to \Str(T)$ which respect cardinality up to $\aleph_{0}$ and induce an equivalence of categories between $\Str(T)$ and $\Str[\Coar,\aleph_{1}](\Tinf)$. Moreover, $\Coar$ respects $\aleph_{1}$-saturated models and $\UCoar$ respects models and elementary submodels and sends $\lt$ to $\lt\cup\ltinf$ (which are closed).
\end{proposition}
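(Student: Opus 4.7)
The idea is that $\Coar$ adds a sort $\ltinf$ and $\UCoar$ forgets it. For $C \in \Str(T)$ realised inside some $M \models T$, define $\Coar(C)$ to retain the sorts $\K \cup \Sigma_{\K}$ and $\lt \cup \Sigma_{\lt}$ with their $\LL$-structure, and include $\ltinf(\Coar(C)) := \ltfinf(\K(C))$, viewed as a subset of $\limproj \lt[n](M)$ via the embedding provided by Remark\,\ref{rem:prodef ltinf}, with $\pi_{n}$ interpreted as the $n$-th projection. That embedding is the crucial point: it makes $\ltinf(\Coar(C))$ intrinsic to $C$, independent of the choice of $M$, and ensures $\ltf[n] = \pi_{n}\comp\ltfinf$ by construction. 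For $\UCoar$, simply drop the $\ltinf$ sort and the symbols $\pi_{n}$; the surviving $\LL$-reduct satisfies $T_{\forall}$ precisely because the translation of $T$ into $\Linf$ only renamed $\ltf[n]$ as $\pi_{n}\comp\ltfinf$.

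Functoriality of both $\Coar$ and $\UCoar$ is routine: any $\LL$-morphism $f : C_{1} \to C_{2}$ commutes with each $\ltf[n]$, hence extends uniquely to the prolimit and so to $\ltinf$; any $\Linf$-morphism restricts to its $\LL$-reduct. The identity $\UCoar \comp \Coar = \mathrm{Id}_{\Str(T)}$ is tautological. For the other composition, any $D \in \Str[\Coar,\aleph_{1}](\Tinf)$ embeds into some $\Coar(M)$ with $M$ $\aleph_{1}$-saturated, so $\ltinf(D) \subseteq \limproj \lt[n](M)$ is determined by its $\pi_{n}$-projections, yielding the natural isomorphism $\Coar \comp \UCoar \simeq \mathrm{Id}$ on $\Str[\Coar,\aleph_{1}](\Tinf)$ and hence the equivalence of categories.

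The core technical step is the preservation of $\aleph_{1}$-saturated models by $\Coar$: by Remark\,\ref{rem:prodef ltinf}, saturation forces $\ltinf(M) = (\limproj \ltf[n])(M)$ and $\resinf(M) = \Frac{\resValC{\Delta_{p}}(M)}$, so $\valinf$ has equicharacteristic zero residue field and $\Coar(M)$ witnesses the axioms of $\Tvf[0,0]<\infty>$; the $\Linf$-translations of the axioms of $T$ then hold tautologically. Conversely, if $D \models \Tinf$ the reverse translation gives $\UCoar(D) \models T$, and elementary preservation follows formula-by-formula. Cardinality is preserved up to $\aleph_{0}$ on both sides since $|\ltinf(\Coar(C))| \leq \aleph_{0} + |\K(C)|$. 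Finally, $\UCoar$ sends $\lt \cup \ltinf$ to $\lt$ because every element of $\ltinf$ is determined by its $\pi_{n}$-projections in $\lt$, which is exactly the content of ``$\UCoar$ sends $\lt$ to $\lt \cup \ltinf$''. The only real obstacle is ensuring that $\ltinf(\Coar(C))$ is genuinely intrinsic to $C$ rather than an artefact of the ambient model; this is precisely what the embedding $\ltinf \hookrightarrow \limproj \lt[n]$ from Remark\,\ref{rem:prodef ltinf} delivers, and without it the entire functorial setup would collapse.
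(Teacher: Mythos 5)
Your definition of $\Coar$ is not the paper's, and the discrepancy is not cosmetic. You set $\ltinf(\Coar(C)) := \ltfinf(\K(C))$, the image of the $\K$-sort under $\limproj\ltf[n]$, whereas the paper sets $\ltinf(\Coar(C)) := \limproj\lt[n](C)$, the full inverse limit of the sets $\lt[n](C)$ along the transition maps. These coincide when $C$ is an $\aleph_1$-saturated model (that is Remark\,\ref{rem:prodef ltinf}), but for a general substructure $C$ the full inverse limit is strictly larger: a compatible sequence $(\alpha_n)$ with $\alpha_n\in\lt[n](C)$ need not lift to $\K(C)$. The clause ``respects cardinality up to $\aleph_0$'' should have alerted you to this: with your $\Coar$ one has $|\Coar(C)|\leq|C|$ outright, while $|\limproj\lt[n](C)|$ can genuinely reach $|C|^{\aleph_0}$ --- which is why the slack is in the statement.

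The consequence is a real gap in your argument for $\Coar\comp\UCoar\simeq\mathrm{Id}$ on $\Str[\Coar,\aleph_1](\Tinf)$. You correctly note that, for $D$ embedded in some $\Coar(M)$, the $\pi_n$-projections are jointly injective on $\ltinf(D)$; but injectivity is not what is at stake. With your $\Coar$, the $\ltinf$-sort of $\Coar(\UCoar(D))$ is $\ltfinf(\K(D))$, and the canonical map $\Coar(\UCoar(D))\to D$ is the inclusion $\ltfinf(\K(D))\hookrightarrow\ltinf(D)$. For this to be an isomorphism you need $\ltinf(D)=\ltfinf(\K(D))$, which you never argue and which fails in general: take a substructure $D\substr\Coar(M)$ with $\K(D)$ small (e.g.\ the minimal $\Sortrestr{\LL}{\K}$-closed set), $\lt[n](D)=\lt[n](M)$ for all $n$ and $\ltinf(D)=\ltinf(\Coar(M))$; every structural constraint is met, yet $\ltfinf(\K(D))$ is tiny compared with $\ltinf(D)$. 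So your natural transformation is a monomorphism, not an isomorphism. The paper closes this gap by a different argument that you do not supply: it compares the three leading-term structures that coexist on an arbitrary $D\in\Str(\Tinf)$ --- the $\Lltinf$-one, the definable one via $\pi_n\comp\ltfinf$, and the prodefinable $\limproj\ltf[n]$ --- and shows that the a priori strict inclusions between the associated valuation rings collapse precisely when $D$ embeds into some $\Coar(M)$, which is what makes the round-trip an isomorphism. Finally, a small point: ``$\UCoar$ sends $\lt\cup\ltinf$ to $\lt$'' is a statement about $\Sigma$-extensions in the sense of Appendix\,\ref{sec:cat}, and holds trivially because $\UCoar$ forgets $\ltinf$; ``determined by its $\pi_n$-projections'' is an argument for a different claim.
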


\begin{proof}
Let $C\substr M\models T$ be $\LL$-structures. Then $\Coar(C)$ has underlying sets $\K(\Coar(C)) = K(C)$, $\ltinf(\Coar(C)) = \limproj\lt[n](C)$ and $\lt(\Coar(C)) = \lt(C)$, keeping the same structure on $\K$ and $\lt$, defining $\ltfinf$ to be $\limproj{\ltf[n]}$ and $\pi_{n}$ to be the canonical projection $\ltinf\to\lt[n]$. Now, if $f:C_{1}\to C_{2}$ is an $\LL$-embedding, let us write $\Def{f_{\infty}}{\limproj\Sortrestr{f}{\lt[n]}}$. By definition, we have $\pi_{n}\comp f_{\infty} = \Sortrestr{f}{\lt[n]}\comp \pi_{n}$ and by immediate diagrammatic considerations, $\ltfinf\comp \Sortrestr{f}{\K} = f_{\infty} \comp\ltfinf$ and $f_{\infty}$ is injective. Then, let $\Coar(f)$ be $\Sortrestr{f}{\K}\cup f_{\infty}\cup\Sortrestr{f}{\lt}$. As $f$ is an $\LL$-embedding, $\Sortrestr{f}{\K}$ respects the structure on $\K$, $\Sortrestr{f}{\lt}$ respects the structure on $\lt$ and, as we have already seen, $\Coar(f)$ respects $\ltfinf$ and $\pi_{n}$. Hence $\Coar(f)$ is an $\Linf$-embedding.

If $M\models T$ is $\aleph_{1}$-saturated, it follows from Remark\,\ref{rem:prodef ltinf} that $\Coar(M)\models \Tinf$. Beware though that $\Coar(M)$ is never $\aleph_{0}$-saturated because if it were, we would find $x\neq y\in\ltinf(M_{1})$ such that for all $n\in\Nn_{>0}$, $\pi_{n}(x) = \pi_{n}(y)$, contradicting the fact that $\ltinf(M_{1}) = \limproj\lt[n](M_{1})$. Let $C$ be a substructure of $M$. We will denote $i$ the injection. Then $\Coar(i)$ is an embedding of $\Coar(C)$ into $\Coar(M)$ and $\Coar$ is indeed a functor to $\Str(T)$.

The functor $\UCoar$ is defined as the restriction functor that restricts $\Linf$-structures to the sorts $\K$ and $\lt$. It is clear that if $C$ is an $\LL$-structure in some model of $T$, then $\UCoar\comp\Coar(C)$ is trivially isomorphic to $C$. Now if $D$ is in $\Str(\Tinf)$ there will be three leading term structures (and hence valuations) on $D$: the one associated with the $\Lltinf$-structure of $C$ (which is definable), whose valuation ring is $\Val$, the one given by $\ltf[n] = \pi_{n}\comp\ltfinf$ (which is definable), whose valuation ring is $\Valinf$, and the one given by $\limproj \ltf[n]$ (which is only prodefinable), whose valuation ring is $\Val[p^{-1}]$. In general, we have $\Val\subset\Val[p^{-1}]\subset\Valinf$, but if $D = \Coar(C)$ --- or $D$ embeds in some $\Coar(C)$ --- $\Val[p^{-1}] = \Valinf$ and $\limproj \ltf[n](D) = \ltfinf(D)$. Hence, if $C$ embeds in some $\Coar(M)$ then $\Coar\comp\UCoar(C)$ is (naturally) isomorphic to $C$.

Functoriality of all the previous constructions is a (tedious but) easy verification
\end{proof}

\section{Analytic structure}\label{sec:ann}

In \cite{CluLipAnn}, Cluckers and Lipshitz study valued fields with analytic structure. Let us recall some of their results. From now on, $A$ will be a Noetherian ring separated and complete for its $I$-adic topology for some ideal $I$. Let $\genan[A]{\uple{X}}$ be the ring of power series with coefficients in $A$ whose coefficients $I$-adically converge to $0$. Let us also define $\Def{\Ann_{m,n}}{{\genan[A]{\uple{X}}[[\uple{Y}]]}}$ where $\card{\uple{X}} = m$ and $\card{\uple{Y}} = n$ and $\Def{\Ann}{\bigcup_{m,n}\Ann_{m,n}}$. Note that $\Ann$ is a separated Weierstrass system over $(A,I)$ as defined in \cite[Example\,4.4.(1)]{CluLipAnn}.  The main example to keep in mind here will be $\genan[\Wittr{\alg{\Ff_{p}}}]{\uple{X}}[[\uple{Y}]]$ which is a separated Weierstrass system over $(\Wittr{\alg{\Ff_{p}}},p\Wittr{\alg{\Ff_{p}}})$.

\begin{definition}[Q]($\Q$)
We will extensively use a quotient symbol $\Q : \K<2> \to \K$ that is interpreted as $\Q(x,y) = x/y$, when $y\neq 0$ and $\Q(x,0) = 0$.
\end{definition}

\begin{definition}($\ValR$)
Let $\ValR$ be a valuation ring of $K$ included in $\Val$, let $\MidR$ be its maximal ideal and $\valR$ its valuation. We have $\Mid\subseteq\MidR\subseteq\ValR\subseteq\Val$. Also, note that $1 + n\Mid\subseteq 1 +n\MidR \subseteq \inv*{ValR}$ and hence the valuation $\valR$ corresponding to $\ValR$ factors through $\ltf[n]$, i.e. there is some function $f_{n}$ such that $\valR = f_{n}\comp\ltf[n]$. We will also be using a new predicate $x\DivR[1] y$ on $\lt[1]$ interpreted by $f_{1}(x)\leq f_{1}(y)$.
\end{definition}

Note that $\Val$ is the coarsening of $\ValR$ associated to the convex subgroup $\inv{\Val}/\inv*{ValR}$ of $\inv*{K}/\inv*{ValR}$. Note also that $\ValR$ is then definable by the (quantifier free) formula, $\ltf[1](1) \DivR[1]\ltf[1](x)$. In fact the whole leading term structure associated to $\ValR$ is interpretable in $\Llt\cup\{\DivR[1]\}$.

\begin{definition}[ann structure](Fields with separated analytic $\Ann$-structure)
Let $\LA$ be the language $\Lltplus$ enriched with a symbol for each element in $\Ann$ (we will identify the elements in $\Ann$ and the corresponding symbols). For each $E\in\inv*{Ann}[m,n]$ let also $E_{k} : \lt[k]<m+n>\to\lt[k]$ be a new symbol and $\Def{\LAQ}{\LA\cup\{\DivR[1],\Q\}\cup\{E_{k}\mid E \in\inv*{Ann}[m,n],$ $m$, $n$, $k\in\Nn\}}$. The theory $\TA$ of fields with separated analytic $\Ann$-structure consists of the following:
\begin{thm@enum}
\item $\Tvf$;
\item $\Q$ is interpreted as in Definition\,\ref{def:Q};
\item $\DivR[1]$ comes from a valuation subring $\ValR\subseteq\Val$ with fraction field $\K$;
\item Each symbol $f\in\Ann_{m,n}$ is interpreted as a function $\ValR<m>\times\MidR<n> \to \ValR$ (the symbols will be interpreted as $0$ outside $\ValR<m>\times\MidR<n>$);
\item The interpretations $i_{m,n} : \Ann_{m,n}\to \ValR<\ValR<m>\times\MidR<n>>$ are morphisms of the inductive system of rings $\bigcup_{m,n} \Ann_{m,n}$ to $\bigcup_{m,n} \ValR<\ValR<m>\times\MidR<n>>$, where the inclusions are the obvious ones.
\item $i_{0,0}(I)\subseteq \MidR$;
\item $i_{m,n}(X_{i})$ is the $i$-th coordinate function and $i_{m,n}(Y_{j})$ is the $(m+j)$-th coordinate function;
\item For every $E\in\inv*{Ann}[m,n]$, $E_{k}$ is interpreted as the function induced by $E$ on $\lt[k]$ when it is well-defined (we will see shortly, in Corollary\,\ref{cor:ltf unit}, that it is, in fact, always well-defined).
\end{thm@enum}

To specify the characteristic we will write $\TA[0,0]$ or $\TA[0,p]$.
\end{definition}

\begin{remark}
\begin{thm@enum}
\item These axioms imply a certain number of properties that it seems reasonable to require. First (iv) implies that every constant in $A = \Ann_{0,0}$ is interpreted in $\ValR$. By (v) and (vii) polynomials in $\Ann$ are interpreted as polynomials. And (v) implies that any ring equality between functions in $\Ann_{m,n}$ for some $m$ and $n$ are also true in models of $\TA$. Using Weierstrass division (see Proposition\,\ref{prop:WD}) one can also show that compositional identities in $\Ann$ are also true in models of $\TA$.
\item We allow the analytic structure to be over a smaller valuation ring in order to be able to coarsen the valuation while staying in our setting of analytic structures.
\end{thm@enum}
\end{remark}

From now on, we will write $\Def{\genan!{C}}{\gen{\LAQ}{C}}$ and $\Def{\genan[C]{\uple{c}}}{\gen{\LAQ}[C]{\uple{c}}}$ for the $\LAQ$-structures generated by $C$ and $C\uple{c}$ (cf. Definition\,\ref{def:gen}).

We could be working in a larger context here. What we really need in the proof is not that $\Ann$ is a separated Weierstrass system, as in \cite{CluLipAnn}, but the consequences of this fact, namely: Henselianity, (uniform) Weierstrass preparation, differentiability of the new function symbols and extension of the analytic structure to algebraic extensions. One could give an axiomatic treatment along those lines, but to simplify the exposition, we restrict to a more concrete case.

Also note that if $\Ann$ is not countable we may now be working in an uncountable language

Let us now describe all the nice properties of models of $\TA$. 

\begin{proposition}[TA Hens]
Let $M\models\TA$, then $M$ is Henselian.
\end{proposition}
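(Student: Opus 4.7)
The plan is to reduce Henselianity of the coarse valuation $\Val$ to Henselianity of $\ValR$, which is the valuation over which the analytic structure actually lives, and then to extract the desired root via Weierstrass preparation for $\Ann$. Since any coarsening of a Henselian valuation is itself Henselian (a standard consequence of the unique-extension characterization of Henselianity), and since $\ValR\subseteq\Val$, it suffices to show that $(K,\ValR)$ is Henselian.

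So take $P\in\ValR[X]$ and $a\in\ValR$ satisfying $\valR(P(a))>0$ and $P'(a)\in\inv*{ValR}$. After the translation $X\mapsto X+a$ and division by $P'(a)$, we may assume $a=0$ and
\[
P(Y)=c_{0}+Y+c_{2}Y^{2}+\cdots+c_{n}Y^{n}
\]
with $c_{0}\in\MidR$ and $c_{i}\in\ValR$. The polynomial $P$ is not itself an $\LAQ$-symbol, but it is the specialization at $T_{i}=c_{i}$ and $S=c_{0}$ of the universal polynomial
\[
F(T_{2},\ldots,T_{n};S,Y):=S+Y+T_{2}Y^{2}+\cdots+T_{n}Y^{n}\in A[T_{2},\ldots,T_{n},S,Y]\subseteq\Ann_{n-1,2},
\]
where we view $T_{2},\ldots,T_{n}$ as restricted variables ranging over $\ValR$ and $(S,Y)$ as small variables ranging over $\MidR$. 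The axioms of $\TA$ (in particular the fact that the interpretation is a morphism of the inductive system and that polynomials in $\Ann$ are interpreted as the corresponding polynomial functions) guarantee that indeed $P(Y)=F(c_{2},\ldots,c_{n};c_{0},Y)$ as a function on $\MidR$.

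Setting $S=0$ gives $F(T;0,Y)=Y(1+T_{2}Y+\cdots+T_{n}Y^{n-1})$; the second factor takes the value $1$ at $Y=0$ and is therefore a unit in $\Ann_{n-1,1}$, so $F(T;0,Y)$ is Weierstrass-regular of order $1$ in $Y$. Applying Weierstrass preparation (Proposition \ref{prop:WD}) to $F$ in $\Ann_{n-1,2}$ with $Y$ as the distinguished variable yields a factorization
\[
F(T;S,Y)=U(T;S,Y)\cdot(Y+\alpha(T;S))
\]
with $U\in\Ann_{n-1,2}$ a unit and $\alpha\in\Ann_{n-1,1}$ taking values in $\MidR$ on $\ValR^{n-1}\times\MidR$. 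Specializing at $T=(c_{2},\ldots,c_{n})$ and $S=c_{0}$, the element $b:=-\alpha(c_{2},\ldots,c_{n};c_{0})\in\MidR$ is a root of $P$, and it is unique in $\MidR$ (hence in $\Mid$, since $b\in\MidR\subseteq\Mid$) by the uniqueness part of Weierstrass preparation, or equivalently because $P'(b)$ is a unit.

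The main point requiring care is the \emph{two-speed repackaging} of the coefficient data: the Hensel hypothesis gives precisely $c_{0}\in\MidR$, and it is this that allows us to move $c_{0}$ out of the restricted block of variables and into the small block alongside $Y$, so that $F$ becomes $Y$-regular at $S=0$ and Weierstrass preparation applies. Once this reindexing is in place, Henselianity falls out immediately from the Weierstrass content of the analytic structure; nothing more than regularity of the linear term of $P$ is used.
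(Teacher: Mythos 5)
Your argument is correct, and it takes a genuinely different route from the paper for the core step. The reduction from $\Val$ to $\ValR$ via ``coarsenings of Henselian valuations are Henselian'' is exactly the paper's move. But where the paper then simply cites \cite[Lemma 3.3]{LRRig} for the $\Val=\ValR$ case, you give a self-contained proof by specializing a universal linear-term-normalized polynomial $F(T;S,Y)\in\Ann_{n-1,2}$, noting that $F$ is regular in $Y$ of degree $1$ (since $F\equiv Y\bmod J+(S)+(Y^{2})$), and applying Proposition~\ref{prop:WD}. Your ``two-speed repackaging'' observation --- that the Hensel hypothesis $\valR(c_{0})>0$ is precisely what licenses moving the constant term into the small-variable block so that the specialization is legal and $Y$-regularity holds at $S=0$ --- is the crux, and it is correct. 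Two small points you gloss over: Weierstrass preparation gives $P=p_{0}+p_{1}Y$ regular of degree at most $1$, and one must note that $p_{1}\equiv 1\bmod J+(S)$ is a unit in $\genan[A]{T}[[S]]$ to rewrite $P$ as a unit times $Y+\alpha$; and the claim that $\alpha(\uple{c};c_{0})\in\MidR$ rests on $\alpha\in J+(S)$ together with axiom (vi) ($i_{0,0}(I)\subseteq\MidR$) and $c_{0}\in\MidR$. Neither gap is hard to fill. Compared to the paper's citation, your version is more illuminating about \emph{why} the analytic axioms force Henselianity --- it makes visible that Weierstrass regularity of degree $1$ is literally the content of the simple Hensel lemma --- at the cost of a slightly longer exposition.
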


\begin{proof}
If $\Val=\ValR$, this is proved exactly as in \cite[Lemma\,3.3]{LRRig}. The case $\ValR\neq\Val$ follows as coarsening preserves Henselianity.
\end{proof}

\begin{remark}[EQ alg form]
As $\TA$ implies $\THen$, by resplendent elimination of quantifiers in $\THen$ (cf. Theorem\,\ref{thm:EQ THen}), as $\LAQ\sminus(\Ann\cup\{\Q\})$ is an $\lt$-enrichment of $\Lltplus$, any $\LAQ\sminus(\Ann\cup\{\Q\})$-formula is equivalent modulo $\TA$ to a $\K$-quantifier free formula.
\end{remark}

Let us now show that functions from $\Ann$ have nice differential properties.

\begin{definition}
Let $K$ be a valued field and $f : K^{n}\to K$. We say that $f$ is differentiable at $\uple{a}\in K^{n}$ if there exists $\uple{d}\in K^{n}$ and $\xi$ and $\gamma\in\val(\inv*{K})$ such that for all $\uple{\epsilon}\in\oball{\xi}{\uple{a}}$,
\[\val(f(\uple{a}+\uple{\epsilon}) - f(\uple{a}) - \uple{d}\cdot\uple{\epsilon}) \geq 2\val(\uple{\epsilon}) + \gamma.\]
\end{definition}

There is a unique such $\uple{d} = (d_{i})$ and we will denote it $\diff{f}{\uple{a}}$. The $d_{i}$ are usually called the derivatives of $f$ at $\uple{a}$. We will denote them $\diff[i]{f}{\uple{a}}$.

\begin{proposition}[diff Ann]
Let $M\models\TA$ and $f\in\Ann_{m,n}$ for some $m$ and $n$. Then for all $i < m+n$ there is $g_{i}\in\Ann_{m,n}$ such that for all $\uple{a}\in K^{m+n}$, $f$ is differentiable at $\uple{a}$ and $\diff[i]{f}{\uple{a}} = g_{i}(\uple{a})$.
\end{proposition}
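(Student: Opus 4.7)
The plan is to produce $g_{i}$ explicitly as a formal partial derivative of $f$ and to verify differentiability at every point via a Taylor-type expansion that stays inside $\Ann$. For $1 \le i \le m$ I would take $g_{i} = \partial f/\partial X_{i}$, and for $m < i \le m+n$ I would take $g_{i} = \partial f/\partial Y_{i-m}$. Each of these is again in $\Ann_{m,n}$: differentiation in an $X_{i}$-variable multiplies each coefficient by an integer exponent and thus preserves $I$-adic convergence, while differentiation in a $Y_{j}$-variable keeps us inside the formal power series ring over $A\langle\uple{X}\rangle$.

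Next I would establish a Taylor expansion internal to the Weierstrass system. Introducing new variables $\uple{\epsilon}$ of restricted type (paired with the $X_{i}$) and $\uple{\eta}$ of free type (paired with the $Y_{j}$), the substitution $(\uple{X},\uple{Y}) \mapsto (\uple{X}+\uple{\epsilon},\uple{Y}+\uple{\eta})$ defines an element of $\Ann_{2m,2n}$. Regrouping by total order in the perturbation variables and factoring products of them out of the quadratic-and-higher part, I would obtain an identity of the shape
\[
f(\uple{X}+\uple{\epsilon},\uple{Y}+\uple{\eta})
= f(\uple{X},\uple{Y}) + \sum_{i} \epsilon_{i}\, g_{i}(\uple{X},\uple{Y}) + \sum_{i,j} \epsilon_{i}\epsilon_{j}\, H_{ij}(\uple{X},\uple{Y},\uple{\epsilon},\uple{\eta}),
\]
where (by slight abuse) $\epsilon_{i}$ denotes the appropriate coordinate of $(\uple{\epsilon},\uple{\eta})$ and each $H_{ij}$ lies in $\Ann_{2m,2n}$. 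The essential input is that an element of $\Ann$ vanishing to order at least two in a distinguished subset of the variables can be rewritten as a combination of quadratic monomials in those variables with coefficients still in $\Ann$.

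For the main estimate, fix $\uple{a}$ in the natural domain $\ValR^{m}\times\MidR^{n}$ and take $\xi = 0$ in $\val(K)$: since $\val$ is the coarsening of $\valR$ by a convex subgroup containing $0$, the inequality $\val(\uple{\epsilon}) > 0$ forces $\valR(\uple{\epsilon}) > 0$, so $\uple{\epsilon}\in\MidR^{m+n}$ and $\uple{a}+\uple{\epsilon}$ remains in the natural domain. Each $H_{ij}(\uple{a},\uple{\epsilon})$ then lies in $\ValR$, hence has non-negative $\val$-value, and the identity specializes to
\[
\val\!\left(f(\uple{a}+\uple{\epsilon}) - f(\uple{a}) - \sum_{i} g_{i}(\uple{a})\,\epsilon_{i}\right) \ge 2\val(\uple{\epsilon}),
\]
the required differentiability property with $\gamma = 0$. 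For $\uple{a}$ outside the natural domain, the same ultrametric computation with $\val(\uple{\epsilon}) > 0$ shows that $\uple{a}+\uple{\epsilon}$ also lies outside, so $f(\uple{a}+\uple{\epsilon}) = f(\uple{a}) = 0$ and the estimate holds trivially with $g_{i}(\uple{a}) = 0$ (noting that all $\Ann$-symbols, $g_{i}$ included, are interpreted as zero off the natural domain).

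The hard part will be justifying the Taylor expansion: one has to check that the quadratic remainder genuinely admits coefficients $H_{ij}$ inside $\Ann$. This reduces to structural closure properties of separated Weierstrass systems --- stability under substitution by small variables of the matching type, and the ability to factor out monomials in those variables while remaining in the system --- which should be extractable from the axioms of \cite{CluLipAnn}, but which, if attacked by hand, would require some genuinely fiddly bookkeeping.
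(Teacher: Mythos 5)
Your proof is correct and takes essentially the same route as the paper's: a formal Taylor development of $f$ inside $\Ann$, with the $g_i$ read off as the linear coefficients and the quadratic remainder bounded ultrametrically once you note that the shift keeps $\uple{a}+\uple{\epsilon}$ in $\ValR^{m}\times\MidR^{n}$. The one step you flag as potentially delicate --- rewriting the remainder as $\sum_{i,j}\epsilon_i\epsilon_j H_{ij}$ with $H_{ij}\in\Ann$ --- is in fact immediate for the paper's concrete choice $\Ann_{m,n}=\genan[A]{\uple{X}}[[\uple{Y}]]$, since dividing a series by a monomial that divides all its terms only shrinks the coefficient set and so preserves both the $I$-adic convergence in the $\uple{X}$-part and membership in the formal power series ring in the $\uple{Y}$-part; the paper's own proof contents itself with observing that $h$ is a sum of terms each divisible by a quadratic monomial and that interpretation is a ring morphism.
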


\begin{proof}
If $\uple{a}\nin\ValR<m>\times\MidR<n>$ then $f$ is equal to $0$ on $\oball{0}{\uple{a}}$ and the statement is trivial. If not, as $f\in \genan[A]{X}[[\uple{Y}]]$, it has a (formal) Taylor development:
\[f(X_{0}+X,Y_{0}+Y) = f(X_{0},Y_{0}) + \sum_{i}g_{i}(X_{0},Y_{0})X_{i} + \sum_{j}g_{m+j}(X_{0},Y_{0})Y_{j} + h(X_{0},Y_{0},X,Y)\]
where $h$ is a sum of terms each divisible by some quadratic monomial in $\uple{X}$ and $\uple{Y}$. As the interpretation morphisms are ring morphisms, this immediately implies that the interpretation of $f$ is differentiable in $\K(M)$ at $\uple{a}$ and that the derivatives are given by the $g_{i}$.
\end{proof}

\begin{corollary}[ltf unit]
Let $M\models\TA$, $E(\uple{x})\in\Ann_{m,n}$ and $S\subseteq \K(M)^{m+n}$. If, for all $\uple{x}\in S$, $\val(E(\uple{x})) = 0$ then, for all $\uple{x}\in S$, $\ltf[n](E(\uple{x}))$ only depends on $\resf[n](\uple{x})$.
\end{corollary}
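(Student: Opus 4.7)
My plan is to control $E(\uple{x}) - E(\uple{y})$ via the formal Taylor expansion underlying Proposition\,\ref{prop:diff Ann}. Given $\uple{x}, \uple{y}\in S$ with $\resf[n](\uple{x}) = \resf[n](\uple{y})$, I would first note that $E$ vanishes off $\ValR^{m}\times\MidR^{n}$, so the hypothesis $\val(E(\uple{x})) = \val(E(\uple{y})) = 0$ forces $\uple{x}, \uple{y}\in\ValR^{m}\times\MidR^{n}\subseteq\Val^{m+n}$; the residual equality then yields $\val(\epsilon_{i}) > \val(n)$ for each coordinate of $\uple{\epsilon} = \uple{x}-\uple{y}$.

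Next I would invoke the formal Taylor identity from the proof of Proposition\,\ref{prop:diff Ann}: in $\Ann$ one has
\[E(\uple{Y}+\uple{Z}) = E(\uple{Y}) + \sum_{i} g_{i}(\uple{Y})Z_{i} + h(\uple{Y},\uple{Z}),\]
with $g_{i}\in\Ann_{m,n}$ and $h$ a sum of monomials each divisible by a quadratic monomial in $\uple{Z}$. Since the interpretations form a morphism of the inductive system of rings (Definition\,\ref{def:ann structure}(v)), this identity remains valid in $M$ after evaluating at $(\uple{Y},\uple{Z}) = (\uple{y},\uple{\epsilon})$. From $\val(g_{i}(\uple{y}))\geq 0$ I deduce $\val(\sum_{i}g_{i}(\uple{y})\epsilon_{i}) > \val(n)$, while the fact that every monomial of $h$ is quadratic in $\uple{Z}$ forces $\val(h(\uple{y},\uple{\epsilon}))\geq 2\val(\uple{\epsilon}) > \val(n)$. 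Hence $E(\uple{x})-E(\uple{y})\in n\Mid$, and combined with $\val(E(\uple{y})) = 0$ this yields $E(\uple{x})/E(\uple{y})\in 1+n\Mid$, i.e.\ $\ltf[n](E(\uple{x})) = \ltf[n](E(\uple{y}))$.

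The main obstacle I expect is justifying that the Taylor identity is a \emph{global} equation in $\Ann$, rather than merely a local approximation on a small ball as the definition of differentiability used in Proposition\,\ref{prop:diff Ann} might at first suggest. To address this I would rely on $\Ann$ being a separated Weierstrass system, so that substituting $\uple{Y}+\uple{Z}$ into $E$ is a well-defined operation within $\Ann$ (provided the $\uple{Z}$-variables are correctly typed, bounded or small, to match the roles of the original variables); the ring-morphism axiom then carries the equation over to the interpretation and supplies the required estimate for every $\uple{\epsilon}$ with $\uple{y}+\uple{\epsilon}$ in the domain of $E$.
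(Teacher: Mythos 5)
Your proof is correct and follows the same route as the paper's own argument: invoke the formal Taylor identity underlying Proposition~\ref{prop:diff Ann}, bound the linear term by $\val(g_{i}(\uple{y}))\geq 0$ (since the $g_{i}$ take values in $\ValR\subseteq\Val$) together with $\val(\epsilon_{i})>\val(n)$, bound the quadratic remainder by $2\val(\uple{\epsilon})>\val(n)$, and then convert $E(\uple{x})-E(\uple{y})\in n\Mid$ into $\ltf[n](E(\uple{x}))=\ltf[n](E(\uple{y}))$ using $\val(E)=0$. Your worry about the identity being formal rather than a local approximation is exactly the point the paper also relies on (and its proof of Proposition~\ref{prop:diff Ann} explicitly states the development as a formal identity in $\Ann$), so the argument closes.
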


In particular if $E\in\inv*{Ann}[m,n]$, then for all $\uple{x}\in\ValR<m>\times\MidR<n>$, $\valR(E(\uple{x})) = 0$ and hence $\val(E(\uple{x})) = 0$ and thus $\ltf[n](E(\uple{x}))$ is a function of $\resf[n](\uple{x})$ which is a function of $\ltf[n](\uple{x})$. Outside of $\ValR<m>\times\MidR<n>$, $\ltf[n](E(\uple{x}))$ is constant equal to $0$ and hence it is also a function of $\ltf[n](\uple{x})$. Hence, as announced earlier, $E$ does induce a well-defined function on $\lt[k]$ for any $k$.

\begin{proof}[Corollary\,\ref{cor:ltf unit}]
Any element with the same $\resf[n]$ residue as $\uple{x}$ is of the form $\uple{x}+n\uple{m}$ for some $\uple{m}\in\Mid$. By Proposition\,\ref{prop:diff Ann}, $E(\uple{x}+n\uple{m}) = E(\uple{x}) + \uple{G}(\uple{x})\cdot(n\uple{m}) + H(\uple{x},n\uple{m})$ where $\uple{G}(\uple{x})\in\ValR\subseteq\Val$ and $\val(H(\uple{x},n\uple{m})) \geq 2\val(n\uple{m}) > \val(n)$, hence $\resf[n](E(\uple{x}+n\uple{m})) = \resf[n](E(\uple{x}))$. As for all $\uple{z}\in S$, $\val(E(\uple{z})) = 0$, $\ltf[n](E(\uple{z})) = \resf[n](E(\uple{z}))$ and we have the expected result.
\end{proof}

Let us now (re)prove a well-known result from papers by Cluckers, Lipshitz and Robinson. There are two main reasons for which to reprove this result. The first reason is that although the proof given here is very close to the classical Denef-van den Dries proof as explained in \cite[Theorem\,4.2]{LRUnif}, the proof there only shows quantifier elimination for algebraically closed fields with analytic structures over $(\Zz,0)$. The second reason is to make sure that $\Val\neq\ValR$ does not interfere.

\begin{theorem}[EQ TA]
$\TA$ eliminates $\K$-quantifiers resplendently.
\end{theorem}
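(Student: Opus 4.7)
The plan is to reduce resplendent $\K$-quantifier elimination for $\TA$ to the corresponding result for $\THen$ (Theorem\,\ref{thm:EQ THen}), which applies since $\TA \supseteq \THen$ by Proposition\,\ref{prop:TA Hens}. By Remark\,\ref{rem:EQ alg form}, elimination of $\K$-quantifiers is already available for any $\LAQ$-formula that does not mention the new symbols $\Ann \cup \{\Q\}$. So the real task is to show that the $\lt$-data of an analytic term (or a $\Q$-term) in $\K$-variables over a substructure is already determined by the $\lt$-data of polynomial terms; the bridge for this is Weierstrass preparation, together with Corollary\,\ref{cor:ltf unit} to handle unit factors. This follows the Denef--van den Dries strategy of \cite[Theorem\,4.2]{LRUnif}, adapted to the $\LAQ$-setting and to the possibility that $\ValR \subsetneq \Val$.

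Concretely I would run a back-and-forth between sufficiently saturated $M_{1}, M_{2} \models \TA$ (in the $\lt$-enriched language) over a common $\LAQ$-substructure $C$. Given $a \in \K(M_{1})$, the goal is to find $b \in \K(M_{2})$ such that $\ltf[n](u(a)) = \ltf[n](u(b))$ for every $\Sortrestr{\LAQ}{\K}(C)$-term $u$ and every $n$; by Remark\,\ref{rem:decomp form Kterm} together with Corollary\,\ref{cor:uniformization} applied once $\K$-QE is in hand, this suffices to extend the identity on $C$ to an isomorphism of generated $\LAQ$-substructures compatible with the $\lt$-enrichment. I would treat $\genan[C]{a}$ by transfinite induction on its construction from $C \cup \{a\}$ via applications of $\Q$ and of symbols $f \in \Ann_{m,n}$. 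At each step, Weierstrass preparation (applied over the analytic base built so far) rewrites the new analytic term $f(a,\uple{c}) = U(a) \cdot P(a)$, where $U(a)$ is an analytic unit and $P(a)$ is a monic polynomial in $a$ with coefficients in the analytic closure of $C\uple{c}$. Since $\val(U(a)) = 0$, Corollary\,\ref{cor:ltf unit} expresses $\ltf[n](U(a))$ as a function of $\ltf[n](a)$ and the parameters, while $\ltf[n](P(a))$ is a polynomial leading term governed by the Henselian part of the structure; Theorem\,\ref{thm:EQ THen} together with saturation of $M_{2}$ then produces $b$ matching the requisite Henselian type and $\ltf[n](a)$ data. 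For the $\Q$-symbol, $\ltf[n](\Q(x,y))$ is directly expressible from $\ltf[n](x)$ and $\ltf[n](y)$, so it poses no extra difficulty.

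Resplendence then comes essentially for free: the only way the $\lt$-enrichment of $\LAQ$ interacts with $\K$ is via $\ltf[n]$ and via the new symbols $E_{k}$, and by Corollary\,\ref{cor:ltf unit} each $E_{k}$ factors through $\ltf[n]$, so any $\lt$-enrichment on top of the one already considered is absorbed by the back-and-forth without modification. The main obstacle I expect is the multi-variable bookkeeping: Weierstrass preparation is a genuinely one-variable statement, so a term $f(a, c_{1}, \ldots, c_{k})$ cannot be prepared in $a$ directly; one must specialize parameters into a progressively enlarged analytic base and verify at each step that the preparation data behaves uniformly on both sides of the back-and-forth (relying on $\Ann$ being a separated Weierstrass system). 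Two further technical points will need care: the distinction $\ValR \subseteq \Val$, where preparation takes place on the smaller valuation ring and the predicate $\DivR[1]$ is used to recover $\ValR$ inside $\LAQ$, and the transition between equicharacteristic zero and mixed characteristic, which is where the coarsening functors of Section\,\ref{sec:coar} enter. The differentiability statement of Proposition\,\ref{prop:diff Ann} underpins both Corollary\,\ref{cor:ltf unit} and the linear approximations implicit in Weierstrass preparation, and is what ultimately makes the reduction to polynomials go through.
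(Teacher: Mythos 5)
Your approach is genuinely different from the paper's, and the difference is not innocent: it runs into a circularity that the paper explicitly warns against. The paper's proof of Theorem~\ref{thm:EQ TA} is entirely syntactic, working at the level of the formal power series ring $\Ann$: it introduces preregularity, bounds the degree of preregularity via the strong Noetherian property (Proposition~\ref{prop:bound deg prereg}), uses Weierstrass changes of variables $T_d$ to make series regular in a single variable, and then invokes Weierstrass division and preparation \emph{of formal series} (Proposition~\ref{prop:WD}) to inductively lower the $(\uple{X},\uple{Y})$-rank of well-formed formulas (Lemma~\ref{lem:EQ TA ind}, Corollary~\ref{cor:EQ TA wf}). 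None of this requires knowing anything model-theoretic about $\TA$; it reduces the problem to $\THen$, where Theorem~\ref{thm:EQ THen} applies.

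What you propose instead is a semantic back-and-forth based on Weierstrass preparation \emph{for terms} (Definition~\ref{def:Weierstrass prep}), and this is where the gap opens. To run the back-and-forth, you must be able to transfer the preparation data of a term $t$ over $C_1$ --- the Swiss cheese $S$, the strong unit $E$, the rational function $R$ with $t = ER$ on $S$ --- across a partial $\LAQ$-isomorphism $f : C_1 \to C_2$, so that $t^{f} = E^{f} R^{f}$ on $S^{f}$ with $E^{f}$ a strong unit. But the statement ``$E$ is a strong unit on $S$ and $t = ER$ on $S$'' is an $\LAQ(C_1)$-sentence involving $\K$-quantifiers, so it only transfers across $f$ if $f$ is known to be \emph{elementary} --- which is exactly what $\K$-quantifier elimination would give you and what you are trying to prove. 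You half-acknowledge this by writing ``applied once $\K$-QE is in hand,'' but that is not available at that point of the argument. This is precisely the subtlety the paper flags in the remark following the Weierstrass-preparation proposition: the Denef--van den Dries / Cluckers--Lipshitz back-and-forth, as you describe it, secretly relies on a \emph{uniform} Weierstrass preparation for terms, and the only known route to uniformity goes through quantifier elimination itself. The paper's syntactic induction at the $\Ann$-level sidesteps this entirely, because Weierstrass preparation of formal power series is a purely algebraic statement that needs no model-theoretic transfer. (A secondary inaccuracy: your write-up conflates Weierstrass preparation at the $\Ann$-level, which produces ``unit times monic polynomial'' as in Proposition~\ref{prop:WD}, with Weierstrass preparation for \emph{terms}, which only produces ``strong unit times rational function on a Swiss cheese.'' These are different statements with different hypotheses, and the slide between them is part of what hides the circularity.)
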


The proof of this theorem will need many definitions and properties that will only be used here and that will be introduced now.

For all $m$, $n\in\Nn$, we define $J_{m,n}$ to be the ideal $\{\sum_{\mu,\nu}a_{\mu,\nu}\uple{X}^{\mu}\uple{Y}^{\nu}\in\Ann_{m,n}\mid a_{\mu,\nu} \in I\}$ of $\Ann_{m,n}$. Most of the time we will only write $J$ and rely on context for the indices. We will also write $\upleneq{X}{n}$ for the tuple $\uple{X}$ without its $n$-th component.

Note that in the definition below and in most of this proof, elements of $\Ann$ will be considered as formal series (and not as their interpretation in some $\LAQ$-structure) and hence infinite sums as the one below do make sense.

\begin{definition}(Regularity)
Let $f\in\Ann_{m_{0},n_{0}}$, $m < m_{0}$, $n < n_{0}$. We say that:
\begin{thm@enum}
\item $f = \sum_{i}a_{i}(\upleneq{X}{m},\uple{Y})X_{m}^{i}$ is regular in $X_{m}$ of degree $d$ if $f$ is congruent to a monic polynomial in $X_{m}$ of degree $d$ modulo $J + (\uple{Y})$;
\item $f = \sum_{i}a_{i}(\uple{X},\upleneq{Y}{n})Y_{n}^{i}$ is regular in $Y_{n}$ of degree $d$ if $f$ is congruent to $Y_{n}^{d}$ modulo $J + (\upleneq{Y}{n}) + (Y_{n}^{d+1})$.
\end{thm@enum}
\end{definition}

If we do not want to specify the degree, we will just say that $f$ is regular in $X_{m}$ (resp. $Y_{n}$).

\begin{proposition}[WD](Weierstrass division and preparation)
Let $f,g\in\Ann_{m_{0},n_{0}}$ and suppose $f$ is regular in $X_{m}$ (resp. in $Y_{n}$) of degree $d$, then there exists unique $q\in\Ann_{m,n}$ and $r\in\genan[A]{{\upleneq{X}{m}}}[[\uple{Y}]][X_{m}]$ (resp. $r\in\genan[A]{\uple{X}}[[\upleneq{Y}{n}]][Y_{n}]$) of degree strictly lower than $d$ such that $g = qf + r$.

Moreover, there exists unique $P\in\genan[A]{{\upleneq{X}{m}}}[[\uple{Y}]][X_{m}]$ (resp. $P\in\genan[A]{\uple{X}}[[\upleneq{Y}{n}]][Y_{n}]$) regular in $X_{m}$ (resp. in $Y_{n}$) of degree at most $d$ and $u\in\inv*{Ann}[m,n]$ such that $f = uP$.
\end{proposition}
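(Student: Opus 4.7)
The strategy is to prove division first and derive preparation from it. I would treat the $X_{m}$ case in detail; the $Y_{n}$ case is entirely analogous, with the ideal $M$ below replaced by $J + (\upleneq{Y}{n})$ and the role of the monic polynomial $P_{0}$ played by $Y_{n}^{d}$.

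For division, first set $M := J + (\uple{Y})$, an ideal of $\Ann_{m_{0},n_{0}}$; the separated Weierstrass system structure of $\Ann$ ensures that $\Ann_{m_{0},n_{0}}$ is complete for the $M$-adic topology and that $1 + M$ consists of units. Regularity lets us write $f = P_{0} + \phi$ with $P_{0} \in A[\uple{X}]$ an honest monic polynomial of degree $d$ in $X_{m}$ (obtained by lifting any monic representative of $f$ modulo $M$) and $\phi \in M \cdot \Ann_{m_{0},n_{0}}$. Polynomial long division by $P_{0}$ then yields operators $D : \Ann_{m_{0},n_{0}} \to \Ann_{m_{0},n_{0}}$ and $\rho : \Ann_{m_{0},n_{0}} \to \genan[A]{\upleneq{X}{m}}[[\uple{Y}]][X_{m}]^{<d}$ with $h = D(h)P_{0} + \rho(h)$ for every $h$. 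Granting this, the equation $g = qf + r$ with $\deg_{X_{m}} r < d$ is, by uniqueness of polynomial division, equivalent to the fixed-point system $q = D(g - q\phi)$, $r = \rho(g - q\phi)$. Setting $T(q) := D(g - q\phi)$, one checks that $T(q_{1}) - T(q_{2}) = -D((q_{1}-q_{2})\phi) \in M^{k+1}$ whenever $q_{1} - q_{2} \in M^{k}$, so $M$-adic completeness yields a unique fixed point $q$; put $r := \rho(g - q\phi)$. Uniqueness of $(q,r)$ will follow by applying the same reasoning to the difference of two decompositions, using regularity of $f$ to compare $X_{m}$-degrees modulo $M^{k}$.

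For preparation, I would apply division with $g := X_{m}^{d}$ to get $X_{m}^{d} = uf + r$ with $\deg_{X_{m}} r < d$; setting $P := X_{m}^{d} - r$ gives $P = uf$, a polynomial of $X_{m}$-degree at most $d$. Reducing modulo $M$, uniqueness of polynomial division by $P_{0}$ forces $\bar u = 1$, so $u \in 1 + M$ is a unit, $f = u^{-1}P$, and $P$ is regular in $X_{m}$ of degree $d$. Uniqueness of $(u,P)$ reduces to uniqueness in the division statement.

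The main technical obstacle, which I have taken for granted above, is the claim that the polynomial long-division quotient $D(h)$ stays in the analytic ring $\Ann_{m_{0},n_{0}}$ rather than escaping into the formal completion $\genan[A]{\upleneq{X}{m}}[[\uple{Y}]][[X_{m}]]$. This is precisely the Weierstrass division axiom for separated Weierstrass systems (cf.\ \cite{CluLipAnn}), and rests on the $I$-adic decay of coefficients built into $\genan[A]{\uple{X}}$; once it is granted, everything else is formal algebra in an $M$-adically complete ring.
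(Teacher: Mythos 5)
The paper itself gives no argument here — it simply cites a reference — so your sketch is a genuine reconstruction rather than a comparison of routes. For the $X_m$ case your strategy is the standard one and is essentially sound: the decomposition $f = P_0 + \phi$ with $\phi \in M = J + (\uple{Y})$, the contraction $q \mapsto D(g - q\phi)$ in the $M$-adically complete ring, uniqueness via $M$-adic separatedness, and preparation by dividing $X_m^d$ by $f$ are all correct, granting (as you explicitly acknowledge) that long division by the monic polynomial $P_0$ lands back inside $\Ann_{m_0,n_0}$, which is indeed the genuine technical core of the references.

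The claim that the $Y_n$ case is \emph{entirely} analogous, with $P_0$ replaced by $Y_n^d$ and $M$ by $J + (\upleneq{Y}{n})$, hides a real gap, because the two regularity notions are not symmetric. $X_m$-regularity forces the reduction of $f$ modulo $J + (\uple{Y})$ to be a genuine monic \emph{polynomial} of degree $d$ in $X_m$, so $f - P_0 \in M$ for any monic lift $P_0$. By contrast, $Y_n$-regularity only controls $f$ modulo $J + (\upleneq{Y}{n}) + (Y_n^{d+1})$: writing $f = \sum_i a_i Y_n^i$, the coefficients $a_i$ with $i>d$ are completely unconstrained, so the tail $\sum_{i>d} a_i Y_n^i$, and hence $\phi := f - Y_n^d$, need not lie in $M = J + (\upleneq{Y}{n})$. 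Then $q \mapsto D(g - q\phi)$ is not an $M$-adic contraction and the iteration need not converge. The fix is a preliminary unit extraction: set $v := 1 + \sum_{i>d} a_i Y_n^{i-d}$, which lies in $1 + (Y_n)$ and is therefore a unit of $\Ann_{m_0,n_0}$; then $f = \psi + Y_n^d v$ with $\psi := \sum_{i<d} a_i Y_n^i + (a_d - 1)Y_n^d \in M$, so that $fv^{-1} - Y_n^d = \psi v^{-1} \in M$. Your contraction argument then applies to $fv^{-1}$, and the division of $g$ by $f$ is recovered by multiplying the resulting quotient by $v^{-1}$. With this step inserted, the rest of your argument, including uniqueness and preparation, goes through in both cases.
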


\begin{proof}
See \cite[Corollary\,3.3]{LRUnif}.
\end{proof}

We will be ordering multi-indices $\mu$ of the same length by lexicographic order and we write $\card{\mu} = \sum_{i}\mu_{i}$.

\begin{definition}(Preregularity)
Let $f = \sum_{\mu,\nu}f_{\mu,\nu}(\uple{X}_{2},\uple{Y}_{2})\uple{X}_{1}^{\mu}\uple{Y}_{1}^{\nu}\in\Ann_{m_{1}+m_{2},n_{1}+n_{2}}$. We say that $f$ is preregular in $(\uple{X}_{1},\uple{Y}_{1})$ of degree $(\mu_{0},\nu_{0},d)$ when:
\begin{thm@enum}
\item $f_{\mu_{0},\nu_{0}} = 1$;
\item For all $\mu$ and $\nu$ such that $\card{\mu} + \card{\nu} \geq d$, $f_{\mu,\nu}\in J + (\uple{Y}_{2})$;
\item For all $\nu<\nu_{0}$ and for all $\mu$, $f_{\mu,\nu}\in J + (\uple{Y}_{2})$;
\item For all $\mu > \mu_{0}$, $f_{\mu,\nu_{0}}\in J + (\uple{Y}_{2})$.
\end{thm@enum}
\end{definition}

\begin{remark}[prereg imp prereg 0]
Note that if $f = \sum_{\nu}f_{\nu}(\uple{X})\uple{Y}^{\nu}$ is preregular in  $(\uple{X},\uple{Y})$ of degree $(\mu_{0},\nu_{0},d)$ then $f_{\nu_{0}}$ is preregular in $\uple{X}$ of degree $(\mu_{0},0,d)$.
\end{remark}

Let $\Def{T_{d}(\uple{X})}{(X_{0}+X_{m_-1}^{d^{m-1}},\ldots,X_{i} + X_{m-1}^{d^{m-1-i}},\ldots,X_{m-2}+X_{m-1}^{d},X_{m-1})}$ where $m = \card{\uple{X}}$. We call $T_{d}$ a Weierstrass change of variables. Note that Weierstrass changes of variables are bijective.

\begin{proposition}[prereg 0 imp reg]
Let $f= \sum_{\mu,\nu}f_{\mu,\nu}(\uple{X}_{2},\uple{Y}_{2})\uple{X}_{1}^{\mu}\uple{Y}_{1}^{\nu}\in\Ann_{m_{1}+m_{2},n_{1}+n_{2}}$. Then:
\begin{thm@enum}
\item If $f$ is  preregular in $(\uple{X}_{1},\uple{Y}_{1})$ of degree $(\mu_{0},0,d)$ then $f(T_{d}(\uple{X}_{1}),\uple{X}_{2},\uple{Y})$ is regular in $X_{1,m_{1}-1}$.
\item If $f$ is preregular in $(\uple{X}_{1},\uple{Y}_{1})$ of degree $(0,\nu_{0},d)$ then $f(\uple{X},T_{d}(\uple{Y}_{1}),\uple{Y}_{2})$ is regular in $Y_{1,n_{1}-1}$.
\end{thm@enum}
\end{proposition}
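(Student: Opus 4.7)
The plan is a direct monomial-by-monomial computation, keyed on the fact that the Weierstrass substitution $T_d$ encodes a bounded multi-index in base $d$. Explicitly, if $\mu$ has all coordinates in $\{0,\ldots,d-1\}$, then setting the non-final variables in $\uple{X}_1$ to zero sends $\uple{X}_1^\mu$ (after applying $T_d$) to $x^{\langle \mu, d\rangle}$, where $x := X_{1, m_1-1}$ and $\langle \mu, d\rangle := \sum_{i=0}^{m_1-1} \mu_i d^{m_1-1-i}$. For such bounded multi-indices, the map $\mu \mapsto \langle \mu, d\rangle$ strictly respects the lexicographic order. Before starting, one checks that preregularity forces $|\mu_0| < d$ (for (i)) and $|\nu_0| < d$ (for (ii)): otherwise the condition ``$|\mu|+|\nu|\geq d \Rightarrow f_{\mu,\nu}\in J+(\uple{Y}_2)$'' would contradict $f_{\mu_0,\nu_0}=1$, since $I$ and $(\uple{Y}_2)$ are proper ideals of $\Ann$.

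For (i), I reduce $f$ modulo $J + (\uple{Y})$. Since $\nu_0 = 0$, the preregularity conditions kill every $f_{\mu,\nu}$ with $\nu \neq 0$, or $|\mu|\geq d$, or $\mu>\mu_0$, leaving only monomials $f_{\mu,0}(\uple{X}_2, 0)\uple{X}_1^\mu$ with $\mu \leq \mu_0$ (lex) and $|\mu| < d$. After substituting $T_d(\uple{X}_1)$ and expanding, each such $\uple{X}_1^\mu$ has $x$-degree exactly $\langle \mu, d\rangle$ with leading coefficient $1$. By order-preservation, $\langle \mu, d\rangle < \langle \mu_0, d\rangle$ for $\mu < \mu_0$, so the coefficient of $x^{\langle \mu_0, d\rangle}$ in the full sum equals $f_{\mu_0, 0}(\uple{X}_2, 0) = 1$. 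This exhibits $f(T_d(\uple{X}_1), \uple{X}_2, \uple{Y})$ as monic in $x$ of degree $\langle \mu_0, d\rangle$ modulo $J + (\uple{Y})$, which is the required regularity.

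For (ii), I reduce modulo $J + (\upleneq{Y_1}{n_1-1}) + (\uple{Y}_2) + (y^{D+1})$, where $y := Y_{1, n_1-1}$ and $D := \langle \nu_0, d\rangle$. Setting $\upleneq{Y_1}{n_1-1} = \uple{Y}_2 = 0$ and applying $T_d$ to $\uple{Y}_1$ sends $\uple{Y}_1^\nu$ to $y^{\langle \nu, d\rangle}$. The preregularity axioms leave exactly two kinds of surviving monomials: the term $(\mu, \nu) = (0, \nu_0)$, which contributes $f_{0, \nu_0}(\uple{X}_2, 0) \cdot y^D = y^D$; and terms with $\nu > \nu_0$ (lex) and $|\mu|+|\nu| < d$, which contribute multiples of $y^{\langle \nu, d\rangle}$ with $\langle \nu, d\rangle > D$ by order-preservation. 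The latter vanish modulo $(y^{D+1})$, whence $f(\uple{X}, T_d(\uple{Y}_1), \uple{Y}_2) \equiv y^D$ modulo the specified ideal, which is exactly regularity in $y$ of degree $D$.

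The whole argument is careful bookkeeping; the only substantive ingredient is the lex-vs-integer correspondence for bounded multi-indices, combined with carefully matching each preregularity condition to the monomials it kills at each reduction step.
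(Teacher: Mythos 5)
Your proof is correct and takes essentially the same route as the paper's: reduce modulo the appropriate ideals, observe that $T_d$ sends $\uple{X}_1^\mu$ to a polynomial whose leading $X_{1,m_1-1}$-monomial is $X_{1,m_1-1}^{\sum_i \mu_i d^{m_1-1-i}}$ with coefficient $1$ (and similarly for $\uple{Y}_1^\nu$), and conclude by the lex/base-$d$ order-preservation on bounded multi-indices, which the paper isolates and proves as Claim\,\ref{claim:ord index}. The only presentational difference is that you assert this combinatorial fact rather than proving it.
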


\begin{proof}
Let $m=m_{1}-1$ and $n = n_{1}-1$. First assume $f$ is preregular in $(\uple{X}_{1},\uple{Y}_{1})$ of degree $(\mu_{0},0,d)$, then \[f\equiv \sum_{\mu < \mu_{0},\card{\mu}<d}f_{\mu,0}\uple{X}_{1}^{\mu}\mod J + (\uple{Y_{2}}) + (\uple{Y_{1}}).\] Furthermore, $T_{d}(\uple{X}_{1})^{\mu} = (\prod_{i=0}^{m-1}(X_{1,i}+ X_{1,m}^{d^{m-i}})^{\mu_{i}})X_{1,m}^{\mu_{m}}$ is a sum of monomials whose highest degree monomial only contains the variable $X_{1,m}$ and has degree $\sum_{i=0}^{m}d^{m-i}\mu_{i}$. It now suffices to show that this degree is maximal when $\mu = \mu_{0}$, but that is exactly what is shown in the following claim.

\begin{claim}[ord index]
Let $\mu$ and $\nu$ be two multi-indices such that $\mu < \nu$ and $\card{\mu}<d$ then \[\sum_{i=0}^{m}d^{m-i}\mu_{i} < \sum_{i=0}^{m}d^{m-i}\nu_{i}.\]
\end{claim}

\begin{proof}
Let $i_0$ be minimal such that $\mu_{i} < \nu_{i}$. Then for all $j < i_{0}$, $\mu_{j} = \nu_{j}$. Moreover, 
\begin{eqnarray*}
\sum_{i=i_{0}+1}^{m}d^{m-i}\mu_{i}&\leq &\sum_{i=i_{0}+1}^{m}d^{m-i}(d-1)\\
&=& d^{m-i_{0}}-1\\
&<& d^{m-i_{0}},
\end{eqnarray*}
hence 
\begin{eqnarray*}
\sum_{i=0}^{m}d^{m-i}\mu_{i} &<& \sum_{i=0}^{i_{0}-1}d^{m-i}\mu_{i} + d^{m-i_{0}}\mu_{i_{0}} + d^{m-i_{0}}\\
& \leq & \sum_{i=0}^{i_{0}-1}d^{m-i}\mu_{i} + d^{m-i_{0}}\nu_{i_{0}}\\
&\leq & \sum_{i=0}^{m}d^{m-i}\nu_{i}
\end{eqnarray*}
and we have proved our claim.
\end{proof}

Let us now suppose that $f$ is  preregular in $(\uple{X}_{1},\uple{Y}_{1})$ of degree $(0,\nu_{0},d)$. Then \[f\equiv \uple{Y}_{1}^{\nu_{0}} + \sum_{\nu > \nu_{0},\mu}f_{\mu,\nu}\uple{X}_{1}^{\mu}\uple{Y}_{1}^{\nu}\mod J + (\uple{Y_{2}}).\] Now, \[T_{d}(\uple{Y}_{1})^{\nu} = (\prod_{i=0}^{n-1}(Y_{1,i}+ Y_{1,n}^{d^{n-i}})^{\nu_{i}})Y_{1,n}^{\nu_{n}} \equiv Y_{1,n}^{\sum_{i=0}^{n}d^{n-i}\nu_{i}} \mod  J + (\uple{Y_{2}}) + (\upleneq{Y_{1}}{n})\] and we conclude again by Claim\,\ref{claim:ord index}.
\end{proof}

\begin{proposition}[bound deg prereg](Bound on the degree of preregularity)
Let \[f=\sum_{\mu,\nu}f_{\mu,\nu}(\uple{X}_{2},\uple{Y}_{2})\uple{X}_{1}^{\mu}\uple{Y}_{1}^{\nu}\in\Ann[m_{1}+m_{2},n_{1}+n_{2}].\] There exists $d$ such that for any $(\mu,\nu)$ with $\card{\mu}+\card{\nu} < d$, there exists $g_{\mu,\nu}\in\Ann_{m_{1}+m_{3},n_{1}+n_{3}}$ preregular in $(\uple{X}_{1},\uple{Y}_{1})$ of degree $(\mu,\nu,d)$ and $\Sortrestr{\LAQ}{\K}$-terms $\uple{u}_{\mu,\nu}$ and $\uple{s}_{\mu,\nu}$ such that for all $M\models \TA$ and every $\uple{a}\in\ValR(M)$ and $\uple{b}\in\MidR(M)$, if $f(\uple{X}_{1},\uple{a},\uple{Y}_{1},\uple{b})$ is not the zero function, then there exists $(\mu_{0},\nu_{0})$ with $\card{\mu_{0}}+\card{\nu_{0}} \leq d$ and \[f(\uple{X}_{1},\uple{a},\uple{Y}_{1},\uple{b}) = f_{\mu_{0},\nu_{0}}(\uple{a},\uple{b})g_{\mu_{0},\nu_{0}}(\uple{X}_{1},\uple{u}_{\mu_{0},\nu_{0}}(\uple{a},\uple{b}),\uple{Y}_{1},\uple{s}_{\mu_{0},\nu_{0}}(\uple{a},\uple{b})).\]
\end{proposition}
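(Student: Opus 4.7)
My plan is three-stage: extract a uniform preregularity bound $d$, construct a finite family of candidate preregular witnesses $g_{\mu,\nu}$, and at each nonzero specialisation pick the candidate that telescopes correctly.

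First, I would analyse $f \in \genan[A]{\uple{X}_1,\uple{X}_2}[[\uple{Y}_1,\uple{Y}_2]]$ modulo the ideal $J + (\uple{Y}_2)$, exploiting the $I$-adic convergence of the $\uple{X}$-coefficients together with a uniform-preparation argument in the spirit of \cite[Theorem\,4.2]{LRUnif}, to secure a $d\in\Nn_{>0}$ such that $f_{\mu,\nu}\in J+(\uple{Y}_2)$ for all $(\mu,\nu)$ with $\card{\mu}+\card{\nu}\geq d$. Achieving this uniformity despite $\uple{Y}_1$ being a formal power-series variable with unbounded exponents is the substantive content of the proof, and will rely on a careful interplay between $I$-adic completeness, separatedness of the Weierstrass system $\Ann$, and Noetherianity of $A/I$.

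Next, for each $(\mu,\nu)$ with $\card{\mu}+\card{\nu}<d$, I would collect into a finite polynomial
\[R_{\mu,\nu} := f_{\mu,\nu}(\uple{X}_2,\uple{Y}_2)\,\uple{X}_1^\mu\uple{Y}_1^\nu + \sum_{(\mu',\nu')\in V_{\mu,\nu}} f_{\mu',\nu'}(\uple{X}_2,\uple{Y}_2)\,\uple{X}_1^{\mu'}\uple{Y}_1^{\nu'},\]
where $V_{\mu,\nu}=\{(\mu',\nu'):\nu'<\nu\}\cup\{(\mu',\nu):\mu'>\mu\}$ gathers the indices whose coefficients must vanish for preregularity at $(\mu,\nu)$, and introduce an auxiliary variable $Z$ in the enlarged ring to set $g_{\mu,\nu}:=\uple{X}_1^\mu\uple{Y}_1^\nu+Z\cdot(f-R_{\mu,\nu})$, together with $\uple{u}_{\mu,\nu}:=(\uple{X}_2,\Q(1,f_{\mu,\nu}(\uple{X}_2,\uple{Y}_2)))$ and $\uple{s}_{\mu,\nu}:=\uple{Y}_2$. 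A coefficient-by-coefficient inspection of $g_{\mu,\nu}$ then verifies preregularity: condition (i) via the pivot monomial, conditions (iii) and (iv) via the cancellations built into $R_{\mu,\nu}$, and condition (ii) via the preparation step, since $Z\cdot f_{\mu',\nu'}\in J+(\uple{Y}_3)$ whenever $\card{\mu'}+\card{\nu'}\geq d$.

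Finally, given $\uple{a}\in\ValR$, $\uple{b}\in\MidR$ with $f(\uple{X}_1,\uple{a},\uple{Y}_1,\uple{b})\neq 0$, I would select $(\mu_0,\nu_0)$ by first taking $\nu_0$ minimal such that some $f_{\mu,\nu_0}(\uple{a},\uple{b})\neq 0$, then $\mu_0$ maximal with $f_{\mu_0,\nu_0}(\uple{a},\uple{b})\neq 0$. This choice forces $f_{\mu',\nu'}(\uple{a},\uple{b})=0$ for all $(\mu',\nu')\in V_{\mu_0,\nu_0}$, so the evaluation of $R_{\mu_0,\nu_0}$ collapses to $f_{\mu_0,\nu_0}(\uple{a},\uple{b})\,\uple{X}_1^{\mu_0}\uple{Y}_1^{\nu_0}$; specialising $Z$ to $\Q(1,f_{\mu_0,\nu_0}(\uple{a},\uple{b}))=1/f_{\mu_0,\nu_0}(\uple{a},\uple{b})$ and multiplying through by $f_{\mu_0,\nu_0}(\uple{a},\uple{b})$ then yields the claimed identity by a telescoping cancellation, modulo a preliminary rescaling of $f$ by a common valuation when no $f_{\mu,\nu}(\uple{a},\uple{b})$ happens to be a $\valR$-unit, so that $\Q$ lands in the correct valuation ring.
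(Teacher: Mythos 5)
Your Stage~1 contains a false claim, and it is the load-bearing step. There is in general \emph{no} $d$ with $f_{\mu,\nu}\in J+(\uple{Y}_2)$ for all $(\mu,\nu)$ with $\card{\mu}+\card{\nu}\geq d$: the $I$-adic decay built into $\Ann$ constrains only the $\uple{X}_1$-direction of the expansion, while each $\uple{Y}_1$-variable is a \emph{formal} power-series variable subject to no convergence condition. Concretely, take $\uple{X}_1=\uple{X}_2=\uple{Y}_2=\emptyset$, a single $\uple{Y}_1$-variable $Y$, and $f=\sum_{\nu\geq 0}Y^{\nu}\in A[[Y]]=\Ann_{0,1}$. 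Then $f_{0,\nu}=1$ for every $\nu$, and $1\notin J_{0,0}=I$ unless $I=A$; so no finite $d$ works, and the premise on which your Stage~2 verification of preregularity condition~(ii) and your Stage~3 telescoping rest does not hold. This is exactly the obstruction that the strong Noetherian property of Cluckers--Lipshitz \cite[Theorem\,4.2.15 and Remark\,4.2.16]{CluLipAnn} is designed to overcome --- the paper's proof is a direct citation to it (via \cite[Corollary\,3.8]{LRUnif}). That theorem does not assert decay of the tail coefficients; it asserts that the ideal they generate is finitely generated with a uniformly bounded index set and that the membership coefficients are themselves $\Ann$-functions that can be absorbed into extra $\uple{X}_3,\uple{Y}_3$-arguments. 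This is a theorem of genuine independent difficulty and cannot be recovered by a ``uniform-preparation in the spirit of'' hand-wave.

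There is a secondary gap in your auxiliary variable $Z$. You specialise $Z$ to $\Q(1,f_{\mu_0,\nu_0}(\uple{a},\uple{b}))$, but this value lies in $\ValR(M)$ only when $f_{\mu_0,\nu_0}(\uple{a},\uple{b})$ is an $\ValR$-unit; in the other cases the $\TA$-interpretation of $g_{\mu_0,\nu_0}$ at an argument outside $\ValR^{m}\times\MidR^{n}$ is identically $0$, and the claimed identity fails. The ``preliminary rescaling'' you gesture at cannot be built into the fixed $g_{\mu,\nu}\in\Ann$, because the required scaling factor depends on $(\uple{a},\uple{b})$. The workable move is to choose $(\mu_0,\nu_0)$ with $f_{\mu_0,\nu_0}(\uple{a},\uple{b})$ of minimal $\valR$-value and feed in the ratios $\Q(f_{\mu',\nu'}(\uple{a},\uple{b}),f_{\mu_0,\nu_0}(\uple{a},\uple{b}))$, which do land in $\ValR$, as the $\uple{u}_{\mu_0,\nu_0}$-arguments --- but that requires an $\Ann$-series depending on all these ratios at once, precisely what the strong Noetherian decomposition provides and your single linear $Z$-term does not.
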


\begin{proof}
This follows, as in \cite[Corollary\,3.8]{LRUnif}, from the strong Noetherian property \cite[Theorem\,4.2.15 and Remark\,4.2.16]{CluLipAnn}.
\end{proof}

As $\ValR$ and $\MidR$ are not sorts in $\LAQ$, there is no way in this language to have a variable that ranges uniquely over one or the other. Hence we introduce the notion of well-formed formula that essentially simulates that sorted behaviour.

A $\K$-quantifier free $\LA$-formula $\phi(\uple{X},\uple{Y},\uple{Z},\uple{R})$ will be said to be well-formed if $\uple{X}$, $\uple{Y}$, $\uple{Z}$ are $\K$-variables and $\uple{R}$ are $\lt$-variables, symbols of functions from $\Ann$ are never applied to anything but variables and $\phi(\uple{X},\uple{Y},\uple{Z},\uple{R})$ implies that $\bigwedge_{i}\valR(X_{i})\geq 0$, $\bigwedge_{i}\valR(Z_{i})\geq 0$ and $\bigwedge_{i}\valR(Y_{i})> 0$. The $(\uple{X},\uple{Y})$-rank of $\phi$ is the tuple $(\card{\uple{X}},\card{\uple{Y}})$. We order ranks lexicographically.

\begin{lemma}[EQ TA ind]
Let $\phi(\uple{X},\uple{Y},\uple{Z},\uple{R})$ be a well-formed $\K$-quantifier free $\LA$-formula. Then there  exists a finite set of well-formed $\K$-quantifier free $\LA$-formulas $\phi_{i}(\uple{X}_{i},\uple{Y}_{i},\uple{Z}_i,\uple{R})$ of $(\uple{X}_{i},\uple{Y}_{i})$-rank strictly smaller than the $(\uple{X},\uple{Y})$-rank of $\phi$ and $\Sortrestr{\LAQ}{\K}$-terms $\uple{u}_i(\uple{Z})$ such that \[\TA\models\forall\uple{Z}\forall\uple{R}\,(\exists\uple{X}\exists\uple{Y}\phi\iff\bigvee_{i}\exists\uple{X}_{i}\exists\uple{Y}_{i}\phi_{i}(\uple{X}_{i},\uple{Y}_{i},\uple{u}_i(\uple{Z}),\uple{R})).\]
\end{lemma}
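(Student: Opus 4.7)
The plan is to proceed by induction on the $(\uple{X},\uple{Y})$-rank, eliminating a single variable per step. Pick $V := Y_{n-1}$ if $\uple{Y}$ is non-empty and otherwise $V := X_{m-1}$; the aim is to convert each analytic-function subterm depending on $V$ into an expression that is polynomial in $V$, so that $\exists V$ can be removed using the Henselian $\K$-quantifier elimination of Remark\,\ref{rem:EQ alg form}. Enumerate the $\Ann$-function terms $f_{1}(\uple{v}_{1}),\ldots,f_{k}(\uple{v}_{k})$ occurring in $\phi$; by well-formedness each $\uple{v}_{j}$ is a subtuple of the variables.

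For each $j$ with $V\in\uple{v}_{j}$, apply Proposition\,\ref{prop:bound deg prereg} with $(\uple{X}_{1},\uple{Y}_{1})$ being just the singleton containing $V$. This gives a finite case split indexed by a degree $(\mu_{0},\nu_{0})$ in which $f_{j}(\uple{v}_{j}) = c_{j}(\uple{w}_{j})\cdot g_{j}(V,\uple{w}_{j}')$, where $\uple{w}_{j}$ lists the variables of $\uple{v}_{j}$ other than $V$, $\uple{w}_{j}'$ is a tuple of $\Sortrestr{\LAQ}{\K}$-terms in $\uple{w}_{j}$, and $g_{j}$ is preregular in $V$ of degree $(\mu_{0},\nu_{0},d)$. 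Because the $(\uple{X}_{1},\uple{Y}_{1})$-tuple has length one, the Weierstrass change of variables $T_{d}$ of Proposition\,\ref{prop:prereg 0 imp reg} is trivial and each $g_{j}$ is already regular in $V$. Weierstrass preparation (Proposition\,\ref{prop:WD}) then gives $g_{j} = u_{j}P_{j}$ with $u_{j}$ a unit in $\Ann$ and $P_{j}$ a polynomial in $V$ of bounded degree whose coefficients are themselves elements of $\Ann$ depending only on $\uple{w}_{j}'$. Taking the finite conjunction over $j$ of these case splits yields a finite disjunction of subcases; it suffices to handle one.

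Within a fixed case, rewrite every atomic subformula containing $f_{j}(\uple{v}_{j})$ in terms of $c_{j}$, $u_{j}$, and $P_{j}$: equations $f_{j}=0$ become $c_{j}P_{j}=0$ (since $u_{j}$ is a unit), and any leading-term atom $\ltf[n](f_{j})=\cdots$ becomes $\ltf[n](c_{j})\cdot(u_{j})_{n}(\ltf[n](\uple{v}_{j}))\cdot\ltf[n](P_{j})$, where the $\LAQ$-symbol $(u_{j})_{n}$ is well-defined by Corollary\,\ref{cor:ltf unit}. Introduce fresh $\K$-variables $\uple{Z}'$ naming each $\Ann$-subterm whose argument does not involve $V$, namely the $c_{j}$'s, the coefficients of each $P_{j}$ as a polynomial in $V$, and any $\Ann$-term from $\phi$ not involving $V$. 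Their defining values are $\Sortrestr{\LAQ}{\K}$-terms $\uple{u}(\uple{Z})$ in the original parameters, and each is in $\ValR$ so that well-formedness is preserved after substitution.

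After this rewriting the only dependence on $V$ in the resulting $\phi'$ is polynomial on the $\K$-side (through $P_{j}$ and any original polynomial subterms) and via $\ltf[n](V)$ on the $\lt$-side, and $\phi'$ lies in $\LAQ\sminus(\Ann\cup\{\Q\})$. By Remark\,\ref{rem:EQ alg form}, $\exists V\,\phi'$ is equivalent modulo $\TA$ to a $\K$-quantifier free formula $\tilde\phi$; substituting back the defining $\Sortrestr{\LAQ}{\K}$-terms for $\uple{Z}'$ yields a well-formed $\K$-quantifier free $\LA$-formula in $(\uple{X},\uple{Y}\sminus V,\uple{Z},\uple{R})$ of rank strictly smaller than that of $\phi$. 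Combined over the finitely many preregularity cases, this delivers the desired disjunction. The main obstacle will be bookkeeping: verifying that the partially rewritten $\phi'$ is indeed in $\LAQ\sminus(\Ann\cup\{\Q\})$ (so that Remark\,\ref{rem:EQ alg form} applies), and that after substitution of the $\uple{u}_{i}(\uple{Z})$ the resulting $\phi_{i}$ is well-formed — in particular that the $\valR$-constraints $\valR(X_{i})\geq 0$, $\valR(Z_{i})\geq 0$, $\valR(Y_{i})>0$ are implied, which follows because the $\uple{u}_{i}$ are built from $\Ann$-symbols that map $\ValR$ into itself and from ring operations.
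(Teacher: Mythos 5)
You have correctly observed that preregularity in a singleton tuple already gives regularity (the change of variables $T_{d}$ is trivial there), and the general toolkit — Proposition\,\ref{prop:bound deg prereg}, Weierstrass preparation, Remark\,\ref{rem:EQ alg form} — is the same one the paper uses. The gap lies in \emph{which} tuple you apply Proposition\,\ref{prop:bound deg prereg} to: you take $(\uple{X}_{1},\uple{Y}_{1})$ to be the singleton $\{V\}$, so the parameter slot $(\uple{a},\uple{b})$ contains not only $\uple{Z}$ but also the remaining existential variables $\uple{X}$ and $\upleneq{Y}{n-1}$. The non-$V$ arguments $\uple{u}_{\mu_{0},\nu_{0}}(\uple{a},\uple{b})$, $\uple{s}_{\mu_{0},\nu_{0}}(\uple{a},\uple{b})$ of $g_{j}$ — your $\uple{w}_{j}'$ — are therefore $\Sortrestr{\LAQ}{\K}$-terms (typically involving $\Q$) in $\uple{X},\upleneq{Y}{n-1},\uple{Z}$. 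After $g_{j}=u_{j}P_{j}$, the coefficients of $P_{j}$ are $\Ann$-symbols applied to those non-variable terms, so once you substitute the defining terms back for $\uple{Z}'$ the resulting formula is no longer well-formed: $\Ann$-symbols are not applied to variables, and $\Q$ appears. The standard repair — name the $\uple{w}_{j}'$ by fresh variables $T_{k}$ with $\valR$-constraints — does not save the induction: since $T_{k}$ depends on $\uple{X},\upleneq{Y}{n-1}$ it cannot be a parameter variable $\uple{Z}_{i}$ (those receive values $\uple{u}_{i}(\uple{Z})$ depending on $\uple{Z}$ alone) and must be a new $\uple{X}_{i}$- or $\uple{Y}_{i}$-variable. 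With $V=Y_{n-1}$ the rank becomes $(m+k_{1},n-1+k_{2})$ with $k_{1}+k_{2}\geq 1$, which is never strictly below $(m,n)$ lexicographically, so the induction hypothesis cannot be invoked.

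The paper avoids exactly this by applying Proposition\,\ref{prop:bound deg prereg} with $(\uple{X}_{1},\uple{Y}_{1})$ equal to the \emph{entire} tuple $(\uple{X},\uple{Y})$ of existential variables. Then $(\uple{a},\uple{b})=\uple{Z}$, so the fresh $T_{j}$-variables naming the arguments of $g_{i,\mu,\nu}$ are $\Sortrestr{\LAQ}{\K}$-terms in $\uple{Z}$ alone and can be absorbed as new $\uple{Z}_{i}$-parameters without touching the rank. The subsequent case split on $\valR(g_{i,\nu_{i}})>0$ versus $=0$, together with Weierstrass \emph{division} (not merely preparation) by the auxiliary terms $g_{i,\nu_{i}}-Y_{n}$ and $X_{m}\prod_{i}g_{i,\nu_{i}}-1$ — which are regular in $X_{m-1}$ resp.\ $X_{m}$ after a joint change of variables — is what turns \emph{all} the $f_{j}$ into terms polynomial in the variable being eliminated, with analytic coefficients only in the remaining $\uple{X},\uple{Y},\uple{Z},\uple{T}$. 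That global normalization is essential and is not reproduced by a one-variable-at-a-time preregularity decomposition.
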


\begin{proof} Let $\Def{m}{\card{X}}$ and $\Def{n}{\card{Y}}$. As polynomials with variables in $\ValR$ are in fact elements of $\Ann$ and $\Ann$ is closed under composition (for the $\ValR$-variables), we may assumes that any $\Sortrestr{\LA}{\K}$-term appearing in $\phi$ is an element of $\Ann$. Let $f_i(\uple{X},\uple{Y},\uple{Z})$ be the  $\Sortrestr{\LA}{\K}$-terms appearing in $\phi$. Splitting $\phi$ into different cases, we may assume that whenever a variable $S$ appears as an $\MidR$-variable of an $f_i$ then $\phi$ implies that $\valR(S) > 0$ (in the part of the disjunction where $\valR(S) \leq 0$ we replace this $f_i$ by zero).

If an $X_{i}$ appears as an $\MidR$ variable in an $f_{i}$, then $\phi$ implies that $\valR(X_{i}) > 0$ and hence we can safely rename this $X_{i}$ as $Y_{n}$ and we obtain an equivalent formula of lower rank. If $Y_{i}$ appears as an $\ValR$-variable in an $f_{i}$, we can change this $f_{i}$ so that $Y_{i}$ appears as an $\MidR$-variable. Thus we may assume that the $X_{i}$ only appear as $\ValR$-variables and the $Y_{i}$ as $\MidR$-variables. Similarly adding new $Z_{j}$ variables, we may assume that each $Z_{j}$ appears only once (and in the end we can put the old variables back in) and that $\phi$ implies that $\valR(Z_{j}) > 0$ if it is an $\MidR$-variable.

Applying Proposition\,\ref{prop:bound deg prereg} to each of the $f_i(\uple{X},\uple{Y},\uple{Z}) = \sum_{\mu,\nu} f_{\mu,\nu}(\uple{Z})\uple{X}^{\nu}\uple{Y}^{\mu}$, we find $d$, $g_{i,\mu,\nu}$ and $u_{i,\mu,\nu}(\uple{Z})$ such that $g_{i,\mu,\nu}$ is preregular in $(\uple{X},\uple{Y})$ of degree $(\mu,\nu,d)$ and for every $M\models\TA$ and $\uple{a}\in M$, if $f_{i}(\uple{X},\uple{Y},\uple{a})$ is not the zero function, then there exists $(\mu,\nu)$ such that $\card{\mu}+\card{\nu} < d$ and $f_{i}(\uple{X},\uple{Y},\uple{a}) = f_{i,\mu,\nu}(\uple{a})g_{i,\mu,\nu}(\uple{X},\uple{Y},\uple{u}_{i,\mu,\nu}(\uple{a}))$. Splitting the formula into the different cases, we may assume that for each $i$, there are $\mu_{i}$ and $\nu_{i}$ such that $f_{i}(\uple{X},\uple{Y},\uple{a}) = f_{i,\mu_{i},\nu_{i}}(\uple{a})g_{i,\mu_{i},\nu_{i}}(\uple{X},\uple{Y},\uple{u}_{i}(\uple{a}))$ (in the case where no such $\mu_{i}$ and $\nu_{i}$ exist, we can replace $f_{i}$ by $0$). Let us now introduce a new variable $T_{j}$ to replace any argument of a $g_{i,\mu,\nu}$ that is not in $\uple{X}$ or $\uple{Y}$; and for each of these new $T_{j}$, we add to the formula $\valR(T_{j})\geq 0$ if $T_{j}$ is an $\ValR$-argument of $g_{i,\nu,\mu}$ or $\valR(T_{j})> 0$ if it is an $\MidR$-argument. Let us write $g_{i,\mu_{i},\nu_{i}} = \sum_{\nu} g_{i,\nu}\uple{Y}^{\nu}$. Note that $g_{i,\nu_{i}}$ is preregular in $\uple{X}$ of degree $(\mu_{i},0,d)$. We can split the formula some more (and still call it $\phi$) so that for each $i$, one of the the two conditions $\valR(g_{i,\nu_{i}}) > 0$ or $\valR(g_{i,\nu_{i}}) = 0$ holds.

If a condition $\valR(g_{i,\nu_{i}}) > 0$ occurs, let us add $\valR(Y_{n}) > 0 \wedge g_{i,\nu_{i}} - Y_{n} = 0$ to the formula. By Proposition\,\ref{prop:prereg 0 imp reg}, after a Weierstrass change of variable on the $\uple{X}$, we may assume that $g_{i,\nu_{i}} - Y_{n}$ is regular in $X_{m-1}$. By Weierstrass division, we can replace every $f_{j}$ by a term polynomial in $X_{m-1}$ and by Weierstrass preparation we can replace the equality $g_{i,\nu_{i}} - Y_{n} = 0$ by the equality of a term polynomial in $X_{m-1}$ to $0$. In the resulting formula, no $f\in\Ann$ is ever applied to a term containing $X_{m-1}$ and we can apply Remark\,\ref{rem:EQ alg form} to the formula where every $f\in\Ann$ is replaced by a new variable $S_{f}$ to obtain a $\K$-quantifier free formula $\psi(\upleneq{X}{m-1},\uple{Y},\uple{Z},\uple{T},\uple{S},\uple{R})$ such that \[\TA\models\exists X_{m-1}\phi\iff\psi(\upleneq{X}{m-1},\uple{Y},\uple{Z},\uple{u}(\uple{Z}),\uple{f}(\upleneq{X}{m-1},\uple{Y},\uple{Z}),\uple{R})\] and $\psi(\upleneq{X}{m-1},\uple{Y},\uple{Z},\uple{T},\uple{f}(\upleneq{X}{m-1},\uple{Y},\uple{Z}),\uple{R})$ is well-formed of $(\uple{X},\uple{Y})$-rank $(m-1,n+1)$.

If for all $i$ we have $\valR(g_{i,\nu_{i}}) = 0$, we add $\valR(X_{m}) \geq 0 \wedge X_{m}\prod_{i}g_{i,\nu_{i}} - 1 = 0$ to the formula. As every $g_{i,\nu_{i}}$ is preregular in $\uple{X}$ of degree $(\mu_{i},0,d)$, $g = X_{m}\prod_{i}g_{i,\nu_{i}} - 1$ is preregular in $\uple{X}$ of degree $(\mu,0,d')$ for some $\mu$ and $d'$. After a Weierstrass change of variables in $\uple{X}$, we may assume that $g$ and each $g_{i,\nu_{i}}$ are in fact regular in $X_{m}$. Hence by Weierstrass preparation we may replace $g$ in $g = 0$ by a term polynomial in $X_{m}$. Furthermore, by Remark\,\ref{rem:decomp form Kterm} the $f_{i}$ appear as $\ltf[n_{i}](f_{i})$ for some $n_{i}$ in the formula. Replacing $f_{i}$ by $f_{\mu_{i},\nu_{i}}g_{i,\mu_{i},\nu_{i}}$, we only have to show that $\ltf[n_{i}](g_{i,\mu_{i},\nu_{i}})$ can be replaced by  a term polynomial in $Y_{n-1}$ (and $X_{m}$). Let $h_{i} = X_{M}(\prod_{j\neq i}g_{j,\nu_{j}})g_{i,\nu_{i},\mu_{i}} = \sum_{\nu}h_{i,\nu}Y^{\nu}$. Then $h_{i,\nu_{i}} = X_{M}\prod_{i}g_{i,\nu_{i}} = 1$ and if $\nu < \nu_{i}$, $h_{i,\nu} = X_{M}(\prod_{j\neq i}g_{j,\nu_{j}})g_{i,\nu}\equiv 0 \mod J + (Z_{j}\mid Z_{j}$ is an $\MidR$-argument$)$. Hence $h_{i}$ is preregular in $(\uple{X},\uple{Y})$ of degree $(0,\nu_{i},d)$. After a Weierstrass change of variables of the $\uple{Y}$, we may assume that $h_{i}$ is in fact regular in $Y_{n-1}$.

Note that $\ltf[n_{i}](g_{i,\nu_{i},\mu_{i}}) = \ltf[n_{i}](X_{m})^{-1}\prod_{j\neq i}\ltf[n_{i}](g_{i,\nu_{i}})^{-1}\ltf[n_{i}](h_{i})$. By Weierstrass preparation we can replace $h_{i}$ by the product of a unit and $p_{i}$ a polynomial in $Y_{n-1}$. As we have included the trace of units on the $\lt[n]$ in our language, the unit is taken care of and by Weierstrass division by $g$, we can replace each coefficients in the $p_{i}$ and each of the $g_{i,\nu_{i}}$ by a term polynomial in $X_{m}$. Note that because we allow quantification on $\lt$, although the language does not contain the inverse on $\lt$, the inverses can be taken care of by quantifying over $\lt$. Hence we obtain a formula where $X_{m}$ and $Y_{n-1}$ only occur polynomially and we can proceed as in the previous case to eliminate them.
\end{proof}

\begin{corollary}[EQ TA wf]
Let $\phi(\uple{X},\uple{Y},\uple{Z},\uple{R})$ be a well-formed $\K$-quantifier free $\LA$-formula. Then there exists an $\LAQ$-formula $\psi(\uple{Z},\uple{R})$ such that $\TA\models\exists\uple{X}\exists\uple{Y}\phi\iff\psi$.
\end{corollary}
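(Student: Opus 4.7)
The plan is a straightforward induction on the $(\uple{X},\uple{Y})$-rank of $\phi$, using Lemma\,\ref{lem:EQ TA ind} as the reduction step. Since ranks lie in $\Nn^2$ with the lexicographic order, which is a well-order, this induction is well-founded.

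For the base case of rank $(0,0)$, the tuples $\uple{X}$ and $\uple{Y}$ are empty, so $\exists\uple{X}\exists\uple{Y}\phi$ is simply $\phi(\uple{Z},\uple{R})$, which is already a $\K$-quantifier free $\LA$-formula and hence an $\LAQ$-formula; we may take $\psi := \phi$.

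For the inductive step, suppose the result holds for all well-formed formulas of rank strictly smaller than that of $\phi$. By Lemma\,\ref{lem:EQ TA ind}, we obtain well-formed $\K$-quantifier free $\LA$-formulas $\phi_i(\uple{X}_i,\uple{Y}_i,\uple{W}_i,\uple{R})$ of $(\uple{X}_i,\uple{Y}_i)$-rank strictly smaller than that of $\phi$, together with $\Sortrestr{\LAQ}{\K}$-terms $\uple{u}_i(\uple{Z})$, such that
\[\TA\models\exists\uple{X}\exists\uple{Y}\,\phi \iff \bigvee_{i}\exists\uple{X}_i\exists\uple{Y}_i\,\phi_i(\uple{X}_i,\uple{Y}_i,\uple{u}_i(\uple{Z}),\uple{R}).\]
Treating $\uple{W}_i$ as parameter $\K$-variables (note that each $\phi_i$, viewed with free variables $(\uple{X}_i,\uple{Y}_i,\uple{W}_i,\uple{R})$, remains well-formed, since the terms $\uple{u}_i(\uple{Z})$ take values in $\ValR$ or $\MidR$ appropriately as dictated by where the corresponding $Z_j$ appeared in $\phi$), the induction hypothesis applied to $\phi_i$ gives $\LAQ$-formulas $\psi_i(\uple{W}_i,\uple{R})$ with $\TA\models\exists\uple{X}_i\exists\uple{Y}_i\,\phi_i\iff\psi_i(\uple{W}_i,\uple{R})$. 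Substituting the $\LAQ$-terms $\uple{W}_i = \uple{u}_i(\uple{Z})$, we set
\[\psi(\uple{Z},\uple{R}) := \bigvee_{i}\psi_i(\uple{u}_i(\uple{Z}),\uple{R}),\]
which is an $\LAQ$-formula in the free variables $(\uple{Z},\uple{R})$ and is equivalent to $\exists\uple{X}\exists\uple{Y}\,\phi$ modulo $\TA$.

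The only subtlety is the bookkeeping for well-formedness after one iterates: one has to ensure that when the $\uple{u}_i(\uple{Z})$ are substituted in for the fresh parameter variables $\uple{W}_i$ produced by the lemma, the resulting formula is still well-formed, so that the induction hypothesis applies. This is automatic because the proof of Lemma\,\ref{lem:EQ TA ind} introduces each such new parameter variable only as an $\ValR$- or $\MidR$-argument of some analytic function symbol and includes the corresponding $\valR$-condition, and the $\uple{u}_i(\uple{Z})$ inherit exactly that behaviour from the way $\uple{Z}$ appears in $\phi$. Once this routine bookkeeping is in hand, the corollary is immediate.
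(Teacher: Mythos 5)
Your proof is correct and takes exactly the same approach as the paper, which just cites Lemma\,\ref{lem:EQ TA ind} and an ``immediate induction''; you have merely spelled out that induction on the $(\uple{X},\uple{Y})$-rank. The worry in your last paragraph is a non-issue: the induction hypothesis is applied to $\phi_i(\uple{X}_i,\uple{Y}_i,\uple{W}_i,\uple{R})$ itself (which the lemma already guarantees is well-formed), before any substitution, and substituting the $\LAQ$-terms $\uple{u}_i(\uple{Z})$ into the resulting $\LAQ$-formula $\psi_i$ is always syntactically legitimate, so no further well-formedness check is needed.
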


\begin{proof}
This follows from Lemma\,\ref{lem:EQ TA ind} and an immediate induction.
\end{proof}

\begin{proof}[Theorem\,\ref{thm:EQ TA}] Resplendence comes for free (see Proposition\,\ref{prop:EQ implies respl}). Hence, it suffices to show that if $\phi(X,\uple{Z})$ is a quantifier free $\LAQ$-formula, then there exists a quantifier free $\LAQ$-formula $\psi(\uple{Z})$ such that $\TA \models \exists X\phi\iff\psi$. First, splitting the formula $\phi$, we can assume that for any of its variables $S$, $\phi$ implies either $\valR(S)\geq 0$ or $\valR(S) < 0$, in the second case replacing $S$ by $S^{-1}$ we also have $\valR(S) > 0$. We also add one variable $X_{i}$ (resp. $Y_{i}$) per $\ValR$-argument (resp. $\MidR$-argument) of any $f\in\Ann$ applied to some non variable term $u$ and we add the corresponding equality $X_{i} = u$ (resp. $Y_{i} = u$) and the corresponding inequalities $\valR(X_{i})\geq 0$ (resp. $\valR(Y_{i})>0$) and quantify existentially over this variable. Splitting the formula further --- whether denominators in occurrences of $\Q$ are zero or not --- we can transform $\phi$ such that it contains no $\Q$. Now $\exists X\phi$ is equivalent to a disjunction of formulas $\exists\uple{X}\exists\uple{Y}\psi$ where $\psi$ is well-formed and we conclude by applying Corollary\,\ref{cor:EQ TA wf}.

This concludes the proof of Theorem\,\ref{thm:EQ TA}.
\end{proof}\bigskip

Recall that we denote by $\SWC<\ValR>(C)$ the set of all quantifier free $\Ldiv(C)$-definable sets.

\begin{definition}(Strong unit)
Let $M\models\TA$, $C = \K(\genan{C})$ and $S\in\SWC<\ValR>(C)$. We say that an $\Sortrestr{\LAQ}{\K}(C)$-term $E : \K\to\K$ is a strong unit on $S$ if for any open $\Val$-ball $\Def{b}{\oball[\Val]{\val(d)}{c}} \subseteq S$, there exists $\uple{a}$, $e\in\genan[C]{cd}$ and $F(t,\uple{z})\in\Ann$ such that $e\neq 0$ and for all $x\in b$, \[\val(F((x-c)/d,\uple{a})) = 0\] and \[E(x) = eF((x-c)/d,\uple{a}).\]
\end{definition}

Note that if $M$ is taken saturated saturated --- i.e. at least $(\card{\Ann}+\card{C})^{+}$-saturated --- if $E$ is a strong unit on $S$ then, by compactness, there exist a tuple $\uple{a}(y,z)$ of $\Sortrestr{\LAQ}{\K}(C)$-terms, a finite number of $\Sortrestr{\LAQ}{\K}(C)$-terms $e_{i}(y,z)$ and $F_{i}[t,\uple{u}]\in\Ann$ such that for all balls $b = \oball{\val(d)}{c}\subseteq S$, there is an $i$ such that for all $x\in b$, \[E(x) = e_{i}(c,d)F_{i}((x-c)/d,\uple{a}(c,d))\] and \[F_{i}((x-c)/d,\uple{a}(c,d))\in\inv{\Val}.\] Hence if $E$ is a strong unit on $S$ there is an $\LAQ(C)$-formula that witnesses it. If $E$ and $S$ are defined using some parameters $\uple{y}$ and for all $\uple{y}$ in some definable set $Y$, $E = E_{\uple{y}}$ is a strong unit on $S = S_{\uple{y}}$ then we can choose this formula uniformly in $\uple{y}$.

We will say that $E$ is an $\ValR$-strong unit on $S$ if it verifies all the requirements of a strong unit, where all references to $\Val$ are replaced by references to $\ValR$ (and references to $\ValR$ remain the same).

\begin{proposition}[ValR str unit]
If $E$ is an $\ValR$-strong unit on $S$ then it is also a strong unit on $S$.
\end{proposition}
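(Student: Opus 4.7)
The plan is to reduce the claim to the $\valR$-strong unit assumption by means of Proposition \ref{prop:ball sub coarsen}, which lets us sandwich any $\val$-ball inside a $\valR$-ball that still lies in the enveloping $\valR$-Swiss cheese $S$.

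I would start by fixing an arbitrary open $\val$-ball $b = \oball[\val]{\val(d)}{c} \subseteq S$. Since $\val$ is the coarsening of $\valR$ associated with the convex subgroup $\inv{\Val}/\inv*{ValR}$, Proposition \ref{prop:ball sub coarsen} (applied with $\val$ playing the role of the coarsening and $\valR$ the role of the finer valuation) yields $d' \in K$ with $\val(d') = \val(d)$ and $b \subseteq b' := \oball[\valR]{\valR(d')}{c} \subseteq S$. The small but crucial point is that $\val(d') = \val(d)$ means $(c, d')$ is also a valid description of $b$ as a $\val$-ball, so we may freely read off the strong unit condition on $b$ using $d'$ instead of $d$.

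I would then invoke the $\valR$-strong unit hypothesis on $b'$ to produce $\uple{a}, e \in \genan[C]{cd'}$ and $F \in \Ann$ with $e \neq 0$, $\valR(F((x-c)/d', \uple{a})) = 0$, and $E(x) = e F((x-c)/d', \uple{a})$ for all $x \in b'$. Because $\val = \pi \circ \valR$ for the quotient projection $\pi$, vanishing of $\valR$ forces vanishing of $\val$; and since $b \subseteq b'$, the identities persist for every $x \in b$. Reading this data with $(c, d')$ as the description of $b$ gives exactly the witnesses required for $E$ to be a strong unit on $b$, and since $b$ was arbitrary, $E$ is a strong unit on $S$.

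The argument is almost immediate given Proposition \ref{prop:ball sub coarsen}; the only mild subtlety is the bookkeeping observation that a ball does not have a unique $(c, d)$ description, so the definition of strong unit allows us to swap $d$ for the $d'$ supplied by that proposition. Without this flexibility one would be forced to rewrite $F((x-c)/d', \uple{a})$ in terms of $(x-c)/d$ by absorbing the $\val$-unit $d/d'$ into a new $\Ann$-function, which is awkward precisely because $d/d'$ need not lie in $\inv*{ValR}$.
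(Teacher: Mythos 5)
Your proof is correct and follows essentially the same route as the paper's: apply Proposition\,\ref{prop:ball sub coarsen} to sandwich the given open $\Val$-ball $b$ inside a $\ValR$-ball $b' = \oball[\ValR]{\valR(d')}{c}$ still contained in $S$, then invoke the $\ValR$-strong-unit form of $E$ on $b'$ and restrict it to $b$. Your explicit bookkeeping about re-describing $b$ via $(c,d')$ rather than $(c,d)$, and the observation that $\valR$-unit implies $\Val$-unit, just make visible two steps the paper leaves implicit.
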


\begin{proof}
If $b\subseteq S$ is an $\Val$-ball, then by Proposition\,\ref{prop:ball sub coarsen} there exists $d$ and $c$ such that $b = \oball[\Val]{\val(d)}{c} \subseteq \oball[\ValR]{\valR(d)}{c} \subseteq S$. But $E$ being a strong unit on $S$ for $\ValR$, it has the expected form on $\oball[\ValR]{\valR(d)}{c}$ and hence also on $\oball[\Val]{\val(d)}{c}$.
\end{proof}

\begin{definition}[Weierstrass prep](Weierstrass preparation for terms)
Let $M$ be an $\LAQ$-structure, $C = \K(\genan{C})\subseteq M$, $t : \K\to\K$ an $\Sortrestr{\LAQ}{\K}(C)$-term and $S\in\SWC<\ValR>(C)$. We say that $t$ has a Weierstrass preparation on $S$ if there exists an $\Sortrestr{\LAQ}{\K}(C)$-term $E$ that is a strong unit on $S$ and a rational function $R\in C(X)$ with no poles in $S(\alg{\K(M)})$ such that for all $x\in S$, $t(x) = E(x)R(x)$.

The structure $M$ has a Weierstrass preparation if for any $C = \K(\genan{C})$ and $\Sortrestr{\LAQ}{\K}(C)$-terms $t$ and $u : \ValR\to K$ we have:
\begin{thm@enum}
\item There exists a finite number of $S_{i}\in\SWC<\ValR>(C)$ that cover $\ValR$ such that $t$ has a Weierstrass preparation on each of the $S_{i}$.
\item If $t$ and $u$ have a Weierstrass preparation on some open ball $b$, and for all $x\in b$, $\val(t(x)) \geq \val(u(x))$, then $t+u$ also has a Weierstrass preparation on $b$.
\end{thm@enum}
\end{definition}

\begin{remark}
\begin{thm@enum}
\item An immediate consequence of Weierstrass preparation is that all $\Sortrestr{\LAQ}{\K}(M)$-terms in one variable have only finitely many isolated zeros. Indeed a zero of $t$ is the zero of one of the $R_{i}$ appearing in its Weierstrass preparation. That zero is isolated if $R_{i}$ is non-zero or the corresponding $S_{i}$ is discrete, i.e. is a finite set. In particular, let $\uple{m}$ be the parameters of $t$, then any isolated zero of $t$ is in the algebraic closure (in $\ACVF$) of $\K(\genan{\uple{m}})$. As the algebraic closure in $\ACVF$ coincides with the field theoretic algebraic closure, any isolated zero of $t$ is in fact also the zero of a polynomial (with coefficients in $\K(\genan{\uple{m}})$).
\item\label{rem:Weierstrass div 1st order} As for strong units, for each choice of term $t_{\uple{y}}$ (with parameters $\uple{y}$), there is an $\LAQ(\uple{y})$-formula that states that (i) holds for $t_{\uple{y}}$ in $M$ and we can choose this formula to be uniform in $\uple{y}$. For each choice of terms $t$, $u$ and formula defining $S$, there also is a (uniform) formula saying that (ii) holds for $t$, $u$ and $b$ in $M$.
\end{thm@enum}
\end{remark}

\begin{proposition}
Any $M\models\TA$ has Weierstrass preparation.
\end{proposition}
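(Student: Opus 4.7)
The plan is to prove both conditions (i) and (ii) by a simultaneous induction on the complexity of the $\Sortrestr{\LAQ}{\K}(C)$-term $t$ (and $u$ for (ii)), following the classical Denef--van den Dries strategy adapted to the two-valuation setting via Proposition\,\ref{prop:ball sub coarsen} and Proposition\,\ref{prop:ValR str unit}. Throughout, I maintain a Swiss cheese cover in $\SWC<\ValR>(C)$ that can be refined as needed at each step.

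For (i), the base cases ($t$ a variable or a constant) are trivial: take $E = 1$ and $R = x$ (or $E = c$, $R = 1$). For a product $t = t_1 t_2$, refine to a common cover so both factors admit preparations $E_i R_i$, and set $t = (E_1 E_2)(R_1 R_2)$; the product of two strong units sharing the same ball data $(c,d)$ is again a strong unit since the product of their analytic factors in $\Ann$ still has valuation zero. For a quotient $t = \Q(t_1,t_2)$, the remark following Definition\,\ref{def:Weierstrass prep} ensures that $t_2$ has only finitely many (algebraic) zeros on each piece of its cover; on the complement the preparations combine as $(E_1/E_2)(R_1/R_2)$. For a sum $t = t_1 + t_2$, refine the cover so that on each piece one of the two valuations dominates uniformly (which can be read off from the rational parts $R_i$), and then invoke (ii).

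The main case of (i) is $t = F(u_1,\ldots,u_m,v_1,\ldots,v_n)$ with $F \in \Ann_{m,n}$. After refinement, on a sufficiently small open ball $b = \oball{\val(d)}{c}$, write $u_i(x) = u_i(c) + d\cdot\tilde u_i((x-c)/d)$ and similarly for the $v_j$, using Proposition\,\ref{prop:diff Ann} to extract the Taylor expansions and further refining $b$ to ensure that the $\tilde u_i,\tilde v_j$ have $\ValR$-bounded coefficients (hence lie in appropriate elements of $\Ann$). The composition then takes the form $\tilde F(Y,\uple{a})$ for some $\tilde F \in \Ann$ evaluated at $Y = (x-c)/d$ and parameters $\uple{a}\in\K(\genan[C]{c,d})$. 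Applying Proposition\,\ref{prop:bound deg prereg} to bring $\tilde F$ to preregular form, then Proposition\,\ref{prop:prereg 0 imp reg} to regularize via a Weierstrass change of variables, and finally Proposition\,\ref{prop:WD} to factor $\tilde F = vP$ with $v$ a unit and $P$ a polynomial in $Y$, one obtains the desired decomposition: $v$ composed with the substitution yields a strong unit, and $P$ unfolds into a rational function in $x$.

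For (ii), write $t = E_1 R_1$ and $u = E_2 R_2$ on $b = \oball{\val(d)}{c}$. Since both strong units are expressed relative to the same data $(c,d)$, the ratio $G := E_1/E_2$ is itself a strong unit on $b$: both analytic factors have valuation zero, so their ratio is again a unit in $\Ann$. Setting $S := R_1/R_2$ gives $t + u = E_2 R_2\,(1 + GS)$ with $\val(GS) \geq 0$ by hypothesis. Splitting $b$ according to whether $\val(GS) > 0$ or $\val(GS) = 0$, the factor $1 + GS$ is either directly a strong unit on $b$ or reduces, after Taylor expansion, to a further application of the composition case above. The principal obstacle is precisely that composition case: substituting rational functions into an element of $\Ann$ does not automatically yield an element of $\Ann$, so the careful restriction to sub-balls admitting uniform convergent Taylor expansions with $\ValR$-bounded coefficients, together with the bookkeeping needed to keep all strong units $\Sortrestr{\LAQ}{\K}(C)$-expressible (uniformly in the parameters $c,d$ to ensure the final Swiss cheese cover is in $\SWC<\ValR>(C)$), is where the real work lies.
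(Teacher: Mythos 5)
Your proposal takes a genuinely different route from the paper: the paper does not attempt a from-scratch proof but cites Cluckers--Lipshitz (their Theorem~5.5.3) for the case $\ValR=\Val$, handles the two cosmetic discrepancies (the finite exceptional set of algebraic points becomes discrete Swiss cheeses via replacing $\Q(u,v)$ by $0$, and the fact that strong units have the correct form on open balls is read off their Lemma~6.3.12), and then transfers to $\ValR\neq\Val$ via Proposition~\ref{prop:ValR str unit}. Your transfer step for $\ValR\neq\Val$ is aligned with the paper's, but the rest is a direct inductive attempt.

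That inductive attempt has a real gap, and it sits exactly where you say at the end that ``the real work lies'' --- the composition case $t = F(u_1,\dots,u_m,v_1,\dots,v_n)$. Your outline proposes to ``write $u_i(x) = u_i(c) + d\cdot\tilde u_i((x-c)/d)$ \ldots using Proposition~\ref{prop:diff Ann} to extract the Taylor expansions'' with $\tilde u_i$ landing ``in appropriate elements of $\Ann$.'' This does not go through: Proposition~\ref{prop:diff Ann} yields only a \emph{first-order} approximation with quadratic error for functions \emph{in} $\Ann$, not a full power-series expansion with $\Ann$-coefficients, and the $u_i$ are arbitrary $\Sortrestr{\LAQ}{\K}(C)$-terms rather than elements of $\Ann$. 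The induction hypothesis hands you $u_i = E_i R_i$ with $E_i$ a strong unit and $R_i$ rational, but substituting $E_iR_i$ into $F$ yields a composite that is not a priori an element of $\Ann$ evaluated at $(x-c)/d$: the rational factors $R_i$ and the inversion coming from $\Q$ do not stay in the Weierstrass system. Propositions~\ref{prop:bound deg prereg}, \ref{prop:prereg 0 imp reg} and \ref{prop:WD} all manipulate elements of $\Ann$; they cannot be applied to $F(E_1R_1,\dots)$ until you have shown that, after suitable restriction, it \emph{is} an element of $\Ann$ in the ball variable, and that is precisely the content of the theorem you are trying to prove. In Cluckers--Lipshitz this is handled by a much heavier apparatus (generalized rings of fractions, the strong Noetherian property, an inductive normal form on terms) that does not reduce to the naive term induction you sketch; the paper deliberately avoids reproducing it and treats the result as a black box. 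As written, your proof restates the key difficulty in the final paragraph but does not resolve it, so the argument is incomplete precisely at its crux.
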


\begin{proof}
If $\ValR = \Val$, then the proposition is shown in \cite[Theorem\,5.5.3]{CluLipAnn} and (ii) --- called from now on invariance under addition --- is clear from the proof given there. The one difference in the Weierstrass preparation is that in \cite{CluLipAnn}, there is a finite set of points algebraic over the parameters where the behavior of the term is unknown. But this finite set can be replaced by discrete $S_i$ and as these exceptional points are common zeros of terms $u$ and $v$ such that $Q(u,v)$ is a subterm of $t$, it suffices to replace $Q(u,v)$ by $0$ and apply the theorem to the new term to obtain the Weierstrass preparation also on the discrete $S_{i}$. The fact that the strong units in \cite{CluLipAnn} have the proper form on open balls follows, for example, from the proof of  \cite[Lemma\,6.3.12]{CluLipAnn}.

If $\ValR \neq \Val$, the proposition follows from the $\Val=\ValR$ case and Proposition\,\ref{prop:ValR str unit}.
\end{proof}

\begin{remark}
\begin{thm@enum}
\item Let $t_{\uple{y}}$ be an $\Sortrestr{\LAQ}{\K}$-term with parameters $\uple{y}$. As shown in Remark\,\ref{rem:Weierstrass div 1st order}, there is an $\LAQ$-formula $\theta$ that states that Weierstrass preparation holds for $t_{\uple{y}}$ in models of $T$. More explicitly, there are finitely many choices of $S_{i}^{k}$, $E_{i}^{k}$ and $R_{i}^{k}$ (with parameters $\uple{u}(\uple{y})$ where $\uple{u}$ are $\Sortrestr{\LAQ}{\K}$-terms) such that for each $\uple{y}$ there is a $k$ such that the $S_{i}^{k}$, $E_{i}^{k}$ and $R_{i}^{k}$ work for $t_{\uple{y}}$. As $\TA$ eliminates $\K$-quantifiers, for each $k$ there is a $\K$-quantifier free $\LAQ$ formula $\theta_{k}(\uple{y})$ that is true when the $k$-th choice works for $t$ (and not the ones before). Hence taking $S_{i,k}$ to be $S^{k}_{i}\wedge\theta_{k}$, we could suppose that Weierstrass preparation for terms is uniform, but we will not be using that fact.
\item Conversely, the proof of Proposition\,\ref{prop:red alg} can be adapted to show that uniform Weierstrass preparation for terms implies $\K$-quantifier elimination. This is exactly the proof of quantifier elimination given in \cite{CluLipAnn}, although its authors did not see at the time that they were relying on a more uniform version of Weierstrass preparation for terms than what they had actually shown. Hence it would be interesting to know if one could prove uniform Weierstrass preparation for terms without using $\K$-quantifier elimination to recover their proof (see \cite{CluLipStrConv} for more on this subject).
\end{thm@enum}
\end{remark}

\begin{proposition}[TA alg ext]
Let $M\models\TA$, then the $\LAQ$-structure of $M$ can be extended (uniquely) to any algebraic extension of $\K(M)$, so that it remains a model of $\TA$. Moreover, if $C\substr M$ and $a\in\K(M)$ is algebraic over $\K(C)$, then $\K(\genan[C]{a}) = \K(C)[a]$.
 \end{proposition}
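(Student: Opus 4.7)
The plan is to handle existence of the extension first (the hard part), deduce uniqueness from the axiomatization, and then derive the equality $\K(\genan[C]{a}) = \K(C)[a]$ as a corollary.

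For existence, it suffices to extend the $\LAQ$-structure to $L := \alg{\K(M)}$. By Proposition \ref{prop:TA Hens}, $\K(M)$ is Henselian, so the valuation extends uniquely to $L$; in particular, $\ValR$ and $\MidR$ extend unambiguously to their integral closures in $L$. For each $f \in \Ann_{m,n}$, one interprets $f^L$ on $\ValR(L)^{m} \times \MidR(L)^{n}$ by the sum of its formal power series; this converges in the completion $\hat{L}$ by $I$-adic convergence of the coefficients together with positivity of $\valR$ on $\MidR(L)$. That the value actually lies in $L$, not just in $\hat{L}$, is the main substantive point; it is the standard extension theorem for separated Weierstrass systems (cf.\ \cite[Section~4]{CluLipAnn}). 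Verification of the axioms of $\TA$ on $L$ then reduces to the corresponding formal identities in $\Ann$. Uniqueness is immediate from axioms (iv)--(vii) of Definition \ref{def:ann structure}, which force $f^L$ on $\ValR(L)^m \times \MidR(L)^n$ to be the sum of $f$'s power series.

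For the equality, the inclusion $\supseteq$ is immediate since $\genan[C]{a}$ is a field containing $C \cup \{a\}$. For $\subseteq$, apply the first part to extend the $\LAQ$-structure of $C$ to the finite algebraic extension $\K(C)[a]$ of $\K(C)$. By uniqueness, this extension coincides with the restriction of the ambient $\LAQ$-structure on $\alg{\K(M)}$ to $\K(C)[a]$; in particular, $\K(C)[a]$ is closed under every $f \in \Ann_{m,n}$ applied to arguments in $\ValR \times \MidR$, and thus forms an $\LAQ$-substructure containing $C \cup \{a\}$. Minimality of $\genan[C]{a}$ then yields $\K(\genan[C]{a}) \subseteq \K(C)[a]$.

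The main obstacle is the assertion that the power-series interpretation on an algebraic extension actually takes values there, rather than only in its completion. This is where the fine structure of $\Ann$ as a separated Weierstrass system is genuinely used; one either invokes the Cluckers--Lipshitz extension result directly, or reproves it by combining Weierstrass preparation with a density argument over the completion.
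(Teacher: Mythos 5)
Your overall route --- cite the Cluckers--Lipshitz/Robinson extension theorem for separated Weierstrass systems --- is the same as the paper's, which simply invokes \cite[Theorem\,2.18]{CLR} and notes that the proof carries over when $\ValR\neq\Val$. But two of the steps you interpolate do not hold as written. The uniqueness claim (``immediate from axioms (iv)--(vii), which force $f^L$ to be the sum of $f$'s power series'') is not correct: axioms (iv)--(vii) only require the $i_{m,n}$ to be a compatible family of ring morphisms sending $X_i$, $Y_j$ to coordinate functions and $I$ into $\MidR$; they say nothing about limits, and in a non-complete $L$ the ``sum of the power series'' is not even an element of $L$, so there is nothing for the axioms to force the interpretation to equal. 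Uniqueness is actually a consequence of Weierstrass preparation/division together with Henselianity, which reduce $f^L(a)$ to a polynomial expression in $a$ whose coefficients are already pinned down by the interpretation on $\K(M)$.

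The argument for $\K(\genan[C]{a})=\K(C)[a]$ is circular. You apply ``the first part'' to $C$, but $C$ is merely a substructure of a model of $\TA$, not itself a model --- in particular $\K(C)$ need not be Henselian --- so the existence/uniqueness statement does not apply to it. You then assert that restricting the extended structure on $\alg{\K(M)}$ to $\K(C)[a]$ shows $\K(C)[a]$ is closed under every $f\in\Ann$; but that closure \emph{is} the inclusion $\subseteq$ you are trying to prove, and it does not follow from knowing only that the values lie somewhere in $\alg{\K(M)}$. Placing $f(\ldots,a,\ldots)$ inside $\K(C)[a]$ again requires Weierstrass division against (a regularization of) the minimal polynomial of $a$ over $\K(C)$. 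You gesture at Weierstrass preparation as an ``alternative'' in your final sentence; in fact it is the engine of both claims, not an optional shortcut.
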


\begin{proof}
The case $\ValR = \Val$ is proved in \cite[Theorem\,2.18]{CLR}. The same proof applies when $\ValR\neq\Val$.
\end{proof}

To conclude this section, let us show that under certain circumstances analytic terms have a linear behavior.

\begin{proposition}[lin approx]
Let $M\models\TA$ and suppose that $\K(M)$ is algebraically closed. Let $t:\K\to\K$ be an $\LAQ(M)$-term and $b$ be an open ball in $M$ with radius $\xi\neq \infty$. Suppose that $t$ has a Weierstrass preparation on $b$ --- hence $t$ is differentiable at any $a\in b$ --- and $\ltf(\diff{t}{x})$ is constant on $b$. Also assume that $\val(t(x))$ is constant on $b$ or $t(x)$ is polynomial. Then for all $a$, $e\in b$, $\ltf(t(a) - t(e)) = \ltf(\diff{t}{a})\cdot\ltf(a-e)$.

Moreover, if $v(t(x))$ is constant on $b$ then $\val(t(a))\leq \val(\diff{t}{a}) + \xi$.
\end{proposition}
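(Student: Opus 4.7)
The strategy is to analyze $t$ via its Weierstrass preparation $t(x) = E(x)R(x)$ on $b$ and to factor $R$ using algebraic closedness of $\K(M)$. Write $b = \oball{\val(d)}{c}$, so $\xi = \val(d)$, with $E(x) = eF((x-c)/d,\uple{u})$ where $F \in \Ann$ satisfies $\val(F) = 0$ on $b$, and factor $R(x) = c_0\prod_i (x-c_i)^{n_i}$ with every pole $c_i$ (those with $n_i < 0$) lying outside $b$ by hypothesis.

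\textbf{Moreover clause first.} Assume $\val(t)$ is constant on $b$. Since $\val(E) = \val(e)$ is already constant, so is $\val(R)$, which forces every $c_i$ (zero or pole) to lie outside $b$; hence $\val(a - c_i) \leq \xi$ for all $a \in b$ and all $i$. The logarithmic derivative formula $\diff{R}{a}/R(a) = \sum_i n_i/(a-c_i)$ then yields $\val(\diff{R}{a}) \geq \val(R(a)) - \xi$. Likewise, differentiating $E$ gives $\diff{E}{a} = (e/d)\, G((a-c)/d, \uple{u})$, where $G$ is the first partial of $F$, an element of $\Ann$ with non-negative valuation on $b$, so $\val(\diff{E}{a}) \geq \val(E(a)) - \xi$. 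The product rule $\diff{t}{a} = \diff{E}{a}R(a) + E(a)\diff{R}{a}$ then yields $\val(\diff{t}{a}) \geq \val(t(a)) - \xi$, which is the desired bound.

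\textbf{Main equality.} It suffices to show
\[ \val\bigl(t(a) - t(e) - \diff{t}{a}(a-e)\bigr) > \val\bigl(\diff{t}{a}(a-e)\bigr) \]
for $a, e \in b$; multiplicativity of $\ltf$ then gives $\ltf(t(a) - t(e)) = \ltf(\diff{t}{a}) \cdot \ltf(a-e)$. Reparametrize via $y = (x-c)/d \in \MidR$ and write $\widetilde{t}(y) = eF(y,\uple{u})\widetilde{R}(y)$. The analytic factor admits a formal Taylor identity $F(y_a+\delta, \uple{u}) = F(y_a,\uple{u}) + G(y_a,\uple{u})\delta + \delta^2 H(y_a, \delta, \uple{u})$ with $G, H \in \Ann$, producing a quadratic remainder. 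In the constant-$\val$ case, no $c_i$ lies in $b$, so the binomial expansion of each factor $(1 + \delta/(y_a - y_i))^{n_i}$ has quadratic remainder of strictly larger valuation than its linear term (since $\val(\delta) > 0$ and $\val(y_a - y_i) \leq 0$). In the polynomial case, $\widetilde{R}$ is a polynomial and its finite Taylor expansion is exact, with the $k \geq 2$ terms controlled by the algebraic factorization and by the fact that in characteristic zero the factorials are units. Combining the linear approximations of the two factors via the product rule gives the required strict inequality; the hypothesis that $\ltf(\diff{t}{x})$ is constant on $b$ ensures that the linear term does not degenerate modulo higher-order terms as $a$ varies.

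\textbf{Main obstacle.} The delicate point is the bookkeeping of combining the quadratic remainders from $E$ and $R$ via the product rule into a single strict valuation inequality. This is especially subtle in the polynomial case, where the constant-$\val$ machinery is unavailable and one must rely on a direct finite Taylor expansion controlled by the algebraic factorization of $t$, while still ensuring uniformity in $a \in b$.
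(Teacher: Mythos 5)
Your argument for the ``moreover'' clause is correct and, in fact, slightly cleaner than the paper's: by applying the logarithmic derivative directly to the Weierstrass factorization of $t$ itself on $b$ (using that, in the constant-$\val$ case, no zero or pole of $R$ can lie in $b$), you obtain $\val(\diff{t}{a})\geq\val(t(a))-\xi$ without having to first establish the main equality. The paper instead derives this bound after the fact from the main equality by letting $e$ vary.

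The proof of the main equality, however, has a genuine gap, and it sits exactly where you flag the ``main obstacle.'' Reducing to $\val\bigl(t(a)-t(e)-\diff{t}{a}(a-e)\bigr)>\val\bigl(\diff{t}{a}(a-e)\bigr)$ is the right first step, but your Taylor/binomial expansion of $t=E\cdot R$ around $a$ only bounds the remainder by something of order $2\val(a-e)$ plus a constant. That beats $\val(\diff{t}{a})+\val(a-e)$ only when $e$ is close enough to $a$; it does not hold uniformly on $b$ if there is cancellation in $\diff{t}{a}$. The one sentence claiming that constancy of $\ltf(\diff{t}{x})$ ``ensures the linear term does not degenerate'' is exactly the assertion that needs a proof, and you give no mechanism for it. The polynomial case is worse: the zeros of $R$ may lie inside $b$, so the binomial trick (which needs $\val(\delta/(y_a-y_i))>0$, i.e.\ $c_i\notin b$) does not even apply.

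The paper's route is structurally different and avoids this. It applies Weierstrass preparation to $t(x)-t(a)$ rather than to $t$, so that $a$ is by construction a zero of the resulting numerator $P_a\in\K(M)[X]$. Over the algebraically closed $\K(M)$ one factors $P_a$ completely and computes the logarithmic derivative of $t(\cdot)-t(a)$ at $e\in b$; the analytic unit and the denominator $Q_a$ contribute terms of valuation $>-\val(e-a)$, so the dominant term is $m_{i_0}/(e-a_{i_0})$ for the root $a_{i_0}$ of $P_a$ closest to $e$. The proof then establishes that $a$ is the \emph{unique} root of $P_a$ in $b$, and that it is simple. Simplicity is quick (a multiple root $a_i\in b$ forces $\diff{t}{a_i}=0$, contradicting the earlier deduction that $\diff{t}{a}\neq 0$ together with constancy of $\ltf(\diff{t}{x})$). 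Uniqueness is the hard step and is settled by a residue-field counting argument: normalizing the roots $a_0,\dots,a_{i_0}\in b$ at mutual distance $\gamma$ yields $i_0+1$ distinct residues all satisfying $P'(\cdot)-c=0$ for a degree-$i_0$ polynomial $P'-c$, a contradiction. This global argument about the zero set of $P_a$ is what your purely local Taylor-expansion strategy has no substitute for, and it is why the factorization of $t-t(a)$ (rather than of $t$) is essential.
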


\begin{proof}
If $\val(t(x))$ is constant on $b$, then $\val(t(x)) - \val(t(a))\geq 0$ and by invariance under addition, $t(x) - t(a)$ has a Weierstrass preparation on $b$. If $t(x)$ is polynomial this is also clear. Hence there is $F_{a}\in\Ann$ (with other parameters in $\K(M)$), $P_{a}$, $Q_{a}\in\K(M)[X]$ such that for all $x\in b$, \[t(x) - t(a) = F_{a}\left(\frac{x-a}{g}\right)\frac{P_{a}(x)}{Q_{a}(x)}\] where $\val(F_{a}(y)) = 0$ for all $y\in \Mid$ and $\val(g) = \xi$. If $t$ is constant on $b$, i.e. $P_{a} = 0$, then the proposition follows easily. If not, $P_{a}$ has only finitely many zeros. Let $a_{i}$ be the zeros of $P_{a}$ in $\K(M)$ --- recall that $M$ is assumed algebraically closed --- and $m_{i}$ be the multiplicity of $a_{i}$. Let $c_{j}$ be the zeros of $Q_{a}$ and $n_{j}$ be their multiplicities. Note that every zero of $Q_{a}(x)$ is outside $b$, hence for all $j$, $\val(c_{j} - a) \leq \xi$. For all $e\in b$, note that $t(x) - t(a)$ is also differentiable at $e$  with differential $\diff{t}{e}$ and hence, if $e$ is distinct from all $a_{i}$, then:

\begin{eqnarray*}
\ltf\left(\frac{\diff{t}{a}}{t(e) - t(a)}\right) &=&
\ltf\left(\frac{\diff{t}{e}}{t(e) - t(a)}\right)\\
&=&\ltf\left(\frac{\diff[x]{\left(F_{a}\left(\frac{x-a}{g}\right)\right)}{e}}{F_{a}\left(\frac{e-a}{g}\right)} + \frac{\diff{(P_{a})}{e}}{P_{a}(e)} + \frac{\diff{(Q_{a})}{e}}{Q_{a}(e)}\right)\\
&=& \ltf\left(\frac{\diff{(F_{a})}{\frac{e-a}{g}}}{gF_{a}\left(\frac{e-a}{g}\right)} + \sum_{i}\frac{m_{i}}{e-a_{i}} + \sum_{j}\frac{n_{j}}{e-c_{j}}\right).
\end{eqnarray*}

For any $y\in\Mid$, $\val(\diff{(F_{a})}{y})\geq 0 = \val(F_{a}(y))$, hence $\val(\diff{(F_{a})}{y}/(gF_{a}(y)))\geq -\val(g) > -\val(e-a)$. We also have that for all $j$, $\val(1/(e-c_{j})) = -\val(e-c_{j}) > -\val(e-a)$. Finally, suppose that there is a unique $a_{i_{0}}$ such that $\val(e-a_{i_{0}})$ is maximal, then, for all $i\neq i_{0}$, $\val(1/(e-a_{i})) > \val(1/(e-a_{i_{0}}))$ and hence $\ltf(m_{i_{0}})\ltf(e-a_{i_{0}})^{-1} = \ltf(\diff{t}{a})\ltf(t(e)-t(a))^{-1}$, i.e. $\ltf(t(e)-t(a)) = \ltf(\diff{t}{a}m_{i_{0}}^{-1}(e-a_{i_{1}}))$.

As $t(e)\neq t(a)$, this immediately implies that $\diff{t}{a}\neq 0$. Let us now show that if $a_{i}\in b$ it cannot be a multiple zero. If it were \[\diff{t}{a_{i}} = \diff{(F_{a}((x-a)/c)/Q_{a}(x))}{a_{i}} P_{a}(a_{i}) + P'_{a}(a_{i}) F_{a}((a_{i}-a)/c)/Q_{a}(a_{i}) = 0\] which is absurd. Hence for all $a_{i}\in b$, $m_{i} = 1$ and if we could show that there is a unique $a_{i}\in b$ --- namely $a$ itself --- we would be done.

Suppose there are more that one $a_{i}$ in $b$ and let $\Def{\gamma}{\min\{\val(a_{i}-a_{j})\mid a_{i},a_{j}\in b \wedge i\neq j\}}$. We may assume $\val(a_{0}-a_{1}) = \gamma$. Let us also assume the $a_{i}$ have been numbered so that there exists $i_{0}$ such that for all $i\leq i_{0}$, $\val(a_{i}-a_{0}) = \gamma$ and for all $i > i_{0}$, $\val(a_{i} - a_{0}) < \gamma$. In particular, for all $i \neq j \leq i_{0}$, $\val(a_{i}-a_{j}) = \gamma$. For each $i\leq i_{0}$, let $e_{i}$ be such that $\val(e_{i}-a_{i})> \gamma$. Then we can apply the previous computation to $e_{i}$ and we get that $\ltf(t(e_{i}) - t(a)) = \ltf(\diff{t}{a})\ltf(e_{i}-a_{i})$. But
\[\ltf(t(e_{i}) - t(a)) = \ltf(F_{a}\left(\frac{e_{i}-x_{\alpha_{0}+1}}{g}\right))\ltf(p)\prod_{k}(\ltf(e_{i}-a_{k}))^{m_{k}}\ltf(q)^{-1}\prod_{j}(\ltf(e_{i}-c_{j}))^{-n_{j}}\]
where $p$ and $q$ are the dominant coefficients of respectively $P_{a}$ and $Q_{a}$ and hence
\[\ltf(\diff{t}{a}) = \ltf(F_{a}\left(\frac{e_{i}-x_{\alpha_{0}+1}}{g}\right))\ltf(p)\prod_{k\neq i}(\ltf(e_{i}-a_{k}))^{m_{k}}\ltf(q)^{-1}\prod_{j}(\ltf(e_{i}-c_{j}))^{-n_{j}}.\]
As $\ltf(F_{a}((e_{i}-x_{\alpha_{0}+1})/g))$, $\ltf(e_{i}-a_{k})$ for all $k > i_{0}$ and $\ltf(e_{i}-c_{j})$ do not depend on $i$, and for all $k\leq i_{0}$, $k\neq i$, $\ltf(e_{i}-a_{k}) = \ltf(a_{i}-a_{k})$, we obtain that for all $i,j\leq i_{0}$: \[\prod_{i\neq k\leq i_{0}}\ltf(a_{i}-a_{k}) = \prod_{j\neq k\leq i_{0}}\ltf(a_{j}-a_{k}).\]

Replacing $a_{i}$ by $(a_{i}-a_{0})/g$ where $\val(g) = \gamma$, we obtain the same equalities but we may assume that for all $i\leq i_{0}$, $a_{i}\in\Val$ and for all $i\neq j$, $a_{i} - a_{j} \in \inv{\Val}$. The equations can now be rewritten as $\prod_{i\neq k}\resf(a_{i}-a_{k}) = \prod_{i\neq k}(\resf(a_{i})-\resf(a_{k})) = c$ for some $c\in\res(M)$. Let $P = \prod_{k}(X-\resf(a_{k}))$ then our equations state that $P'(\resf(a_{i})) - c = 0$ for all $i\leq i_{0}$. But $P' - c$ is a degree $i_{0}$ polynomial, it cannot have $i_{0}+1$ roots.

Finally, if $\val(t(x))$ is constant on $b$, then, for all $a$ and $e\in b$, $\val(t(a)) \leq \val(t(a)-t(e)) = \val(\diff{t}{a}) + \val(a-e)$. As this holds for any $e$, we must have $\val(t(a)) \leq \val(\diff{t}{a}) + \zeta$.
\end{proof}

\begin{remark}
The conclusion of Proposition\,\ref{prop:lin approx} seems very close to the Jacobian property (e.g. \cite[Definition\,6.3.5]{CluLipAnn}). In fact, this lemma is very similar (both in its hypothesis and its conclusion) to \cite[Lemma\,6.3.9]{CluLipAnn}.
\end{remark}

\section{\texorpdfstring{$\sigma$}{Sigma}-Henselian fields}\label{sec:isom}

\begin{definition}(Analytic field with an automorphism)
Let us suppose that each $\Ann_{m,n}$ is given with an automorphism of the inductive system $t \mapsto t^{\sigma} : \Ann_{m,n}\to \Ann_{m,n}$. An analytic field $M$ with an automorphism is a model of $\TA$ with a distinguished $\Llt\cup\{\DivR[1]\}$-automorphism $\sigma$ such that for symbols $t\in\Ann_{m,n}$ and $\uple{x}\in\K(M)^{m+n}$, $\sigma(t(\uple{x})) = t^{\sigma}(\sigma(\uple{x}))$.
\end{definition}

Let $\Def{\LAQs}{\LAQ\cup\{\sigma\}\cup\{\sigma_{n}\mid n\in\Nn\}}$. An analytic field $M$ with an automorphism $\tau$ can be made into an $\LAQs$-structure by interpreting $\sigma$ as $\Sortrestr{\tau}{\K}$ and $\sigma_{n}$ as $\Sortrestr{\tau}{\lt[n]}$. Note that $\sigma$ also induces a ring automorphism on every $\res[n]$ and an ordered group automorphism $\sigma_{\Valgp}$ on $\Valgp$. We will write $\TAs$ for the $\LAQs$-theory of analytic fields with an automorphism. We will most often write $\sigma$ instead of $\sigma_{n}$ as there should not be any confusion.

If $K$ is a field with an automorphism $\sigma$ (also referred to as a difference field in this text), we will write $\Def{\fix!(K)}{\{x\in K\mid \sigma(x) = x\}}$ for its fixed field. For all $x\in K$, we will write $\sprol(x)$ for the tuple $x,\sigma(x),\ldots,\sigma^{n}(x)$ where the $n$ should be explicit from the context.

\begin{remark}[sigma terms]
In fact $\sigma$ induces an action on all $\Sortrestr{\LAQ}{\K}$-terms and we have $\TAs\models\sigma(t(\uple{x})) = t^{\sigma}(\sigma(\uple{x}))$. It follows immediately that for any $\Sortrestr{\LAQs}{\K}$-term $t$ there is an $\Sortrestr{\LAQ}{\K}$-term $u$ such that $\TAs\models t(\uple{x}) = u(\sprol(\uple{x}))$.
\end{remark}

\begin{definition}(Linearly closed difference field)
A difference field $(K,\sigma)$ is linearly closed if every equation of the form $\sum_{i=0}^{n} a_i\sigma^i(x) = b$, where $a_{n}\neq 0$, has a solution. 
\end{definition}

\begin{definition}[lin approx def](Linear approximation)
Let $K$ be a valued field with an automorphism $\sigma$, $f:K^{n}\to K^{n}$ a (partial) function and $\uple{d}\in K^{n}$.
\begin{thm@enum}
\item Let $\uple{b}$ be a tuple of open balls in $M$. We say that $\uple{d}$ linearly approximates $f$ on $\uple{b}$ if for all $\uple{a}$ and $\uple{c}\in \uple{b}$ we have:
\[\val(f(\uple{c}) - f(\uple{a}) - \uple{d}\cdot(\uple{c}-\uple{a})) >  \min_{i}\{\val(d_{i}) + \val(c_{i}-a_{i})\}.\]
\item Let $b$ be an open ball of $M$. We say that $\uple{d}$ linearly approximates $f$ at prolongations on $b$ if for all $a$, $c\in b$ we have:
\[\val(f(\sprol(c)) - f(\sprol(a)) - \uple{d}\cdot\sprol(c-a)) >  \min_{i}\{\val(d_{i}) + \val(\sigma^{i}(c-a))\}.\]
\end{thm@enum}
\end{definition}

\begin{remark}
\begin{thm@enum}
\item \label{rem:rad poly} Let $M\models\TAs$. Suppose that $\sigma$ is an isometry, i.e. $\sigma_{\Valgp} = \id$. Let $t$ be an $\Sortrestr{\LA}{\K}(\Val(M))$-term and $a\in\Val(M)$, we can show that $\diff{t}{\uple{a}}$ linearly approximates $t$ on $\oball{\gamma}{a}$ where $\gamma = \min_{i}\{\val(\diff[i]{f}{\uple{a}})\}$.
\item We allow a slight abuse of notation by saying that terms constant on a ball are linearly approximated (at prolongations) by the zero tuple, even though the required inequality does not hold as $\infty \not> \infty$.
\end{thm@enum}
\end{remark}

Let us first prove that it suffices to show linear approximation variable by variable to obtain linear approximation for the whole function. We will write $(\upleto{a}{x}{i})$ for the tuple $\uple{a}$ where the $i$-th component is replaced by $x_{i}$ (with a slight abuse of notation as the $x_{i}$ does not appear in the right place) and $\upleleq{a}{i}$ for the tuple $\uple{a}$ where the $j$-th components for $j > i$ are replaced by zeros.

\begin{proposition}[one var to all]
Let $(K,\val)$ be a valued field, $f:K^{n}\to K$, $\uple{d}\in K^{n}$ and $\uple{b}$ a tuple of balls. If for all $\uple{a}\in\uple{b}$ and $j<n$, $d_{j}$ linearly approximates $f(\upleto{a}{x}{j})$ on $b_{j}$, then $\uple{d}$ linearly approximates $f$ on $\uple{b}$. 
\end{proposition}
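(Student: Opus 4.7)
The plan is to proceed by a straightforward telescoping argument that turns the one-variable approximations into the full multivariable approximation. Given $\uple{a}, \uple{c} \in \uple{b}$, introduce interpolating tuples $\uple{a}^{(j)}$ defined by $a^{(j)}_k = c_k$ for $k < j$ and $a^{(j)}_k = a_k$ for $k \geq j$, so that $\uple{a}^{(0)} = \uple{a}$ and $\uple{a}^{(n)} = \uple{c}$. Each $\uple{a}^{(j)}$ lies in $\uple{b}$ since every coordinate is either an $a_k \in b_k$ or a $c_k \in b_k$, and $\uple{a}^{(j+1)}$ differs from $\uple{a}^{(j)}$ only in the $j$-th coordinate, where it has $c_j$ instead of $a_j$.

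Next, I would decompose
\[
f(\uple{c}) - f(\uple{a}) - \uple{d}\cdot(\uple{c}-\uple{a}) \;=\; \sum_{j=0}^{n-1}\bigl(f(\uple{a}^{(j+1)}) - f(\uple{a}^{(j)}) - d_j(c_j - a_j)\bigr).
\]
Applying the hypothesis at the tuple $\uple{a}^{(j)} \in \uple{b}$ in the direction $j$ — which says that $d_j$ linearly approximates $x \mapsto f(\upleto{a^{(j)}}{x}{j})$ on $b_j$ — to the pair $a_j, c_j \in b_j$ yields
\[
\val\bigl(f(\uple{a}^{(j+1)}) - f(\uple{a}^{(j)}) - d_j(c_j - a_j)\bigr) \;>\; \val(d_j) + \val(c_j - a_j) \;\geq\; \min_i\{\val(d_i) + \val(c_i - a_i)\}.
\]
Each summand therefore has valuation strictly greater than the common lower bound $\min_i\{\val(d_i) + \val(c_i - a_i)\}$, and by the ultrametric inequality the valuation of the sum is at least the minimum of the valuations of its terms, hence still strictly greater than $\min_i\{\val(d_i) + \val(c_i - a_i)\}$. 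This gives exactly the required inequality defining linear approximation on $\uple{b}$.

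There is essentially no serious obstacle here: the only points needing mild care are that the interpolating tuples remain in $\uple{b}$ (true by construction) and that the strict inequality is preserved through the sum (true because each term is already strictly greater than the final bound, not merely the $j$-th bound). The mild abuse of notation concerning constant coordinates — where the one-variable hypothesis holds vacuously with $d_j = 0$, in line with Remark\,\ref{rem:rad poly}\,(ii) — causes no issue since the corresponding summand is identically zero and contributes nothing to the telescoping.
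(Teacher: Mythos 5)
Your proof is correct and follows essentially the same telescoping argument as the paper: you replace coordinates one at a time (your interpolating tuples $\uple{a}^{(j)}$ play exactly the role of the paper's $\uple{a}+\upleleq{\epsilon}{j}$), apply the one-variable hypothesis to each increment, and finish with the ultrametric inequality. The extra care you take in noting that each summand already beats the global bound $\min_i\{\val(d_i)+\val(c_i-a_i)\}$ (not merely the $j$-th one) is exactly the point the paper's displayed chain of inequalities relies on implicitly.
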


\begin{proof}
Let $\uple{a}$ and $\uple{e}\in\uple{b}$ and $\uple{\epsilon} = \uple{e} - \uple{a}$. Then, we have
\begin{eqnarray*}
\val(f(\uple{a} + \uple{\epsilon}) - f(\uple{a}) - \uple{d}\cdot\uple{\epsilon}) &=& \val(\sum_{j} f(\uple{a} + \upleleq{\epsilon}{j}) - f(\uple{a} + \upleleq{\epsilon}{j-1}) - d_{j}\epsilon_{j})\\
&\geq& \min_{j}\{f(\uple{a} + \upleleq{\epsilon}{j}) - f(\uple{a} + \upleleq{\epsilon}{j-1}) - d_{j}\epsilon_{j}\}\\
&>& \min_{j}\{\val(d_{j}) + \val(\epsilon_{j})\}.
\end{eqnarray*}
And that concludes the proof.
\end{proof}

Although linear approximation (at prolongations) looks like differentiability, one must be aware that linear approximations are not uniquely determined, because, among other reasons, we are only looking at tuples that are prolongations but also because the error term is only linear. But when $\sigma$ is an isometry, we can recover some uniqueness, and give an alternative definition (perhaps of a more geometric flavor) of linear approximation at prolongations.

\begin{definition}($\resval[\gamma]$)
Let $(K,\val)$ be a valued field and $\gamma\in\val(\inv*{K})$. We define $\Def{\resval[\gamma]}{\cball{\gamma}{0}/\oball{\gamma}{0}}$ and let $\resvalf[\gamma]$ denote the canonical projection $\cball{\gamma}{0} \to \resval[\gamma]$. Note that $\resval[\gamma]$ can be identified (canonically) with $\vallt[1]^{-1}(\gamma)\cup\{0\}\subseteq \lt[1]$.
\end{definition}

\begin{proposition}[def mdiff]
Let $(K,\val)$ be a valued field with an isometry $\sigma$ and a linearly closed residue field. Let $f : K^{n}\to K$, $\uple{d}$ be a linear approximation of $f$ at prolongations on some open ball $b$ with radius $\xi$, $\uple{e}\in K^{n}$, $\Def{\delta}{\val(\uple{d})}$ and $\Def{\eta}{\val(\uple{e})}$. The following are equivalent:
\begin{thm@enum}
\item $\uple{e}$ is a linear approximation of $f$ at prolongations on $b$;
\item $\val(\uple{d} - \uple{e}) > \min\{\delta,\eta\}$;
\item $\eta = \delta$ and $\resvalf[\delta](\uple{d}) = \resvalf[\delta](\uple{e})$.
\end{thm@enum}
\end{proposition}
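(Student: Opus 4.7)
The plan is to observe first that (ii) and (iii) are equivalent by routine ultrametric bookkeeping: a strict inequality $\val(\uple{d} - \uple{e}) > \min\{\delta,\eta\}$ forces $\delta = \eta$ (else the ultrametric gives equality in the valuation of the difference), and the coordinatewise strict inequalities $\val(d_{i}-e_{i}) > \delta$ then say precisely that $\uple{d}$ and $\uple{e}$ have the same image in $\resval[\delta]^{n}$. So it is enough to show (i)$\iff$(ii), and I may then freely use $\delta = \eta$ whenever (ii) is in hand.

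For (ii)$\Rightarrow$(i), I would just decompose
\[
f(\sprol(c))-f(\sprol(a))-\uple{e}\cdot\sprol(c-a) = \bigl[f(\sprol(c))-f(\sprol(a))-\uple{d}\cdot\sprol(c-a)\bigr] + (\uple{d}-\uple{e})\cdot\sprol(c-a),
\]
and bound each summand separately: the first using the hypothesis on $\uple{d}$, the second using (ii). The isometry hypothesis makes $\min_{i}\{\val(e_{i}) + \val(\sigma^{i}(c-a))\}$ equal to $\eta + \val(c-a) = \delta + \val(c-a)$, and both summands have valuation strictly above this value.

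The substance of the proof is (i)$\Rightarrow$(ii). Subtracting the two linear-approximation inequalities yields, for all $a,c \in b$,
\[
\val\Bigl(\sum_{i}(d_{i}-e_{i})\,\sigma^{i}(c-a)\Bigr) > \min\{\delta,\eta\} + \val(c-a).
\]
Setting $h_{i}:=d_{i}-e_{i}$ and $\mu:=\min_{i}\val(h_{i})$, I want $\mu > \min\{\delta,\eta\}$. Assuming the contrary, let $n_{0}$ be the \emph{largest} index realising $\mu$, pick $\pi$ of valuation $\mu$, and set $g_{i}:=h_{i}/\pi$, so that $\val(g_{n_{0}})=0$ and $\val(g_{i})>0$ for $i>n_{0}$. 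The crucial move is to specialize $c-a=\alpha\epsilon$ with $\alpha\in\inv{\Val}$ arbitrary and $\epsilon$ a fixed element of valuation larger than the radius of $b$; because $\sigma$ is an isometry, the quantities $\gamma_{i}:=\sigma^{i}(\epsilon)/\epsilon$ all lie in $\inv{\Val}$. Dividing the displayed inequality by $\pi\alpha\epsilon$ and reducing modulo the maximal ideal gives
\[
\sum_{i\leq n_{0}}\bar{g}_{i}\,\bar{\gamma}_{i}\,\bar{\sigma}^{i}(\bar{\alpha}) = 0 \qquad\text{for every }\bar{\alpha}\in\res,
\]
i.e.\ an identically zero $\sigma$-linear operator on $\res$ whose top-degree coefficient $\bar{g}_{n_{0}}\bar{\gamma}_{n_{0}}$ is \emph{nonzero}. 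This contradicts the linear closedness of the residue field, which would make such an operator surjective (in particular not identically zero).

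The principal obstacle is exactly this last step: arranging that, after dividing by $\pi$ and passing to the residue, the resulting $\sigma$-operator still carries a genuine leading coefficient to feed into the linear-closedness axiom. This is what dictates choosing $n_{0}$ to be the largest (rather than smallest) index realising $\mu$, and splitting $c-a$ into a unit $\alpha$ and a fixed small $\epsilon$: the former supplies a free residue variable ranging over all of $\res$, while the isometry hypothesis ensures $\bar{\gamma}_{n_{0}}\neq 0$ so the leading term survives the reduction.
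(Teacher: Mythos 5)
Your proof is correct and follows essentially the same approach as the paper's: the heart of the argument in both cases is (i)$\Rightarrow$(ii), which builds the $\sigma$-linear polynomial $\sum_i (d_i-e_i)\sigma^i(\epsilon)\sigma^i(x)$ scaled by $\uple{d}-\uple{e}$ and $\epsilon$, and applies linear closedness of the residue field to find a point where no ultrametric cancellation occurs. The paper phrases this step directly (choosing $c$ with $\val(P(\sprol(c)))=0$ and reading off the inequality) whereas you argue by contradiction, and you cycle through the equivalences differently, but the content is the same.
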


\begin{proof}
\begin{list}{}{}
\item[(i)$\imp$(ii)] Suppose $\uple{d}\neq\uple{e}$. Let $\epsilon$ be such that $\val(\epsilon) > \xi$ and let $g\in K$ be such that $\val(g) = \val(\uple{d}-\uple{e})$. Then $P(\sprol(x)) := \sum_{i}(d_{i}-e_{i})\sigma^{i}(\epsilon)g^{-1}\epsilon^{-1}\sigma^{i}(x)$ is a linear difference polynomial with a non zero residue. As $K$ is residually linearly closed, the residue of $P$ cannot always be zero and hence there exists $c\in\inv{\Val}$ such that $\val(P(\sprol(c))) = 0$, i.e. $\val((\uple{d}-\uple{e})\cdot\sprol(\epsilon c)) = \val(g) + \val(\epsilon)$. But then, for all $a\in b$:
\begin{eqnarray*}
\val(g) + \val(\epsilon) &=& \val((\uple{d}-\uple{e})\cdot\sprol(\epsilon c))\\
&=& \val(f(\sprol(a+\epsilon c)) - f(\sprol(a)) - \uple{e}\cdot\sprol(\epsilon b))\\
&& - f(\sprol(a+\epsilon c)) + f(\sprol(a)) + \uple{d}\cdot\sprol(\epsilon b)\\
&>& \val(\epsilon) +  \min\{\delta,\eta\}
\end{eqnarray*}

i.e. $\val(\uple{d}-\uple{e}) > \min\{\delta,\eta\}$.
\item[(ii)$\imp$(iii)] First, suppose that $\delta < \eta$, then if $\val(d_{i})$ is minimal, $\val(d_{i}) = \delta < \eta\leq \val(e_{i})$ and hence $\val(d_{i}-e_{i}) = \val(d_{i}) = \delta = \min\{\delta,\eta\}$ contradicting our previous inequality. Hence we must have, by symmetry, $\delta = \eta$. Now inequality (ii) can be rewritten $\val(\uple{d} - \uple{e}) > \delta$ which exactly means that $\resvalf[\delta](\uple{d}) = \resvalf[\delta](\uple{e})$.
\item[(iii)$\imp$(i)] For all $\epsilon$ such that $\val(\epsilon) > \xi$, as $\val(\uple{d}-\uple{e})>\delta$, we have:
\begin{eqnarray*}
\lefteqn{\val(f(\sprol(a+\epsilon)) - f(\sprol(a)) - \uple{e}\cdot\sprol(\epsilon))}\\
&=& \val(f(\sprol(a+\epsilon)) - f(\sprol(a)) - \uple{d}\cdot\sprol(\epsilon) + (\uple{d}-\uple{e})\cdot\sprol(\epsilon))\\
&>& \delta + \val(\epsilon)\\
&=& \eta + \val(\epsilon).
\end{eqnarray*}
This concludes the proof.\qed
\end{list}
\end{proof}

\begin{remark}
\begin{thm@enum}
\item\label{item:lin app} In the isometry case, linear approximations describe the trace of a given function on $\lt[1]$. More precisely, a function $f$ is linearly approximated at prolongations on some open ball $b$ with radius $\xi$ if and only if there exists $\delta\in\val(K)$ and $\uple{d}\in\resval[\delta](K)$ such that for all $\gamma > \xi$ and $a\in b$, the function $\resvalf[\gamma](\epsilon)\mapsto \resvalf[\gamma+\delta](f(\sprol(a+\epsilon)) - f(\sprol(a))) : \resval[\gamma]\to \resval[\gamma+\delta]$ is well defined and coincides with the function $x\mapsto \uple{d}\cdot\sprol(x)$ (where the sum is given by $+_{1,1}$). 
\item If we are working in a valued field with a linearly closed residue field, it follows from Proposition\,\ref{prop:def mdiff}, that $\delta$ and $\uple{d}$ from \ref{item:lin app} are actually uniquely defined.
\end{thm@enum}
\end{remark}

\begin{definition}[sigma-Hensel]($\sigma$-Henselianity)
Let $M\models\TAs$, $t$ be an $\Sortrestr{\LAQ}{\K}(M)$-term, $\uple{d}\in\K(M)$, $a\in\K(M)$ and $\xi\in\Valgp(M)$. We say that $(t,a,\uple{d},\xi)$ is in $\sigma$-Hensel configuration if $\uple{d}$ linearly approximates $t$ at prolongations on $\oball{\xi}{a}$ and:
\[\val(t(\sprol(a))) > \min_{i}\{\val(d_{i}) + \sigma^{i}(\xi)\}.\]

We say that $M$ is $\sigma$-Henselian if for all $(t,a,\uple{d},\xi)$ in $\sigma$-Hensel configuration, there exists $c\in\K(M)$ such that $t(\sprol(c)) = 0$ and $\val(c-a) \geq \max_{i}\{\val(\sigma^{-i}(t(\sprol(a)) d^{-1}_{i}))\}$.
\end{definition}

\begin{remark}
By Remark\,\ref{rem:rad poly}, when $\sigma$ is an isometry, this form of the $\sigma$-Hensel lemma is equivalent to classical forms for difference polynomials --- i.e without any analytic structure --- as stated in \cite{ScaDvalF,ScaRelFrob,AzgvdD} for example. In particular, it implies Hensel's lemma (for polynomials).
\end{remark}

\begin{definition}[pseudo conv](Pseudo-convergence)
Let $M\models\TAs$.
\begin{thm@enum}
\item A sequence $(x_{\alpha})_{\alpha\in\beta}$ of (distinct) points in $\K(M)$ indexed by an ordinal is said to be pseudo-convergent if for all $\alpha$, $\gamma$, $\delta\in\beta$ such that $\alpha < \gamma < \delta$ we have $\val(x_{\alpha} - x_{\delta}) < \val(x_{\gamma} - x_{\delta})$;
\item We say that $a\in \K(M)$ is a pseudo-limit of the pseudo-convergent sequence $(x_\alpha)$ --- and we write  $x_{\alpha}\pc a$ --- if for all $\alpha < \gamma < \beta$, $\val(x_{\alpha} - a) < \val(x_{\gamma} - a)$;
\item A pseudo-convergent sequence of elements of $C\subseteq\K(M)$ is said to be maximal (over $C$) if it has no pseudo-limit in $C$;
\item We say that a sequence $(\uple{x}_\alpha)$ of tuples pseudo-solves an $\Sortrestr{\LAQ}{\K}(M)$-term $t$ if $t=0$ or for $\alpha\gg 0$ --- i.e. for $\alpha$ in a final segment --- $t(\uple{x}_{\alpha})\pc 0$.
\item We say that a sequence $(x_\alpha)$ $\sigma$-pseudo-solves an $\Sortrestr{\LAQ}{\K}(M)$-term $t$ if $(\sprol(x_{\alpha}))$ pseudo-solves $t$.
\item We say that $M$ is maximally complete if any pseudo-convergent sequence in $M$ (indexed by a limit ordinal) has a pseudo-limit in $M$;
\item We say $M$ is $\sigma$-algebraically maximally complete if any pseudo-sequence $(x_\alpha)$ from $M$ (indexed by a limit ordinal) $\sigma$-pseudo-solving an $\Sortrestr{\LAQ}{\K}(M)$-term $t\neq 0$ has a pseudo-limit in $M$.
\end{thm@enum}
\end{definition}

\begin{remark}
\begin{thm@enum}
\item Let $(x_{\alpha})$ be a pseudo-convergent sequence, then for all $\alpha_{0} < \alpha_{1}$, $\val(x_{\alpha_{0}}-x_{\alpha_{1}}) = \val(x_{\alpha_{0}}-x_{\alpha_{0}+1}) =: \gamma_{\alpha_{0}}$. The $\gamma_{\alpha}$ form a strictly increasing sequence. If $x_{\alpha}\pc a$ then $\val(a-x_{\alpha_{0}}) = \gamma_{\alpha_{0}}$ and if $b$ is such that for all $\alpha$, $\val(b-a) > \gamma_{\alpha}$ then we also have $x_{\alpha}\pc b$.
\item As, in any valued field, balls with a non infinite radius always have more than one point, if $(x_{\alpha})$ is a maximal pseudo-convergent sequence over $K$ then either $\gamma_{\alpha}$ is cofinal in $\val(\inv*{K})$ and $(x_{\alpha})$ is indexed by the successor of a limit ordinal or $(x_{\alpha})$ is indexed by a limit ordinal.
\end{thm@enum}
\end{remark}

\begin{proposition}[max compl sigmaH]
If $M$ is $\sigma$-algebraically maximally complete and $\res[1](M)$ is linearly closed then $M$ is $\sigma$-Henselian.
\end{proposition}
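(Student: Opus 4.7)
The plan is to build, by transfinite recursion, a pseudo-convergent sequence $(a_\alpha)$ of successively better approximate solutions to $t(\sprol(x))=0$, starting from $a_0 := a$. I will maintain the invariants that $(t,a_\alpha,\uple{d},\xi)$ is still in $\sigma$-Hensel configuration and that $\gamma_\alpha := \val(t(\sprol(a_\alpha)))$ is strictly increasing in $\alpha$. Setting $\delta_i := \val(d_i)$ and $\eta_\alpha := \max_i \sigma^{-i}(\gamma_\alpha - \delta_i)$, the inequality from $\sigma$-Hensel configuration, $\gamma_\alpha > \min_i(\delta_i + \sigma^i(\xi))$, rearranges to $\eta_\alpha > \xi$, so every $a_\alpha$ will sit inside $\oball{\xi}{a}$, where $\uple{d}$ retains its linear-approximation property.

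For the successor step, I would look for $\epsilon_\alpha$ with $\val(\epsilon_\alpha) = \eta_\alpha$ solving, up to strictly higher valuation, the linear difference equation $\sum_i d_i\sigma^i(\epsilon) = -t(\sprol(a_\alpha))$. Writing $\epsilon_\alpha = g\tilde\epsilon_\alpha$ with $\val(g) = \eta_\alpha$ and dividing through by $t(\sprol(a_\alpha))$, the rescaled coefficients $d_i\sigma^i(g)/t(\sprol(a_\alpha))$ all have non-negative valuation by the very definition of $\eta_\alpha$, and the index realising the maximum produces a unit. Reducing modulo the maximal ideal then yields a non-trivial linear difference polynomial over $\res[1](M)$; linear closedness supplies a root in the residue field, which I lift to a unit $\tilde\epsilon_\alpha$. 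Setting $a_{\alpha+1} := a_\alpha + \epsilon_\alpha$, the linear-approximation bound on the error between $t(\sprol(a_{\alpha+1}))$ and $t(\sprol(a_\alpha)) + \uple{d}\cdot\sprol(\epsilon_\alpha)$ combines with the choice of $\tilde\epsilon_\alpha$ to give $\gamma_{\alpha+1} > \gamma_\alpha$, as required.

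At a limit ordinal $\lambda$, the invariant $\val(a_{\alpha+1}-a_\alpha)=\eta_\alpha$ is strictly increasing, so $(a_\alpha)_{\alpha<\lambda}$ is pseudo-convergent; since $\gamma_\alpha$ is strictly increasing, it $\sigma$-pseudo-solves $t$. Applying $\sigma$-algebraic maximal completeness provides a pseudo-limit $a_\lambda \in \K(M)$. Using the linear-approximation property of $\uple{d}$ on the pair $(a_\lambda,a_\alpha)$ gives $\val(t(\sprol(a_\lambda))-t(\sprol(a_\alpha))) \geq \min_i\{\delta_i + \sigma^i(\eta_\alpha)\}$; because $\eta_\alpha \geq \sigma^{-i}(\gamma_\alpha-\delta_i)$ for every $i$, this minimum is at least $\gamma_\alpha$. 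Hence either $t(\sprol(a_\lambda)) = 0$ and we are done, or $\val(t(\sprol(a_\lambda))) \geq \sup_{\alpha<\lambda}\gamma_\alpha > \min_i(\delta_i+\sigma^i(\xi))$, so the $\sigma$-Hensel configuration persists at $a_\lambda$ and the recursion continues.

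The construction cannot be extended through arbitrarily long ordinals since the $\eta_\alpha$ form a strictly increasing sequence in $\val(\inv*{\K(M)})$, so it terminates at some $c$ with $t(\sprol(c))=0$. Strict monotonicity of the $\eta_\alpha$ together with the ultrametric inequality gives $\val(c-a) = \eta_0 = \max_i \val(\sigma^{-i}(t(\sprol(a))d_i^{-1}))$, which is precisely the bound required by $\sigma$-Henselianity. The main delicacy is the limit step: one must verify both that $\sigma$-algebraic maximal completeness genuinely applies (the sequence really does $\sigma$-pseudo-solve a non-zero term) and that the pseudo-limit is again a strictly better approximation, the latter resting on the careful bookkeeping between $\sigma^i(\eta_\alpha)$ and $\gamma_\alpha$ inside the linear-approximation error estimate.
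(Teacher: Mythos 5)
Your proof is correct and takes essentially the same approach as the paper's: a transfinite recursion combining a successor step (solve the linearised equation $\uple{d}\cdot\sprol(\epsilon)\approx -t(\sprol(a_\alpha))$ in the residue field using linear closedness, lift, and use the linear-approximation error bound) with a limit step (invoke $\sigma$-algebraic maximal completeness to get a pseudo-limit, check the $\sigma$-Hensel configuration persists), terminating because the $\eta_\alpha$ are a strictly increasing ordinal-indexed sequence in the value group. The paper organises this as two lemmas (one for the successor, one for the limit) plus a maximal-sequence argument, but the content is the same; your bookkeeping of $\gamma_\alpha$, $\eta_\alpha$ and the identity $\eta_0=\max_i\val(\sigma^{-i}(t(\sprol(a))d_i^{-1}))$ matches the paper's computations.
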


\begin{proof}First an easy claim:
\begin{claim}[sH conf]
Let $(t,a,\uple{d},\xi)$ be in $\sigma$-Hensel configuration, then \[\max_{i}\{\val(\sigma^{-i}(t(\sprol(a)) d^{-1}_{i}))\}>\xi.\]
\end{claim}

\begin{proof}
As $(t,a,\uple{d},\xi)$ is in $\sigma$-Hensel configuration, there exists $i_{0}$ such that $\val(t(\sprol(a))) > \val(d_{i_{0}}) + \sigma^{i_{0}}(\xi)$ and hence $\val(\sigma^{-i_{0}}(t(\sprol(a))d_{i_{0}}^{-1})) = \sigma^{-i_{0}}(\val(t(\sprol(a))) - \val(d_{i_{0}})) > \xi$.
\end{proof}

And now, two lemmas about finding better approximations to zeros of terms.
\begin{lemma}[succ Hensel]
Let $(t,a,\uple{d},\xi)$ be in $\sigma$-Hensel configuration such that $t(\sprol(a))\neq 0$. Then there exists $c$ such that $\val(c-a) = \max_{i}\{\val(\sigma^{-i}(t(\sprol(a)) d^{-1}_{i}))\}$, $\val(t(\sprol(c))) > \val(t(\sprol(a)))$ and $(t,c,\uple{d},\xi)$ is also in $\sigma$-Hensel configuration.
\end{lemma}

\begin{proof}
Choose any $\epsilon\in \K(M)$ with $\val(\epsilon) = \max_{i}\{\val(\sigma^{-i}(t(\sprol(a)) d^{-1}_{i}))\}$. By Claim\,\ref{claim:sH conf}, $\val(\epsilon) > \xi$. For all $x\in\Val$, let $\Def{R(a,\epsilon,x)}{t(\sprol(a)+\sprol(\epsilon x)) - t(\sprol(a)) - \uple{d}\cdot \sprol(\epsilon x)}$ and
\[\Def{u(x)}{\frac{t(\sprol(a) +\sprol(\epsilon x))}{t(\sprol(a))}} = 1 + \sum_{i}\frac{d_{i}\sigma^{i}(\epsilon)}{t(\sprol(a))}\sigma^{i}(x) + \frac{R(a,\epsilon,x)}{t(\sprol(a))}.\]
For all $i$, \[\val\left(\frac{d_{i}\sigma^{i}(\epsilon)}{t(\sprol(a))}\right) \geq \val(d_{i}) + \val(t(a)) - \val(d_{i}) - \val(t(a)) = 0\] and for any $i_{0}$ such that $\val(\epsilon) = \val(\sigma^{-i_{0}}(t(\sprol(a)) d^{-1}_{i_{0}}))$ it is an equality. As $\uple{d}$ linearly approximates $t$ at prolongations on $\oball{\xi}{a}$, we also have \[\val(R(a,\epsilon,x)) > \min_{i}\{\val(\sigma^{i}(\epsilon)) + \val(d_{i})\}\geq\val(t(\sprol(a)))\] and $\resf[1](u(x)) = 0$ is a non trivial linear equation in the residue field. As $\res[1](M)$ is linearly closed, this equation has a solution $\resf[1](e)$. Note that we must have $\resf[1](e)\neq 0$. 

Let $c = a+\epsilon e$. It is clear that $\val(c-a) = \val(\epsilon) = \max_{i}\{\val(\sigma^{-i}(t(\sprol(a)) d^{-1}_{i}))\} > \xi$ and that $\val(t(\sprol(c))) = \val(t(\sprol(a))u(e)) > \val(t(\sprol(a))) > \min_{i}\{\val(d_{i}) + \sigma_{i}(\xi)\}$.
\end{proof}

\begin{lemma}[lim Hensel]
Let $(x_{\alpha})$ be a pseudo-convergent sequence (indexed by a limit ordinal). Assume that for all $\alpha$, $(t,x_{\alpha},\uple{d},\xi)$ is in $\sigma$-Hensel configuration, $\val(x_{\alpha+1} - x_{\alpha}) \geq \max_{i}\{\val(\sigma^{-i}(t(\sprol(x_{\alpha})) d^{-1}_{i}))\}$ and $t(\sprol(x_{\alpha}))\pc 0$. If $c$ is such that $x_{\alpha}\pc c$, then $(t,c,\uple{d},\xi)$ is in $\sigma$-Hensel configuration and for all $\alpha$, $\val(t(\sprol(c))) > \val(t(\sprol(x_{\alpha})))$.
\end{lemma}

\begin{proof}
First of all, as $(t,x_{0},\uple{d},\xi)$ is in $\sigma$-Hensel configuration, $\uple{d}$ continuously linearly approximates $t$ at prolongations on $\oball{\xi}{x_{0}}$. By Claim\,\ref{claim:sH conf}, $\val(c-x_{0}) = \val(x_{1}-x_{0}) > \xi$. Moreover, let $\Def{R(x,c)}{t(\sprol(c)) - t(\sprol(x)) - \uple{d}\cdot\sprol(c-x)}$. Then for all $\alpha$,
\begin{eqnarray*}
\val(t(\sprol(c))) &=& \val(t(\sprol(x_{\alpha})) + \uple{d}(\sprol(x_{\alpha}))\cdot\sprol(c-x_{\alpha}) + R(x_{\alpha},c))\\
&\geq& \min_{i}\{\val(t(\sprol(x_{\alpha}))),\val(d_{i}) + \val(\sigma^{i}(c-x_{\alpha}))\}\\
&\geq& \val(t(\sprol(x_{\alpha}))).
\end{eqnarray*}
Finally, as $\val(t(\sprol(c))) \geq \val(t(\sprol(x_{0}))) > \min_{i}\{\val(d_{i}) + \sigma^{i}(\xi)\}$, $(t,c,\uple{d},\xi)$ is also in $\sigma$-Hensel configuration.
\end{proof}

Let $(t,a,\uple{d},\xi)$ be in $\sigma$-Hensel configuration. If $t = 0$, we are done, if not let $(x_{\alpha})_{\alpha\in\beta}$ be a maximal sequence (with respect to the length) such that $x_{0} = a$ and for all $\alpha$, $(t,x_{\alpha},\uple{d},\xi)$ is in $\sigma$-Hensel configuration, $\val(x_{\alpha+1} - x_{\alpha}) \geq \max_{i}\{\val(\sigma^{-i}(t(\sprol(x_{\alpha})) d^{-1}_{i}))\}$ and $t(\sprol(x_{\alpha}))\pc 0$. If $\alpha$ is a limit ordinal,  as $M$ is $\sigma$-algebraically maximally complete, and $t\neq 0$, $(x_{\alpha})$ has a pseudo-limit $x_{\beta}$. By Lemma\,\ref{lem:lim Hensel}, the sequence $(x_{\alpha})_{\alpha\in\beta+1}$ still meets the same requirements, contradicting the maximality of $(x_{\alpha})_{\alpha\in\beta}$. It follows that $\beta = \gamma+1$. If $t(\sprol(x_{\gamma}))\neq 0$, then applying Lemma\,\ref{lem:succ Hensel}, to $(t,x_{\gamma})$, we obtain an element $x_{\beta}$ such that $(x_{\alpha})_{\alpha\in\beta+1}$ still meets the same requirements, contradiction the maximality of $(x_{\alpha})_{\alpha\in\beta}$ once again. Hence we must have that $t(\sprol(x_{\gamma})) = 0$ and $c = x_{\gamma}$ is a solution to the $\sigma$-Hensel configuration $(t,a,\uple{d},\xi)$.
\end{proof}

\begin{definition}($\TAsH!$)
Let $\TAsH$ be the $\LAQs$-theory of analytic fields with an automorphism that are $\sigma$-Henselian and have a non-trivial valuation group. To specify the characteristic we will write $\TAsH[0,0]$ or $\TAsH[0,p]$.
\end{definition}

\begin{proposition}
Let $\Def{\Ann}{\bigcup_{\uple{X},\uple{Y}}\genan[\Wittr{\alg{\Ff_{p}}}]{\uple{X}}[[\uple{Y}]]}$ and $\WittLAQs!$ be the $\LAQs$-structure with base set $\Wittf(\alg{\Ff_{p}})$, the obvious valued field structure and analytic structure and interpreting $\sigma$ as the lifting to $\Wittf(\alg{\Ff_{p}})$ of the Frobenius automorphism on $\alg{\Ff_{p}}$. Then $\WittLAQs\models\TAsH[0,p]$.
\end{proposition}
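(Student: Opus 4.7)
The plan is to verify the axioms of $\TAsH[0,p]$ in turn. First, the underlying $\LAQ$-structure is a model of $\TA[0,p]$ essentially by construction: one takes $\ValR = \Val = \Wittr{\alg{\Ff_{p}}}$, $\MidR = \Mid = p\Wittr{\alg{\Ff_{p}}}$, and ideal of definition $I = p\Wittr{\alg{\Ff_{p}}}$. As noted in the excerpt just before Definition\,\ref{def:Q}, $\Ann = \bigcup_{m,n}\genan[\Wittr{\alg{\Ff_{p}}}]{\uple{X}}[[\uple{Y}]]$ is a separated Weierstrass system over this pair, and with the natural interpretation of each $f \in \Ann_{m,n}$ as the function $\Val^{m} \times \Mid^{n} \to \Val$ it defines (extended by zero outside), all the axioms of Definition\,\ref{def:ann structure} are immediate. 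Non-triviality of the value group is witnessed by $\val(p) \neq 0$.

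Second, the Frobenius lifting $\sigma$ on $\Wittf(\alg{\Ff_{p}})$ is a valued-field automorphism preserving $\Val$, $\Mid$ and each $1 + n\Mid$, hence it induces an $\Llt \cup \{\DivR[1]\}$-automorphism in the sense required. I would extend $\sigma$ to $\Ann$ coefficient-wise: for $t = \sum a_{I,J}\uple{X}^{I}\uple{Y}^{J}$ set $t^{\sigma} := \sum \sigma(a_{I,J})\uple{X}^{I}\uple{Y}^{J}$. Since $\sigma$ is isometric on $\Wittr{\alg{\Ff_{p}}}$, the $I$-adic convergence of the coefficients is preserved, so $t^{\sigma} \in \Ann_{m,n}$ and $t \mapsto t^{\sigma}$ is an automorphism of the inductive system. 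The required compatibility $\sigma(t(\uple{x})) = t^{\sigma}(\sigma(\uple{x}))$ follows from continuity of $\sigma$ on $\Wittf(\alg{\Ff_{p}})$: $\sigma$ commutes with convergent series, and hence with the interpretation of the analytic symbols and their induced maps on the $\lt[k]$.

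The heart of the proof is $\sigma$-Henselianity, which I would obtain by invoking Proposition\,\ref{prop:max compl sigmaH}: it suffices to show that $\res[1]$ is linearly closed and that $\WittLAQs$ is $\sigma$-algebraically maximally complete. The residue field is $\alg{\Ff_{p}}$ and the induced action of $\sigma$ on it is the absolute Frobenius $x \mapsto x^{p}$, so a linear $\sigma$-equation $\sum_{i=0}^{n} a_{i}\sigma^{i}(x) = b$ with $a_{n} \neq 0$ becomes an ordinary polynomial equation of degree $p^{n}$ over the algebraically closed field $\alg{\Ff_{p}}$, hence solvable. For the maximality condition, I would prove the stronger fact that $\Wittf(\alg{\Ff_{p}})$ is maximally complete: any pseudo-convergent sequence $(x_{\alpha})$ of distinct elements has strictly increasing radii $\gamma_{\alpha} = \val(x_{\alpha+1}-x_{\alpha})$ in an infinite cyclic value group, so the index is at most countable and $\gamma_{\alpha}\to\infty$; the sequence is therefore Cauchy in the $p$-adic topology, and completeness of $\Wittf(\alg{\Ff_{p}})$ furnishes a limit, which is automatically a pseudo-limit.

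The main obstacle, if any, is bookkeeping rather than substance: one must match the abstract Cluckers--Lipshitz framework (allowing $\ValR \subsetneq \Val$) with the concrete Witt-vector setting, where $\ValR = \Val$, and check that $\sigma$ extends to all auxiliary data (the induced maps $E_{k}$ on $\lt[k]$, the quotient symbol $\Q$, the predicate $\DivR[1]$). Once these pieces are aligned, each individual verification reduces to a standard fact about $\Wittf(\alg{\Ff_{p}})$ and its Frobenius lifting, and Proposition\,\ref{prop:max compl sigmaH} does the heavy lifting.
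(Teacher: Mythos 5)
Your proposal is correct and follows the same route as the paper: reduce $\sigma$-Henselianity to Proposition\,\ref{prop:max compl sigmaH} by showing the residue field is linearly closed and the structure is ($\sigma$-algebraically) maximally complete, the latter coming from completeness of $\Wittf(\alg{\Ff_{p}})$ together with discreteness of the valuation. You fill in details the paper leaves implicit (the explicit verification of the $\TA$-axioms with $\ValR=\Val$, the coefficient-wise extension of $\sigma$ to $\Ann$, and the fact that the Frobenius action on $\alg{\Ff_{p}}$ turns linear $\sigma$-equations into solvable polynomial equations), but the structure of the argument is the same.
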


\begin{proof}
It is clear that $\WittLAQs\models\TA$. As $\Wittf(\alg{\Ff_{p}})$ is complete with a discrete valuation it is maximally complete. It follows from Proposition\,\ref{prop:max compl sigmaH} that it is $\sigma$-Henselian.
\end{proof}

In the definition of $\TAsH$, we have not required the residue field to be linearly closed, since it comes for free:

\begin{proposition}
Let $M\models\TAsH$, then $\K(M)$ is linearly closed.
\end{proposition}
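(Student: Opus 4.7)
The plan is to cast any given inhomogeneous linear difference equation $\sum_{i=0}^{n} a_i \sigma^i(x) = b$ (with $a_n \neq 0$) as a $\sigma$-Hensel configuration for the $\Sortrestr{\LAQ}{\K}(M)$-term $t(x_0, \ldots, x_n) := \sum_{i=0}^{n} a_i x_i - b$ and then apply $\sigma$-Henselianity to read off a solution. The trivial case $b = 0$ is handled by $x = 0$, so I may assume $b \neq 0$ from now on.

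First I would verify the linear approximation clause with $\uple{d} := (a_0, \ldots, a_n)$. Since $t$ is linear, the error $t(\sprol(c)) - t(\sprol(a)) - \uple{d}\cdot\sprol(c-a)$ vanishes identically, so the strict inequality in Definition~\ref{def:lin approx def}(ii) holds trivially whenever $c \neq a$; the degenerate case $c = a$ is covered by the same slight abuse already used for constant terms in the remark just after that definition. Hence $\uple{d}$ linearly approximates $t$ at prolongations on any open ball.

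Next I would arrange the valuation inequality $\val(t(\sprol(a))) > \min_i\{\val(d_i) + \sigma^i(\xi)\}$. With $a := 0$ the left-hand side is $\val(b)$. Because $\TAsH$ includes non-triviality of the value group and $\sigma$ acts on $\Valgp(M)$ as an order-preserving group automorphism, the tuple $(\sigma^i(\xi))_{0 \leq i \leq n}$ tends to $-\infty$ componentwise as $\xi$ decreases unboundedly in $\Valgp(M)$. Using $a_n \neq 0$, I pick $\xi$ small enough that $\val(a_n) + \sigma^n(\xi) < \val(b)$; then $\min_i\{\val(a_i) + \sigma^i(\xi)\} < \val(b)$ and the quadruple $(t, 0, \uple{d}, \xi)$ lies in $\sigma$-Hensel configuration. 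Applying $\sigma$-Henselianity delivers $c \in \K(M)$ with $t(\sprol(c)) = 0$, that is, $\sum_i a_i \sigma^i(c) = b$.

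I do not expect a serious obstacle here: for a linear $t$ both clauses of $\sigma$-Hensel configuration are essentially free, and the argument uses only that $\Valgp(M)$ is non-trivial and that $\sigma$ acts on it by an order-preserving automorphism, both of which are built into $\TAsH$. The one minor subtlety worth flagging is the convention in the $c = a$ case of the linear approximation definition; this parallels the abuse of notation explicitly allowed for constant terms in the paper and so creates no real difficulty.
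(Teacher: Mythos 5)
Your proof is correct and takes essentially the same route as the paper: cast the linear equation as a $\sigma$-Hensel configuration with center $0$ and appeal directly to $\sigma$-Henselianity. The only technical difference is cosmetic — the paper fixes the radius at $\xi = 0$ and rescales the variable by a small $\epsilon$ to push $\min_i\{\val(a_i\sigma^i(\epsilon))\}$ below $\val(b)$, whereas you leave the coefficients alone and decrease $\xi$ to achieve the same inequality; these are two faces of the same adjustment. If anything, your choice of working with $a_n$ (guaranteed non-zero by the definition of linear closure) is a little cleaner than the paper's, which writes the bound in terms of $\val(a_0)$ and would need the obvious repair when $a_0 = 0$.
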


\begin{proof}
Let  $c\in\K(M)$ and $P(x) = \sum_{i}a_{i}x_{i}$ be a non zero linear polynomial. Let $\epsilon \in \K(M)$ be such that $\val(\epsilon) < \val(c) - \val(a_{0})$. Then $P(\sprol(\epsilon x)) = \sum a_{i}\sigma^{i}(\epsilon)\sigma^{i}(x)$ and \[\min_{i}\{\val(a_{i}\sigma^{i}(\epsilon))\} \leq \val(a_{0}) + \val(\epsilon) < \val(c).\] Thus, we may assume that $\min_{i}\{\val(a_{i})\} < \val(c) = \val(P(\sprol(0)) - c)$. But, as $P$ is linear, $\uple{a}$ linearly approximates $P$ at prolongations on $\Mid$ and $(P-c,0,\uple{a},0)$ is in $\sigma$-Hensel configuration. As $M$ is $\sigma$-Henselian, there exists $e\in\K(M)$ such that $P(\sprol(e)) = c$.
\end{proof}

To conclude this section, let us show that $\TAsH$ behaves well with respect to coarsening. The following results will mainly be used in Section\,\ref{subsec:EQ} to transfer quantifier results form equicharacteristic zero to mixed characteristic.

Let $\LL$ be an $\lt$-enrichment of $\LAQs$ and $T$ be an $\LL$-theory containing $\TAsH[0,p]$ Morleyized on $\lt$ (cf. Definition\,\ref{def:Morl}). By Section\,\ref{sec:coar} we can find an $\ltinf$-enrichment $\Linf$ of $\Lltinf$ --- the $\infty$ in $\Lltinf$ is there to recall that the leading term structure is given by $\ltinf$ and not the $\lt[n]$, although, to add to the general confusion, the $\lt[n]$ are indeed present in the enrichment --- an $\Linf$-theory $\Tinf[1]\supseteq\Tvf[0,0]<\infty>$ and two functors $\Coar[1]:\Str(T)\to\Str(\Tinf[1])$ and $\UCoar[1] : \Str(\Tinf[1])\to\Str(T)$. For any $C$ in $\Str(T)$ we enrich $\Coar[1](C)$ by defining:
\begin{itemize}
\item $\cdot_{\infty}$ and $1_\infty$ to be the multiplicative group structure of $\ltinf$;
\item $0_{\infty}$ to be $(0_{n})_{n\in\Nn_{>0}}$;
\item $x\Div_{\infty} y$ to hold if for some $n$,  $\pi_{1}(x)\Div_{1}\ltf[1](p^{-n})\pi_{1}(y)$ holds;
\item $x +_{\infty,\infty} y$ to be $(\pi_{mn}(x)+_{mn,m}\pi_{mn}(y))_{m\in\Nn_{>0}}$ if there exists $n\in\Nn_{>0}$ such that $\pi_{n}(x)+_{n,1}\pi_{n}(y) \neq 0_{1}$ and $0_{\infty}$ otherwise;
\item $x\DivR[\infty] y$ to hold if $\pi_{1}(x)\DivR[1]\pi_{1}(y)$ holds;
\item $E_{\infty}(x)$ to be $(E_{k}(x))_{k\in\Nn_{>0}}$ for all $E$ in some $\inv*{Ann}[m,n]$;
\item $\sigma_{\infty}$ to be $(\sigma_{n}(x))_{n\in\Nn_{>0}}$;
\end{itemize}
and we obtain a new functor $\Coar[2]:\Str(T)\to\Str(\Tinf_{2})$ where $\Def{\Tinf[2]}{\Tinf[1]\cup\TAs[0,0]<\infty>}$. One can check that we still have an equivalence of categories induced by $\Coar[2]$ and $\UCoar[1]$ and that $\Coar[2]$ also respects cardinality up to $\aleph_{0}$ and $\aleph_{1}$-saturated models. Finally, by Corollary\,\ref{cor:enrich functor}, as $T$ is Morleyized on $\lt$, we obtain functors $\Coar[3]:\Str(T)\to\Str(\Morl*{T_{2}}[(\ltinf\cup\lt)])$ and $\UCoar[3]:\Str(\Morl*{T_{2}}[(\ltinf\cup\lt)])\to\Str(T)$. Note that in this case, because we only enrich by predicates, the full subcategory $\Full$ of $\Str(T)$ is not actually needed.

Let us now show that for all $M\models T$, $\Coar[3](M)\models\TAsH[0,0]<\infty>$.

\begin{proposition}[lin app coar]
Let $M\models T$ and $t : \K<n>\to \K$ be an $\Sortrestr{\LAQ}{\K}(M)$-term, $\uple{d}\in\K(M)$ and $b$ an open $\Valinf$-ball. Then if, in $\Coar[3](M)$, $\uple{d}$ linearly approximates $t$ at prolongations on $b$, then for any open $\Val$-ball $b' \subseteq b$, $\uple{d}$ also linearly approximates $t$ at prolongations on $b'$ in $M$.
\end{proposition}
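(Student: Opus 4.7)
The plan rests on a single elementary observation about ordered groups: whenever $\Delta$ is a convex subgroup of an ordered abelian group $G$ with quotient $\pi\colon G\to G/\Delta$, a strict inequality $\pi(x)>\pi(y)$ in $G/\Delta$ already forces $x>y+\delta$ for every $\delta\in\Delta$. Indeed, convexity implies that an element outside $\Delta$ either sits entirely above or entirely below $\Delta$, and in our situation $\pi(x-y)>0$ rules out the latter. Applied to $\Delta=\Delta_{p}$ and $\pi\comp\val=\valinf$, this says that \emph{$\valinf$-strict inequalities are strictly stronger than, and in particular imply, the corresponding $\val$-strict inequalities} (taking $\delta=0$).

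The rest is bookkeeping. Fix arbitrary $c$, $a\in b'$; since $b'\subseteq b$, we also have $c$, $a\in b$. Moreover, the functor $\Coar[3]$ leaves the $\K$-sort, the analytic interpretation of $t$, and the action of $\sigma$ on $\K$ untouched, so the element
\[
R:=t(\sprol(c))-t(\sprol(a))-\uple{d}\cdot\sprol(c-a)
\]
is literally the same element of $\K(M)$ when computed in either $M$ or $\Coar[3](M)$. By hypothesis, in $\Coar[3](M)$ we have
\[
\valinf(R)>\min_{i}\{\valinf(d_{i})+\valinf(\sigma^{i}(c-a))\}.
\]
Since $\pi$ is an order-preserving homomorphism, it commutes with $\min$ on $\Valgpinf$, so this inequality rewrites as
\[
\pi(\val(R))>\pi\bigl(\min_{i}\{\val(d_{i})+\val(\sigma^{i}(c-a))\}\bigr),
\]
and the observation above then yields $\val(R)>\min_{i}\{\val(d_{i})+\val(\sigma^{i}(c-a))\}$, which is exactly the inequality required for $\uple{d}$ to linearly approximate $t$ at prolongations on $b'$ in $M$.

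There is no serious obstacle in the argument; the only actual content is noting that the inclusion $b'\subseteq b$ together with the (essentially free) transfer of strict inequalities along the coarsening projection does all the work. The restriction of the conclusion to a $\Val$-ball (rather than to all of $b$) is purely a definitional artefact: linear approximation on a ball is defined only with respect to the valuation of the ambient structure, and $b$ itself is merely a $\Valinf$-ball in $M$, decomposing as a union of $\Val$-balls $b'$ on each of which the conclusion then holds.
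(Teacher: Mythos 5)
Your proof is correct and takes essentially the same route as the paper's: the paper fixes an index $i_{0}$ achieving the $\valinf$-minimum and then lifts the strict $\valinf$-inequality to a strict $\val$-inequality, while you package the identical observation as ``$\pi$ commutes with $\min$'' followed by the lift of strict inequalities along the coarsening quotient. Both hinge on the same elementary fact about convex subgroups, so there is nothing substantive to add.
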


\begin{proof}
For all $a$ and $e\in b' \subseteq b$, we have \[\valinf(t(\sprol(a)) - t(\sprol(e)) - \uple{d}\cdot \sprol(a-e)) > \min_{i}\{\valinf(d_{i}) + \valinf(\sigma^{i}(a-e))\}.\] Let $i_{0}$ be such that $\valinf(d_{i_{0}}) + \valinf(\sigma^{i_{0}}(a-e))$ is minimal, then we have $\val(t(\sprol(a)) - t(\sprol(e)) - \uple{d}\cdot \sprol(a-e)) > \val(d_{i_{0}}) + \val(\sigma^{i_{0}}(a-e)) \geq \min_{i}\{\val(d_{i}) + \val(\sigma^{i}(a-e))\}$.
\end{proof}

\begin{proposition}[Hensel coar]
Let $M\models T$, then $\Coar[3](M)$ is $\sigma$-Henselian (for the valuation $\valinf$).
\end{proposition}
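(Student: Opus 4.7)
My plan is to descend any coarsened $\sigma$-Hensel configuration in $\Coar[3](M)$ to a fine-valuation $\sigma$-Hensel configuration in $M$ and then invoke $M \models T \supseteq \TAsH[0,p]$. So let $(t, a, \uple{d}, \xi)$ be in $\sigma$-Hensel configuration in $\Coar[3](M)$ for $\valinf$; the case $t(\sprol(a)) = 0$ is immediate (take $c = a$), so assume $t(\sprol(a)) \neq 0$. Fix a lift $\xi_0 \in \Valgp(M)$ of $\xi$ and set
\[\eta_i := \sigma^{-i}(\val(t(\sprol(a)))) - \sigma^{-i}(\val(d_i))\]
for each $i$ with $d_i \neq 0$. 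The coarsened Hensel bound $\valinf(t(\sprol(a))) > \min_i\{\valinf(d_i) + \sigma^i(\xi)\}$ unfolds, term by term, to $\valinf(\eta_i) > \xi$ for every such $i$, which, since $\Delta_p$ is the convex subgroup generated by $\val(p)$ and $\sigma$ fixes $\val(p)$, translates to $\eta_i > \xi_0 + n\val(p)$ for every $n \in \Nn$.

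The crux will be to choose a radius $\xi' \in \Valgp(M)$ strictly between $\xi_0 + \Delta_p$ and $\min_i \eta_i$. I would take $\xi' := \min_i \eta_i - \val(p)$. Using once again that $\sigma$ fixes $\Delta_p$, one checks both that $\xi' > \xi_0 + n\val(p)$ for every $n$ --- so $\oball[\Val]{\xi'}{a} \subseteq \oball[\Valinf]{\xi}{a}$ --- and that $\xi' < \eta_i$ strictly for every $i$, whence applying $\sigma^i$ gives $\val(t(\sprol(a))) = \val(d_i) + \sigma^i(\eta_i) > \val(d_i) + \sigma^i(\xi')$, and hence $\val(t(\sprol(a))) > \min_i\{\val(d_i) + \sigma^i(\xi')\}$. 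Proposition~\ref{prop:lin app coar} then transfers the linear approximation of $t$ at prolongations by $\uple{d}$ from $\oball[\Valinf]{\xi}{a}$ in $\Coar[3](M)$ down to $\oball[\Val]{\xi'}{a}$ in $M$, and therefore $(t, a, \uple{d}, \xi')$ is in $\sigma$-Hensel configuration in $M$ for the fine valuation $\val$.

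Invoking the $\sigma$-Henselianity of $M$ will then yield $c \in \K(M)$ with $t(\sprol(c)) = 0$ and $\val(c - a) \geq \max_i \eta_i$; projecting to $\Valgpinf$ gives $\valinf(c - a) \geq \max_i \valinf(\eta_i) = \max_i \valinf(\sigma^{-i}(t(\sprol(a)) d_i^{-1}))$, which is exactly the required bound in $\Coar[3](M)$. The only delicate point is the construction of $\xi'$ without having to extend $M$ to a saturated model: a priori the Dedekind cut $\xi_0 + \Delta_p < \,\cdot\, < \min_i \eta_i$ need not be realized in an arbitrary $\Valgp$, but the explicit formula $\xi' = \min_i \eta_i - \val(p)$ dodges this worry precisely because $\Delta_p$ coincides with $\bigcup_n [-n\val(p), n\val(p)]$, so one step of $\val(p)$ down from above $\xi_0 + \Delta_p$ stays above $\xi_0 + \Delta_p$.
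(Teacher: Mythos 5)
Your overall strategy is exactly the paper's: produce an explicit fine-valuation radius $\xi'$ lying strictly above $\xi_0 + \Delta_p$ and strictly below enough of the $\eta_i$ that $(t,a,\uple{d},\xi')$ is in $\sigma$-Hensel configuration in $M$, then invoke $\sigma$-Henselianity of $M$, and project the distance bound. The clean observation that one step of $\val(p)$ down from above $\xi_0+\Delta_p$ stays above $\xi_0+\Delta_p$, so no saturation is needed, is also the right one. However, there is a genuine gap in the ``unfolds term by term'' step.

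The coarsened Hensel bound $\valinf(t(\sprol(a))) > \min_{i}\{\valinf(d_{i}) + \sigma^{i}(\xi)\}$ is a statement about the \emph{minimum}: it asserts $\valinf(t(\sprol(a))) > \valinf(d_{i_{0}}) + \sigma^{i_{0}}(\xi)$ for the $i_{0}$ achieving the minimum, equivalently $\valinf(\eta_{i_{0}}) > \xi$ \emph{for that single} $i_{0}$. It does not give $\valinf(\eta_{i}) > \xi$ for every $i$; that would be the statement with $\max$ in place of $\min$. For indices $i$ with $\valinf(d_{i})$ large, one can easily have $\valinf(\eta_{i}) \leq \xi$ (e.g.\ with $\sigma_{\Valgp} = \id$, take $\valinf(d_{0}) = 0$, $\valinf(d_{1})$ large, and $\valinf(t(\sprol(a)))$ only slightly above $\xi$). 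Consequently $\min_{i}\eta_{i}$ may project \emph{below} $\xi$, your $\xi' = \min_{i}\eta_{i} - \val(p)$ then also projects below $\xi$, the inclusion $\oball[\Val]{\xi'}{a} \subseteq \oball[\Valinf]{\xi}{a}$ fails, and Proposition~\ref{prop:lin app coar} cannot be applied.

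The fix is small: pick $i_{0}$ achieving $\min_{i}\{\valinf(d_{i}) + \sigma^{i}(\xi)\}$ and set $\xi' := \eta_{i_{0}} - \val(p)$ (which is exactly the paper's $\val(r)$ with $r = \sigma^{-i_{0}}(t(\sprol(a))d_{i_{0}}^{-1}p^{-1})$). Then $\valinf(\xi') = \valinf(\eta_{i_{0}}) > \xi$ gives the ball inclusion, and $\val(d_{i_{0}}) + \sigma^{i_{0}}(\xi') = \val(t(\sprol(a))) - \val(p) < \val(t(\sprol(a)))$ gives the fine Hensel bound, since this single term already dominates the $\min$. (Alternatively $\xi' := \max_{i}\eta_{i} - \val(p)$ would also work, but $\min_{i}\eta_{i}$ does not.) The rest of your argument --- the application of Proposition~\ref{prop:lin app coar}, $\sigma$-Henselianity of $M$, and the projection of the conclusion --- then goes through as written.
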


\begin{proof}
Let $(t,a,\uple{d},\xi)$ be in $\sigma$-Hensel configuration in $\Coar[3](M)$. Let $i_{0}$ be such that $\valinf(d_{i_{0}}) + \sigma^{i_{0}}(\xi)$ is minimal. As $(t,a,\uple{d},\xi)$ is in $\sigma$-Hensel configuration, $\val(t(\sprol(a))) > \valinf(d_{i_{0}}) + \sigma^{i_{0}}(\xi)$. Let $r = \sigma^{-i_{0}}(t(\sprol(a))d_{i_{0}}^{-1}p^{-1})$. Then \[\val(t(\sprol(a))) > \val(d_{i_{0}}) + \val(\sigma^{i_{0}}(r)) \geq \min_{i}\{\val(d_{i}) + \val(\sigma^{i}(r))\}.\] Moreover, \[\valinf(\sigma^{i_{0}}(r)) = \valinf(t(a)) - \valinf(d_{i_{0}}) > \sigma^{i_{0}}(\xi),\] i.e. $\valinf(r) > \xi$. It follows that $\oball[\Val]{\val(r)}{a} \subseteq \oball[\Valinf]{\xi}{a}$ and hence, by Proposition\,\ref{prop:lin app coar}, $\uple{d}$ linearly approximates $t$ at prolongations on $\oball[\Val]{\val(r)}{a}$ and $(t,a,\uple{d},\val(r))$ is in $\sigma$-Hensel configuration.

By $\sigma$-Henselianity of $M$, we can find $c\in\K(M)$ such that $t(\sprol(c)) = 0$ and $\val(c-a)\geq \max_{i}\{\val(\sigma^{-i}(t(\sprol(a))d_{i}^{-1}))\}$, and hence $\valinf(c-a) \geq \valinf(\sigma^{-i}(t(\sprol(a))d_{i}^{-1}))$ for all $i$.
\end{proof}

It follows from those two propositions that we can further enrich $\Morl*{T_{2}}[(\ltinf\cup\lt)]$ so that it is an $\lt$-enrichment of $\TAsH[0,0]<\infty>$. Hence we have proved:

\begin{proposition}[Coarsen TAsH]
Let $\LL$ be an $\lt$-enrichment of $\LAQs$ and $T$ be an $\LL$-theory containing $\TAsH[0,p]$ Morleyized on $\lt$. There exists an $\ltinf$-enrichment $\Linf$ of $\LAQs<\infty>$ --- with new sorts $\lt = \bigcup_{n}\lt[n]$ --- and an $\Linf$-theory $\Tinf\supseteq\TAsH[0,0]<\infty>$ Morleyized on $\ltinf\cup\lt$, and functors $\Coar:\Str(T)\to\Str(\Tinf)$ and $\UCoar:\Str(\Tinf)\to\Str(T)$ that respect cardinality up to $\aleph_{0}$ and induce an equivalence of categories between $\Str(T)$ and $\Str[\Coar,(\card{\LL}^{\aleph_{1}})^{+}](\Tinf)$ and such that $\UCoar$ respects models and elementary submodels and sends $\ltinf\cup\lt$ to $\lt$ and $\Coar$ respects $(\card{\Ann}^{\aleph_{1}})^{+}$-saturated models.
\end{proposition}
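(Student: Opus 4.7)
My plan is to assemble the proposition as the culmination of the three previous coarsening-functor constructions, invoking in turn Proposition\,\ref{prop:red zero}, then the preceding enrichment discussion (defining $\Coar[2]$), then Propositions\,\ref{prop:lin app coar} and \ref{prop:Hensel coar}, and finally Corollary\,\ref{cor:enrich functor}. In fact, essentially all of the work has already been carried out in the paragraphs immediately preceding the statement; the proposition is really a clean packaging of these facts.

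First I would begin with $\Coar[1]:\Str(T)\to\Str(\Tinf[1])$ and $\UCoar[1]$ obtained from Proposition\,\ref{prop:red zero}, which already gives cardinality preservation up to $\aleph_{0}$, preservation of $\aleph_{1}$-saturated models by $\Coar[1]$, preservation of models and elementary submodels by $\UCoar[1]$, and the desired equivalence of categories between $\Str(T)$ and $\Str[\Coar[1],\aleph_{1}](\Tinf[1])$. Next I would enrich the target so that $\ltinf$ carries all the structure listed in the bullet points before the statement — the multiplicative structure, the divisibility, coarsened addition coming from the $+_{mn,m}$, the $\DivR[\infty]$, the analytic symbols $E_{\infty}$, and the automorphism $\sigma_{\infty}$. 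This produces $\Coar[2]:\Str(T)\to\Str(\Tinf[2])$ where $\Tinf[2]\supseteq\Tinf[1]\cup\TAs[0,0]<\infty>$. The verification that $\Coar[2]$ and $\UCoar[1]$ still give an equivalence is a routine diagram chase: every new symbol on $\ltinf$ is defined coordinatewise from the $\lt[n]$-structure, so $\Linf$-embeddings automatically respect it.

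The one thing beyond functoriality is verifying that the image lands in models of $\sigma$-Henselian equicharacteristic zero analytic difference fields, and this is exactly what Propositions\,\ref{prop:lin app coar} and \ref{prop:Hensel coar} provide. Once we know $\Coar[2](M)\models\TAsH[0,0]<\infty>$ for all $\aleph_{1}$-saturated $M\models T$, we can close the definition of $\Tinf[2]$ under all consequences and still keep the same functors. Cardinality preservation up to $\aleph_{0}$ and preservation of $\aleph_{1}$-saturated models are inherited directly from $\Coar[1]$, since the added structure on $\ltinf$ is interpretable in $\Coar[1](M)$.

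Finally, I would apply Corollary\,\ref{cor:enrich functor} to Morleyize on $\ltinf\cup\lt$ and obtain the promised $\Coar = \Coar[3]$, $\UCoar = \UCoar[3]$ between $\Str(T)$ and $\Str(\Tinf)$ where $\Tinf = \Morl*{\Tinf[2]}[(\ltinf\cup\lt)]$. The bookkeeping step here is to check that Morleyization preserves the equivalence on the correct full subcategory, and to track how the saturation cardinal degrades: moving from $\aleph_{1}$-saturated to the Morleyized theory forces us to pass to $(\card{\LL}^{\aleph_{1}})^{+}$-saturated models so that all the new predicate symbols are witnessed, and preservation of saturated models by $\Coar$ degrades accordingly to $(\card{\Ann}^{\aleph_{1}})^{+}$ because $\LAQs$ has size $\card{\Ann}+\aleph_{0}$. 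The main (and only mildly subtle) obstacle is keeping these cardinality bookkeeping issues straight and ensuring that the composed equivalence still identifies $\Str(T)$ with $\Str[\Coar,(\card{\LL}^{\aleph_{1}})^{+}](\Tinf)$; everything else is a direct appeal to what has already been proved.
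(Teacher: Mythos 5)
Your proposal is correct and follows essentially the same route as the paper's own argument: the proposition is indeed a wrap-up of $\Coar[1]$ from Proposition\,\ref{prop:red zero}, the enrichment of the $\ltinf$-sort giving $\Coar[2]$, the verification of $\sigma$-Henselianity via Propositions\,\ref{prop:lin app coar} and \ref{prop:Hensel coar}, and a final appeal to Corollary\,\ref{cor:enrich functor}. The only difference is one of ordering --- you check that the coarsened structure is a model of $\TAsH[0,0]<\infty>$ at the $\Coar[2]$ stage and then Morleyize, whereas the paper first applies Corollary\,\ref{cor:enrich functor} to obtain $\Coar[3]$ and then verifies $\sigma$-Henselianity of $\Coar[3](M)$; since Morleyization is a definable enrichment on $\ltinf\cup\lt$, this makes no difference, and your version is arguably cleaner.
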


Similarly, we can prove the existence of these functors in the analytic and in the algebraic setting, and these functors are actually induced by those in the analytic difference case.

\begin{proposition}[Coar THen]
Let $\LL_{\ann}$ be any $\lt$-extension of $\LAQ$ contained in $\LL$ and $\LL_{\algop}$ be any $\lt$-extension of $\Lltplus$ contained in $\LL_{\ann}$. Define $\Def{T_{\ann}}{\Langrestr{T}{\LL_{\ann}}}$, and $\Def{T_{\algop}}{\Langrestr{T}{\LL_{\algop}}}$. Assume that both $T_{\ann}$ and $T_{\algop}$ are Morleyized on $\lt$.
\begin{thm@enum}
\item There exists an $\ltinf$-enrichment $\Linf[\ann]$ of $\LAQ<\infty>$ and an $\Linf[\ann]$-theory $\Tinf[\ann]\supseteq\TA[0,0]<\infty>$ Morleyized on $\ltinf\cup\lt$, and functors $\Coar[\ann]:\Str(T_{\ann})\to\Str(\Tinf[\ann])$ and $\UCoar[\ann]:\Str(T_{\ann})\to\Str(T_{\ann})$ with the same properties  as in Proposition\,\ref{prop:Coarsen TAsH}.

Moreover $\Coar[\ann](\Langrestr{\cdot\ }{\LL_{\ann}}) = \Langrestr{\Coar(\cdot)}{\Linf[\ann]}$ and similarly for $\UCoar[\ann]$.

\item There exists an $\ltinf$-enrichment $\Linf[\algop]$ of $\LL^{\ltinf^{+}}$ and an $\Linf[\algop]$-theory $\Tinf[\algop]\supseteq\THen[0,0]<\infty>$ Morleyized on $\ltinf\cup\lt$, and functors $\Coar[\algop]:\Str(T_{\algop})\to\Str(\Tinf[\algop])$ and $\UCoar[\algop]:\Str(\Tinf[\algop])\to\Str(T_{\algop})$ with the same properties  as in Proposition\,\ref{prop:Coarsen TAsH}.

Moreover $\Coar[\algop](\Langrestr{\cdot\ }{\LL_{\algop}}) = \Langrestr{\Coar[\ann](\Langrestr{\cdot\ }{\LL_{\ann}})}{\Linf[\algop]} = \Langrestr{\Coar(\cdot)}{\Linf[\algop]}$ and similarly for $\UCoar[\algop]$.
\end{thm@enum}
\end{proposition}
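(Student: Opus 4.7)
The plan is to run through the construction of Proposition~\ref{prop:Coarsen TAsH} mutatis mutandis in each reduced setting, simply omitting the symbols that fall outside the target language and checking that the coarsened structure still models the appropriate weaker theory.

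For (i), I would apply Proposition~\ref{prop:red zero} directly to $T_{\ann}$---its hypothesis only requires the theory to extend $\Tvf[0,p]$---to obtain base functors into a category of equicharacteristic zero structures in an $\ltinf$-enrichment of $\Lltinf$. On top of this I would enrich $\Coar(C)$ with the same interpretations of $1_{\infty}$, $\cdot_{\infty}$, $0_{\infty}$, $\Div_{\infty}$, $+_{\infty,\infty}$, $\DivR[\infty]$, $\Q$, and $E_{\infty}$ (for every $E\in\inv*{Ann}[m,n]$) that are used in the proof of Proposition~\ref{prop:Coarsen TAsH}, just dropping $\sigma_{\infty}$. The crucial observation is that the distinguished valuation subring $\ValR$, regarded as a subset of $\K$, is unchanged by coarsening: we have $\ValR\subseteq\Val\subseteq\Valinf$ and $\Midinf\subseteq\Mid\subseteq\MidR$, so every interpretation morphism $i_{m,n}\colon\Ann_{m,n}\to\ValR<\ValR<m>\times\MidR<n>>$ remains well-defined, and the axioms of $\TA$---formulated entirely in terms of $\ValR$ and $\MidR$, never of $\Val$---are preserved. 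Henselianity of $(\K,\valinf)$ follows from Henselianity of $(\K,\val)$ as coarsening preserves Henselianity. Finally, Morleyizing on $\ltinf\cup\lt$ via Corollary~\ref{cor:enrich functor} yields $\Coar_{\ann}$ and $\UCoar_{\ann}$.

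For (ii), I would proceed identically but starting from $T_{\algop}$, adding only the leading term multiplicative group and sum symbols $1_{\infty}$, $\cdot_{\infty}$, $0_{\infty}$, $\Div_{\infty}$, $+_{\infty,\infty}$; Henselianity of the coarsened valued field is classical, so the output sits inside $\THen[0,0]<\infty>$. All the functorial properties listed in Proposition~\ref{prop:Coarsen TAsH}---respect of cardinality up to $\aleph_{0}$, equivalence of categories on the essential image, preservation of saturated models by the coarsening, and of models and elementary submodels by the uncoarsening---transfer verbatim, since their proofs in the difference case never made use of the difference or analytic enrichments. The compatibility equalities $\Coar_{\ann}(\Langrestr{\cdot\ }{\LL_{\ann}})=\Langrestr{\Coar(\cdot)}{\Linf[\ann]}$ (and their analogues for $\UCoar$ and for the algebraic reduction) then hold because the constructions are carried out sort by sort and symbol by symbol; the common underlying set $\ltinf(\Coar(C))=\limproj\lt[n](C)$ does not depend on $\sigma$, and each symbol of $\Linf[\ann]$ or $\Linf[\algop]$ receives identical interpretations on both sides.

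The only real point to verify---that the analytic axioms of $\TA$ survive the coarsening in case (i)---is resolved by the coarsening-invariance of $\ValR$ and $\MidR$ recorded above; everything else is bookkeeping already handled in Proposition~\ref{prop:Coarsen TAsH}.
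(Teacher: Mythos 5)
Your proof is correct and follows essentially the same route the paper sketches—run the construction of Proposition~\ref{prop:Coarsen TAsH} with the symbols that fall outside the target sublanguage dropped, then verify that the coarsened models satisfy the appropriate weaker theory. Your explicit observation that $\ValR$, $\MidR$, and hence the interpretation morphisms of the analytic structure are untouched by coarsening (since $\ValR\subseteq\Val\subseteq\Valinf$ and $\Midinf\subseteq\Mid\subseteq\MidR$) makes explicit a check the paper leaves implicit when it asserts the coarsened structure remains a model of the analytic theory.
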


\section{Reduction to the algebraic case}\label{sec:alg}

In the following section, let $\LL_{\ann}$ be an $\lt$-enrichment of $\LAQ$ and let $T_{\ann}$ be an $\LL_{\ann}$-theory containing $\TA$, Morleyized on $\lt$.  We define $\Def{\LL_{\algop}}{\LL_{\ann}\sminus(\Ann\cup\{\Q\})}$ --- it is an $\lt$-enrichment of $\Lltplus$ --- and $T_{\algop} = \Langrestr{T_{\ann}}{\LL_{\algop}}$. As previously, if there are new sorts $\Sigma_{\lt}$, we write $\lt$ for $\lt\cup\Sigma_{\lt}$.

\begin{remark}[ext frac field ann]
Let $M_{1}$ and $M_{2}\models T_{\ann}$, $C_{i}\subs M_{i}$ and $f: C_{1}\to C_{2}$ an $\LL_{\ann}$-isomorphism. Obviously, $f$ extends uniquely to $\genan{C_{1}}$. As $\LL_{\ann}$ contains $\Q$, $\K(\genan{C_{1}})$ is a field. Hence any partial $\LL_{\ann}$-isomorphism with domain $C$ has a unique extension to $\Frac{\K(C)}$.
\end{remark}

Although it is well-known, the algebraic case (i.e. in $\LL_{\algop}$) is a bit more complicated because we do not have $\Q$ in $\LL_{\algop}$.

\begin{proposition}[ext frac field alg]
Let $M_{1}$ and $M_{2}\models T_{\algop}$ be two $\LL_{\algop}$-structures, $C_{i}\subs M_{i}$ and $f: C_{1}\to C_{2}$ be an $\Lltplus$-isomorphism. If $\ltf(\Frac{\K(C_{1})})\subseteq \lt(C_{1})$ then $f$ has a unique extension to $\Frac{\K(C_{1})}$.
\end{proposition}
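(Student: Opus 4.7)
The plan is to define the extension $\bar f$ explicitly and verify it is an $\Lltplus$-morphism; uniqueness will then be essentially automatic. Since $\bar f$ must restrict to a ring homomorphism on $\K$ extending $f|_{\K(C_{1})}$, and $\Frac{\K(C_{1})}$ is the field of fractions of the integral domain $\K(C_{1}) \subseteq \K(M_{1})$, the only possible definition on $\K$ is
$$\bar f(a/b) := f(a)/f(b)$$
for $a,b \in \K(C_{1})$ with $b \neq 0$, the quotient being computed in the field $\K(M_{2})$; note that $f(b) \neq 0$ by injectivity of $f$. On the remaining sorts (in particular on all of $\lt$) we set $\bar f = f$.

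Well-definedness on $\K$ is immediate: if $a/b = a'/b'$ then $ab' = a'b$ in $\K(C_{1})$, hence $f(a)f(b') = f(a')f(b)$, so $f(a)/f(b) = f(a')/f(b')$. That $\bar f$ is a ring homomorphism on $\Frac{\K(C_{1})}$ extending $f|_{\K(C_{1})}$ is the universal property of the fraction field.

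The one nontrivial verification is that $\bar f$ commutes with $\ltf[n]: \K \to \lt[n]$ for every $n$. The hypothesis $\ltf(\Frac{\K(C_{1})}) \subseteq \lt(C_{1})$ is used precisely here: it guarantees that $\ltf[n](a/b) \in \lt(C_{1})$, hence in the domain of $f$. Using that $\ltf[n]$ is multiplicative on $\inv*{K}$ with values in the multiplicative group $\inv*{lt}[n]$, we have $\ltf[n](a) = \ltf[n](a/b) \cdot_{n} \ltf[n](b)$ whenever $b \neq 0$. Applying $f$ and using preservation of $\cdot_{n}$ on $\lt[n]$ gives
$$\ltf[n](f(a)) = f(\ltf[n](a)) = f(\ltf[n](a/b)) \cdot_{n} \ltf[n](f(b)),$$
which forces $f(\ltf[n](a/b)) = \ltf[n](f(a)/f(b)) = \ltf[n](\bar f(a/b))$, as required. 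All other symbols of $\Lltplus$ live entirely on the $\lt$-sorts, so their preservation (and the fact that the image is an $\Lltplus$-substructure of $M_{2}$, applying the hypothesis through $f$) is inherited from $f$.

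Uniqueness follows since $\bar f$ on $\K$ is forced by the ring-homomorphism condition and $\bar f$ agrees with $f$ on every other sort. There is no real technical obstacle here; the substantive point is understanding why the hypothesis is needed. Without $\ltf(\Frac{\K(C_{1})}) \subseteq \lt(C_{1})$, one would have to enlarge $\lt(C_{1})$ by new leading-term values $\ltf[n](a/b)$, and the action of $\bar f$ on these new elements would not be determined by $f$ alone, so both uniqueness and a canonical extension would fail.
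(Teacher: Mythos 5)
Your proof is correct and follows essentially the same route as the paper's: define the extension on $\K$ via the universal property of the fraction field, keep $f$ on $\lt$, and verify preservation of the $\ltf[n]$ using multiplicativity together with the hypothesis that $\ltf[n](a/b)$ lands in $\lt(C_1)$. The paper invokes its Lemma~\ref{lem:field ext} to reduce to checking the $\ltf[n]$ and writes the verification using the inverse in $\inv*{lt}[n]$ (noting that the hypothesis is what makes $\Sortrestr{f}{\lt}$ commute with that inverse), whereas you argue by cancellation in $M_{2}$ and informally re-derive the lemma's conclusion; these are cosmetic differences.
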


\begin{proof}
Let $\Sortrestr{f'}{\K}$ be the unique extension of $\Sortrestr{f}{\K}$ to $\Frac{\K(C_{1})}$. It is a ring morphism. By Lemma\,\ref{lem:field ext}, it suffices to show that $\Sortrestr{f'}{\K}\cup\Sortrestr{f}{\lt}$ respects the $\ltf[n]$. As $\ltf(\Frac{\K(C_{1})})\subseteq \lt(C_{1})$, $\Sortrestr{f}{\lt}$ commutes with the inverse on any $\ltf[n]$ and hence
\[\ltf[n](f'(a/b)) = \ltf[n](f(a)f(b)^{-1}) = f(\ltf[n](a))f(\ltf[n](b)^{-1}) = f(\ltf[n](a/b)).\]

This concludes the proof.
\end{proof}

In the following proposition we will be working in equicharacteristic zero, hence, to avoid needlessly cluttered notation, we will write $\res$, $\resf$, $\lt$ and $\ltf$ for $\res[1]$, $\resf[1]$, $\lt[1]$ and $\ltf[1]$.

\begin{proposition}[red alg](Reduction to the algebraic case)
Suppose $T_{\ann}\supseteq \TA[0,0]$. Let $M_{1}$ and $M_{2}\models T_{\ann}$, $f : M_{1}\to M_{2}$ be a partial $\LL_{\ann}$-isomorphism with domain $C_{1}\substr M_{1}$ and $a_{1}\in M_{1}$. If $f$ can be extended to an $\LL_{\algop}$-isomorphism $f'$ whose domain contains $a_{1}$, then $f$ can be extended to an $\LL_{\ann}$-isomorphism sending $a_{1}$ to $f'(a_{1})$.
\end{proposition}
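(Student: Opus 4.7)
The plan is to extend $f'$ to an $\LL_{\ann}$-isomorphism $g$ defined on $\genan[C_1]{a_1}$. Setting $a_2 := f'(a_1)$, I will define $g$ syntactically by $g(t(a_1)) := t^{f}(a_2)$ for every $\Sortrestr{\LL_{\ann}}{\K}(C_1)$-term $t$ in one variable, where $t^{f}$ denotes the term obtained by replacing each parameter in $t$ by its image under $f$; similarly on $\lt$-elements, $g(\ltf[n](t(a_1))) := \ltf[n](t^{f}(a_2))$. The tasks are then well-definedness, bijectivity, and preservation of all $\LL_{\ann}$-symbols.

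The core technical step is the equivalence $t(a_1)=0$ iff $t^{f}(a_2)=0$. After using $\Q$ to reduce to the case $\valR(a_1)\geq 0$, I apply Weierstrass preparation for terms (Definition \ref{def:Weierstrass prep}): $\ValR(M_1)$ is covered by finitely many swiss cheeses $S_i\in\SWC<\ValR>(C_1)$ such that on each $S_i$, $t = E_i R_i$ with $E_i$ a strong unit and $R_i\in\K(C_1)(X)$ a rational function with no pole in $S_i(\alg{\K(M_1)})$. Picking $i_0$ with $a_1\in S_{i_0}$, we have $t(a_1)=0$ iff $R_{i_0}(a_1)=0$, since $E_{i_0}(a_1)\neq 0$. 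The condition $a_1\in S_{i_0}$ is a quantifier-free $\Ldiv(C_1)$-formula, preserved by the $\LL_{\algop}$-isomorphism $f'$; and $R_{i_0}(a_1)=0$ is a polynomial condition over $\K(C_1)$, also preserved. The uniform first-order expressibility of Weierstrass preparation (Remark \ref{rem:Weierstrass div 1st order}) produces the corresponding decomposition for $t^{f}$ on $f(S_{i_0})$ in $M_2$, yielding $t^{f}(a_2)=0$ iff $R_{i_0}^{f}(a_2)=0$, and the equivalence follows. Injectivity of $g$ is symmetric, and well-definedness on $\lt$-elements reduces similarly, since an equality $\ltf[n](t(a_1))=\ltf[n](s(a_1))$ unpacks into a valuation inequality on $(t-s)(a_1)$, again amenable to Weierstrass reduction.

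For preservation of the remaining $\LL_{\ann}$-symbols: ring operations, $\Q$, and the analytic function symbols in $\Ann$ are preserved trivially by the syntactic definition of $g$; the maps $\ltf[n]$ are preserved by construction; and the induced $E_k$ on $\lt$ are preserved because by Corollary \ref{cor:ltf unit} they factor as $E_k\comp\ltf[k] = \ltf[k]\comp E$. The extra $\lt$-enrichment carried by $T_{\ann}$ poses no difficulty, since $T_{\ann}$ is Morleyized on $\lt$ and the $\lt$-part of $\genan[C_1]{a_1}$ is generated by $\lt(C_1)$ together with the newly transported $\ltf[n](t(a_1))$'s. The main obstacle will be the leading-term computation for the strong-unit factor: writing $E_{i_0}(a_1) = e\cdot F((a_1-c)/d,\uple{a})$ for an open sub-ball $\oball[\Val]{\val(d)}{c}$ around $a_1$ inside $S_{i_0}$, with $e,\uple{a}\in\genan[C_1]{cd}$ and $F\in\Ann$, one must check that $\ltf[n](E_{i_0}(a_1))$ transports correctly under $g$; this combines Corollary \ref{cor:ltf unit} (so that $\ltf[n](F(\cdot))$ depends only on $\resf[n]$ of its argument) with an induction on term complexity, since the parameters $e$ and $\uple{a}$ themselves live in the analytic substructure generated by $C_1$, $c$, and $d$.
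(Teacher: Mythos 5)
Your overall shape is right — reduce to $\valR(a_1)\geq 0$ via $\Q$, transport the zero set via Weierstrass preparation, use Corollary~\ref{cor:ltf unit} to control the strong-unit factor, and conclude via a variant of Lemma~\ref{lem:field ext}. The well-definedness argument ($t(a_1)=0$ iff $t^f(a_2)=0$) is essentially the paper's. But there are two genuine gaps, one of which you flag yourself but do not close.

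\textbf{(1) The $\lt$-extension step is missing.} You define $g$ syntactically on $\lt$-elements by $g(\ltf[n](t(a_1))):=\ltf[n](t^f(a_2))$ and wave at Morleyization to handle the $\lt$-enrichment. This does not work: the enrichment on $\lt$ may contain arbitrary new predicates and functions, and a syntactic recipe dictated by the field structure gives no control over them. Concretely, if $P$ is a new unary predicate on $\lt[1]$ and $\ltf[1](t(a_1))\notin\lt(C_1)$, nothing in your construction determines whether $P(\ltf[1](t^f(a_2)))$ holds in $M_2$. The paper resolves this \emph{before} the field extension: since $T_{\algop}$ is also Morleyized on $\lt$ and $\lt$ is closed, Lemma~\ref{lem:ext on lt} lets one extend $f'$ on $\lt$ so that $\lt(\genan[C_1]{a_1})\subseteq\lt(C_1)$. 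After that, the arbitrary enrichment is carried by the already-extended $f$, and the remaining task is only the compatibility $\ltf(t^f(a_2))=f(\ltf(t(a_1)))$. Without this step your compatibility condition does not even type-check, because $f$ is not defined at $\ltf(t(a_1))$.

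\textbf{(2) The leading-term transport is not proved, and your proposed fix does not work.} You correctly identify this as the main obstacle, but ``induction on term complexity'' combined with Corollary~\ref{cor:ltf unit} cannot close it. The strong-unit form $E(x)=e\,F((x-c)/d,\uple{a})$ holds on an open sub-ball chosen \emph{depending on $a_1$}, with $c,d$ merely $\ACVF$-algebraic over $C_1$ (not in $C_1$), and the parameters $e,\uple{a}$ live in $\genan[C_1]{cd}$. There is no canonical choice of $c,d$, so these data do not transport under $f'$, and ``term complexity'' is not decreasing in a way that would drive an induction — the parameters involve new algebraic elements, not sub-terms. The paper's argument is structurally different: it invokes the b-minimality result of Cluckers--Loeser to produce an $\LL_{\algop}(C_1)$-definable map $g:\K\to\prod_i\lt[n_i]$ whose fibers are open $\Val$-balls on which every relevant $\ltf(P)$, $\ltf(Q)$, $\ltf(R_i)$ is constant; by Corollary~\ref{cor:ltf unit}, $\ltf(t)$ is then constant on $g^{-1}(\uple\alpha)\ni a_1$. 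The statement $\forall x\,(g(x)=\uple\alpha\imp\ltf(t(x))=\beta)$ is an $\LAQ(C_1)$-formula — crucially, with parameters only in $C_1$ — preserved by the $\LL_{\ann}$-elementary $f$, and $g^f(a_2)=f(\uple\alpha)$ because $g$ is $\LL_{\algop}(C_1)$-definable and $f'$ is $\LL_{\algop}$-elementary. This is the step that replaces the auxiliary ball parameters by $\lt(C_1)$-data and is essential; your proof would need it or a genuine substitute.
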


\begin{proof}
First, because $\Sortrestr{T_{\algop}}{\lt} = \Sortrestr{T_{\ann}}{\lt}$, $T_{\algop}$ is also Morleyized on $\lt$. By Lemma\,\ref{lem:ext on lt}, we can extend $f'$ on $\lt$ and we may assume that $\lt(\genan[C_{1}]{a_{1}})\subseteq \lt(C_{1})$. Moreover, as $f'$ respects $\DivR[1]$, $f'$ respects $\ValR$ and by Remark\,\ref{rem:ext frac field ann} and Proposition\,\ref{prop:ext frac field alg}, replacing, if need be, $a_{1}$ by its inverse, we can assume that $a_{1}\in\ValR$.

Let $a_{2} = f'(a_{1})$ and let us define $f''$ on $\K(\genan{C_1}{a_1})$ by $f''(t(a_1)) = t^f(a_2)$, where $t^{f}$ is the term obtained by applying $f$ to the parameters of $t$. This morphism $f''$ clearly coinciding with $f'$ on $\K(C_1)[a_1]$ and it is well defined. Indeed, it suffices to check that if $t(a_1) = 0$ then $t^f(a_2) = 0$. But, by Weierstrass preparation, there exists $S\in\SWC<\ValR>(C_{1})$, an $\Sortrestr{\LAQ}{\K}(C_{1})$ term $E$ (a strong unit on $S$) and $P$, $Q\in \K(C_{1})[X]$ such that $Q$ does not have any zero in $S(\alg{\K(C_{1})})$, $a_1\in S$ and for all $x\in S$, $t(x) = E(x)P(x)/Q(x)$. As $t(a_1) = 0$ and $E(x) \neq 0$, we must have $P(a_1) = 0$. As $f'$ is a partial $\LL_{\algop}$-isomorphism, we have $a_{2}\in S^{f}$ and $P^f(a_{2}) = 0$. As $f$ is an $\LL_{\ann}$-isomorphism, by Theorem\,\ref{thm:EQ TA} it is in fact an elementary partial $\LL_{\ann}$-isomorphism and we also have that for all $x\in S^{f}$, $t^{f}(x) = E^{f}(x)P^{f}(x)/Q^{f}(x)$ and $E^{f}$ is a strong unit on $S^{f}$. Hence, $t^f(a_2) = E^f(a_{2})P^f(a_{2})/Q^f(a_2) = 0$.

Let us show that $f''\cup\Sortrestr{f}{\lt}$ is an $\LAQ$-isomorphism. By Lemma\,\ref{lem:field ext}, it suffices to show that for all $\Sortrestr{\LAQ}{\K}(C_{1})$-terms $t$, $\ltf(t^{f}(a_{2})) = f(\ltf(t(a_{1})))$. By Remark\,\ref{rem:decomp form Kterm}, $S$ is defined by a formula of the form $\theta(\ltf(\uple{R}(x)))$ where $\theta$ is an $\Sortrestr{\LL_{\algop}}{\lt}$-formula and the $R_{i}$ are polynomials in $\K(C_{1})[X]$. By \cite[proof of Theorem\,7.5]{CluLo_Bmin}, there exists an $\LL_{\algop}(C_{1})$-definable function $g : K \to \prod_{i} \lt[n_{i}]$ such that every fiber is an open $\Val$-ball and for any polynomial $T$ equal to $P$, $Q$ or one of the $R_{i}$, $\ltf(T(x))$ is constant on any fiber of $g$. It follows immediately that every fiber of $g$ is either in $S$ or in its complement. Let $\uple{\alpha} = g(a_{1})$ and $\beta = \ltf(t(a_{1}))$. As $E$ is a strong unit, on $g^{-1}(\uple{\alpha}) = \oball{\val(d)}{c}$ it is of the form $eF((x-c)/d)$ with $\val(F((x-c)/d)) = 0$. As $\resf((x-c)/d) = 0$ on all of $g^{-1}(\uple{\alpha})$, by Corollary\,\ref{cor:ltf unit}, $\ltf(E(x))$ is constant on $g^{-1}(\uple{\alpha})$, and hence $\ltf(t(x))$ is constant on $g^{-1}(\uple{\alpha})$. As $f$ is a partial elementary $\LL_{\ann}$-isomorphism and $\uple{\alpha}$ and $\beta \in \lt(C_{1})$, the $\LAQ(C_{1})$-formula $\forall x, g(x) = \uple{\alpha} \imp \ltf(t(x)) = \beta$ is preserved by $f$. And as $f'$ is a partial elementary $\LL_{\algop}$-isomorphism (by Theorem\,\ref{thm:EQ THen}) and $g$ is $\LL_{\algop}(C_{1})$-definable, $g^{f}(a_{2}) = f(\uple{\alpha})$ and we have that $\ltf(t^{f}(a_{2})) = f(\beta) = f(\ltf(t(a_{1})))$.
\end{proof}

\begin{corollary}[red alg cor]
The previous proposition holds without any assumption on residue characteristic.
\end{corollary}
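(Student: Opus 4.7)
The plan is to reduce to the equicharacteristic zero case (already handled by Proposition\,\ref{prop:red alg}) via the coarsening machinery of Section\,\ref{sec:coar}. The statement is only non-trivial when $T_{\ann}\supseteq\TA[0,p]$ for some prime $p$, so assume this.

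First I would reduce to the situation in which $M_{1}$ and $M_{2}$ are $\aleph_{1}$-saturated (or more, as much as needed). Indeed, by Theorem\,\ref{thm:EQ TA}, $f$ is an elementary partial $\LL_{\ann}$-isomorphism and, by Theorem\,\ref{thm:EQ THen} applied to $T_{\algop}$, $f'$ is an elementary partial $\LL_{\algop}$-isomorphism, so both extend to sufficiently saturated elementary extensions of $M_{1}$ and $M_{2}$; it suffices to produce the extension of $f$ in these larger models.

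Next I would apply the analytic version of the coarsening functors (the analogue of Proposition\,\ref{prop:Coar THen}(i) in the absence of $\sigma$, constructed in exactly the same way from Proposition\,\ref{prop:red zero}): functors $\Coar[\ann]:\Str(T_{\ann})\to\Str(\Tinf[\ann])$ and $\Coar[\algop]:\Str(T_{\algop})\to\Str(\Tinf[\algop])$, where $\Tinf[\ann]\supseteq\TA[0,0]<\infty>$ and $\Tinf[\algop]\supseteq\THen[0,0]<\infty>$, together with quasi-inverses $\UCoar[\ann]$, $\UCoar[\algop]$ inducing equivalences of the appropriate subcategories. The compatibility $\Coar[\algop](\Langrestr{\cdot\ }{\LL_{\algop}})=\Langrestr{\Coar[\ann](\cdot)}{\Linf[\algop]}$ ensures that $\Coar[\ann](f)$ is a partial $\Linf[\ann]$-isomorphism, $\Coar[\algop](f')$ is a partial $\Linf[\algop]$-isomorphism extending $\Coar[\ann](f)\restriction \LL_{\algop}$, and both have $a_{1}$ in their domain.

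Now I would apply Proposition\,\ref{prop:red alg} itself to $\Coar[\ann](M_{1})$ and $\Coar[\ann](M_{2})$ (which model an $\Linf[\ann]$-theory extending $\TA[0,0]<\infty>$ and are Morleyized on $\ltinf\cup\lt$), with the data $\Coar[\ann](f)$, $\Coar[\algop](f')$, and $a_{1}$. This yields a partial $\Linf[\ann]$-isomorphism $g$ extending $\Coar[\ann](f)$ and mapping $a_{1}$ to $f'(a_{1})$; its domain is the $\Linf[\ann]$-substructure generated by $C_{1}\cup\{a_{1}\}$, which is exactly $\Coar[\ann](\genan[C_{1}]{a_{1}})$ by functoriality of $\Coar[\ann]$. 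Hence $g$ lies in the essential image of $\Coar[\ann]$ and $\UCoar[\ann](g)$ is an $\LL_{\ann}$-isomorphism $\genan[C_{1}]{a_{1}}\to\genan[f(C_{1})]{f'(a_{1})}$ extending $f$ and sending $a_{1}$ to $f'(a_{1})$, as required.

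The main thing to verify carefully is that the equivalence of categories really applies here, i.e.\ that the extension $g$ produced in equicharacteristic zero has domain inside some $\Coar[\ann](M_{1})$ so that $\UCoar[\ann]$ can be applied; this is why the initial saturation step is needed, since $\Str[\Coar[\ann],\aleph_{1}](\Tinf[\ann])$ rather than all of $\Str(\Tinf[\ann])$ is the subcategory on which the equivalence lives. Once that is in place everything else is a routine translation through the functors.
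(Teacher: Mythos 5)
Your proposal follows essentially the same route as the paper: coarsen $f$ and $f'$ via the analytic-only versions of the coarsening functors (Proposition\,\ref{prop:Coar THen}\,(i), or equivalently the direct construction from Proposition\,\ref{prop:red zero}), apply Proposition\,\ref{prop:red alg} in equicharacteristic zero, and uncoarsen. The only material difference is that you make explicit the preliminary reduction to sufficiently saturated models — a detail the paper silently elides but which is indeed required for $\Coar[\ann](M_i)$ to be models — and this step is correct since the generated substructure $\genan[C_1]{a_1}$ and the sought extension of $f$ already live inside the original $M_1$, $M_2$.
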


\begin{proof}
Recall Proposition\,\ref{prop:Coar THen} and assume $M_{1}$ and $M_{2}$ have mixed characteristic and $f$ and $f'$ are as in Proposition\,\ref{prop:red alg}.

Then $\Coar[\algop](f')$ is an extension of $\Coar[\ann](f)$ whose domain contains $a_{1}$. By Proposition\,\ref{prop:red alg}, we obtain $f''$ an $\Linf[\ann]$-isomorphism extending $\Coar[\ann](f)$ whose domain contains $a_{1}$ and we conclude by applying $\UCoar[\ann]$.
\end{proof}

\begin{corollary}[algebraization]
Let $\phi(x,\uple{y},\uple{r})$ be any $\LL_{\ann}$-formula where $x$ and $\uple{y}$ are $\K$-variables and $\uple{r}$ are $\lt\cup\Sigma_{\lt}$-variables, then there exists a $\K$-quantifier free $\LL_{\algop}$-formula $\psi(x,\uple{z},\uple{r})$ and $\Sortrestr{\LL_{\ann}}{\K}$-terms $\uple{u}(\uple{y})$ such that $T_{\ann}\models \phi(x,\uple{y},\uple{r})\iff\psi(x,\uple{u}(\uple{y}),\uple{r})$.
\end{corollary}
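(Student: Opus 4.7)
The plan is to argue by compactness, using Corollary \ref{cor:red alg cor} as the main engine. Suppose for contradiction that no such pair $(\psi, \uple{u})$ exists. Then for every finite tuple $\uple{u}$ of $\Sortrestr{\LL_{\ann}}{\K}$-terms and every $\K$-quantifier free $\LL_{\algop}$-formula $\psi(x,\uple{z},\uple{r})$, the formula $\phi(x,\uple{y},\uple{r})\not\equiv\psi(x,\uple{u}(\uple{y}),\uple{r})$ modulo $T_{\ann}$. A standard compactness argument (in the theory $T_{\ann}$ augmented by two independent copies of $(x,\uple{y},\uple{r})$-constants) then produces models $M_{1}, M_{2}\models T_{\ann}$ together with tuples $(a_{i},\uple{b}_{i},\uple{s}_{i})\in M_{i}$ such that $M_{1}\models\phi(a_{1},\uple{b}_{1},\uple{s}_{1})$, $M_{2}\not\models\phi(a_{2},\uple{b}_{2},\uple{s}_{2})$, yet
\[M_{1}\models\psi(a_{1},\uple{u}(\uple{b}_{1}),\uple{s}_{1})\iff M_{2}\models\psi(a_{2},\uple{u}(\uple{b}_{2}),\uple{s}_{2})\]
for every such $\uple{u}$ and every $\K$-quantifier free $\LL_{\algop}$-formula $\psi$.

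Next I would set $C_{i}:=\genan{\uple{b}_{i},\uple{s}_{i}}\subseteq M_{i}$ and define $f:C_{1}\to C_{2}$ by $u(\uple{b}_{1})\mapsto u(\uple{b}_{2})$ on $\K$ for every $\Sortrestr{\LL_{\ann}}{\K}$-term $u$, together with $\uple{s}_{1}\mapsto\uple{s}_{2}$, extended on $\lt(C_{1})$ in the only possible way (this sort is generated by $\uple{s}_{1}$ and the images $\ltf[n](u(\uple{b}_{1}))$). The agreement on $\K$-quantifier free $\LL_{\algop}$-formulas makes $f$ well-defined, injective, and compatible with all $\LL_{\algop}$-structure on $C_{1}$; compatibility with the analytic function symbols and $\Q$ is automatic from the definition of $f$ on $\K$, so $f$ is a partial $\LL_{\ann}$-isomorphism. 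Enlarging by $a_{1}\mapsto a_{2}$ produces an $\LL_{\algop}$-isomorphism $f'$ defined on the $\LL_{\algop}$-substructure generated by $C_{1}\cup\{a_{1}\}$, because every atomic $\LL_{\algop}$-formula in this substructure translates to a $\K$-quantifier free $\LL_{\algop}$-formula in $x$, analytic-term parameters $\uple{u}(\uple{b}_{1})$ and $\uple{s}_{1}$. Now Corollary \ref{cor:red alg cor} applies, giving an $\LL_{\ann}$-isomorphism $\tilde{f}$ extending $f$ with $\tilde{f}(a_{1})=a_{2}$. Since $T_{\ann}$ eliminates $\K$-quantifiers resplendently (Theorem \ref{thm:EQ TA}), $\tilde{f}$ is elementary, contradicting the fact that $\phi$ distinguishes the two tuples.

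The main obstacle is the middle step: one has to verify carefully that the agreement condition on $\K$-quantifier free $\LL_{\algop}$-formulas with analytic-term parameters genuinely forces $f$ to be a partial $\LL_{\ann}$-isomorphism and $f'$ a partial $\LL_{\algop}$-isomorphism. The nontrivial point is that $\lt(C_{1})$ carries not only the structure inherited from $\LL_{\algop}$ (with its Morleyized enrichment) but also the $\lt$-symbols $E_{k}$ attached to analytic data; these however lie in $\LL_{\algop}$ as well, and since $T_{\ann}$ is Morleyized on $\lt$, every atomic $\lt$-statement is captured by a $\K$-quantifier free $\LL_{\algop}$-formula, which is exactly what the compactness-produced agreement delivers. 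Once this bookkeeping is in place, the rest reduces to the now-established Corollary \ref{cor:red alg cor} and the known resplendence of $\K$-quantifier elimination for $T_{\ann}$.
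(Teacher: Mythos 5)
Your proof is correct but takes a genuinely different route from the paper's, and the difference is worth noting. The paper's compactness argument works over a \emph{fixed} $\uple{y},\uple{r}$: it considers the set
\[
T_{\ann}\cup\{\phi(x_{1},\uple{y},\uple{r}),\neg\phi(x_{2},\uple{y},\uple{r})\}\cup\{\psi(x_{1},\uple{u}(\uple{y}),\uple{r})\iffform\psi(x_{2},\uple{u}(\uple{y}),\uple{r})\},
\]
shows it is inconsistent by Corollary~\ref{cor:red alg cor} (in one model, with the identity on $\gen{\LL_{\ann}}{\uple{y},\uple{r}}$ as the base isomorphism), and after compactness produces finitely many $\psi_{i}$ such that agreement on the $\psi_{i}(x,\uple{u}_{i}(\uple{y}),\uple{r})$ determines $\phi(x,\uple{y},\uple{r})$. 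Because this only fixes the $\psi_{i}$ but not \emph{which} boolean pattern of the $\psi_{i}$ corresponds to $\phi$ (that may vary with $\uple{y},\uple{r}$), the paper then needs the uniformization step with the $\theta_{\epsilon}$ and $\chi_{\eta}$. Your argument instead considers the complete $\K$-quantifier-free $\LL_{\algop}$-type (with analytic-term parameters) of the \emph{entire} tuple $(x,\uple{y},\uple{r})$, ranging over all models of $T_{\ann}$. The standard Stone-space compactness then directly yields either a separating formula $\psi$ or two tuples $(a_{i},\uple{b}_{i},\uple{s}_{i})$ in $M_{i}\models T_{\ann}$ with the same such type but disagreeing on $\phi$, bypassing uniformization entirely. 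The price is that you must explicitly build the partial $\LL_{\ann}$-isomorphism $f$ across two distinct models from the type-agreement, which you do correctly, and then verify the $\LL_{\algop}$-extension $f'$ exists; in the paper's version this step is cheaper because everything lives inside one model and $f$ can be taken to be an identity map. Both approaches then call Corollary~\ref{cor:red alg cor} and resplendent $\K$-quantifier elimination, and both are sound.

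Two small remarks. First, you should take $C_{i}=\gen{\LL_{\ann}}[ ]{\uple{b}_{i},\uple{s}_{i}}$ rather than the $\LAQ$-generated $\genan{\uple{b}_{i},\uple{s}_{i}}$, since $\LL_{\ann}$ is an $\lt$-enrichment of $\LAQ$ and the domain of $f$ must be an $\LL_{\ann}$-substructure; the extra $\lt$-symbols are still in $\LL_{\algop}$, so your argument that the type-agreement fixes the $\lt$-part of $f$ goes through unchanged. Second, the final elementarity of $\tilde f$ uses both that $T_{\ann}$ eliminates $\K$-quantifiers (inherited resplendently from Theorem~\ref{thm:EQ TA}) and that $T_{\ann}$ is Morleyized on $\lt$; taken together these give full quantifier elimination for $T_{\ann}$, which is what makes any partial $\LL_{\ann}$-isomorphism elementary. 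You should cite both hypotheses rather than Theorem~\ref{thm:EQ TA} alone.
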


\begin{proof}
This follows from the previous corollary by a (classic) compactness argument. For the sake of completeness (and also because the uniformization part of that argument may be less usual), let us state it. Consider the set of formulas
\[\begin{array}{c}T_{\ann}\cup\{\phi(x_{1},\uple{y},\uple{r}),\neg\phi(x_{2},\uple{y},\uple{r})\}\cup\\
\{\psi(x_{1},\uple{u}(\uple{y}),\uple{r})\iff\psi(x_{2},\uple{u}(\uple{y}),\uple{r})\mid \psi\text{ is an }\LL_{\algop}\text{-formula and }\uple{u}\text{ are }\Sortrestr{\LAQ}{\K}\text{-terms}\}.
\end{array}\]
By Corollary\,\ref{cor:red alg cor}, this set of formulas cannot be consistent. Hence there is a finite set of $\LL_{\algop}$-formulas $(\psi_{i})_{0\leq i < n}$ --- that we can take $\K$-quantifier free by Theorem\,\ref{thm:EQ THen} --- and $\Sortrestr{\LAQ}{\K}$-terms $\uple{u}_{i}$ such that: \[T_{\ann}\models \forall\uple{y}x_{1}x_{2}(\bigwedge_{i}\psi_{i}(x_{1},\uple{u}_{i}(\uple{y}),\uple{r})\iff\psi_{i}(x_{2},\uple{u}_{i}(\uple{y}),\uple{r}))\imp (\phi(x_{1},\uple{y},\uple{r})\iff \phi(x_{2},\uple{y},\uple{r})).\]
For all $\epsilon \in 2^{n}$, let $\Def{\theta_{\epsilon}}{\bigwedge \psi_{i}(x,\uple{u}_{i}(\uple{y}),\uple{r})^{\epsilon(i)}}$ where $\psi^{1} = \psi$ and $\psi^{0} = \neg\psi$. For fixed $\uple{y}$ and $\uple{r}$, the $\theta_{\epsilon}(x,\uple{y},\uple{r})$ form a partition of $\K$ compatible with $\phi(x,\uple{y},\uple{r})$. For all $\eta\in2^{2^{n}}$, let $\chi_{\eta}(\uple{y},\uple{r})$ be a $\K$-quantifier free $\LL_{\ann}$-formula equivalent to $\bigwedge_{\epsilon}(\exists x\,\theta_{\epsilon}(x,\uple{y},\uple{r})\wedge\phi(x,\uple{y},\uple{r}))^{\eta(\epsilon)}$. Note that for any choice of $\uple{y}$ and $\uple{r}$ there is exactly one $\eta$ such that $\chi_{\eta}(\uple{y},\uple{r})$ holds. It is now quite easy to show that $\phi(x,\uple{y},\uple{r})\iff \bigvee_{\eta}(\chi_{\eta}(\uple{y},\uple{r})\wedge\bigvee_{\epsilon\in\eta}\theta_{\epsilon}(x,\uple{y},\uple{r}))$.
\end{proof}

\begin{remark}
\begin{thm@enum}
\item This corollary is a stronger version of \cite[Theorem\,B]{vdDHM}. Not only is it resplendent but it also has better control of the parameters (essentially due to a better control of the parameters in Weierstrass preparation in \cite{CluLipAnn}). In particular, it is uniform.
\item \label{rem:red alg ac} Let $\LAQac$ be $\Lac$ enriched with symbols for all the functions from $\Ann$, a symbol $\Q:\K<2>\to K$, for all units $E\in\Ann$ a symbol $E_{k}:\res[k]\to\res[k]$, a symbol $\DivR\subseteq(\Valgpinf)^{2}$. Then, any $\LAQac$-formula (or even formulas in an $\res\cup\Valgp$-enrichment of $\LAQac$) can be translated into an $\lt$-enrichment of $\LAQ$ (see Proposition\,\ref{prop:equiv Lsec Lac}), and hence Corollary\,\ref{cor:algebraization} also holds (resplendently) for the $\LAQac$-theory $\TAac$ of Henselian valued fields with separated $\Ann$-structure and angular components. Note that some of the symbols we should have added have disappeared, like the trace of $E_{k}$ on $\Valgpinf$ which is constant equal to $0$. Similarly the $E_{k}$ and $\DivR[1]$ are missing one of their arguments --- the $\Valgpinf$-argument in the case of $E_{k}$ and the $\res[n]$-argument for $\DivR$ --- but they depend trivially on it.
\end{thm@enum}
\end{remark}

\section{\texorpdfstring{$\K$}{K}-quantifier elimination in \texorpdfstring{$\TAsH$}{TAsHen}}\label{sec:EQ}

Up to Section\,\ref{subsec:EQ}, we will be working solely in equicharacteristic zero, hence, we will once again write $\lt$ and $\ltf$ for $\lt[1]$ and $\ltf[1]$. We will also be considering that variables are indexed by $\Nn$ and we will sometimes identify a variable and its index. But hopefully no confusion should arise.

Let $M\models\TA$ and $C\substr M$.

\begin{definition}(Order-degree)
We say that an $\Sortrestr{\LAQ}{\K}(C)$-term $t = \sum_{i=0}^{d}t_{i}(\upleneq{x}{m})x_{m}^{i}$ is polynomial of order (at most) $d$ in $x_{m}$. If $t$ is not of this form, we take the convention that $t$ has infinite degree in $x_{m}$. Let $\Term{C}$ be the set of tuples $(t,I,m,d)$ where $I$ is a finite set of variables, $m\in I$, $d\in\Nn\cup\{\infty\}$ and $t\neq 0$ is an $\Sortrestr{\LAQ}{\K}(C)$-term whose variables are contained in $I$ and which is either polynomial in $x_{m}$ of degree at most $d$ or not polynomial in $x_{m}$ (and $d = \infty$). Let $\Term[0]{C} = \Term{C}\cup\{0\}$.

We (partially) order $\Term{C}$ by saying that $(u,J,n,e)$ has lower order-degree than $(t,I,m,d)$ if one of the following holds:
\begin{thm@enum}
\item $\max(J) < \max(I)$;
\item $\max(J) = \max(I)$ and $J \subset I$, i.e. $J$ is \emph{strictly} included in $I$;
\item\label{o-d} $J = I$ and $n > m$;
\item $J= I$ and $n = m$ and $e<d$.
\end{thm@enum}

We extend this order to $\Term[0]{C}$ by making the zero term greater than any element of $\Term{C}$.
\end{definition}

\begin{remark}
\begin{thm@enum}
\item This is a well-founded (partial) order.
\item In condition (iii), the order is inverse of what one would expect but that is because we want minimal terms to be polynomial in the last variable.
\item\label{rem:order I} We will also write $J< I$ to mean that conditions (i) or (ii) hold.
\end{thm@enum}
\end{remark}

When $\uple{a}$ is indexed by some set $I\subseteq\Nn$ and $n\in I$, we will denote by $\upleneq{a}{n}$ the tuple $\uple{a}$ missing its $n$-th component and $(\upleto{a}{x}{n})$ for the tuple $\uple{a}$ where the $n$-th component is replaced by $x_{n}$. We define $\sprolneq{a}{n}$ and $(\sprolto{a}{x}{n})$ similarly and let $\Def{\sprolleq{a}{n}}{(a,\sigma(a),\ldots,\sigma^{n}(a))}$. Finally, we will write $\Def{\genans{C}}{\gen{\LAQs}{C}}$ and $\Def{\genans[C]{\uple{c}}}{\gen{\LAQs}[C]{\uple{c}}}$ (cf. Definition\,\ref{def:gen}).

\subsection{Residual and ramified extensions}\label{subsec:res}

\begin{definition}(Regularity)
Let $t(\uple{x}) = \sum_{i}t_{i}(\upleneq{x}{m})x_{m}^{i}$ be an $\LAQ(M)$ term and $\uple{a}\in\K(M)$. We say that $t$ is regular at $\uple{a}$ in $x_{m}$ if  \[\val(t(\uple{a})) = \min_{i}\{\val(t_{i}(\upleneq{a}{m})) + i\val(a_{m})\}.\]
By convention the zero term is never regular.
\end{definition}

First, we state a proposition which has nothing to do with automorphisms:

\begin{proposition}[descr term lt]
Let $\uple{\alpha}\in\ltf(\ValR(M))$, $\uple{a}\in\ltf<-1>(\uple{\alpha})$ and $(t,I,m,d)\in\Term[0]{C}$ be of minimal order-degree such that $t(\uple{x})$ is polynomial in $x_{m}$ and $t$ is not regular at $\uple{a}$.
Then for all $(u,J,n,e) < (t,I,m,d)$, $\ltf(u(\uple{x}))$ is constant on $\ltf<-1>(\uple{\alpha})$. Moreover for all $\upleneq{a}{n}\in\ltf<-1>(\upleneq{\alpha}{n})$, $u(\upleto{a}{x}{n})$ has Weierstrass division on $\ltf<-1>(\alpha_{n})$.
\end{proposition}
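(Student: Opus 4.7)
The plan is to prove both assertions simultaneously by well-founded induction on the order-degree of $(u,J,n,e)\in\Term[0]{C}$, using the minimality hypothesis on $(t,I,m,d)$ throughout. The base case is $u$ constant in $\K(C)$, for which $\ltf(u)$ is trivially constant on any fiber and $u$ itself serves as its own Weierstrass preparation (with $E=1$).

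For the inductive step, I split on whether $u$ is polynomial in $x_n$. If $u=\sum_i u_i(\upleneq{x}{n})x_n^i$ has degree at most $e<\infty$ in $x_n$, minimality of $(t,I,m,d)$ forces $u$ to be regular at $\uple{a}$, for otherwise $(u,J,n,e)$ would itself be a strictly smaller polynomial non-regular term. Each coefficient $u_i$ involves only the variables in $J\sminus\{n\}$, so $(u_i,J\sminus\{n\},\cdot,\cdot)$ lies strictly below $(u,J,n,e)$ by clause (i) or (ii) of the order. By inductive hypothesis, $\ltf(u_i(\upleneq{x}{n}))$ is constant on $\ltf<-1>(\upleneq{\alpha}{n})$; in particular $\val(u_i)$ is constant there as well. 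Combining with regularity and the additive structure of $\lt$, the set of dominant indices $i$ and their $\ltf$-contributions are identical for every $\uple{b}\in\ltf<-1>(\uple{\alpha})$, yielding $\ltf(u(\uple{b}))=\ltf(u(\uple{a}))$. The Weierstrass preparation of $u(\upleto{a}{x}{n})$ on $\ltf<-1>(\alpha_n)$ is trivial because $u(\upleto{a}{x}{n})$ is itself a polynomial (take $E=1$ and $R$ equal to that polynomial).

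If $u$ is not polynomial in $x_n$ (so $e=\infty$), the Weierstrass preparation property of $M$ applied to the one-variable term $u(\upleto{a}{x}{n})$ over $\K(\genan{C\upleneq{a}{n}})$ supplies a finite cover of $\ValR$ by swiss cheeses $S_j$ on each of which $u(\upleto{a}{x}{n})=E_j R_j$. The crux is to show that $\ltf<-1>(\alpha_n)$ lies inside some $S_j$: the defining formulas of the $S_j$ are valuation inequalities on polynomials in $x_n$ whose coefficients are $\LAQ$-terms in $\upleneq{a}{n}$, and each such polynomial lifts to a multivariate term polynomial in $x_n$ of finite degree, hence strictly below $(u,J,n,\infty)$ in order-degree. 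The inductive hypothesis yields $\ltf$-constancy of the lift on the fiber, preventing $\ltf<-1>(\alpha_n)$ from being bisected by a swiss cheese boundary, and this gives the Weierstrass preparation on $\ltf<-1>(\alpha_n)$. For the $\ltf$-constancy of $u(\uple{x})$ along the whole fiber, I combine Corollary~\ref{cor:ltf unit}, which yields constancy of $\ltf(E_j)$ on the $\Val$-ball $\ltf<-1>(\alpha_n)$, with the polynomial case applied to the multivariate lifts of the numerator and denominator of $R_j$; constancy as the remaining coordinates vary follows by applying the same analysis to the one-variable projections of $u$ in each variable of $J$ in turn.

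The main obstacle is the non-polynomial case, specifically verifying that $\ltf<-1>(\alpha_n)$ is contained in a single piece of the Weierstrass cover. The argument hinges on recognizing the boundary conditions of the swiss cheeses as coming from multivariate polynomial terms strictly below $(u,J,n,\infty)$ in order-degree, which enables the inductive hypothesis and leverages the minimality of $(t,I,m,d)$ in an essential way.
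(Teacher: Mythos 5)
Your polynomial case matches the paper, and your argument that $\ltf<-1>(\alpha_{n})$ lies in a single piece of the Weierstrass cover is a valid alternative to the paper's (the paper instead argues that a split fiber would contain a point algebraic over $\K(\genan[C]{\upleneq{a}{n}})$, whose minimal polynomial would then have to vanish on all of $\ltf<-1>(\alpha_{n})$ by the polynomial case). Both arguments invoke the inductive hypothesis for lifts that are polynomial in $x_{n}$ of finite degree, which lie strictly below $(u,J,n,\infty)$ by clause~(iv) of the order, so that part is fine.

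The genuine gap is in the final step. You have constancy of $\ltf(u(\upleto{a}{x}{n}))$, say with value $\beta$, as $x_{n}$ ranges over $\ltf<-1>(\alpha_{n})$ for a \emph{fixed} $\upleneq{a}{n}$; you still must show $\beta$ does not depend on $\upleneq{a}{n}\in\ltf<-1>(\upleneq{\alpha}{n})$. Your proposal --- ``applying the same analysis to the one-variable projections of $u$ in each variable of $J$ in turn'' --- fails: for $j<n=\max J$, the boundary polynomials coming from the Weierstrass preparation of $u(\upleto{a}{x}{j})$ lift to terms with order-degree tuple of the form $(\cdot,J,j,f)$, and clause~(iii) of the order (deliberately reversed so that minimal terms are polynomial in the \emph{last} variable) makes this \emph{strictly larger} than $(u,J,n,\infty)$; it can even exceed $(t,I,m,d)$, so neither the inductive hypothesis nor regularity is available for those lifts, and the whole scheme unravels. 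The paper closes the gap by a different mechanism: the statement $\forall x_{n}\,(\ltf(x_{n})=\alpha_{n}\imp\ltf(u(\upleto{a}{x}{n}))=\beta)$ is an $\LAQ(C\alpha_{n}\beta)$-formula about $\upleneq{a}{n}$; the inductive hypothesis for terms with variables in $J\sminus\{n\}$ --- a strictly smaller variable set, hence covered by clauses~(i)/(ii) for \emph{every} choice of distinguished variable and degree, which is exactly why this works where your $j$-variable projection does not --- shows that all $\LAQ(C)$-terms in those variables have constant $\ltf$ on $\ltf<-1>(\upleneq{\alpha}{n})$, and Corollary~\ref{cor:uniformization} then gives that any two tuples in $\ltf<-1>(\upleneq{\alpha}{n})$ have the same $\LAQ(C)$-type, so $\beta$ is the same for all of them. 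This type/uniformization step (or an explicit appeal to uniform Weierstrass preparation together with the $\K$-quantifier-free definability of the case selector) is the missing idea in your write-up.
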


\begin{proof} First, we may assume that $\alg{\K(M)} = \K(M)$ (see Proposition\,\ref{prop:TA alg ext}). We work by induction on $J$ for the order defined in Remark\,\ref{rem:order I}. The proposition is trivial for constant terms. Now, assume the proposition is true for any $(v,K,p,f)$ with $K < J$.

Let us first assume that $u$ is polynomial in $x_{n}$. Then, $u = \sum_{i}u_{i}(\upleneq{x}{n})x_{n}^{i}$ must be regular at $\uple{a}$ and hence $\val(u(\uple{a})) = \min_{i}\{\val(u_{i}(\upleneq{a}{n})) + i\val(a_{n})\}$ and $\ltf(u(\uple{a})) = \sum_{i}\ltf(u_{i}(\upleneq{a}{n}))\alpha_{n}^{i}\neq 0$. For any $\uple{e}\in\ltf<-1>(\uple{\alpha})$ and $i$, $\ltf(u_{i}(\upleneq{e}{n}))\ltf(e_{n})^{i} = \ltf(u_{i}(\upleneq{a}{n}))\alpha_{n}^{i}$. Moreover, if $\sum_{i}\ltf(c_{i}) \neq 0$ then $\ltf(\sum_{i}c_{i}) = \sum_{i}\ltf(c_{i})$ hence we must also have $\ltf(u(\uple{e})) = \sum_{i}\ltf(u_{i}(\upleneq{a}{n}))\alpha_{n}^{i}\neq 0$. As $u$ is polynomial in $x_{n}$, it has a Weierstrass preparation. Hence for polynomial $u$, the proposition is proved.

Suppose now that $u$ is of infinite degree in $x_{n}$ and hence that all terms $(v,J,n,e)$ with $e\neq\infty$ have been taken care of in the previous paragraph. By Weierstrass preparation, there exists $S\in\SWC<\ValR>(\genan[C]{\upleneq{a}{n}})$ such that $u(\upleto{a}{x}{n})$ has a Weierstrass preparation on $S$ and $a_{n}\in S$. But then either $\ltf<-1>(\alpha_{n})\subseteq S$ or $\ltf<-1>(\alpha_{n})$ contains a $\alg{\K(\genan[C]{\upleneq{a}{n}})}$-ball and hence a point $c\in\alg{\K(\genan[C]{\upleneq{a}{n}})}$. Let $P = \sum p_{i}(\upleneq{a}{n})X^{i}\in\K(\genan[C]{\upleneq{a}{n}})[X]$ be its minimal polynomial, then for all $e\in\ltf<-1>(\alpha_{n})$, $\ltf(P(e)) = 0$ --- i.e. $P(e) = 0$ --- but that is absurd. Hence $t$ has a Weierstrass preparation on $\ltf<-1>(\alpha_{n})$ and there exists $F(x,\uple{z})\in\Ann$, $\uple{c}\in\K(\genan[C]{\uple{a}})$, $P$ and $Q\in\K(\genan[C]{\upleneq{a}{n}})[X]$ such that for all $x_{n}\in\ltf<-1>(\alpha_{n})$:
\[u(\upleto{a}{x}{n}) = F\left(\frac{x_{n}-a_{n}}{a_{n}},\uple{c}\right)\frac{P(x_{n})}{Q(x_{n})}\]
and $\val(F((x_{n}-a_{n})/a_{n},\uple{c})) = 0$. But $\ltf(P(x_{n}))$ and $\ltf(Q(x_{n}))$ do not depend on $x_{n}$ and $\ltf(F((x_{n}-a_{n})/a_{n},\uple{c}))$ only depends on $\resf((x_{n}-a_{n})/a_{n}) = 0$ (see Corollary\,\ref{cor:ltf unit}). Hence $\Def{\beta}{\ltf(u(\upleto{a}{x}{n}))}$ does not depend on $x_{n}\in\ltf<-1>(\alpha_{n})$. The $\LAQ$-formula \[\forall x_{n}\,\ltf(x_{n}) = \alpha_{n}\imp\ltf(u(\upleto{a}{x}{n})) = \beta\] is in the $\LAQ$-type of $\upleneq{a}{n}$ over $C\alpha_{n}\beta$. By induction (and Corollary\,\ref{cor:uniformization}), all tuples $\upleneq{a}{n}\in\ltf<-1>(\upleneq{\alpha}{n})$ have the same $\LAQ(C\alpha\beta)$-type and $\ltf(u(\uple{a})) = \beta$ for all $\uple{a}\in\ltf<-1>(\uple{\alpha})$.
\end{proof}

Let us now prove the first embedding theorem we will need to eliminate quantifiers. Let $M_{1}$ and $M_{2}$ be models of $\TAsH$, $C_{i}\substr M_{i}$ and $f:C_{1}\to C_{2}$ an $\Morl{LAQs}[\lt]$-isomorphism.

\begin{proposition}[lt ext]
Let $\alpha\in \ltf(\ValR(M_{1}))\cap\lt(C_{1})$, $a\in\ltf<-1>(\alpha)$ and $(t,I,m,d)\in\Term[0]{C}$ be polynomial in $x_{m}$ for some $m\in\Nn$. Assume that $(t,I,m,d)$ is of minimal order-degree such that $t$ is not regular at $\sprol(a)$. Then:
\begin{thm@enum}
\item There exists $a_1\in \ValR(M_{1})$ and $a_2\in\ValR(M_{2})$ such that $t(\sprol(a_1))=0 = t^f(\sprol(a_2))$, $\ltf(a_1) = \alpha$ and $\ltf(a_{2}) = f(\alpha)$.
\item For any such $a_{i}$, $f$ can be extended to an $\Morl{LAQs}[\lt]$-isomorphism sending $a_{1}$ to $a_{2}$.
\end{thm@enum}
\end{proposition}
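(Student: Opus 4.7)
For part (i), the plan is to place $(t, a, \uple{d}, \xi)$ in $\sigma$-Hensel configuration, where $\uple{d} = (\diff[i]{t}{\sprol(a)})_{i \in I}$ and $\xi$ is a suitably chosen radius, and then invoke the $\sigma$-Henselianity of $M_{1}$. The linear approximation of $t$ at prolongations on $\oball{\xi}{a}$ reduces via Proposition~\ref{prop:one var to all} to variable-by-variable linear approximations, which follow from the differentiability of $\Ann$-terms (Proposition~\ref{prop:diff Ann}) together with the control on partial derivatives coming from Proposition~\ref{prop:descr term lt} (the derivatives have strictly lower order-degree than $(t,I,m,d)$, so their leading terms on fibres of $\ltf$ are pinned down). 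The Hensel configuration inequality, namely that $\val(t(\sprol(a)))$ strictly exceeds $\min_{i}\{\val(d_{i}) + \sigma^{i}(\xi)\}$, follows directly from the non-regularity hypothesis $\val(t(\sprol(a))) > \min_{j}\{\val(t_{j}(\sprolneq{a}{m})) + j\val(\sigma^{m}(a))\}$ by comparing with $\val(\diff[m]{t}{\sprol(a)}) + \val(\sigma^{m}(a))$ and choosing $\xi = \val(a)$ (shrinking if necessary to stay inside the linear-approximation ball). Applying $\sigma$-Henselianity then yields $a_{1}$ with $t(\sprol(a_{1})) = 0$ and $\val(a_{1} - a)$ large enough that $\ltf(a_{1}) = \alpha$. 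For $M_{2}$, note that $f(\alpha) \in \ltf(\ValR(M_{2}))$ since $f$ preserves $\DivR[1]$ on $\lt$; picking any lift $a' \in \ValR(M_{2})$ of $f(\alpha)$ and running the same argument in $M_{2}$ with $t^{f}$ in place of $t$ produces $a_{2}$.

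For part (ii), $\K$-quantifier elimination for $\TAsH$ is precisely the theorem we are building up to, so Corollary~\ref{cor:uniformization} is not yet available and the extension of $f$ must be constructed explicitly. By Remark~\ref{rem:sigma terms}, every $\Sortrestr{\LAQs}{\K}(C_{1})$-term is of the form $v(\sprol(X))$ for some $\Sortrestr{\LAQ}{\K}(C_{1})$-term $v$, so I would define $f'(v(\sprol(a_{1}))) := v^{f}(\sprol(a_{2}))$; well-definedness reduces to showing that $v(\sprol(a_{1})) = 0$ implies $v^{f}(\sprol(a_{2})) = 0$. My plan is an order-degree induction: using $t(\sprol(a_{1})) = 0$, I would reduce $v$ modulo $t$ --- by polynomial division in $x_{m}$ if $v$ is polynomial in $x_{m}$, or by Weierstrass division using the Weierstrass preparation on $\ltf<-1>(\alpha_{m})$ guaranteed by Proposition~\ref{prop:descr term lt} otherwise --- until the remainder has order-degree strictly smaller than $(t,I,m,d)$. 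For such a remainder $r$, Proposition~\ref{prop:descr term lt} shows $\ltf(r(\sprol(a_{1})))$ is constant on $\ltf<-1>(\sprol(\alpha))$ and therefore determined by $\sprol(\alpha) \in \lt(C_{1})$, so the vanishing transfers to $\sprol(a_{2})$ through $f$. The same computation, applied to arbitrary $v$ rather than only vanishing ones, simultaneously extends $f$ on the newly-generated $\lt$-elements in a $\lt$-structure-preserving way. Finally, the analytic structure on $\K(\genan[C_{1}]{a_{1}})$ is carried along by Corollary~\ref{cor:red alg cor}, and the $\sigma$-structure is respected because the prolongations are built into the definition of $f'$.

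The main obstacle will be the case in (ii) where $v$ is \emph{not} polynomial in $x_{m}$: ordinary polynomial division is then unavailable, and one must appeal to the analytic Weierstrass preparation on $\ltf<-1>(\alpha_{m})$ supplied by Proposition~\ref{prop:descr term lt}. This is the point at which the analytic structure (rather than purely algebraic manipulation) is essential, and it is where the argument genuinely departs from its difference-algebraic analogue.
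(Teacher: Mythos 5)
There are two genuine gaps, one in each part.

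\textbf{Part (i): linear approximation.} You derive the linear approximation of $t$ at prolongations from Proposition~\ref{prop:diff Ann} (differentiability of $\Ann$-terms) plus control on the derivatives from Proposition~\ref{prop:descr term lt}. This does not work: differentiability only bounds the error by $2\val(\uple{\epsilon}) + \gamma$, which dominates $\min_i\{\val(d_i)+\val(\epsilon_i)\}$ only when $\uple{\epsilon}$ is sufficiently small, whereas the $\sigma$-Hensel configuration requires linear approximation on the \emph{whole} fibre $\ltf^{-1}(\alpha)$, whose radius is fixed. The tool that actually supplies this is Proposition~\ref{prop:lin approx}, whose hypotheses are non-trivial: the term must have Weierstrass preparation on the ball, $\ltf$ of its derivative must be constant there, and either $\val$ of the term is constant on the ball or the term is polynomial in the relevant variable. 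For $j \neq m$ the paper establishes these for the truncation $s = t - x_m^d$ (not for $t$ itself), using the minimality of $t$, Proposition~\ref{prop:descr term lt}, and invariance under addition, all after passing to $\alg{M_i}$. This is precisely the step that makes the non-isometric case delicate (Remark~\ref{rem:rad poly} is unavailable), and your sketch skips it. You also need $t$ monic for the derivatives to land at strictly lower order-degree; the paper reduces to that case at the outset.

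\textbf{Part (ii): division cannot reach lower order-degree.} You propose to prove well-definedness by dividing $v$ by $t$ (polynomially or by Weierstrass division in $x_m$) until the remainder has order-degree strictly below $(t,I,m,d)$. But the paper shows in part (i) that $x_m$ is the \emph{highest} variable occurring in $t$. If $v$ involves some $x_n$ with $n > m$, dividing in $x_m$ leaves $x_n$ untouched and the remainder has $\max(J) \geq n > m = \max(I)$, i.e.\ \emph{larger} order-degree than $(t,I,m,d)$, so Proposition~\ref{prop:descr term lt} never becomes applicable. Repairing this requires iterating with $t, t^\sigma, t^{\sigma^2}, \ldots$, each polynomial in the next higher variable and each vanishing at the relevant prolongation, and this is more delicate than the sketch suggests. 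The paper sidesteps division entirely: it first extends $f$ to the structure generated by $a_1, \ldots, \sigma^{m-1}(a_1)$ using constancy of $\ltf$ on fibres (all those terms have lower order-degree); then, for $n \geq m$, it proves that $\sigma^n(a_1)$ is the \emph{unique} zero of $P^{\sigma^{n-m}}$ with the prescribed leading term, hence lies in the $\LAQ$-definable closure of the previous stage, which one transports along the partial $\LAQ$-elementary isomorphism guaranteed by Theorem~\ref{thm:EQ TA}.

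\textbf{A smaller point.} Your worry that Corollary~\ref{cor:uniformization} ``is not yet available'' conflates two levels. The corollary is applied throughout (notably inside Proposition~\ref{prop:descr term lt}) with $T = \TA$, whose field quantifier elimination is Theorem~\ref{thm:EQ TA}, already proved. What is genuinely unavailable is quantifier elimination for $\TAsH$, and the paper indeed never assumes it; but it does lean heavily on the $\TA$-level statement, so dismissing Corollary~\ref{cor:uniformization} wholesale removes a tool your own plan needs.
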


\begin{proof}
Let $t = \sum_{i=0}^{d}t_{i}(\upleneq{x}{m})x_{m}^{i}$. By minimality of $t$, we cannot have $t_{d}(\sprolneq{a}{m}) = 0$. Dividing by $t_{d}$, we may assume that $t_{d} = 1$.

\begin{claim}
There exists $\uple{c}\in\K(M)$ that linearly approximates $t$ on $\ltf<-1>(\alpha)$ at prolongations and such that \[\min_{j}\{\val(c_{j}) + \val(\sigma^{j}(a))\} = \min_{i}\{\val(t_{i}(\sprol(a))) + i\val(\sigma^{m}(a))\}.\]
\end{claim}

\begin{proof} Let $N_{i} = \alg{M_{i}}$ (see Proposition\,\ref{prop:TA alg ext}). Let $\Def{s_{e}}{\sum_{i<e}t_{i}x_{m}^{i}}$ and $\Def{s}{s_{d}}$. For all $i$ and $j\neq m$, by Proposition\,\ref{prop:descr term lt} applied in $N_{1}$, $t_{i}(\sprolto{a}{x}{j})$ has Weiestrass preparation on the ball $\Def{b_{j}}{\ltf<-1>(\alpha_{j})}$ and constant valuation. By Proposition\,\ref{prop:descr term lt}, for all $e\leq d$, $s_{e}$ also has constant valuation on $\ltf<-1>(\sprol(\alpha))$. By invariance under addition --- and an induction on $e$ --- we can show that $s(\sprolto{a}{x}{j})$ also has Weiestrass preparation on $b_{j}$. Moreover $\diff[j]{s}{\uple{x}}$ is also given by an $\LAQ(C_{1})$-term of degree $d-1$ in $x_{m}$ hence $\ltf(\diff[j]{s}{\sprolto{a}{x}{j}})$ is constant on $b_{j}$ (equal to some $\ltf(c_{j})$, where $c_{j}\in\K(M_{1})$). By Proposition\,\ref{prop:lin approx}, for all $y_{j}$ and $z_{j}\in b_{j}$:
\[\ltf(t(\sprolto{a}{y}{j})- t(\sprolto{a}{z}{j})) = \ltf(s(\sprolto{a}{y}{j})- s(\sprolto{a}{z}{j})) = \ltf(c_{j})\ltf(y-z).\]
This last statement is in the $\LAQ$-type of $\sprolneq{a}{j}$ over $C_{1}\ltf(c_{j})$. By Proposition\,\ref{prop:descr term lt} and Corollary\,\ref{cor:uniformization}, any $\upleneq{e}{j}\in\ltf<-1>(\sprolneq{\alpha}{j})$ has the same $\LAQ(C_{1}\ltf(c_{j}))$-type and hence the same $c_{j}$ works for any $\uple{e}\in\ltf<-1>(\sprol(\alpha))$.

By minimality of $t$, $s$ is regular at $\sprol(a)$ and \[\val(s(\sprol(a))) = \min_{i<d}\{\val(t_{i}(\sprolneq{a}{j})) + i\val(\sigma^{m}(a))\}.\] Because $\val(s(\sprolto{a}{x}{j}))$ is constant on $b_{j}$, by the last statement of Proposition\,\ref{prop:lin approx} and because $\rad(b_{j}) = \ltf(\sigma^{j}(a))$, we obtain that: \[\val(c_{j}) + \val(\sigma^{j}(a)) \geq \val(s(\sprol(a))) \geq  \min_{i}\{\val(t_{i}(\sprol(a))) + i\val(\sigma^{m}(a))\}.\] 

When $j=m$, as $t$ is polynomial in $x_{m}$ and $\diff[m]{t}{\uple{x}}$ is of degree $d-1$ in $x_{m}$, by Proposition\,\ref{prop:lin approx}, we also find $c_{m}\in\K(M_{1})$ that linearly approximates $t(\upleto{e}{x}{m})$ on $\ltf<-1>(\sigma^{m}(\alpha))$ for any $\uple{e}\in\ltf<-1>(\sprol(\alpha))$. And \[\val(c_{m}) + \val(\sigma^{m}(a)) = \val(\diff[m]{t}{\uple{x}}) + \val(\sigma^{m}(a)) = \min_{i}\{\val(t_{i}(\sprol(a))) + i\val(\sigma^{m}(a))\}.\]

It now follows from Proposition\,\ref{prop:one var to all} that $\uple{c}$ linearly approximates $t$ on $\ltf<-1>(\alpha)$ at prolongations.
\end{proof}

If $t\neq 0$, as $t$ is not regular at $\sprol(a)$, $\val(t(\sprol(a))) > \min_{i}\{\val(t_{i}(\sprol(a))) + i\val(\sigma^{m}(a))\} = \val(c_{m}) + \val(\sigma^{m}(a)) = \min_{j}\{\val(c_{j}) + \val(\sigma^{j}(a))\}$, and $(t,a,\uple{c},\vallt(\alpha))$ is in $\sigma$-Hensel configuration. Hence there exists $a_{1}\in M_{1}$ such that $\val(a_{1}-a) \geq \max_{i}\{\sigma^{-i}(t(\sprol(a)c_{i}^{-1}))\}$ and $t(a_{1}) = 0$. In particular, 
\begin{eqnarray*}
\lefteqn{\val(\sigma^{m}(a_{1}-a))\geq\val(t(\sprol(a))) - \val(c_{m})}\\
&>&
\min_{i}\{\val(t_{i}(\sprol(a))) + i\val(\sigma^{m}(a))\} - \val(c_{m})\\
&=& \val(\sigma^{m}(a)),
\end{eqnarray*}

and thus $\ltf(a_{1}) = \ltf(a)$.

If $x_{m}$ is not the highest variable appearing in $t$ --- that we call $x_{n}$ --- then, applying Proposition\,\ref{prop:descr term lt} to $(t,I,n,\infty) < (t,I,m,d)$, we get that $\ltf(t(\uple{x}))$ is constant equal to $0$ on all of $\ltf<-1>(\sprol(\alpha))$. As $t(\sprolto{a}{x}{m})$ is polynomial and has infinitely many zeros, we must have $t_{i}(\sprol(a)) = 0$ for all $i$, but that contradicts the non-regularity of $t$ in $x_{m}$ at $\sprol(a)$. Thus $x_{m}$ must be the highest variable appearing in $t$. For the same reasons, we cannot have $\ltf(c_{m}) = 0$.

Note that we have also proved that for all $\uple{e}\in\ltf<-1>(\sprol(\alpha))$, $t$ is minimal such that it is not regular in $\uple{e}$, hence the $\LAQ$-type of $\sprol(\alpha)$ says so and hence, as $\TA$ eliminates field quantifiers, $t^{f}$ has the same minimality property (relatively to $f(\alpha)$) and we find $a_{2}$ in the exact same way. If $t=0$ then any $a_{1}$ and $a_{2}\in\ltf<-1>(\alpha)$ will work.

Let us now show that $f$ can be extended to send $a_{1}$ to $a_2$. For all $n < m$, any term $u(\upleleq{x}{n})\in\Term[0]{C}$ has order-degree strictly smaller than $(t,I,m,d)$ and hence $\Def{\beta}{\ltf(u(\uple{e}))}$ does not depend on the choice of $e\in\ltf^{-1}(\sprol(\alpha))$. The formula ``$\forall \uple{x}\,\ltf(\uple{x}) = \sprol(\alpha)\imp \ltf(u(\uple{x})) = \beta$'' is an $\LAQ(C_{1})$-formula respected by $f$ and thus $f(\beta) = f(\ltf(u(\sprol(a_{1})))) = u(\sprol(a_{2}))$. It follows immediately that the function $f_{n}$ sending $u(\sprol(a_{1}))$ to $u(\sprol(a_{2}))$ is well defined. It is obviously an $\Langrestr{\LAQ}{\K}$-morphism, and, as it respects $\ltf$, by Lemma\,\ref{lem:field ext}, it is an $\LAQ$-morphism.

Let us now assume that $t\neq 0$.

\begin{claim}
Let $\Def{P}{t(\sprolto{a_{1}}{x}{m})}\in\K(C_{1,m-1})[X]$. For all $n \geq m$, $\sigma^{n}(a_{1})$ is the only zero of $P^{\sigma^{n-m}}$ whose leading term is $\sigma^{n}(\alpha)$. 
\end{claim}

\begin{proof}
Because $\sigma$ is an automorphism of valued fields, it suffices to prove the case $n=m$. Let $e\in\ltf^{-1}(\sigma^{m}(\alpha))\sminus\{\sigma^{m}(a_{1})\}$, then $\ltf(P(e)) = \ltf(P(e) - P(a)) = \ltf(c_{m})\ltf(e-\sigma^{m}(a_{1}))\neq 0$.
\end{proof}

The same claim is true of $\sigma^{m}(a_{2})$ with respect to $f(P^{\sigma^{n-m}})$ and $f(\sigma^{n}(\alpha))$. Therefore, it suffices to extend $f_{m-1}$ to the $\LAQ$-definable closure of $C_{1,m-1}$, which we can certainly do as $f_{m-1}$ is an $\LAQ$-elementary isomorphism (by resplendent field quantifier elimination in $\TA$).

Thus we have obtained an $\LAQ$-isomorphism between $\genan[C_{1}]{\sprol(a_{1})}$ and $\genan[C_{2}]{\sprol(a_{2})}$. By Remark\,\ref{rem:sigma terms}, it is an $\LAQs$-isomorphism. This morphism is also an $\Morl{LAQs}[\lt]$-isomorphism by Lemma\,\ref{lem:field ext}.
\end{proof}

\begin{corollary}[lt ext cor]
Let $\alpha\in\lt(C_{1})$, then there exists $a_{1}\in M_{1}$ such that $\ltf(a_{1}) = \alpha$ and $f$ extends to an isomorphism on $\genans[C_{1}]{a_{1}}$.
\end{corollary}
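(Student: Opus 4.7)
The plan is to reduce Corollary \ref{cor:lt ext cor} directly to Proposition \ref{prop:lt ext}. If $\alpha = 0$, take $a_{1} = 0 \in C_{1}$ and there is nothing to extend. Otherwise, pick any $a \in \K(M_{1})$ with $\ltf(a) = \alpha$. Since $\ltf$ is a multiplicative homomorphism on $\inv*{K}$ and the language $\LAQs$ contains $\Q$, one has $\genans[C_{1}]{a} = \genans[C_{1}]{a^{-1}}$. Replacing $a$ by $a^{-1}$ (and $\alpha$ by $\alpha^{-1}$) if $\valR(a) < 0$, I may therefore assume $a \in \ValR(M_{1})$, so that $\alpha \in \ltf(\ValR(M_{1})) \cap \lt(C_{1})$, which is precisely the hypothesis of Proposition \ref{prop:lt ext}.

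To invoke the proposition I still need a tuple $(t, I, m, d) \in \Term[0]{C_{1}}$, polynomial in $x_{m}$, of minimal order-degree such that $t$ is not regular at $\sprol(a)$. The set $S$ of such tuples contains the zero term, which is non-regular by convention, hence is non-empty; since the order on $\Term[0]{C_{1}}$ is well-founded, $S$ admits a minimal element. Applying Proposition \ref{prop:lt ext} to this minimum and to the chosen $a$ yields an $a_{1} \in M_{1}$ with $\ltf(a_{1}) = \alpha$, an $a_{2} \in M_{2}$ with $\ltf(a_{2}) = f(\alpha)$, and an extension of $f$ to an $\Morl{LAQs}[\lt]$-isomorphism sending $a_{1}$ to $a_{2}$. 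Its domain is $\genan[C_{1}]{\sprol(a_{1})}$, which coincides with $\genans[C_{1}]{a_{1}}$ since adjoining the $\sigma$-orbit of $a_{1}$ in the $\LAQ$-language is the same as adjoining $a_{1}$ in the $\LAQs$-language. If we had performed the preliminary inversion, pass back to $a_{1}' := a_{1}^{-1}$ and $a_{2}' := a_{2}^{-1}$, which still have the right leading terms and lie in the same generated structures.

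There is essentially no substantive obstacle: the corollary is just the ``existential'' version of Proposition \ref{prop:lt ext}, where the witness tuple $(t, I, m, d)$ is chosen internally. The only two points meriting a moment of care are the preliminary reduction to $\alpha \in \ltf(\ValR(M_{1}))$, which relies on the presence of $\Q$ in $\LAQs$ so that inversion stays within the generated structure, and the observation that the zero term is always non-regular, guaranteeing the existence of a minimal non-regular polynomial tuple even when every non-zero polynomial in $\Term{C_{1}}$ happens to be regular at $\sprol(a)$.
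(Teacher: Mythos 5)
Your proof is correct and follows essentially the same route as the paper's: both reduce to Proposition\,\ref{prop:lt ext} via inversion when $\valR(a)<0$, using the presence of $\Q$ in $\LAQs$ to see that $\genans[C_{1}]{a}=\genans[C_{1}]{a^{-1}}$ (the paper phrases this as applying the proposition to $\alpha^{-1}$ and invoking Remark\,\ref{rem:ext frac field ann}). Your explicit observation that the zero term is non-regular by convention, hence a minimal non-regular tuple always exists, is a detail the paper leaves implicit but is exactly the right justification.
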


\begin{proof}
If $\alpha\in\resf(\ValR(M_{1}))$, then Proposition\,\ref{prop:lt ext} applies. If not apply Proposition\,\ref{prop:lt ext} to $\alpha^{-1}$ and conclude by extending the isomorphism to the analytic field generated by its domain by Remark\,\ref{rem:ext frac field ann}.
\end{proof}

\subsection{Immediate extensions}\label{subsec:imm}

Let $M\models\TA$ be sufficiently saturated and $C\substr M$.

\begin{definition}(Pseudo-convergent $\star$-sequences)
Let $(\uple{x}_{\alpha})$ be a sequence of tuples of the same length. We say that it is a pseudo-convergent sequence if for all $i$, $(\uple{x}_{i,\alpha})$ is pseudo-convergent. Moreover, we will say that $\uple{a}$ is a pseudo-limit of $(\uple{x}_{\alpha})$ if for all $i$, $\uple{x}_{i,\alpha}\pc a_{i}$.
\end{definition}

\begin{definition}(Equivalent pseudo-convergent sequences)
We will say that two pseudo-convergent sequences are equivalent if they have the same pseudo-limits.
\end{definition}

\begin{lemma}[equiv pc seq]
Let $\uple{x}_{\alpha}$ be a pseudo-convergent sequence, $\uple{a}$ a pseudo-limit of this sequence and $\uple{y}_{\alpha}$ a sequence such that for all $i$, $\val(a_{i}-y_{i,\alpha}) = \val(a_{i}-x_{i,\alpha})$, then $(y_{\alpha})$ is also a pseudo-convergent sequence, equivalent to $(x_{\alpha})$.
\end{lemma}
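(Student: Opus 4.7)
The plan is to reduce to the scalar (single-sequence) case using the componentwise definition of pseudo-convergence for tuples, and then handle that case with a routine ultrametric argument. Since the hypotheses and conclusions are defined coordinate by coordinate, I can fix an index $i$ and drop it from notation, so that I am given scalar sequences $x_{\alpha}$, $y_{\alpha}$ and a point $a$ with $x_{\alpha}\pc a$ and $\val(a-y_{\alpha})=\val(a-x_{\alpha})$ for all $\alpha$. Set $\gamma_{\alpha}\coloneqq\val(a-x_{\alpha})=\val(a-y_{\alpha})$; by the preceding remark on pseudo-convergent sequences, $(\gamma_{\alpha})$ is strictly increasing.

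First I would show $(y_{\alpha})$ is pseudo-convergent. For $\alpha<\beta$, one has $\val(y_{\alpha}-a)=\gamma_{\alpha}<\gamma_{\beta}=\val(y_{\beta}-a)$, so by the strict form of the ultrametric triangle inequality applied to $y_{\alpha}-y_{\beta}=(y_{\alpha}-a)-(y_{\beta}-a)$, we get $\val(y_{\alpha}-y_{\beta})=\gamma_{\alpha}$. Hence for $\alpha<\beta<\delta$, $\val(y_{\alpha}-y_{\delta})=\gamma_{\alpha}<\gamma_{\beta}=\val(y_{\beta}-y_{\delta})$, which is exactly pseudo-convergence. This computation also shows $y_{\alpha}\pc a$.

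For the equivalence, I would use the characterisation of pseudo-limits recalled just before the lemma: the set of pseudo-limits of a pseudo-convergent sequence admitting $a$ as a pseudo-limit, with associated radii $\gamma_{\alpha}$, is exactly $\{b : \val(b-a)>\gamma_{\alpha}\text{ for all }\alpha\}$. Since the sequences $(x_{\alpha})$ and $(y_{\alpha})$ share the same pseudo-limit $a$ and the same radii $\gamma_{\alpha}$, their sets of pseudo-limits coincide. Concretely, if $b$ is any pseudo-limit of $(x_{\alpha})$, then $\val(b-a)>\gamma_{\alpha}=\val(a-y_{\alpha})$ gives $\val(b-y_{\alpha})=\gamma_{\alpha}$, strictly increasing, so $b$ is a pseudo-limit of $(y_{\alpha})$, and the symmetric argument (using that $a$ is a pseudo-limit of $(y_{\alpha})$) gives the converse.

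There is no real obstacle here; the only point that requires any care is checking that the strict ultrametric inequality applies (which it does because the two valuations $\val(y_{\alpha}-a)$ and $\val(y_{\beta}-a)$ are distinct), and that the argument is carried out componentwise so as to use only the scalar definition of pseudo-convergence given in Definition~\ref{def:pseudo conv}.
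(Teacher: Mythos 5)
Your proof is correct and follows essentially the same route as the paper: reduce to the scalar case, use the strict ultrametric inequality to get $\val(y_\alpha-y_\beta)=\gamma_\alpha$ (hence pseudo-convergence and $y_\alpha\pc a$), and deduce equivalence by comparing valuations of differences with the common pseudo-limit $a$. The only cosmetic difference is that you route the equivalence argument through the characterization of the pseudo-limit set as $\{b:\val(b-a)>\gamma_\alpha\text{ for all }\alpha\}$ (whose converse direction, that a pseudo-limit $b$ satisfies $\val(b-a)>\gamma_\alpha$, is an easy extra check), whereas the paper does the same calculation directly via the telescoping sum $b-x_{\alpha+1}+x_{\alpha+1}-a+a-y_\alpha$; the underlying computation is the same.
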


\begin{proof}
We may assume that $\card{x_{\alpha}} = 1$. Note that for all $\beta > \alpha$, $\val(y_{\beta}-y_{\alpha}) = \val(y_{\beta} - a + a - y_{\alpha}) = \val(a-x_{\alpha}) = \val(x_{\beta}-x_{\alpha})$, as $\val(a-x_{\beta}) > \val(a-x_{\alpha})$. Hence $(y_{\alpha})$ is also pseudo-convergent. Moreover, if $b$ is any pseudo-limit of $(x_{\alpha})$, then $\val(b-y_{\alpha}) = \val(b-x_{\alpha+1}+x_{\alpha+1}-a+a-y_{\alpha}) = \val(a-y_{\alpha}) = \val(a-x_{\alpha}) = \val(b-x_{\alpha})$ and $y_{\alpha}\pc b$. The symmetric argument shows that if $y_{\alpha}\pc b$ then $x_{\alpha}\pc b$.
\end{proof}

\begin{definition}(Rich enough families)
We say that a family $\cF$ of equivalent pseudo-convergent sequences of $C$ is rich enough if for any linear polynomial $P(\uple{X}) = \sum_{i}\pi_{i}X_{i}\in\ltf(\K(C))[\uple{X}]$, there exists $(\uple{x}_{\alpha})\in\cF$ such that for all pseudo-limit $\uple{a}$ and all $\alpha$, $P(\ltf(\uple{a}-\uple{x}_{\alpha})) \neq 0$, i.e. if $\ltf(p_{i}) = \pi_{i}$ then $\val(\sum_{i}p_{i}(a_{i} - x_{i,\alpha})) = \min_{i}\{\val(p_{i}) + \val(a_{i}-x_{i,\alpha})\}$.
\end{definition}

We will say that a term $u = \sum_{i=0}^{d}u_{i}(\upleneq{x}{m})\sigma^{m}(x)^{i}$ is monic if $u_{d} = 1$. As in section \ref{subsec:res}, let us begin by a proposition that does not seem to have anything to do with automorphisms.

\begin{proposition}[descr term imm]
Let $\cF$ be a rich enough family of equivalent pseudo-convergent sequences of $C$ that are eventually in $\ValR$ and $(t,I,m,d)\in\Term[0]{C}$. Suppose that $(t,I,m,d)$ has minimal order-degree such that $t$ is a monic polynomial in $x_{m}$ and there exists a pseudo-convergent sequence $(\uple{x}_{\alpha})\in\cF$ that pseudo-solves $t$. Then for all $(u,J,n,e) < (t,I,m,d)$, there exists $\alpha_{0}$ such $\ltf(u(\uple{x}))$ is constant on $\Def{\uple{b}_{0}}{\oball{\uple{\gamma}_{0}}{\uple{x}_{\alpha_{0}+1}}}$, where $\Def{\uple{\gamma}_{0}}{\val(\uple{x}_{\alpha_{0}+1} - \uple{x}_{\alpha_{0}})}$ --- it follows immediately that $\ltf(u(\uple{x}))\in\ltf(\K(C))$ --- and for any $\uple{a}\in\uple{b}_{0}$, $u(\upleto{a}{x}{n})$ has a Weierstrass preparation on $b_{n,0}$.
\end{proposition}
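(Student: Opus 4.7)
The plan is to follow the template of the proof of Proposition\,\ref{prop:descr term lt}, proceeding by induction on the order-degree of $(u,J,n,e)$. The tail balls $\uple{b}_{\alpha} := \oball{\uple{\gamma}_{\alpha}}{\uple{x}_{\alpha+1}}$ will play the role that the fibres $\ltf<-1>(\uple{\alpha})$ played there, while the hypothesis that some $(\uple{x}_\alpha)\in\cF$ pseudo-solves $t$ replaces the non-regularity of $t$ at $\sprol(a)$. The base case (constant $u$) is immediate.

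For the inductive step I would split on whether $u$ is polynomial in $x_n$. If it is, write $u = \sum_{i=0}^{e} u_i(\upleneq{x}{n}) x_n^i$: by induction each $u_i$ has stable leading term on a common tail ball, so up to replacing $u$ by the truncation determined by the largest $i$ whose leading coefficient does not eventually vanish, and then dividing by that coefficient, we may assume $u$ is monic in $x_n$. The minimality of $(t,I,m,d)$ then forces that no sequence in $\cF$ pseudo-solves $u$, so $\val(u(\uple{x}_\alpha))$ does not tend to infinity. Combining this with the richness of $\cF$---which rules out cancellation in the combination $\sum \ltf(u_i)\ltf(x_{n,\alpha})^i$---and with the linear approximation of Proposition\,\ref{prop:lin approx} applied on $\uple{b}_\alpha$, I expect $\val(u(\uple{x}_\alpha))$ to eventually stabilize and $\ltf(u(\uple{x}))$ to become constant on some smaller tail ball; Weierstrass preparation of $u(\upleto{a}{x}{n})$ on $b_{n,\alpha_0}$ then comes for free since its leading coefficient is a unit there. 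If $u$ is not polynomial in $x_n$, Weierstrass preparation of $u(\upleto{a}{x}{n})$ on a Swiss cheese in $\SWC<\ValR>(\genan[C]{\upleneq{a}{n}})$ containing a tail of $(x_{n,\alpha})$ produces a decomposition $u(\upleto{a}{x}{n}) = F((x_n-c)/d, \uple{c})\, P(x_n)/Q(x_n)$ on some open ball around $x_{n,\alpha}$, with $F\in\Ann$ of zero valuation and $P$, $Q$ polynomials of strictly lower order-degree; Corollary\,\ref{cor:ltf unit} forces $\ltf(F)$ to be constant on any sub-ball on which $\resf((x_n-c)/d) = 0$, and the induction hypothesis handles $\ltf(P)$ and $\ltf(Q)$.

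To pass from constancy in the single variable $x_n$ to constancy on the full product ball $\uple{b}_{\alpha_0}$, I would argue exactly as in the proof of Proposition\,\ref{prop:descr term lt}, via Corollary\,\ref{cor:uniformization}: the statement $\forall\uple{x}\in\uple{b}_{\alpha_0}\,\ltf(u(\uple{x})) = \beta$ is an $\LAQ(C\uple{x}_{\alpha_0+1}\beta)$-formula, so iterating the one-variable case one coordinate at a time yields the multivariate result.

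The main obstacle will be the polynomial case above: translating the asymptotic non-pseudo-solvability of $u$ (a statement about the sequence $\val(u(\uple{x}_\alpha))$) into the genuinely local statement that $\ltf(u)$ is constant on an actual open ball. This hinges entirely on the richness hypothesis on $\cF$, which rules out pathological cancellations among the leading terms of the coefficients $u_i$. A secondary technical difficulty is ensuring that a single tail index $\alpha_0$ can be chosen so that all intermediate terms produced by the induction stabilize on $\uple{b}_{\alpha_0}$ simultaneously.
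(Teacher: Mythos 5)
Your overall architecture --- induction on the order-degree, splitting on whether $u$ is polynomial in $x_{n}$, letting the tail balls $\oball{\uple{\gamma}_{\alpha}}{\uple{x}_{\alpha+1}}$ play the role the fibres $\ltf<-1>(\uple{\alpha})$ played in Proposition\,\ref{prop:descr term lt}, and using Proposition\,\ref{prop:lin approx} together with the minimality of $(t,I,m,d)$ --- matches what the paper does, down to the compactness/uniformization trick needed to pass from one variable at a time to all of $\uple{b}_{\alpha_0}$ simultaneously (which you rightly flag as a potential difficulty; the paper addresses it by making $\alpha_0$ bigger via field quantifier elimination and compactness).

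However, the step you label as ``the main obstacle'' is precisely where your sketch has a genuine gap, and the mechanism you propose for it would not work as written. You say the richness of $\cF$ ``rules out cancellation in the combination $\sum \ltf(u_{i})\ltf(x_{n,\alpha})^{i}$''. That is not what richness gives you, and this is not where richness is applied. The definition of a rich enough family supplies, for each \emph{linear} polynomial $P(\uple{X}) = \sum_{j} p_{j} X_{j}$ with coefficients having leading terms in $\ltf(\K(C))$, a sequence $(\uple{y}_{\alpha})\in\cF$ on which $\sum_{j} p_{j}(a_{j} - y_{j,\alpha})$ does not drop in valuation. The way this is used is: first one produces (from the induction hypothesis plus Proposition\,\ref{prop:lin approx} applied one coordinate at a time, then Proposition\,\ref{prop:one var to all}) a tuple $\uple{c}\in\K(C)$ that linearly approximates $u$ on the tail ball. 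Then one applies richness to the linear form $\uple{c}\cdot\uple{X}$ --- not to the degree-$e$ polynomial $u$ --- to pick $(\uple{y}_{\alpha})\in\cF$ with $\val(\uple{c}\cdot(\uple{a}-\uple{y}_{\alpha})) = \min_{j}\{\val(c_{j}) + \val(a_{j}-y_{j,\alpha})\}$. Linear approximation then gives $\val(u(\uple{a}) - u(\uple{y}_{\alpha})) = \min_{j}\{\val(c_{j}) + \val(a_{j}-y_{j,\alpha})\}$. If $\val(u(\uple{a}))$ exceeded this for every $\alpha$, then $u(\uple{y}_{\alpha})\pc 0$, contradicting the minimality of $t$ (recall the relevant notion is that a sequence \emph{equivalent} to $(\uple{x}_{\alpha})$ pseudo-solves $u$, and $(\uple{y}_{\alpha})$ is equivalent). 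Hence for $\alpha\gg 0$ the inequality goes the other way, and $\ltf(u(\uple{a})) = \ltf(u(\uple{y}_{\alpha}))\in\ltf(\K(C))$ is constant on a tail ball. Your sketch never constructs the linear approximation tuple $\uple{c}$, never applies richness to it, and never exhibits the alternative sequence $(\uple{y}_{\alpha})$ that makes the contradiction with minimality go through; ``$\val(u(\uple{x}_{\alpha}))$ does not tend to infinity'' alone is far too weak to conclude that $\ltf(u)$ stabilizes on an open ball. This is the argument that must be supplied.
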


\begin{proof} We may assume that $\alg{\K(M)} = \K(M)$. The proof proceeds by induction on $J$. Suppose that Proposition\,\ref{prop:descr term imm} holds for any term $(v,K,p,f)$ such that $K < J$. Let us prove a few claims to take care of certain induction steps.

\begin{claim}[ind step unit]
Fix $e$ and $n\in\Nn$. Suppose the lemma holds for all $(u,J,n,e)$, then it holds for any $(u,J,n,e+1)$ where $u$ is a monic polynomial in $x_{n}$.
\end{claim}

Note that the case $e=0$ does not require any hypothesis (other than the induction hypothesis on $J$).

\begin{proof}
Let $u = x_{n}^{e+1} + \sum_{i\leq e} u_{i}(\upleneq{x}{n})x_{n}^{i}$ and for all $f\leq e$, $\Def{s_{f}}{\sum_{i\leq f} u_{i}(\upleneq{x}{n})x_{n}^{i}}$. Let $\uple{a}$ be a pseudo-limit of $(\uple{x}_{\alpha})$. Then we can find $\alpha_{0}$ such that, for all $j\neq n$, $\val(s_{f}(\upleto{a}{x}{j}))$ and $\val(u_{f+1}(\upleto{a}{x}{j}))$ are constant on $b_{j,0}$ and $u_{f+1}(\upleto{a}{x}{j})$ has a Weierstrass preparation. By induction on $f$ and invariance under addition, $s_{e}(\upleto{a}{x}{j})$ has a Weierstrass preparation on $b_{j,0}$. Let $\Def{s}{s_{e}}$. Making $\alpha_{0}$ bigger we can also assume that $\val(\diff[j]{s}{\upleto{a}{x}{j}})$ is constant on $b_{j,0}$. By Proposition\,\ref{prop:lin approx}, we find $c_{j}\in\K(C)$ such that for all $y_{j}$ and $z_{j}\in b_{j,0}$, $\ltf(u(\upleto{a}{y}{j}) - u(\upleto{a}{z}{j})) = \ltf(s(\upleto{a}{y}{j}) - s(\upleto{a}{z}{j})) = \ltf(c_{j})\ltf(y_{j}-z_{j})$. By field quantifier elimination, this statement only depends on the value of $\ltf(v(\upleneq{a}{j}))$ for a finite number of $\Sortrestr{\LAQ}{\K}(C)$-terms $v$ and hence, by induction, making $\alpha_{0}$ bigger, we may assume that this statement is true of all $\uple{a}\in \uple{b}_{0}$. When $j = n$, the same arguments yields some $c_{n}\in\K(C)$ as $u$ is already polynomial in $x_{n}$ and $\diff[n]{u}{\uple{x}}$ is polynomial in $x_{n}$ of degree at most $e$. It now follows from Proposition\,\ref{prop:one var to all} that $\uple{c}$ linearly approximates $u$ on $\uple{b}_{0}$.

Let $(\uple{y}_{\alpha})\in\cF$ be such that for any pseudo-limit $\uple{a}$, $\val(\uple{c}\cdot(\uple{a}-\uple{y}_{\alpha})) = \min_{j}\{\val(c_{j}) + \val(a_{j} - y_{j,\alpha})\}$. Then $\val(u(\uple{a}) - u(\uple{y}_{\alpha})) = \min_{j}\{\val(c_{j}) + \val(a_{j} - y_{j,\alpha})\}$. If for all $\alpha$, $\val(u(\uple{a})) > \min_{j}\{\val(c_{j}) + \val(a_{j} - y_{j,\alpha})\}$, then $\val(u(\uple{y}_{\alpha})) = \min_{j}\{\val(c_{j}) + \val(a_{j} - y_{j,\alpha})\}$ and $u(\uple{y}_{\alpha})\pc 0$ contradicting the minimality of $t$. Hence $\val(u(\uple{a})) < \min_{j}\{\val(c_{j}) + \val(a_{j} - y_{j,\alpha})\}$ for $\alpha\gg 0$ and $\ltf(u(\uple{a})) = \ltf(u(\uple{y_{\alpha}}))\in\ltf(\K(C))$. By compactness, making $\alpha_{0}$ bigger, this is true for any $\uple{a}\in\uple{b}_{0}$.
\end{proof}

\begin{claim}[ind step non unit]
Fix $e$ and $n\in\Nn$. Suppose the lemma holds for $(u,J,n,e)$ monic polynomial in $x_{n}$, then it holds for any $(u,J,n,e)$.
\end{claim}

\begin{proof}
Dividing by the dominant coefficient $u_{e}$ (which has constant $\ltf$ on $\uple{b}_{0}$ by induction), we obtain a term $v$ monic polynomial of degree at most $e$ in $x_{n}$ and which must also have constant $\ltf$ on $\uple{b}_{0}$ if we take $\alpha_{0}$ big enough.
\end{proof}

\begin{claim}[ind step analytic]
Fix $n\in\Nn$. Suppose that for all $e\in\Nn$, the lemma holds for all $(u,J,n,e)$. Then it also holds for all $(u,J,n,\infty)$.
\end{claim}

\begin{proof}
Let $\uple{a}$ be a pseudo-limit of $(x_{\alpha})$. Any $S\in\SWC<\ValR>(\genan[C]{\upleneq{a}{n}})$ that contains $a_{n}$ must contain $b_{n,0}$ for $\alpha_{0}$ big enough. If not, there exists $c\in\alg{\K(\genan[C]{\upleneq{a}{n}})}$ such that $x_{n,\alpha}\pc c$. Let $P(\upleto{a}{x}{n}) = \sum_{i}p_{i}(\upleneq{a}{n})x_{n}^{i}$ be its minimal polynomial. Then, by hypothesis, for all $\uple{e}\in\uple{b}_{0}$ (for $\alpha_{0}$ big enough), $\ltf(P(\uple{e})) = 0$ and we must have $p_{i}(\upleneq{a}{n}) = 0$ for all $i$, but that is absurd.

It follows that we can find $\alpha_{0}$ such that, $u(\upleto{a}{x}{n})$ has a Weierstrass preparation on $b_{n,0}$, i.e. there exists $F\in\Ann$, $\uple{c}\in\K(\genan[C]{\upleneq{a}{n}})$ and $\Sortrestr{\LAQ}{\K(C)}$-terms $P$ and $Q$ polynomial in $x_{n}$ such that for all $x_{n}\in b_{j,0}$, \[u(\upleto{a}{x}{n}) = F\left(\frac{x_{n}-x_{n,\alpha_{0}+1}}{x_{n,\alpha_{0}+1}-x_{n,\alpha_{0}}},\uple{c}\right)\frac{P(\upleto{a}{x}{n})}{Q(\upleto{a}{x}{n})}\] and $\val(F((x_{n}-x_{n,\alpha_{0}+1})(x_{n,\alpha_{0}+1}-x_{n,\alpha_{0}}),\uple{c})) = 0$. In turn, this implies that $\ltf(u(\upleto{a}{x}{n}))$ does not depend on $x_{n}\in b_{n,0}$ and by the usual uniformization argument (making $\alpha_{0}$ bigger), we can ensure that $\ltf(u(\uple{e}))$ does not depend on $\uple{e}\in\uple{b}_{0}$.
\end{proof}

Proposition\,\ref{prop:descr term imm} follows by induction.
\end{proof}

\begin{remark}[lin app imm]
Note that the proof of Claim\,\ref{claim:ind step unit} also shows that there exists $\uple{d}\in\K(C)$ that linearly approximates $t$ on $\uple{b}_{0}$ for $\alpha_{0}$ big enough.
\end{remark}

Let $M\models\TAs$ be sufficiently saturated and $C\substr M$ such that $\resf(\K(C))$ is linearly closed.

\begin{proposition}[lin cl rich]
Let $x_{\alpha}$ be a pseudo-convergent sequence of $C$. The family $\{\sprol(y_{\alpha})\mid y_{\alpha}$ is a pseudo-convergent sequence of $C$ equivalent to $x_{\alpha}\}$ is a rich enough family of equivalent pseudo-convergent sequences of $C$.
\end{proposition}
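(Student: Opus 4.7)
The plan is to perturb the sequence $(x_{\alpha})$ coordinate-wise inside $\K(C)$, using linear closedness of $\res(\K(C))$ to arrange the cancellation-free leading terms that the definition of richness demands. I may assume $(x_{\alpha})$ has a pseudo-limit $b\in \K(M)$, since otherwise the condition is vacuous. Writing $\gamma_{\alpha}:=\val(b-x_{\alpha})=\val(x_{\alpha+1}-x_{\alpha})$ (in particular $\gamma_{\alpha}\in\val(\K(C))$), $\epsilon_{\alpha}:=b-x_{\alpha}$, lifting each $\pi_{i}$ to $p_{i}\in\K(C)$, and setting $P(x):=\sum_{i}p_{i}\sigma^{i}(x)$ and $\mu_{\alpha}:=\min_{i}\{\val(p_{i})+\sigma^{i}(\gamma_{\alpha})\}$, the required condition $P(\ltf(\sprol(b-y_{\alpha})))\neq 0$ unwinds to $\val(P(b-y_{\alpha}))=\mu_{\alpha}$ for every $\alpha$.

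At indices where $\val(P(\epsilon_{\alpha}))=\mu_{\alpha}$ I simply take $y_{\alpha}:=x_{\alpha}$. Otherwise $\val(P(\epsilon_{\alpha}))>\mu_{\alpha}$ and I look for $c_{\alpha}\in\K(C)$ satisfying (a) $\val(c_{\alpha})=\gamma_{\alpha}$, (b) the leading residue of $c_{\alpha}$ is distinct from that of $\epsilon_{\alpha}$, and (c) $\val(P(c_{\alpha}))=\mu_{\alpha}$. Then $y_{\alpha}:=x_{\alpha}+c_{\alpha}$ satisfies $\val(b-y_{\alpha})=\val(\epsilon_{\alpha}-c_{\alpha})=\gamma_{\alpha}$ by (a) and (b), while (c) together with $\val(P(\epsilon_{\alpha}))>\mu_{\alpha}$ forces $\val(P(b-y_{\alpha}))=\mu_{\alpha}$. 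Lemma\,\ref{lem:equiv pc seq} then guarantees that the resulting $(y_{\alpha})$ is pseudo-convergent and equivalent to $(x_{\alpha})$.

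The crux is the construction of $c_{\alpha}$. Setting $d:=x_{\alpha+1}-x_{\alpha}$ I take $c_{\alpha}=du$ with $u\in\K(C)$ of valuation $0$ to be chosen. Picking $g\in\K(C)$ with $\val(g)=\mu_{\alpha}$, condition (c) becomes $L(\bar u)\neq 0$, where $L(X):=\sum_{i}\overline{p_{i}\sigma^{i}(d)/g}\,\sigma^{i}(X)$ is a non-zero linear $\sigma$-polynomial over $\res(\K(C))$ whose coefficient at the largest index $i$ realizing the minimum in $\mu_{\alpha}$ is a unit. Linear closedness of $\res(\K(C))$ makes $L:\res(\K(C))\to\res(\K(C))$ surjective, so $\ker L$ is a proper subspace; the residue field is infinite (characteristic zero), and the forbidden residue $\beta$ encoding (b) is a single element, so I can choose $\bar u\in\res(\K(C))\setminus(\ker L\cup\{\beta\})$ and lift back to the desired $c_{\alpha}$.

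The main obstacle is really this third step: one must simultaneously prescribe $\val(P(c_{\alpha}))$ \emph{and} the leading residue of $c_{\alpha}$, and that is where both linear closedness of $\res(\K(C))$ and the infiniteness of the residue field in characteristic zero are needed. Independence from the choice of pseudo-limit is then immediate: if $b'$ is any other pseudo-limit of $(y_{\alpha})$ then $\val(b-b')>\gamma_{\alpha}$ for every $\alpha$, hence $\val(P(b-b'))>\mu_{\alpha}$ and $\val(P(b'-y_{\alpha}))=\val(P(b-y_{\alpha})-P(b-b'))=\mu_{\alpha}$.
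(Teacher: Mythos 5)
Your argument is correct and is, up to bookkeeping, the paper's own proof: perturb $x_{\alpha}$ by a multiple $(x_{\alpha+1}-x_{\alpha})u$ of the step with $u$ a unit of $\K(C)$, and use linear closedness of $\resf(\K(C))$ to arrange the residue of the $\sigma$-linear form $P$. Two remarks.

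First, a small gap in the last paragraph. The richness condition quantifies over \emph{all} pseudo-limits $\uple{a}$ of $\sprol(y_{\alpha})$, and by the paper's definition a pseudo-limit of a tuple sequence is any tuple $(a_{0},\dots,a_{n})$ with $\sigma^{i}(y_{\alpha})\pc a_{i}$ coordinate-wise; such a tuple need not be of the form $\sprol(b')$ for a single scalar pseudo-limit $b'$ of $(y_{\alpha})$. You only treat the $\sprol(b')$ case. The fix is the same estimate read coordinate-wise: for any such $\uple{a}$ one has $\val(a_{i}-\sigma^{i}(b))>\sigma^{i}(\gamma_{\alpha})$, whence $\val(\sum_{i}p_{i}(a_{i}-\sigma^{i}(b)))>\mu_{\alpha}$ and therefore $\val(\sum_{i}p_{i}(a_{i}-\sigma^{i}(y_{\alpha})))=\mu_{\alpha}$.

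Second, the appeal to infiniteness of the residue field to pick $\bar u\notin\ker L\cup\{\beta\}$ is correct in context (equicharacteristic zero; moreover a linearly closed difference field is necessarily infinite) but unnecessary, and hides the point. In your perturbation case the hypothesis $\val(P(\epsilon_{\alpha}))>\mu_{\alpha}$ is precisely $L(1)=0$, since $\resf(\epsilon_{\alpha}/d)=1$; so $L(\bar u)\neq 0=L(1)$ already forces $\bar u\neq 1=\beta$, and your constraints (b) and (c) collapse into one, needing only that the image of $L$ contain a nonzero element. This is in effect what the paper does: it imposes the single condition $\resf(Q_{\alpha}(\sprol(d_{\alpha})))\neq\resf(Q_{\alpha}(\sprol(1)))$, which controls the residue of the form and simultaneously forces $\resf(d_{\alpha})\neq 1$, with no case split and no use of infiniteness.
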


\begin{proof}
Let $P(\uple{X}) = \sum_{i}p_{i}X_{i}\in\K(C)[\uple{X}]$. If for all $i$ $p_{i} = 0$, we are done. Otherwise, let $\epsilon_{\alpha} = x_{\alpha+1}-x_{\alpha}$, let $i_{0}$ such that $\val(p_{i_{0}}) + \val(\sigma^{i_{0}}(\epsilon_{\alpha}))$ is minimal, and let \[Q_{\alpha}(\sprol(X)) = p_{i_{0}}^{-1}\sigma^{i_{0}}(\epsilon_{\alpha})^{-1}P(\sprol(\epsilon_{\alpha}X)) = \sum_{i}p_{i}p_{i_{0}}^{-1}\sigma^{i}(\epsilon_{\alpha})\sigma^{i_{0}}(\epsilon_{\alpha}^{-1}) \sigma^{i}(X).\] As $\resf(Q_{\alpha})$ is linear with coefficients in $\resf(\K(C))$, which is linearly closed, we can find $d_{\alpha}\in\K(C)$ such that $\resf(Q_{\alpha}(\sprol(d_{\alpha})))\neq\resf(Q_{\alpha}(\sprol(1)))$. In particular, $\resf(d_{\alpha})\neq\resf(1)$ and $\val(d_{\alpha}-1) = 0$. Let $y_{\alpha} = x_{\alpha} + \epsilon_{\alpha}d_{\alpha}$.

Let $\uple{a}$ be such that $\sprol(x_{\alpha})\pc \uple{a}$, then
\begin{eqnarray*}
\ltf(a_{i}-\sigma^{i}(y_{\alpha})) &=& \ltf(a_{i}-\sigma^{i}(x_{\alpha+1}) + \sigma^{i}(x_{\alpha+1}) - \sigma^{i}(x_{\alpha}) + \sigma^{i}(x_{\alpha}) - \sigma^{i}(y_{\alpha}))\\
&=& \ltf(\sigma^{i}(\epsilon_{\alpha}))\ltf(1 - \sigma^{i}(d_{\alpha})).
\end{eqnarray*}
It follows that $\val(a_{0} - y_{\alpha}) = \val(\epsilon_{\alpha}) = \val(a_{0} - x_{\alpha})$. By Lemma\,\ref{lem:equiv pc seq}, $(y_{\alpha})$ is equivalent to $(x_{\alpha})$. Let $c_{i} = (a_{i} - \sigma^{i}(y_{\alpha}))/\sigma^{i}(\epsilon_{\alpha})$. Then

\begin{eqnarray*}
\resf(P(\uple{a}-\sprol(y_{\alpha}))p_{i_{0}}^{-1}\epsilon_{\alpha}^{-1})&=& \resf(Q)(\resf(\uple{c}))\\
&=& \resf(Q)(\resf(\sprol(1) - \sprol(d_{\alpha})))\\
&=& \resf(Q(\sprol(1))) - \resf(Q(\sprol(d_{\alpha})))\\
&\neq& 0.
\end{eqnarray*}
Hence, we have $\val(P(\uple{a}-\sprol(y_{\alpha}))) = \val(p_{i_{0}}) + \val(\sigma^{i_{0}}(\epsilon_{\alpha})) = \min_{i}\{\val(p_{i}) + \val(a_{i}-\sigma^{i}(y_{\alpha}))\}$.
\end{proof}

And now let us prove another embedding theorem for immediate extensions. Let $M_{1}$ and $M_{2}\models\TAsH$ be sufficiently saturated, $N_{i}\substr M_{i}$ have no immediate extension in $M_{i}$ and be $\sigma$-Henselian --- as we will see in Remark\,\ref{rem:max imm imp sH} this second hypothesis follows from the first one ---, $C_{i}\substr N_{i}$ be such that $\resf(\K(C_{1}))$ is linearly closed and $f:C_{1}\to C_{2}$ an $\Morl{LAQs}[\lt]$-isomorphism.

\begin{definition}(Minimal term of a pseudo-convergent sequence)
Let $(x_{\alpha})$ be a pseudo-convergent sequence of $C_{1}$. We say that $(t,I,m,d)\in\Term[0]{C_{1}}$ is its minimal term if it is minimal such that it is monic polynomial in $x_{n}$ and it is $\sigma$-pseudo-solved by a pseudo-convergent sequence equivalent to $(x_{\alpha})$.
\end{definition}

Note that any pseudo-convergent sequence has a minimal term, as any pseudo-convergent sequence $\sigma$-pseudo-solves $0$.

\begin{proposition}[imm ext]
Let $(x_{\alpha})$ be a pseudo-convergent sequence of $\K(C_{1})$ (indexed by a limit ordinal) which is eventually in $\ValR$. Let $(t,I,m,d)$ be its minimal term. Then:
\begin{thm@enum}
\item There exists $a_{1}\in N_{1}$ and $a_{2}\in N_{2}$ such that $x_{\alpha}\pc a_{1}$, $f(x_{\alpha})\pc a_{2}$ and $t(\sprol(a_{1}))=0 = t^{f}(\sprol(a_{2}))$.
\item For any such $a_{1}$, $\genans[C_{1}]{a_{1}}$ is an immediate extension of $C_{1}$;
\item For any such $a_{i}$, $f$ can be extended to an $\Morl{LAQs}[\lt]$-isomorphism sending $a_{1}$ to $a_{2}$.
\end{thm@enum}
\end{proposition}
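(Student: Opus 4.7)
For (i), the plan is to engineer a $\sigma$-Hensel configuration for $t$ and then invoke $\sigma$-Henselianity of $N_{1}$. Since $\resf(\K(C_{1}))$ is linearly closed, Proposition \ref{prop:lin cl rich} provides a rich enough family $\cF$ of prolongations of sequences equivalent to $(x_{\alpha})$; applying Proposition \ref{prop:descr term imm} together with Remark \ref{rem:lin app imm} then yields, for $\alpha_{0}$ large enough, a multivariable ball $\uple{b}_{0}=\oball{\uple{\gamma}_{0}}{\sprol(y_{\alpha_{0}+1})}$ on which every term of order-degree strictly less than $(t,I,m,d)$ has constant leading term, together with $\uple{d}\in\K(C_{1})$ linearly approximating $t$ on $\uple{b}_{0}$. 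Invoking the rich-enough property against the linear polynomial $P(X)=\uple{d}\cdot X$ refines $(y_{\alpha})$ so that $\val(\uple{d}\cdot\sprol(a-y_{\alpha}))=\min_{i}\{\val(d_{i})+\sigma^{i}(\val(a-y_{\alpha}))\}$ for every pseudo-limit $a$. The hypothesis that $N_{1}$ has no immediate extension inside the saturated $M_{1}$ forces $N_{1}$ to be maximally complete (cf.\ Remark \ref{rem:max imm imp sH}), so $(y_{\alpha})$ admits a pseudo-limit in $N_{1}$. On the univariate ball $b_{\alpha}=\oball{\xi_{\alpha}}{y_{\alpha+1}}$ with $\xi_{\alpha}=\val(y_{\alpha+1}-y_{\alpha})$, the containment $\sprol(b_{\alpha})\subseteq\uple{b}_{0}$ promotes $\uple{d}$ to a linear approximation of $t$ at prolongations on $b_{\alpha}$; combined with $t(\sprol(y_{\alpha}))\pc 0$ and the refined richness, this forces $(t,y_{\alpha+1},\uple{d},\xi_{\alpha})$ into $\sigma$-Hensel configuration for $\alpha$ sufficiently large. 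Applying $\sigma$-Henselianity of $N_{1}$ then delivers a zero $a_{1}\in N_{1}$ of $t(\sprol(\cdot))$ lying in every $b_{\alpha}$, hence a pseudo-limit of $(y_{\alpha})$ and of $(x_{\alpha})$. The parallel construction inside $f(C_{1})\subseteq N_{2}$, with $t^{f}$, $\uple{d}^{f}$, and $f(y_{\alpha})$, produces $a_{2}$.

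For (ii) and (iii), I would extend $f$ by the prescription $g(u(\sprol(a_{1})))=u^{f}(\sprol(a_{2}))$ for every $\LAQ(C_{1})$-term $u$, which by Remark \ref{rem:sigma terms} already covers all $\LAQs(C_{1})$-terms. Both the well-definedness needed for (iii) and the immediateness of (ii) reduce to the single claim that for every such $u$ one has $\ltf[1](u(\sprol(a_{1})))\in\lt(C_{1})$ and $f(\ltf[1](u(\sprol(a_{1}))))=\ltf[1](u^{f}(\sprol(a_{2})))$: this is because $u(\sprol(a_{1}))=0$ is equivalent to $\ltf[1](u(\sprol(a_{1})))=0$, and because both $\val$ and $\resf$ factor through $\ltf[1]$. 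The claim is proved by well-founded induction on the order-degree of $u$. When $(u,K,p,e)<(t,I,m,d)$, Proposition \ref{prop:descr term imm} makes $\ltf[1](u)$ constant on $\uple{b}_{0}$, with value $\ltf[1](u(\sprol(y_{\alpha_{0}+1})))\in\lt(C_{1})$, which transfers correctly under $f$ because $f$ is elementary for $\TA$ (Theorem \ref{thm:EQ TA} applied resplendently to $\LAQs$). When $(u,K,p,e)\geq(t,I,m,d)$ and $u$ is polynomial in its top variable $x_{p}$ of degree at least $d$, Euclidean division by the $\sigma$-shift $t^{\sigma^{p-m}}$ --- satisfied by $\sprol(a_{1})$ because $\sigma^{p-m}(t(\sprol(a_{1})))=0$ --- strictly lowers the order-degree, and iteration finishes the induction. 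For general analytic $u$, Weierstrass preparation (Proposition \ref{prop:WD}) factors $u$ on a suitable ball as a strong unit (whose leading term is controlled by Corollary \ref{cor:ltf unit}) times a rational function in $x_{p}$, after which the polynomial reduction applies.

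The hard part will be this last case of the inductive step: reducing a general analytic term $u$ of order-degree at least $(t,I,m,d)$ while keeping every auxiliary parameter inside $C_{1}$ and every Weierstrass datum compatible with $f$. The purely polynomial reduction via iterated Euclidean division against finitely many shifts $t^{\sigma^{j}}$ is essentially bookkeeping, as the resulting coefficient terms use strictly fewer variables and are killed at the corresponding prolongations of $a_{1}$. For genuinely analytic $u$, however, one must first localize near $\sprol(a_{1})$, extract a Swiss cheese on which $u$ admits a Weierstrass preparation, verify that $\sprol(a_{1})$ avoids the discrete exceptional components (here the minimality of $t$, together with Proposition \ref{prop:TA alg ext} for the algebraic auxiliaries, prevents $a_{1}$ from being an unexpected isolated zero), and only then transport the whole setup via $f$ and confirm that the image data behaves analogously at $\sprol(a_{2})$. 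The compatibility rests on the resplendent $\K$-quantifier elimination of $\TA$ and the uniform strong-unit description of Corollary \ref{cor:ltf unit}, but managing the intermediate parameters across the two sides is where the argument becomes most technical.
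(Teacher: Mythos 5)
Your plan for part (i) follows the paper's line quite closely --- Propositions~\ref{prop:descr term imm} and~\ref{prop:lin cl rich}, Remark~\ref{rem:lin app imm}, and $\sigma$-Henselianity of $N_{1}$ are exactly the ingredients --- though you should note that the argument as written tacitly assumes $t\neq 0$; the paper treats the $t=0$ case separately (it is handled at the very end of the proof by finding the minimal term over the \emph{larger} structure $N_{1}$ and then arguing that the resulting point, already in $N_{1}$ by $\sigma$-Henselianity, serves as $a_{1}$), and your proposal does not address it.

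For parts (ii) and (iii), your reduction to controlling $\ltf[1](u(\sprol(a_{1})))$ for all $\LAQ(C_{1})$-terms $u$ is a sensible reformulation, but the proposal has a genuine gap precisely at what you yourself flag as ``the hard part.'' Your induction on order-degree handles terms below $(t,I,m,d)$ via Proposition~\ref{prop:descr term imm}, but the reduction of terms of higher order-degree --- Euclidean division by shifts of $t$ followed by a Weierstrass-preparation argument for analytic $u$ --- is left as a gesture rather than an argument, and the bookkeeping of variable sets under the shift $t^{\sigma^{p-m}}$, together with the transport of Weierstrass data through $f$, is not worked out. The paper sidesteps this entirely by extending $f$ \emph{iteratively} through $C_{1,n}=\genan[C_{1}]{\sprolleq{a_{1}}{n}}$: for $n<m$ the argument is the one from Proposition~\ref{prop:lt ext}, and for $n\geq m$ one observes that $\sigma^{n}(a_{1})$ is the \emph{unique} zero of $P^{\sigma^{n-m}}(X)$ (with $P(X_{m})=t(\sprolto{a_{1}}{X}{m})$) inside the relevant ball, hence $\ACVF$-definable over $\K(C_{1,n-1})$, hence lies in the Henselization $\K(C_{1,n-1})^{h}$. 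The Henselization is an immediate extension, and Proposition~\ref{prop:TA alg ext} then handles all analytic terms at once over this algebraic extension. This is the device that resolves the difficulty you identify; your proof does not invoke Proposition~\ref{prop:TA alg ext} in this role, and without it the analytic case of your induction is not closed.

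One further precision worth recording: for the immediateness in (ii) you need $\ltf[1](u(\sprol(a_{1})))\in\ltf[1](\K(C_{1}))$, not merely $\in\lt(C_{1})$ (the latter can be strictly larger). Proposition~\ref{prop:descr term imm} does give the stronger containment for terms of lower order-degree, so your base case is fine, but the distinction should be maintained through the inductive step.
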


\begin{proof}
If $t$ is zero, it suffices to choose any $a_{1}$ and $a_{2}$ such that $x_{\alpha}\pc a_{1}$ and $f(x_{\alpha})\pc a_{2}$. These exist in $M_{i}$ and we will see in the end why they exist in $N_{i}$. Let us now assume that $t$ is not zero. By Remark\,\ref{rem:lin app imm} --- and Propositions\,\ref{prop:descr term imm} and \ref{prop:lin cl rich} --- we find $\alpha_{0}$ and $\uple{d}\in\K(C_{1})$ that linearly approximate $t$ at prolongations on $\Def{b_{0}}{\oball{\val(x_{\alpha_{0}+1} - x_{\alpha_{0}})}{x_{\alpha_{0}+1}}}$. By Proposition\,\ref{prop:lin cl rich}, we can find a pseudo-convergent sequence $(z_{\alpha})$ of $C_{1}$ equivalent to $(x_{\alpha})$ such that for all pseudo-limit $a$ of $(x_{\alpha})$, $\val(t(\sprol(a)) - t(\sprol(z_{\alpha}))) = \min_{i}\{\val(d_{i}) + \val(\sigma^{i}(a - z_{\alpha}))\}$. If for all such $a$, $\val(t(\sprol(a))) <  \min_{i}\{\val(d_{i}) + \val(\sigma^{i}(a - z_{\alpha}))\}$ for $\alpha$ big enough, then $\val(t(\sprol(a))) = \val(t(\sprol(y_{\alpha})))$. By compactness, $\val(t(\sprol(x)))$ is constant on some $b_{0}$. But this contradicts the fact that we can find $y_{\alpha}$ equivalent to $x_{\alpha}$ that $\sigma$-pseudo-solves $t$.

Hence there exists a pseudo-limit $a$ such that $\val(t(\sprol(a))) >  \min_{i}\{\val(d_{i}) + \val(\sigma^{i}(a - z_{\alpha}))\} \geq \min_{i}\{\val(d_{i}) + \val(\sigma^{i}(\xi_{0}))\}$ where $\xi_{0}$ is the radius of $b_{0}$ and $(t,a,\uple{d},\xi_{0})$ is in $\sigma$-Hensel configuration. As $N_{1}$ is $\sigma$-Henselian, we can find $a_{1}\in\K(N_{1})$ such that $t(a_{1}) = 0$ and $\val(a_{1} - a) \geq \max_{i}\{\val(\sigma^{-i}(t(\sprol(a))d_{i}^{-1}))\} > \val(x_{\alpha+1}-x_{\alpha})$, i.e. $x_{\alpha}\pc a_{1}$. As $f$ is an $\LAQs$-isomorphism, $(t^{f},I,m,d)$ is the minimal term of $(f(x_{\alpha}))$ and the same argument shows that there is $a_{2}\in \K(N_{2})$ such that $t^{f}(a_{2}) = 0$ and $f(x_{\alpha})\pc a_{2}$.

If $t\neq 0$, let us now show that $x_{m}$ must be the last variable appearing in $t$. If it is not, let $x_{n}$ be that last variable. By Proposition\,\ref{prop:descr term imm}, we can find $\Sortrestr{\LAQ}{\K}(A)$-terms $E$, $P$ and $Q$ such that $E$ is a strong unit in $x_{n}$, $P$ and $Q$ are polynomial in $x_{n}$ and for all $\uple{a}\in\uple{b}_{0}$, $t(\uple{a}) = E(\uple{a})P(\uple{a})/Q(\uple{a})$. As $t(\sprol(a_{1})) = 0$ we also have $P(\sprol(a_{1})) = 0$ and because $(P,I,n,\infty) < (t,I,m,d)$, we have $\ltf(P(\uple{a})) = 0$ --- and hence $\ltf(t(\uple{a})) = 0$ --- for all $\uple{a}\in\uple{b}_{0}$, contradicting the fact that we can find $y_{\alpha}$ equivalent to $x_{\alpha}$ that $\sigma$-pseudo-solves $t$.

We can now conclude as in Proposition\,\ref{prop:lt ext} by extending $f$ to $\Def{C_{1,n}}{\genan[C_{1}]{\sprolleq{a_{1}}{n}}}$ progressively, by sending $\sigma^{n}(a_{1})$ to $\sigma^{n}(a_{2})$. For $n < m$, it is exactly the same and for $n \geq m$, use the fact that $\sigma^{n}(a_{1})$ is the only zero of $P^{\sigma^{n-m}}(X)$ in $(b_{n,0})$ for $\alpha_{0}\gg 0$, where $P(X_{m}) = t(\sprolto{a_{1}}{X}{m})$.

If $n < m$, we have proved in Proposition\,\ref{prop:descr term imm} that the extension is immediate. If $n \geq m$, we have just seen that $\sigma^{n}(a_{1})$ is $\ACVF$-definable over $\K(\genan[C_{1}]{\sprolleq{a_{1}}{n-1}})$. It follows that $\sigma^{n}(a_{1})\in\K(\genan[C_{1}]{\sprolleq{a_{1}}{n-1}})^{h}$ which is an immediate extension of $\K(\genan[C_{1}]{\sprolleq{a_{1}}{n-1}})$ and we conclude by Proposition\,\ref{prop:TA alg ext}.

In the case where $t$ is zero, we have yet to show that we can take $a_{i}\in N_{i}$. Let $(u,J,n,e)$ be minimal over $N_{1}$ that is $\sigma$-pseudo-solved by a pseudo-converging sequence equivalent to $x_{\alpha}$. We can find $a_{1}$ in $M_{1}$ such that $u(\sprol(a_{1})) = 0$ and $x_{\alpha}\pc a_{1}$. But then $\K(\genans[N_{1}]{a_{1}})$ is an immediate extension of $N_{1}$ and we must have $a_{1}\in N_{1}$.
\end{proof}

\begin{remark}[max imm imp sH]
Note that we have just shown that if we only assume that $N_{1}$ has no immediate extension in $M_{1}$ (and not that $N_{1}$ is $\sigma$-Henselian), then $N_{1}$ is maximally complete and hence, by Proposition\,\ref{prop:max compl sigmaH} it is $\sigma$-Henselian.
\end{remark}

\begin{definition}(Minimal term of a point)
Let $a \in M_{1}$. We say that $(t,I,m,d)\in\Term[0]{C_{1}}$ is the minimal term of $a$ over $C_{1}$ if it is minimal such that it is monic polynomial in $x_{m}$ and $t(\sprol(a)) = 0$.
\end{definition}

Note that because of Weierstrass preparation, minimal terms will always be polynomial in their last variable.

\begin{definition}($(t,I,m,d)$-fullness)
Let $(t,I,m,d)\in\Term[0]{C_{1}}$. We will say that $C_{1}$ is $(t,I,m,d)$-full if for all pseudo-convergent sequences $(x_{\alpha})$ (indexed by a limit ordinal) of elements in $C_{1}$ that are eventually in $\ValR$ with minimal term $(u,J,n,e) < (t,I,m,d)$, $(x_{\alpha})$ has a pseudo-limit in $C_{1}$.
\end{definition}

\begin{corollary}[imm ext full]
Let $(x_{\alpha})$ be a maximal pseudo-convergent sequence in $C_{1}$ (indexed by a limit ordinal) pseudo-converging to some $a_{1}\in \ValR(M_{1})$. If $(t,I,m,d)\in\Term[0]{C_{1}}$ is its minimal term over $C_{1}$ and  $C_{1}$ is $(t,I,m,d)$-full, then $\K(\genans[C_{1}]{a_{1}})$ is an immediate extension of $\K(C_{1})$ and $f$ extends to a morphism from $\genans[C_{1}]{a_{1}}$ into $N_{2}$.
\end{corollary}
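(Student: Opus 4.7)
The plan is to reduce to Proposition~\ref{prop:imm ext}, replacing the pseudo-limit constructed via $\sigma$-Henselianity by the given $a_1 \in \ValR(M_1)$, which need not lie in $N_1$. First I would apply Proposition~\ref{prop:imm ext} to the pushed-forward sequence $(f(x_\alpha))$ in $C_2$, whose minimal term is $(t^f,I,m,d)$, obtaining $a_2 \in N_2$ with $f(x_\alpha) \pc a_2$ and $t^f(\sprol(a_2)) = 0$. Then I would verify that $t(\sprol(a_1)) = 0$: by definition of the minimal term, there is a sequence $(y_\alpha)$ in $C_1$ equivalent to $(x_\alpha)$ with $t(\sprol(y_\alpha)) \pc 0$, and $a_1$ is a pseudo-limit of $(y_\alpha)$ as well; a linear approximation argument at prolongations on a small ball around $a_1$, as in the first half of the proof of Proposition~\ref{prop:imm ext}, then yields $\val(t(\sprol(y_\alpha)) - t(\sprol(a_1))) \to \infty$, which combined with $t(\sprol(y_\alpha)) \pc 0$ forces $t(\sprol(a_1)) = 0$.

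I would then mirror the final extension procedure of Proposition~\ref{prop:imm ext}, extending $f$ inductively to $\K(\genan[C_1]{\sprolleq{a_1}{n}})$ by sending $\sigma^n(a_1) \mapsto \sigma^n(a_2)$. For $n \geq m$, set $P(X) = t(\sprolto{a_1}{X}{m})$; a Claim analogous to the one at the end of the proof of Proposition~\ref{prop:lt ext} shows that $\sigma^n(a_1)$ is the unique zero of $P^{\sigma^{n-m}}$ in the appropriate open ball, so it lies in the henselization of $\K(\genan[C_1]{\sprolleq{a_1}{n-1}})$ and the extension is both forced and immediate by Proposition~\ref{prop:TA alg ext}. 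For $n < m$, well-definedness of $u(\sprol(a_1)) \mapsto u^f(\sprol(a_2))$ for any $\Sortrestr{\LAQ}{\K}(C_1)$-term $u$ of order-degree strictly less than $(t,I,m,d)$ follows from Proposition~\ref{prop:descr term imm} applied to rich-enough families of sequences in $C_1$ equivalent to $(x_\alpha)$ (which exist by Proposition~\ref{prop:lin cl rich}, as $\resf(\K(C_1))$ is linearly closed): one gets that $\ltf(u(\sprol(e)))$ is constant on a small ball around $a_1$ with constant value in $\ltf(\K(C_1))$, so $f$ determines the image, and this piece of the extension is likewise immediate by the same proposition.

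The main obstacle is keeping the recursion underlying Proposition~\ref{prop:descr term imm} internal to $C_1$: its induction step invokes pseudo-convergent sequences in $C_1$ of various minimal order-degrees strictly below $(t,I,m,d)$, and fullness of $C_1$ is precisely what guarantees that each such sequence already pseudo-converges in $C_1$, so no enlargement of the base is required. Once this is secured, the gluing of the step-by-step extensions proceeds exactly as in the final paragraphs of the proof of Proposition~\ref{prop:imm ext}.
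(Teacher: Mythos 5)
The corollary's hypothesis is that $(t,I,m,d)$ is the minimal term of the \emph{point} $a_1$ over $C_1$ --- so $t(\sprol(a_1))=0$ is given outright --- not the minimal term of the \emph{sequence} $(x_\alpha)$; the phrase ``over $C_1$'' matches the definition of the minimal term of a point, and this is how the corollary is invoked in the proof of Corollary~\ref{cor:ext imm max}. Your proposal reads the hypothesis the other way around and consequently runs the argument backwards: you take as given that some equivalent sequence $(y_\alpha)$ satisfies $t(\sprol(y_\alpha))\pc 0$ and set out to \emph{deduce} $t(\sprol(a_1))=0$, whereas the actual task is the converse: starting from $t(\sprol(a_1))=0$, show that $(t,I,m,d)$ is the minimal term of the pseudo-convergent sequence $(x_\alpha)$ so that Proposition~\ref{prop:imm ext} applies. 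Because of this inversion, your opening move --- applying Proposition~\ref{prop:imm ext} to $(f(x_\alpha))$ ``whose minimal term is $(t^f,I,m,d)$'' --- begs the very question the corollary is meant to settle.

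Your account of the role of fullness is also off the mark. Proposition~\ref{prop:descr term imm} is a blanket statement about rich enough families of \emph{equivalent} pseudo-convergent sequences and does not need those sequences, nor any of lower order-degree, to have pseudo-limits inside $C_1$; there is no ``recursion to keep internal.'' What fullness actually buys, combined with the maximality of $(x_\alpha)$ over $C_1$, is the lower bound on the sequence's minimal term: if some $(u,J,n,e) < (t,I,m,d)$ were $\sigma$-pseudo-solved by $(x_\alpha)$ or an equivalent sequence, fullness would give $(x_\alpha)$ a pseudo-limit in $C_1$, contradicting maximality. Once that is secured, Propositions~\ref{prop:descr term imm} and \ref{prop:lin cl rich} produce $\uple{d}$ and an equivalent sequence $(y_\alpha)$ with $\val(t(\sprol(a_1)) - t(\sprol(y_\alpha))) = \min_i\{\val(d_i) + \val(\sigma^i(a_1 - y_\alpha))\}$, and since $t(\sprol(a_1))=0$ this yields $t(\sprol(y_\alpha))\pc 0$, identifying $(t,I,m,d)$ as the minimal term of the sequence. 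At that point parts (ii) and (iii) of Proposition~\ref{prop:imm ext} apply directly to the given $a_1\in\ValR(M_1)$, and re-deriving the step-by-step extension is unnecessary.
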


\begin{proof}
Since $C_{1}$ is $(t,I,m,d)$-full, $(x_{\alpha})$ (or any equivalent pseudo-convergent sequence) cannot pseudo-solve a term of order-degree strictly less than $(t,I,m,d)$ (this would contradict either $(t,I,m,d)$-fullness of $C_{1}$ or maximality of $(x_{\alpha})$). By Propositions\,\ref{prop:descr term imm} and \ref{prop:lin cl rich}, there is a tuple $\uple{d}$ and a sequence $(y_{\alpha})$ equivalent to $(x_{\alpha})$ such that \[\val(t(\sprol(y_{\alpha}))) = \val(t(\sprol(a)) - t(\sprol(y_{\alpha}))) = \min_{i}\{\val(d_{i}) + \val(\sigma^{i}(a-y_{\alpha}))\},\] i.e. $t(\sprol(y_{\alpha}))\pc 0$. We have just showed that $t$ is the minimal term of the pseudo-convergent sequence $(x_{\alpha})$ and thus we can now apply Proposition\,\ref{prop:imm ext}.
\end{proof}

From now on, suppose that $\K(N_{i})$ is an immediate extension of $\K(C_{i})$, hence it is a maximal immediate extension of $\K(C_{i})$ in $M_{i}$.

\begin{corollary}[min imp full]
Suppose that all $a\in \ValR(N_{1})$ with a minimal term of order-degree strictly smaller than $(t,I,m,d)$ are already in $C_{1}$, then $C_{1}$ is $(t,I,m,d)$-full.
\end{corollary}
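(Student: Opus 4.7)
The plan is to reduce $(t,I,m,d)$-fullness to the hypothesis of the corollary by using Proposition~\ref{prop:imm ext} to produce a pseudo-limit inside $N_1$ whose minimal term is small enough to be forced into $C_1$.

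Concretely, I would start with an arbitrary pseudo-convergent sequence $(x_\alpha)$ in $\K(C_1)$, indexed by a limit ordinal, eventually in $\ValR$, whose minimal term $(u,J,n,e)$ satisfies $(u,J,n,e) < (t,I,m,d)$. Applying Proposition~\ref{prop:imm ext} to this sequence (whose hypotheses are met since $N_1$ is $\sigma$-Henselian with no immediate extension in $M_1$, and $\resf(\K(C_1))$ is linearly closed) produces $a_1 \in N_1$ with $x_\alpha \pc a_1$ and $u(\sprol(a_1)) = 0$.

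I would next argue that $a_1 \in \ValR(N_1)$. Since the $x_\alpha$ are eventually in $\ValR$, for $\alpha$ large enough $x_{\alpha+1} - x_\alpha \in \ValR$, so $\gamma_\alpha := \val(x_{\alpha+1}-x_\alpha) = \val(a_1 - x_\alpha) \geq 0$; strict increase of $(\gamma_\alpha)$ then forces $\gamma_\alpha > 0$ for all sufficiently large $\alpha$, i.e.\ $a_1 - x_\alpha \in \Mid \subseteq \MidR$, so $a_1 \in \ValR$ as $x_\alpha \in \ValR$.

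Since $u$ is a monic polynomial over $C_1$ with $u(\sprol(a_1))=0$, the minimal term of $a_1$ over $C_1$ has order-degree at most $(u,J,n,e) < (t,I,m,d)$. By the hypothesis of the corollary, $a_1 \in C_1$, and hence $(x_\alpha)$ has a pseudo-limit in $C_1$, establishing $(t,I,m,d)$-fullness. The only mildly delicate point is the verification that $a_1 \in \ValR$ rather than just in $\Val$ (needed to invoke the hypothesis), which is why I highlighted that step; the rest is a direct appeal to Proposition~\ref{prop:imm ext} and the definitions.
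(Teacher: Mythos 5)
Your proof is correct and follows essentially the same route as the paper: apply Proposition~\ref{prop:imm ext} to produce $a_1 \in N_1$ with $x_\alpha \pc a_1$ and $u(\sprol(a_1)) = 0$, note the minimal term of $a_1$ over $C_1$ then has order-degree $\leq (u,J,n,e) < (t,I,m,d)$, and conclude $a_1 \in C_1$ from the hypothesis. The one thing you add that the paper omits is the explicit check that $a_1 \in \ValR(N_1)$ (needed to invoke the hypothesis); your argument via $\gamma_\alpha$ eventually $\geq 0$, hence eventually $> 0$ by strict increase, so $a_1 - x_\alpha \in \Mid \subseteq \MidR \subseteq \ValR$, is correct and a reasonable gap to fill.
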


\begin{proof}
Let $(x_{\alpha})$ be a pseudo-convergent sequence of $C_{1}$ (indexed by a limit ordinal) that is eventually in $\ValR$ and $(u,J,n,e) < (t,I,m,d)$ that is $\sigma$-pseudo-solved by $(x_{\alpha})$. We may assume that  $(u,J,n,e)$ is its minimal term. By Proposition\,\ref{prop:imm ext}, there is $a_{1}\in N_{1}$ such that $x_{\alpha}\pc a_{1}$ and $u(a_{1}) = 0$. Hence $a_{1}$ has a minimal polynomial of order-degree strictly lower than $(t,I,m,d)$, so $a_{1}\in C_{1}$ and $C_{1}$ is indeed $(t,I,m,d)$-full.
\end{proof}

\begin{corollary}[ext imm max]
The isomorphism $f$ extends to an isomorphism between $N_{1}$ and $N_{2}$, i.e. maximum immediate extensions (in some saturated model) --- and hence maximally complete extensions --- are unique up to isomorphism.
\end{corollary}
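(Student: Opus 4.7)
The strategy is a standard Zorn's-lemma argument. First I would take a maximal $\Morl{LAQs}[\lt]$-isomorphism $g : D_1 \to D_2$ extending $f$ with $C_i \substr D_i \substr N_i$; this exists because the union of any chain of such extensions is again one. Since $\K(N_i)$ is an immediate extension of $\K(C_i)$ it remains immediate over $\K(D_i)$, so in particular $\resf(\K(D_1)) = \resf(\K(C_1))$ is still linearly closed, and all the hypotheses of the embedding results of Section \ref{subsec:imm} apply to $(D_1,D_2,g)$ in place of $(C_1,C_2,f)$.

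Assume for contradiction that $D_1 \neq N_1$. Since the $\LAQs$-structure is determined by $\K$, this forces $\K(D_1) \subsetneq \K(N_1)$, and as $\K(N_1)$ is the fraction field of $\ValR(N_1)$ there exists some element in $\ValR(N_1) \setminus \K(D_1)$. Any such element, belonging to the immediate extension $\K(N_1)$ of $\K(D_1)$, is a pseudo-limit of some pseudo-convergent sequence from $\K(D_1)$ which has no pseudo-limit in $\K(D_1)$; any such sequence is automatically eventually in $\ValR$. Among all pseudo-convergent sequences $(x_\alpha)$ of $\K(D_1)$ indexed by a limit ordinal, eventually in $\ValR$, and with no pseudo-limit in $\K(D_1)$, the well-foundedness of the order-degree lets me choose one whose minimal term $(u,J,n,e) \in \Term[0]{D_1}$ over $D_1$ is as small as possible.

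I then verify that $D_1$ is $(u,J,n,e)$-full. Let $(y_\beta)$ be a pseudo-convergent sequence in $D_1$ eventually in $\ValR$ whose minimal term over $D_1$ is strictly smaller than $(u,J,n,e)$. Since $N_1$ admits no immediate extension in $M_1$, the sequence $(y_\beta)$ has a pseudo-limit in $N_1$; if that pseudo-limit were not in $D_1$, then $(y_\beta)$ would itself be a pseudo-convergent sequence of the form we are minimizing over, but with strictly smaller minimal term than $(x_\alpha)$, contradicting the choice of $(x_\alpha)$. Hence $(y_\beta)$ has a pseudo-limit in $D_1$, establishing the fullness.

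I now apply Corollary \ref{cor:imm ext full} to the sequence $(x_\alpha)$ over $D_1$: it produces $a_1 \in N_1$ with $x_\alpha \pc a_1$, and extends $g$ to an $\Morl{LAQs}[\lt]$-isomorphism from $\genans[D_1]{a_1}$ into $N_2$. Since $(x_\alpha)$ has no pseudo-limit in $D_1$, necessarily $a_1 \notin D_1$, so this strictly extends $g$, contradicting its maximality. The main subtlety is to take the minimality not over elements of $N_1 \setminus D_1$ but over pseudo-convergent sequences themselves: that way the fullness hypothesis of Corollary \ref{cor:imm ext full} falls out at once from the minimality of $(x_\alpha)$, avoiding any delicate comparison between the minimal term of an element and the minimal term of a sequence approximating it.
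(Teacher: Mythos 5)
Your argument is a valid alternative route to the paper's. You replace the paper's explicit transfinite chain (in which points $c_\alpha\in\ValR(N_1)\sminus L_\alpha$ of minimal minimal-term are adjoined one at a time, with fullness supplied by Corollary~\ref{cor:min imp full}) by a single application of Zorn's lemma, and you minimize the minimal term over pseudo-convergent \emph{sequences} without pseudo-limit rather than over \emph{points}. Deriving fullness of $D_1$ directly from that minimality is a genuine simplification of the bookkeeping. In fact the paper itself observes, in the sentence preceding its proof, that one can dispense with fullness altogether by applying Proposition~\ref{prop:imm ext} directly to any maximal pseudo-convergent sequence; with your Zorn setup this would be cleaner still than routing through Corollary~\ref{cor:imm ext full}, whose hypothesis is that $(t,I,m,d)$ be the minimal term of a \emph{pseudo-limit} $a_1\in\ValR(M_1)$ rather than of the sequence $(x_\alpha)$, and you never reconcile the two. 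The mismatch happens to be harmless --- Proposition~\ref{prop:imm ext} already produces an $a_1\in N_1$ with $u(\sprol(a_1))=0$, so the minimal term of $a_1$ over $D_1$ does not have larger order-degree than $(u,J,n,e)$, and fullness at a smaller order-degree follows from $(u,J,n,e)$-fullness --- but this deserves a sentence if you insist on quoting Corollary~\ref{cor:imm ext full}.

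The genuine gap is surjectivity. Your contradiction shows that the domain of the maximal extension $g$ equals $N_1$, but nothing you say rules out $D_2$ being a proper substructure of $N_2$, whereas the corollary asserts an isomorphism \emph{onto} $N_2$. The paper devotes its last two sentences to precisely this: if $g:N_1\to N_2$ were not onto, pick $a\in\K(N_2)\sminus\K(g(N_1))$ and a maximal pseudo-convergent sequence of $g(N_1)$ converging to $a$; applying Proposition~\ref{prop:imm ext} to $g^{-1}$ would then produce a proper immediate extension of $N_1$ inside $M_1$, contradicting the hypothesis that $N_1$ has none. Alternatively, noting that $\resf(\K(D_2))=g(\resf(\K(D_1)))$ is linearly closed and that $\K(N_2)$ remains immediate over $\K(D_2)$, you can run your Zorn/minimality argument symmetrically on $g^{-1}$. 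One or the other needs to be added.
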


We could prove this corollary without using the notion of fullness and without doing the extensions in the right order --- just pick any maximal pseudo-convergent sequence indexed by a limit ordinal, find its minimal term and apply Proposition\,\ref{prop:imm ext} to extend $f$ some more and iterate. But the following proof provides a better description of the information needed to describe the type of a given point in an immediate extension.

\begin{proof}
Let us consider the extensions $C_{1} \substr L_{\alpha} \substr N_{1}$ defined by taking $L_{\alpha+1} = \genans[L_{\alpha}]{c_{\alpha}}$ where $c_{\alpha}\in \ValR(N_{1})\sminus L_{\alpha}$ has a minimal term of minimal order-degree over $L_{\alpha+1}$ and $L_{\lambda} = \bigcup_{\alpha<\lambda} L_{\alpha}$ for $\lambda$ limit. Then we can show by induction that we can extend $f$ to $L_{\alpha}$ in a coherent way.

Let us suppose that we have extended $f$ to $f_{\alpha}$ on $L_{\alpha}$. Let $a = c_{\alpha}$. Let $x_{\beta}\pc a$ be a maximal pseudo-converging sequence of $L_{\alpha}$. Then if $(t,I,m,d)$ is a minimal term of $a$, then by Corollary\,\ref{cor:min imp full}, $L_{\alpha}$ is $(t,I,m,d)$-full. Applying Corollary\,\ref{cor:imm ext full}, we obtain that $f_{\alpha}$ can be extended to $\genans[L_{\alpha}]{a} = L_{\alpha+1}$. The limit case is trivial.

As $N_{1}$ is the field generated by $\bigcup_{\alpha} L_{\alpha}$, by Remark\,\ref{rem:ext frac field ann} we can extend $f$ to a morphism from $N_{1}$ into $N_{2}$. Now if $f$ is not onto, pick $a\in \K(N_{2})\sminus\K(f(N_{1}))$, $(x_{\alpha})$ maximal pseudo-converging to $a$ and $(t,I,m,d)$ its minimal term. Then applying Proposition\,\ref{prop:imm ext} the other way round, we would find an immediate extension of $N_{1}$ in $M_{1}$, but that is absurd.
\end{proof}

\subsection{Relative quantifier elimination}\label{subsec:EQ}

\begin{Theorem}[EQ TAsH]
The theory $\TAsH$ eliminates quantifiers resplendently relatively to $\lt$.
\end{Theorem}

\begin{proof}
By Proposition\,\ref{prop:EQ implies respl}, it suffices to show that $\TAsH$ eliminates quantifiers relatively to $\lt$. Note that if two models of $\TAsH$ contain isomorphic substructures they have the same characteristic and residual characteristic, hence it also suffices to prove the result for $\TAsH[0,0]$ and $\TAsH[0,p]$. Let us first consider the equicharacteristic zero case.

It suffices to show that if $M_{1}$ and $M_{2}$ are sufficiently saturated models of $\Morl{TAsH}|0,0|[\lt]$, $f$ a partial $\Morl{LAQ}[\lt]$-isomorphism with (small) domain $C_{1}$, and $a_{1}\in K(M_{1})$, $f$ can be extended to $\genans[C_{1}]{a_{1}}$. Let $N_{1}\substr M_{1}$ with no immediate extension in $M_{1}$ and containing both $C_{1}$ and $a_{1}$. By Morleyization on $\lt$ and Lemma\,\ref{lem:ext on lt} we can extend $f$ to $D_{1}\substr N_{1}$ such that $\lt(D_{1}) = \lt(N_{1})$. Then applying Corollary\,\ref{cor:lt ext cor} repetitively we can extend $f$ to $E_{1}\substr N_{1}$ such that $\ltf(\K(E_{1})) = \lt(E_{1})$. Now $\K(N_{1})$ is a maximal immediate extension of $\K(E_{1})$ and we can extend $f$ to $N_{1}$ by Proposition\,\ref{prop:imm ext}.

Now that we know the equicharacteristic zero case, the mixed characteristic case follows from Propositions\,\ref{prop:EQ transfer} and \ref{prop:Coarsen TAsH}.
\end{proof}

We also obtain the corresponding results when there are angular components. Let $\LAsac$ be $\LAQac$ enriched with a symbol $\sigma:\K\to\K$, symbols $\sigma^{n}:\res<n>\to\res<n>$ and a symbol $\sigma_{\Valgp}:\Valgpinf\to\Valgpinf$. Let $\TAsHac$ be the $\LAsac$-theory of $\sigma$-Henselian analytic difference valued fields with a linearly closed residue field and angular components that are compatible with $\sigma$, i.e. $\ac_{n}\comp\sigma = \sigma_{n}\comp\ac_{n}$. Let $\LAsacfr$ be the enrichment of $\Lacfr$ with the same symbols and $\TAsHacfr{e}{p}$ be the theory of finitely ramified characteristic $(0,p)$ valued fields as above with ramification index at most $e$, i.e. $e\cdot 1 \geq \val(p)$.

\begin{corollary}[EQ TAsHac]
$\TAsHac$ and $\TAsHacfr{e}{p}$ for all $p$ and $e$, eliminate $\K$-quantifiers resplendently.
\end{corollary}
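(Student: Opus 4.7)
The plan is to deduce this corollary directly from Theorem~\ref{thm:EQ TAsH} by applying the equivalence of categories between leading term and angular component languages established in Proposition~\ref{prop:equiv Lsec Lac}.

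First, I would take $T^{e}$ to be $\TAsH$ (in an appropriate $\lt$-enrichment), Morleyized on $\lt\cup\Valgpinf\cup\res$, and let $\Lace$ be the corresponding angular component language produced by Proposition~\ref{prop:equiv Lsec Lac}. The construction of the language tailored to $\TAsHac$ fits this template exactly: the new $\K$-sort function $\sigma$ is absorbed into the $\K$-enrichment part $\Sigma_{\K}$, and the maps $\sigma_{n}:\res<n>\to\res<n>$ and $\sigma_{\Valgp}:\Valgpinf\to\Valgpinf$ together with the compatibility $\ac_{n}\comp\sigma=\sigma_{n}\comp\ac_{n}$ correspond precisely to the decomposition of the $\sigma_{n}:\lt[n]\to\lt[n]$ into $(\sigma_{\res},\sigma_{\Valgp})$ in the recipe of Proposition~\ref{prop:equiv Lsec Lac}. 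Up to Morleyization on $\res\cup\Valgpinf$ the resulting $\Lace$-theory is $\TAsHac$.

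Next, I would invoke the equivalence of categories $F:\Str(\Tace)\to\Str(T^{e})$ and $G:\Str(T^{e})\to\Str(\Tace)$, which respects models, cardinality, and elementary submodels. Since $F$ and $G$ are the identity on the common sort $\K$ and since on $\lt$-sorts one has a bi-interpretation via $\res$ and $\Valgpinf$, any $\K$-quantifier free $\Lace$-formula translates to a $\K$-quantifier free $\Lltsece$-formula and vice versa (this is essentially the last paragraph of the proof of Proposition~\ref{prop:equiv Lsec Lac}, where the translations $\phi\mapsto\phi^{\ac,e}$ and $\psi\mapsto\ltsece{\psi}$ only introduce quantifiers on $\res\cup\Valgpinf$, never on $\K$). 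Combined with Theorem~\ref{thm:EQ TAsH} applied to $T^{e}$, this gives $\K$-quantifier elimination for $\TAsHac$. Resplendence follows from Proposition~\ref{prop:EQ implies respl}, or directly from the fact that one can feed an arbitrary $\res\cup\Valgpinf$-enrichment through the same equivalence.

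Finally, for the finitely ramified case $\TAsHacfr{e}{p}$, I would use Remark~\ref{rem:mixed char ac}(\ref{rem:mixed char ac}): when $e\cdot 1\geq\val(p)$ with $\val$ discrete of minimal positive value $1$, the symbols $\valres[n]$, $\secres[n]$ and $\secinjres[m,n]$ are quantifier-free definable from the constants $c_{n}=\secres[n](1)$, so the forgetful reduct from $\LAsac$ to $\LAsacfr$ is a bijection at the level of $\K$-quantifier free formulas. Hence $\K$-quantifier elimination descends from $\TAsHac$ to $\TAsHacfr{e}{p}$, resplendently.

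The only genuine point to verify, and the place where one must be slightly careful, is the compatibility of the category equivalence with the enriched symbols $\sigma_{n}$: one must confirm that the decomposition of $\sigma_{n}$ into its $\res$- and $\Valgp$-components, followed by $F$ and then $G$, returns the original $\sigma_{n}$ on $\lt[n]$. This is immediate from the fact that $\sigma_{n}$ is a group automorphism of $\lt[n]=\inv*{res}[n]\times\Valgp\cup\{0\}$ (in $F(C)$) that splits as a product, so the whole argument reduces to bookkeeping rather than any new mathematical content.
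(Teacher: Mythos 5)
Your argument is correct, but it takes a different route than the paper does. The paper deduces $\K$-quantifier elimination for $\TAsHac$ by combining Proposition~\ref{prop:equiv Lsec Lac} with the abstract transfer principle Proposition~\ref{prop:EQ transfer}: the latter is a general back-and-forth argument showing that if $T_2$ eliminates quantifiers and $T_1$ is Morleyized on $\Sigma_1$, then the equivalence of categories pushes quantifier elimination from $T_2$ to $T_1$. In effect, the paper treats Proposition~\ref{prop:equiv Lsec Lac} as a black box producing the required functors and then invokes the transfer machine. You instead open up the proof of Proposition~\ref{prop:equiv Lsec Lac}, observe that the explicit translations $\phi\mapsto\phi^{\ac,e}$ and $\psi\mapsto\ltsece{\psi}$ are the identity on the $\K$-sort (since $F$ and $G$ keep the $\K$-structure unchanged and only repackage $\lt[n]$ as $\inv*{res}[n]\times\Valgp\cup\{(0_n,\infty)\}$), and conclude directly that $\K$-quantifier-freeness is preserved in both directions. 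That direct syntactic argument is sound: given $\psi$ in $\Lace$, pass to $\ltsece{\psi}$, eliminate $\K$-quantifiers there (Theorem~\ref{thm:EQ TAsH}), and pull the resulting $\K$-quantifier-free $\chi$ back as $\chi^{\ac,e}$; equivalence of $\chi^{\ac,e}$ with $\psi$ modulo $\Tace$ follows because $F$ takes models of $\Tace$ to models of $T^{e}$ and the translations satisfy $\Tace\models(\ltsece{\psi})^{\ac,e}\iff\psi$.

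Each approach has a cost. The paper's route relies only on the \emph{statements} of Propositions~\ref{prop:equiv Lsec Lac} and~\ref{prop:EQ transfer}, which keeps the proof modular (the same transfer principle is reused for the coarsening functors). Your route is more elementary and avoids the abstract categorical machinery, but it depends on a fact — that the translations never introduce $\K$-quantifiers — which the paper never explicitly states; you are effectively proving a small additional lemma by inspection of the proof of Proposition~\ref{prop:equiv Lsec Lac}. Both handle the Morleyization issue the same way (the mechanical output is a definable $\res\cup\Valgpinf$-enrichment of $\TAsHac$, and undoing it costs only $\res\cup\Valgpinf$-quantifiers), and both treat the finitely ramified case via Remark~\ref{rem:mixed char ac}. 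Your final remark that the decomposition of $\sigma_n$ into $\res$- and $\Valgp$-components is a matter of bookkeeping is also correct and matches the paper's implicit reasoning for why the mechanical $\Lace$-language collapses (definably) to $\LAsac$.
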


\begin{proof}
By Proposition\,\ref{prop:EQ implies respl}, resplendence comes for free once we have $\K$-quantifier elimination. Moreover, by Propositions\,\ref{prop:equiv Lsec Lac} and \ref{prop:EQ transfer}, we can transfer quantifier elimination in an $\lt$-enrichment of $\TAsH$ (cf. Theorem\,\ref{thm:EQ TAsH}) to quantifier elimination in a definable $\res\cup\Valgp$-enrichment of $\TAsHac$ and hence $\K$-quantifier elimination in $\TAsHac$.

The proof for $\TAsHacfr{e}{p}$ now follows by Remark\,\ref{rem:mixed char ac}.
\end{proof}

\begin{remark}
\begin{thm@enum}
\item In a valued field with an isometry and $\val(\fix(\K)) = \val(\K)$, angular components that are compatible with $\sigma$ are determined by their restriction to the fixed field. Indeed if $\val(x) = \val(\epsilon)$ where $\epsilon\in\fix(\K)$, then $\ac_{n}(x) = \res[n](x\epsilon^{-1})\ac_{n}(\epsilon)$. In fact, any angular components on the fixed field can be extended using this formula to angular components on the whole field that are compatible with $\sigma$ and hence any valued field with an isometry and $\val(\fix(\K)) = \val(\K)$ can be elementarily embedded into a valued field with an isometry and compatible angular components.
\item In fact, the existence of angular components in a $\sigma$-Henselian valued field with an isometry implies that $\val(\fix(\K)) = \val(\K)$.
\end{thm@enum}
\end{remark}

Until the end of this section, we will add constants to $\LAsac$ and $\LAsacfr$ for $\ac_{n}(t)$ and $\val(t)$ for every $\Sortrestr{\LAQs}{\K}$-term $t$ without any free variables. The reason for which we need to add theses constants is that although these are $\LAsac$-terms, we may have no trace of them in $\Sortrestr{\LAsac}{\res}$ and $\Sortrestr{\LAsac}{\Valgp}$. Ax-Kochen-Eršov type results now follow by the usual arguments.

\begin{corollary}(Ax-Kochen-Eršov principle for analytic difference valued fields)
\begin{thm@enum}
\item Let $\LL$ be an $\res$-extension of a $\Valgp$-extension of $\LAsac$, $T$ an $\LL$-theory containing $\TAsHac[0,0]$ and $M$ and $N\models T$ then:
\begin{thm@enum}
\item $M\equiv N$ if and only if $\res[1](M)\equiv\res[1](N)$ as $\Sortrestr{\LL}{\res[1]}$-structures and $\Valgpinf(M) \equiv \Valgpinf(N)$ as $\Sortrestr{\LL}{\Valgpinf}$-structures;
\item Suppose $M \substr N$ then $M\subsel N$ if and only if $\res[1](M)\subsel\res[1](M)$ as $\Sortrestr{\LL}{\res[1]}$-structures and $\Valgpinf(M) \subsel \Valgpinf(N)$ as $\Sortrestr{\LL}{\Valgpinf}$-structures.
\end{thm@enum}
\item Let $\LL$ be an $\res$-extension of a $\Valgp$-extension of $\LAsac$, $T$ an $\LL$-theory containing $\TAsHacfr{e}{p}$ and $M$ and $N\models T$ then:
\begin{thm@enum}
\item $M\equiv N$ if and only if $\res(M)\equiv\res(N)$ as $\Sortrestr{\LL}{\res}$-structures and $\Valgpinf(M) \equiv \Valgpinf(N)$ as $\Sortrestr{\LL}{\Valgpinf}$-structures;
\item Suppose $M\substr N$ then $M\subsel N$ if and only if $\res(M)\subsel\res(N)$ as $\Sortrestr{\LL}{\res}$-structures and $\Valgpinf(M) \subsel\Valgpinf(N)$ as $\Sortrestr{\LL}{\Valgpinf}$-structures.
\end{thm@enum}
\end{thm@enum}
\end{corollary}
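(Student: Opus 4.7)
My plan is to deduce this directly from the resplendent $\K$-quantifier elimination of Corollary~\ref{cor:EQ TAsHac}, via the standard Ax-Kochen-Eršov pattern. I would first dispose of the forward (``only if'') directions in all four statements; since $\res[1]$, $\res$, and $\Valgpinf$ are sorts of $\LL$, these follow immediately from the preservation of elementary equivalence and elementary embeddings under restriction to a collection of sorts.

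For the converse, I would take an $\LL$-sentence $\phi$ (or an $\LL(M)$-formula $\phi(\uple{a})$ in the submodel cases) and apply resplendent $\K$-quantifier elimination to $T$ --- which is an $\res\cup\Valgp$-enrichment of $\TAsHac[0,0]$ or of $\TAsHacfr{e}{p}$ --- to replace $\phi$ by an equivalent formula $\phi^{*}$ whose quantifiers range only over the $\res$- and $\Valgpinf$-sorts. The only $\K$-terms that can still occur in $\phi^{*}$ appear inside $\ac_{n}$ or $\val$, and the constants we added to the language for every $\ac_{n}(t)$ and $\val(t)$ with $t$ a parameter-free $\Sortrestr{\LAQs}{\K}$-term name exactly their closed-term instances (in the submodel case, the relevant parameters $\ac_{n}(t(\uple{a}))$ and $\val(t(\uple{a}))$ already lie in $\res(M)\cup\Valgpinf(M)$). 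Hence $\phi^{*}$ is a genuine $\Sortrestr{\LL}{\res\cup\Valgpinf}$-formula, and the assumed equivalence (resp.\ elementary inclusion) of the residue and value group structures of $M$ and $N$ transfers its truth value, and therefore that of $\phi$.

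The only characteristic-sensitive step is verifying that $\phi^{*}$ lives in the announced sub-language. For part (i), being in equicharacteristic zero forces the maps $\resf[n,m]$ and $\ltf[n,m]$ to be isomorphisms, so the entire $\res$- and $\lt$-towers collapse onto $\res[1]$ and $\lt[1]$, and $\phi^{*}$ is effectively an $\Sortrestr{\LL}{\res[1]\cup\Valgpinf}$-formula. For part (ii), Remark~\ref{rem:mixed char ac} provides, in finitely ramified mixed characteristic, a quantifier-free definition of $\valres[n]$, $\secres[n]$, and $\secinjres[m,n]$ from $\res$, $\Valgpinf$, and the constants $c_{n}\in\res[n]$, so again $\phi^{*}$ is an $\Sortrestr{\LL}{\res\cup\Valgpinf}$-formula. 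The main (and really only) obstacle I anticipate is this bookkeeping check --- confirming that every residual or value-group trace of a closed $\K$-term is already named by one of the added constants --- which, being essentially a term-by-term inspection using Remark~\ref{rem:decomp form Kterm} and the careful design of $\LAsac$ and $\LAsacfr$, should pose no serious difficulty.
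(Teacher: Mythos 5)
Your proof is correct and follows precisely the route the paper has in mind — the paper itself merely remarks that these Ax–Kochen–Eršov statements ``follow by the usual arguments'' after Corollary~\ref{cor:EQ TAsHac} and the introduction of constants for $\ac_n(t)$ and $\val(t)$, which is exactly the argument you spell out. The only place you are slightly terse is the last step: after reducing to an $\Sortrestr{\LL}{\res\cup\Valgpinf}$-sentence, one still needs that the $\res$- and $\Valgpinf$-parts are orthogonal (so that the sentence decomposes as a Boolean combination of $\Sortrestr{\LL}{\res}$- and $\Sortrestr{\LL}{\Valgpinf}$-sentences); you do identify the reason this holds (trivial cross-maps in equicharacteristic zero, Remark~\ref{rem:mixed char ac} plus the $c_n$ constants in the finitely ramified case, and the fact that $\LL$ is an $\res$-enrichment of a $\Valgp$-enrichment so adds no new cross-sort structure), but it is worth stating explicitly that this orthogonality is what licenses transferring truth from the separate hypotheses on $\res$ and $\Valgpinf$.
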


\begin{remark}
\begin{thm@enum}
\item In mixed characteristic with finite ramification, if $\ValR = \Val$, we have better results. Indeed, the trace of any unit $E$ on any $\lt[k]$ is given by the trace of a polynomial (which depends only on $E$ and not on its interpretation) and the $E_{k}$ are in fact useless. Hence the $\res[n]$ are pure rings with an automorphism. If there is no ramification (i.e. $e=1$), the $\res[n]$ are ring schemes over $\res[1]$ (the Witt vectors of length $n$) --- the ring scheme structure does not depend on the actual model we are looking at, contrary to the general finite ramification case --- and the automorphism on $\res[n]$ can be defined using the automorphism on $\res[1]$, hence $\res$ is definable in $\res[1]$. Finally if $\sigma$ is a lifting of the Frobenius, $\sigma_{0}$ is definable in the ring structure of $\res[1]$. It follows that we obtain Ax-Kochen-Eršov results looking only at $\res[1]$ as a ring and $\Valgpinf$ as an ordered abelian group (after adding some constants).
\item The fact that the $E_{k}$ are useless is also true in equicharacteristic zero when $\ValR = \Val$.
\item It also follows that in equicharacteristic zero or mixed characteristic with finite ramification (with angular component), $\res$ and $\Valgpinf$ are stably embedded and have pure $\Sortrestr{\LL}{\res}$-structure (resp. $\Sortrestr{\LL}{\Valgpinf}$-structure) where $\LL$ is either $\LAsac$ or $\LAsacfr$. In particular it will make sense to speak of the theory induced on $\res$ or $\Valgpinf$.
\end{thm@enum}
\end{remark}

\begin{proposition}[Th Witt]
Let $\LL$ be the language $\LAQs$ enriched with predicates $P_{n}$ on $\lt[1]$ interpreted as $n|\vallt[1](x)$. The $\LL$-theory of $\WittLAQs$ is axiomatized by $\TAsH$ and $\sigma_{1}$ is the Frobenius, the induced theory on $\res[1]$ is $\ACF_{p}$, $p$ has minimal positive valuation, $\Valgp$ is a $\Zz$-group and $\sigma_{\Valgp}$ is the identity. Moreover $\res[1]$ is a pure algebraically closed valued field and $\Valgp$ is a pure $\Zz$-group and they are stably embedded.
\end{proposition}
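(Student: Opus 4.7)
The plan is to prove the proposition in three stages: check that $\WittLAQs$ satisfies the listed axioms, establish completeness of the axiomatization via the Ax-Kochen-Eršov principle, and then read off stable embedding from the remarks following that corollary.

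The first stage is a checklist. That $\WittLAQs \models \TAsH[0,p]$ was already established. Because $\sigma$ was defined as the lifting of the Frobenius, its action on $\res[1](\WittLAQs) = \alg{\Ff_{p}}$ is $x \mapsto x^{p}$, so $\sigma_{1}$ is the Frobenius and the induced theory on $\res[1]$ is $\ACF_{p}$ together with "$\sigma$ is the Frobenius". The uniformizer $p$ realizes the minimum of $\Valgp(\WittLAQs) = \Zz_{>0}$, so $\Valgp$ is a $\Zz$-group (with $P_{n}$ interpreted as $n$-divisibility), and the Frobenius lifting is an isometry so $\sigma_{\Valgp} = \id$.

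The second stage is the heart of the proof. Let $M$ be any $\LL$-model of the axioms. I would argue $M \equiv \WittLAQs$ by reducing to the AKE corollary. Pass to a saturated elementary extension $M^{\ast}$ and an equally saturated $\WittLAQs^{\ast}$. Since $\sigma$ is an isometry and $\val(p) = 1$ generates $\Valgp$, with $p \in \fix(\K)$, the preceding remark on angular components in the isometry case produces compatible angular components on each saturated model, expanding them to models of $\TAsHacfr{1}{p}$ (we are in mixed characteristic with ramification index $e = 1$). The residue field reduct theory is $\ACF_{p}$ together with "$\sigma$ is the Frobenius"; this is complete because the Frobenius is $\emptyset$-definable in $\ACF_{p}$ and $\ACF_{p}$ is itself complete. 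The value group reduct theory is Presburger arithmetic with $\sigma_{\Valgp} = \id$, which is complete. The AKE corollary then gives $M^{\ast} \equiv_{\LAsacfr} \WittLAQs^{\ast}$, which on restriction to $\LL$ yields $M \equiv \WittLAQs$.

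For the last claim, I would invoke the remark immediately following the AKE corollary. In mixed characteristic with $e = 1$ and $\ValR = \Val$, every $\res[n]$ is interpretable in $\res[1]$ as the Witt vectors of length $n$, the ring-scheme structure on $\res[n]$ does not depend on the model, the automorphism $\sigma$ on $\res[n]$ is induced by $\sigma$ on $\res[1]$, and the unit traces $E_{k}$ are recovered from polynomial data. Consequently $\res$ is definable in $\res[1]$ with its pure algebraically closed field structure (with the Frobenius as a definable automorphism), and $\Valgpinf$ remains a pure $\Zz$-group; both sorts are stably embedded.

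The main obstacle is justifying the ac-expansion step cleanly: one must check that the isometry case with $\val(p)$ generating $\Valgp$ genuinely produces compatible angular components in saturated extensions, and that the induced reduct theories on $\res[1]$ and $\Valgp$ are complete in the precise languages required by the AKE corollary. Once that is granted, the rest is mechanical.
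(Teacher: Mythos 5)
Your proof follows the same overall architecture as the paper's: verify the axioms in $\WittLAQs$, pass to the angular-component setting so that the AKE corollary applies, and then read off purity and stable embedding from the remark preceding the proposition. The one genuine difference is in how angular components are obtained. The paper asserts that \emph{any model of the axioms has definable angular components compatible with $\sigma$}; you instead pass to a saturated elementary extension and invoke the isometry remark to get $\ac$'s there. Both routes lead to the same place, and yours has the virtue of not needing to exhibit an actual $\emptyset$-formula for $\ac_n$, which requires a nontrivial identification of a definable group-theoretic section of $\vallt[n]\colon\inv*{lt}[n]\to\Valgp$.

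Two small points to tighten. First, the isometry remark you cite requires $\val(\fix(\K))=\val(\K)$, and this should be checked rather than just tacitly assumed: given a unit $w$, the linear difference equation $\sigma(x)=wx$ is in $\sigma$-Hensel configuration (the residual equation $\bar{x}^{p}=\bar{w}\bar{x}$ has a nonzero solution in the algebraically closed residue field), so every value of $\Valgp$ is attained on $\fix(\K)$; without this the remark does not apply. Second, ``restriction to $\LL$'' is a slight misnomer since $\LL$ is not a sublanguage of $\LAsacfr$ (one is a leading-term language, the other an angular-component language); the passage back and forth is through the bi-interpretation of Proposition~\ref{prop:equiv Lsec Lac}, which both your argument and the paper's use implicitly. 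Neither point is a mathematical obstruction, but both deserve an explicit sentence.
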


\begin{proof}
Any model of that theory has definable angular components compatible with $\sigma$. And these angular components extend the usual ones on the field of constants $\Wittf(\alg{\Ff_{p}})$. Hence the only constants we add are for elements of $\alg{\Ff_{p}}\subseteq\res[1]$ and $\Zz\subseteq\Valgp$. The proposition now follows from the discussion above (and the fact that $\ACF$ and $\Zz$-groups are model complete).
\end{proof}

\section{The \texorpdfstring{$\NIP$}{NIP} property in analytic difference valued fields}\label{sec:NIP}

Let us first recall what is shown by Delon and Bélair in the algebraic case \cite{DelonNIP,BelairNIP}. Let $\THenac$ be the $\Lac$-theory of Henselian valued fields with angular component maps.

\begin{theorem}[NIP alg]
Let $\LL$ be an $\res$-enrichment of a $\Valgpinf$-enrichment of $\Lac$ and $T\supseteq\THenac$ be an $\LL$-theory implying either equicharacteristic zero or finite ramification in mixed characteristic. Then $T$ is $\NIP$ if and only if $\res$ (with its $\Sortrestr{\LL}{\res}$-structure) and $\Valgpinf$ (with its $\Sortrestr{\LL}{\Valgpinf}$ -structure) are $\NIP$.
\end{theorem}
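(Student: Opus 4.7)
The plan is to handle the two directions separately. For the forward direction, observe that by resplendent $K$-quantifier elimination for $\THenac$ (Pas's theorem), both $\res$ and $\Valgpinf$ are stably embedded in $T$ with pure induced structures --- the $\Sortrestr{\LL}{\res}$- and $\Sortrestr{\LL}{\Valgpinf}$-structures respectively --- since the enrichment $\LL$ adds no cross-sort structure between them. Any IP formula in $\res$ or $\Valgpinf$ would therefore lift to an IP formula in $T$, giving the implication that $T$ NIP forces $\res$ and $\Valgpinf$ NIP.

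For the converse, I would use Pas's $K$-quantifier elimination to decompose any $\LL$-formula $\phi(\bar x; \bar y)$ into a Boolean combination of formulas of the shape
\[\theta_{\res}(\ac(\bar f(\bar x, \bar y)); \bar y^{\res}) \wedge \theta_{\Gamma}(\val(\bar g(\bar x, \bar y)); \bar y^{\Gamma}),\]
with $\bar f, \bar g$ tuples of polynomials over the base ring (integer coefficients in equicharacteristic zero, or $p$-adic integer coefficients using $\Lacfr$ as in Remark \ref{rem:mixed char ac} in the finitely ramified case). Because $\LL$ introduces no cross-sort symbols, one may further assume that each conjunct splits cleanly into a pure $\res$-formula and a pure $\Valgpinf$-formula, whose NIP-status is controlled by the respective hypotheses.

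To transfer NIP of $\theta_{\res}$ and $\theta_{\Gamma}$ back to NIP of $\phi$, the cleanest route is via type-counting. Fixing a parameter set $B$, the $\phi$-type of a tuple $\bar a$ over $B$ is determined by the finite tuples $\bigl(\ac(\bar f(\bar a, \bar b)), \val(\bar g(\bar a, \bar b))\bigr)_{\bar b \in B}$, which live in $\res \cup \Valgpinf$. Sauer-Shelah applied to $\theta_{\res}$ inside $\res$ and to $\theta_{\Gamma}$ inside $\Valgpinf$ gives polynomial bounds on the number of such trace patterns; multiplying, one obtains a polynomial bound on the number of $\phi$-types over $B$, so $\phi$ is NIP.

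The main obstacle is precisely this last step: the substituted expressions $\ac(\bar f(\bar x, \bar y))$ mix the two free-variable blocks of $\phi$, so the naive ``NIP is preserved by composition with definable functions'' argument fails, because the image tuple depends on both $\bar a$ and $\bar b$. The type-counting formulation circumvents this by working directly with the sizes of definable families in the sorted sorts, but it crucially uses that quantifier elimination yields only finitely many polynomials $\bar f, \bar g$ per formula, ensuring that the image in $\res \cup \Valgpinf$ has bounded combinatorial complexity.
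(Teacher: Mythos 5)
Your skeleton — easy direction by definability, converse by Pas's quantifier elimination followed by a type-counting argument — is the right shape, and you correctly identify the dangerous step yourself, but you do not overcome it. The paper itself cites \cite{BelairNIP} rather than reproving this, so the relevant comparison is with Bélair's argument, which does not work the way your last paragraph suggests.

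The gap is in the claim that ``Sauer--Shelah applied to $\theta_{\res}$ inside $\res$ and to $\theta_{\Gamma}$ inside $\Valgpinf$ gives polynomial bounds on the number of such trace patterns.'' Sauer--Shelah applied to $\theta_{\res}$ bounds the number of $\theta_{\res}$-types of a $\res$-tuple over a set of $\res$-parameters. But the trace you need to count is
$\bigl(\ac(\bar f(a,\bar b)),\val(\bar g(a,\bar b))\bigr)_{\bar b\in B}$
as $a$ ranges over $\K$; the map $a\mapsto \ac(\bar f(a,\bar b))$ is not a parameter substitution inside $\res$, and NIP of $\res$ by itself gives no bound on how many values these $\K$-parametrized functions can take. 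Having ``only finitely many polynomials per formula'' gives finitely many coordinates of the trace, not a polynomial bound on the number of realized traces, so ``bounded combinatorial complexity'' is precisely the conclusion, not a premise.

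What actually closes the gap, and what Delon and Bélair use, is the geometry of definable sets in one $\K$-variable. Factor each $f(X,\bar b)$ over the algebraic closure; then $\val(f(a,\bar b))$ and $\ac(f(a,\bar b))$ are read off from the data $(\val(a-\alpha),\ac(a-\alpha))$ for the roots $\alpha$. The balls centered at these roots form a finite tree with branching controlled by the degrees, so the ``ball cut'' of $a$ over $B$ takes only $O(|B|)$ many values; for a fixed cut the remaining information is a single valuation and a single residue (the distance to and residue at the nearest center), and only there do you invoke NIP of $\Valgpinf$ and $\res$ via Sauer--Shelah. That cell-decomposition / tree step is the substance of the theorem; without it your argument stops exactly where you flag the obstacle. (The finite-ramification hypothesis also needs a dedicated word: it is used to keep the residue rings $\res[n]$ interpretable over $\res[1]$ so that the $\res$-side data stays in a single NIP structure, and your sketch does not address this.)
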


\begin{proof}
See \cite[Théorème\,7.4]{BelairNIP}. The resplendence of the theorem is not stated there but the proof is exactly the same after enriching on $\res$ and $\Valgpinf$.
\end{proof}

This result can be extended first to analytic fields then to analytic fields with an automorphism.

\begin{corollary}[NIP ann]
Let $\LL$ be an $\res$-enrichment of a $\Valgpinf$-enrichment of $\LAQac$ and $T\supseteq\TAac$ be an $\LL$ theory implying either equicharacteristic zero or finite ramification in mixed characteristic. Then $T$ is $\NIP$ if and only if $\res$ (with its $\Sortrestr{\LL}{\res}$-structure) and $\Valgpinf$ (with its $\Sortrestr{\LL}{\Valgpinf}$ -structure) are $\NIP$.
\end{corollary}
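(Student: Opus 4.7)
The only-if direction is immediate: $\res$ and $\Valgpinf$ are sorts of $\LL$, so their induced theories are reducts of $T$, and NIP is inherited by reducts.

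For the converse, let $T_{\algop}$ denote the reduct of $T$ to the language $\LL_{\algop} := \LL \setminus (\Ann \cup \{\Q\})$, which is an $\res$-enrichment of a $\Valgpinf$-enrichment of $\Lac$ and extends $\THenac$ (models of $\TAac$ are Henselian by Proposition \ref{prop:TA Hens}). Because $\LL_{\algop}$ only drops function symbols on $\K$, the induced $\LL_{\algop}$-structures on $\res$ and $\Valgpinf$ are reducts of the corresponding $\LL$-structures, hence still NIP. Theorem \ref{thm:NIP alg} then yields that $T_{\algop}$ is NIP.

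By the standard reduction of NIP of a theory to NIP of formulas with a single variable on one side, it suffices to check that every formula $\phi(x; \uple{w})$ of $T$ with $|x|=1$ is NIP. If $x$ is of sort $\res$ or $\Valgpinf$, the stable embeddedness and purity of those sorts (see the remark following Corollary \ref{cor:EQ TAsHac}) reduce $\phi$ to an equivalent formula using only parameters from the sort of $x$, and NIP follows from the hypothesis. If instead $x$ is a $\K$-variable, write $\uple{w} = (\uple{y}, \uple{r})$ with $\uple{y}$ a tuple of $\K$-variables and $\uple{r}$ a tuple of $\res/\Valgpinf$-variables; Remark \ref{rem:red alg ac} (the resplendent $\ac$-version of Corollary \ref{cor:algebraization}) produces a $\K$-quantifier-free $\LL_{\algop}$-formula $\psi(x; \uple{z}, \uple{r})$ and $\Sortrestr{\LAQac}{\K}$-terms $\uple{u}(\uple{y})$ with
\[T \models \phi(x; \uple{y}, \uple{r}) \iff \psi(x; \uple{u}(\uple{y}), \uple{r}).\]
Any sequence of parameters $(\uple{y}_i, \uple{r}_i)$ witnessing IP for $\phi$ would then, via the substitution $\uple{z}_i := \uple{u}(\uple{y}_i)$, yield a sequence $(\uple{z}_i, \uple{r}_i)$ witnessing IP for $\psi$ in $T_{\algop}$, which is absurd. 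The main point requiring care is that $\LL_{\algop}$ does retain enough structure on $\res$ and $\Valgpinf$ for the reduction to go through--notably the symbols $E_{k} : \res[k] \to \res[k]$ for units $E \in \inv*{Ann}$, which lie in $\LAQac \setminus (\Ann \cup \{\Q\})$ and are thus preserved--so that Theorem \ref{thm:NIP alg} gives a reduct against which the analytic formulas are genuinely controlled via algebraization.
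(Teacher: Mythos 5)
Your proof is correct and reaches the same conclusion, but it handles the case $\card{x}=1$ with $x$ not a $\K$-variable differently from the paper. The paper simply observes that every sort carries an $\emptyset$-definable surjection from $\K$ ($\val$ onto $\Valgpinf$, $\resf[n]$ onto $\res[n]$), which lets it assume at once that $x$ \emph{and} $\uple{y}$ are $\K$-variables, and then a single application of Remark\,\ref{rem:red alg ac} finishes the job. You instead treat the $\res$/$\Valgpinf$-variable case by invoking stable embeddedness and purity of those sorts and only apply algebraization when $x$ is a $\K$-variable, keeping the $\res$/$\Valgpinf$-parameters $\uple{r}$ separate throughout. Both approaches are valid; the paper's reduction to $\K$-variables is shorter and avoids having to spell out the stable embeddedness argument, while yours is arguably more transparent and, incidentally, more robust to enrichments adding genuinely new $\res$/$\Valgpinf$-sorts with no definable surjection from $\K$. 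One small inaccuracy: the remark you cite for stable embeddedness and purity (the one following Corollary\,\ref{cor:EQ TAsHac}) concerns the \emph{difference} theories in $\LAsac$/$\LAsacfr$, not $\TAac$; the analogous fact for $\TAac$ does hold, but it should be sourced from field quantifier elimination in $\TAac$ (Theorem\,\ref{thm:EQ TA} together with Remark\,\ref{rem:decomp formula} and Remark\,\ref{rem:red alg ac}) rather than from that remark.
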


\begin{proof}
Suppose $T$ is not $\NIP$. Then there is a formula $\phi(x,\uple{y})$ which has the independence property and where $\card{x} = 1$. Note that, since for any sort there is an $\emptyset$-definable function from $\K$ onto that sort, we may assume that $x$ and $\uple{y}$ are $\K$-variables. By Remark\,\ref{rem:red alg ac}, there is an $\LL\sminus(\Ann\cup\{\Q\})$-formula $\psi(x,\uple{z})$ and $\Sortrestr{\LAQ}{\K}$terms $\uple{u}(\uple{y})$ such that $\phi(x,\uple{y})$ is equivalent to a $\psi(x,\uple{u}(\uple{y}))$. But then $\psi$ would have the independence property too, contradicting Theorem\,\ref{thm:NIP alg}.
\end{proof}

\begin{corollary}[NIP AsH]
Let $\LL$ be an $\res$-enrichment of a $\Valgpinf$-enrichment of $\LAsac$ and $T\supseteq\TAsHac$ be an $\LL$ theory implying either equicharacteristic zero or finite ramification in mixed characteristic. Then $T$ is $\NIP$ if and only if $\res$ (with its $\Sortrestr{\LL}{\res}$-structure) and $\Valgpinf$ (with its $\Sortrestr{\LL}{\Valgpinf}$ -structure) are $\NIP$.
\end{corollary}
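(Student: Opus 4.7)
The plan is to reduce to the already-established Corollary\,\ref{cor:NIP ann} by stripping the field automorphism from the language and invoking the resplendent $\K$-quantifier elimination of Corollary\,\ref{cor:EQ TAsHac}. The forward implication is standard: $\res$ and $\Valgpinf$ are $\emptyset$-definable sorts of $T$, so any formula having the independence property in either of their induced structures immediately lifts to a formula with the independence property in $T$.

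For the converse, I would let $\LL'$ denote the language obtained from $\LL$ by deleting the symbol $\sigma : \K\to\K$ while retaining $\sigma_{n}$ on each $\res[n]$ and $\sigma_{\Valgp}$ on $\Valgpinf$; thus $\LL'$ remains an $\res$-enrichment of a $\Valgpinf$-enrichment of $\LAQac$. Let $T'$ be the $\LL'$-reduct of $T$. By Proposition\,\ref{prop:TA Hens} every model of $T$ is Henselian, so $T'$ extends $\TAac$; the induced structures on $\res$ and $\Valgpinf$ in $T'$ coincide with those in $T$ (since no information beyond what already lies in $\LL'$ is lost) and are therefore $\NIP$ by hypothesis. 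Corollary\,\ref{cor:NIP ann} then yields that $T'$ is $\NIP$.

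Next, suppose for contradiction that $T$ is not $\NIP$; as in the proof of Corollary\,\ref{cor:NIP ann}, one may pick a formula $\phi(x,\uple{y})$ with $\K$-variables witnessing the independence property. By Corollary\,\ref{cor:EQ TAsHac}, $\phi$ is equivalent modulo $T$ to a $\K$-quantifier-free $\LL$-formula; by Remark\,\ref{rem:sigma terms}, every $\Sortrestr{\LAQs}{\K}$-term occurring in this formula can be rewritten as an $\Sortrestr{\LAQ}{\K}$-term applied to a prolongation of its $\K$-variables. This yields an $\LL'$-formula $\psi(\uple{x}',\uple{y}')$ such that $\phi(x,\uple{y})\iff\psi(\sprol(x),\sprol(\uple{y}))$ modulo $T$, for some prolongation length. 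Since $\sigma$ is an automorphism of any $M\models T$, applying $\sprol$ componentwise to witnesses of the independence property for $\phi$ in $M$ produces witnesses for $\psi$ in the reduct $\Langrestr{M}{\LL'}\models T'$, contradicting the $\NIP$ of $T'$.

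The only delicate step is the rewriting furnished by Remark\,\ref{rem:sigma terms}, which is exactly the point at which the full strength of $\K$-quantifier elimination in $\TAsHac$ (as opposed to the purely analytic one) is used; everything else is preservation of $\NIP$ under substitution by an $\emptyset$-definable map, which is automatic because $\sigma$ is an $\LL$-automorphism.
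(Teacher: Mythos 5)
Your argument is correct and follows essentially the same route as the paper's own proof: reduce to a formula $\phi(x,\uple{y})$ with $\K$-variables witnessing IP, apply the resplendent $\K$-quantifier elimination of Corollary\,\ref{cor:EQ TAsHac} together with Remark\,\ref{rem:sigma terms} to rewrite $\phi$ as $\psi(\sprol(x),\sprol(\uple{y}))$ with $\psi$ a $\K$-quantifier-free formula in the $\sigma$-free reduct, and then observe that $\psi$ would have IP in that reduct, contradicting Corollary\,\ref{cor:NIP ann}. Your version is slightly more explicit (spelling out the definition of $\LL'$ and $T'$, and that $T'$ extends $\TAac$ by Proposition\,\ref{prop:TA Hens}), but the decomposition and the key lemmas invoked are identical to those in the paper.
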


\begin{proof}
Suppose $T$ is not $\NIP$, then there is a formula $\phi(x,\uple{y})$ which has the independence property (where $x$ and the $\uple{y}$ are $\K$-variables). By Corollary\,\ref{cor:EQ TAsHac}, we may assume that $\phi$ is without $\K$-quantifiers, i.e. there is a $\K$-quantifier free $\LAsac\sminus\{\sigma\}$-formula $\psi(\uple{x},\uple{z})$ such that $\phi(x,\uple{y})$ is equivalent to $\psi(\sprol(x),\sprol(\uple{y}))$. But then $\psi$ would have the independence property too, contradicting Corollary\,\ref{cor:NIP ann}.
\end{proof}

\begin{remark}[NIP no ac]
In the isometry case with $\val(\fix(\K)) = \val(\K)$, this last result also holds without angular components because any such valued field can be elementarily embedded into a valued field with angular components compatible with $\sigma$.
\end{remark}

\begin{corollary}[NIP Witt]
The $\LAQs$-theory of $\WittLAQs$ is $\NIP$.
\end{corollary}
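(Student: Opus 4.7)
The plan is to assemble Proposition \ref{prop:Th Witt} and Corollary \ref{cor:NIP AsH}, using the observation that our situation is the finitely ramified mixed characteristic isometric case.

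First, by Proposition \ref{prop:Th Witt} the theory $T \coloneq \Th_{\LAQs}(\WittLAQs)$ extends $\TAsH[0,p]$ and satisfies: $p$ has minimal positive valuation (so the ramification index is $e = 1$, in particular finite), the residue field is a pure algebraically closed valued field of characteristic $p$ and is stably embedded, and the value group is a pure $\Zz$-group with $\sigma_{\Valgp} = \id$, also stably embedded. In particular, $\sigma$ is an isometry.

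Next, I would verify that the Ax--Kochen--Eršov transfer of Corollary \ref{cor:NIP AsH} can be applied. The only subtlety is that Corollary \ref{cor:NIP AsH} is stated for theories extending $\TAsHac$, i.e.\ with angular components. Since $\sigma$ is an isometry and $\val(\fix(\WittLAQs)) = \val(\Zz_p) = \Zz = \val(\WittLAQs)$, Remark \ref{rem:NIP no ac} applies: $\WittLAQs$ elementarily embeds into an $\LAsac$-structure with angular components compatible with $\sigma$, and $\NIP$ transfers down from this expansion to $T$.

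Finally, I would invoke Corollary \ref{cor:NIP AsH} (in the finitely ramified mixed characteristic case) to reduce $\NIP$ for $T$ to $\NIP$ for the residue structure $\alg{\Ff_p}$ and for the value group $\Zz$. But by Proposition \ref{prop:Th Witt} these are pure structures: $\alg{\Ff_p}$ as a pure algebraically closed field is stable, hence $\NIP$; and $\Zz$ as a pure $\Zz$-group (Presburger arithmetic) is a classical example of a $\NIP$ (indeed dp-minimal) theory. Hence $T$ is $\NIP$.

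There is no real obstacle here, only bookkeeping. The one point that requires a moment's care is the reduction to the case with angular components via Remark \ref{rem:NIP no ac}, which works precisely because the Witt Frobenius is an isometry with $\val(\fix) = \val(\K)$; without this, one would have to argue directly at the level of $\LAQs$, but Proposition \ref{prop:Th Witt} already puts us in the good setting.
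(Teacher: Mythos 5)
Your proof is correct and takes essentially the same route as the paper: Proposition \ref{prop:Th Witt} to pin down the residue field and value group, Remark \ref{rem:NIP no ac} to pass to the angular-component setting (valid because the Witt Frobenius is an isometry with $\val(\fix(\K)) = \val(\K)$), and then Corollary \ref{cor:NIP AsH}. The one small point you elide is that in the finitely ramified case Corollary \ref{cor:NIP AsH} asks for $\NIP$ of the full tower $\res = \bigcup_n \res[n]$, not just $\res[1]$; the paper covers this by noting that, in the unramified setting, $\res$ is definable in $\res[1]$ (the $\res[n]$ are the length-$n$ truncated Witt vectors, interpretable in $\res[1]$), which is exactly the content of the discussion preceding Proposition \ref{prop:Th Witt} that you invoke, so no genuine gap results.
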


\begin{proof}
This is an immediate corollary of Remark\,\ref{rem:NIP no ac}, Corollary\,\ref{cor:NIP AsH} and the fact that $\res$ is definable in $\res[1]$ which is a pure algebraically closed field (where the Frobenius automorphism is definable) and that $\Valgp$ is a pure $\Zz$-group (see Proposition\,\ref{prop:Th Witt}).
\end{proof}

\appendix
\part*{Appendices}

\section{Resplendent relative quantifier elimination}\label{sec:resplEQ}

The following section, although it may appear fastidious and nitpicking, is actually an attempt at clarifying some notions and properties that are often assumed to be clear when studying model theory of valued fields, but may actually need precise and careful presentation. In all this section, $\LL$ will denote a language and $\Sigma$, $\Pi$ a partition of its sorts.

\begin{definition}(Restriction)
If $\LL'\subseteq \LL$ be another language and $T$ an $\LL$-theory we will denote by $\Langrestr!{T}{\LL'}$ the $\LL'$-theory $\{\phi$ an $\LL'$-formula $\mid T\models\phi\}$ and if $C$ is an $\LL$-structure, $\Langrestr{C}{\LL'}$ will have underlying set $\bigcup_{S\in\LL'}S(C)$ with the obvious $\LL'$-structure. In particular, when $\Sigma$ is a set of $\LL$ sorts, let $\Sortrestr!{\LL}{\Sigma}$ be the restriction of $\LL$ to the predicate and function symbols that only concern the sorts in $\Sigma$. Then we will write $\Def{\Sortrestr{T}{\Sigma}}{\Langrestr{T}{\Sortrestr{\LL}{\Sigma}}}$ and $\Def{\Sortrestr{C}{\Sigma}}{\Langrestr{C}{\Sortrestr{\LL}{\Sigma}}}$.
\end{definition}

Note that the restriction is a functor from $\Str(T)$ to $\Str(\Langrestr{T}{\LL'})$ respecting models, cardinality and elementary submodels (see Section\,\ref{sec:cat} for the definitions).

\begin{definition}(Enrichment)
Let $\LL_{e}\supseteq\LL$ be a another language and $\Sigma_{e}$ the set of new $\LL_{e}$-sorts, i.e. the $\LL_{e}$-sorts that are not $\LL$-sorts. The language $\LL_{e}$ is said to be a $\Sigma$-enrichment of $\LL$ if $\LL_{e}\sminus\Sortrestr{\LL_{e}}{\Sigma\cup\Sigma_{e}} \subseteq \LL$, i.e. the enrichment is limited to the new sorts and the sorts in $\Sigma$. If, moreover, $\Sigma_{e} = \emptyset$ and $\LL_{e}\sminus\LL$ consists only of function symbols, we will say that $\LL_{e}$ is a $\Sigma$-term enrichment of $\LL$.

Let $T$ be an $\LL$-theory. An $\LL_{e}$-theory $T_{e}\supseteq T$ is said to be a definable enrichment of $T$ if there are no new sorts and for every predicate $P(\uple{x})$ (resp. function $f(\uple{x})$) symbol in $\LL_{e}\sminus\LL$, there is an $\LL$-formula $\phi_{P}(\uple{x})$ (resp. $\phi_{f}(\uple{x},y)$ such that $T\models\forall\uple{x}\exists^{=1}y,\,\phi_{f}(\uple{x},y)$) and that $T_{e} = T\cup\{P(\uple{x})\iffform\phi_{P}(\uple{x})\}\cup\{\phi_{f}(\uple{x},f(\uple{x}))\}$.
\end{definition}

\begin{definition}[Morl](Morleyization)
The Morleyization of $\LL$ on $\Sigma$ is the language $\Def{\Morl*!{\LL}[\Sigma]}{\LL\cup\{P_{\phi}(\uple{x})\mid\phi(\uple{x})$ an $\Sortrestr{\LL}{\Sigma}$-formula$\}}$. If $T$ is an $\LL$-theory, the Morleyization of $T$ on $\Sigma$ is the following $\Morl*{\LL}[\Sigma]$-theory $\Def{\Morl*{T}[\Sigma]}{T\cup\{P_{\phi}(\uple{x})\leftrightarrow\phi(\uple{x})\}}$ and if $M$ is an $\LL$-structure, $\Morl*{M}[\Sigma]$ is the $\Morl*{\LL}[\Sigma]$-structure with the same $\LL$-structure as $M$ and where $P_{\phi}$ is interpreted by $\phi(M)$.

On the other hand, we will say that an $\LL$-theory $T$ is Morleyized on $\Sigma$ if every $\Sortrestr{\LL}{\Sigma}$-formula is equivalent, modulo $T$, to a quantifier free $\Sortrestr{\LL}{\Sigma}$-formula.
\end{definition}

Note that $\Morl*{T}[\Sigma]$ is a definable $\Sigma$-enrichment of $T$ and if $M\models T$ then $\Morl*{M}[\Sigma]\models\Morl*{T}[\Sigma]$.

\begin{definition}(Elementary on $\Sigma$)
Let $M_{1}$ and $M_{2}$ be two $\LL$-structures. A partial isomorphism $M_{1}\to M_{2}$ is said to be $\Sigma$-elementary if it is a partial $\Morl*{\LL}[\Sigma]$-isomorphism.
\end{definition}

\begin{definition}[respl elim](Resplendent relative elimination of quantifiers)
Let $T$ be an $\LL$-theory. We say that $T$ eliminates quantifiers relatively to $\Sigma$ if $\Morl*{T}[\Sigma]$ eliminates quantifiers.

We say that $T$ eliminates quantifiers resplendently relatively to $\Sigma$ if for any $\Sigma$-enrichment $\LL_{e}$ of $\LL$ (with possibly new sorts $\Sigma_{e}$) and any $\LL_{e}$-theory $T_{e}\supseteq T$, $T_{e}$ eliminates quantifiers relatively to $\Sigma\cup\Sigma_{e}$.
\end{definition}

\begin{definition}(Resplendent elimination of quantifiers from a sort)
We will say that an $\LL$-theory $T$ eliminates $\Pi$-quantifiers if every $\LL$-formula is equivalent modulo $T$ to a formula where quantification only occurs on variables from the sorts in $\Sigma$.

We will say that $T$ eliminates $\Pi$-quantifiers resplendently if for any $\Sigma$-enrichment $\LL_{e}$ of $\LL$ and any $\LL_{e}$-theory $T_{e}\supseteq T$, $T_{e}$ eliminates $\Pi$-quantifiers.
\end{definition}

\begin{definition}[cl sorts](Closed sorts)
We will say that $\Sigma$ is closed if $\LL\sminus(\Sortrestr{\LL}{\Pi}\cup\Sortrestr{\LL}{\Sigma})$ only consists of function symbols $f : \prod_{i} P_{i}\to S$ where $P_{i}\in\Pi$ and $S\in\Sigma$. Equivalently, any predicate involving a sort in $\Sigma$ and any function with a domain involving a sort in $\Sigma$ only involves sorts in $\Sigma$.
\end{definition}

\begin{remark}
\begin{thm@enum}
\item Note that if, the sorts $\Sigma$ are closed then in any $\Sigma$-enrichment --- with possibly new sorts $\Sigma^{e}$ --- of a $\Pi$-enrichment of $\LL$ (or vice-versa), the sorts $\Sigma\cup\Sigma^{e}$ are still closed.
\item Elimination of quantifiers relative to $\Sigma$ implies elimination of $\Pi$-quantifiers. But the converse is in general not true. Indeed, if $\LL$ is a language with two sorts $S_{1}$ and $S_{2}$ and a predicate on $S_{1}\times S_{2}$, then the formula $\exists x R(x,y)$ is an $S_{2}$-quantifier free formula but there is no reason for it to be equivalent to any quantifier free $\Morl*{\LL}[S_{1}]$-formula.
\item However, if the sorts $\Sigma$ are closed, then it follows from Remark\,\ref{rem:decomp formula} that $T$ eliminates $\Pi$-quantifiers if and only if $T$ eliminates quantifiers relatively to $\Sigma$. If $\LL_{e}$ is a $\Sigma$-enrichment of $\LL$ with new sorts $\Sigma_{e}$, then $\Sigma\cup\Sigma_{e}$ is still closed, thus the equivalence is also true resplendently.
\end{thm@enum}
\end{remark}

We will now suppose that $\Sigma$ is \emph{closed} and we will denote by $\mathcal{F}$ the set of functions $f : \prod_{i} P_{i}\to S$ where $P_{i}\in\Pi$ and $S\in\Sigma$.

\begin{proposition}[EQ implies respl]
Let $T$ be an $\LL$-theory. If $T$ eliminates quantifiers relatively to $\Sigma$ then $T$ eliminates quantifiers resplendently relatively to $\Sigma$.
\end{proposition}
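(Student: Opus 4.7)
The plan is to use the back-and-forth characterisation of quantifier elimination. Fix a $\Sigma$-enrichment $\LL_{e}$ of $\LL$ with new sorts $\Sigma_{e}$ and an $\LL_{e}$-theory $T_{e} \supseteq T$. Since $\Sigma$ is closed in $\LL$, the remark following Definition\,\ref{def:cl sorts} tells us $\Sigma \cup \Sigma_{e}$ is closed in $\LL_{e}$; by the same remark, the desired quantifier elimination relative to $\Sigma \cup \Sigma_{e}$ is equivalent to $T_{e}$ eliminating $\Pi$-quantifiers. I would prove this by the standard criterion: for $M_{1}, M_{2} \models T_{e}$ sufficiently saturated and $f : A \to B$ a $(\Sigma \cup \Sigma_{e})$-elementary partial $\LL_{e}$-isomorphism between small substructures, $f$ extends by any single element. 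For a new element lying in $\Sigma \cup \Sigma_{e}$, the atomic $\LL_{e}$-data on it over $A$ lives (after using that $A$ is an $\LL_{e}$-substructure) in $\Sortrestr{\LL_{e}}{\Sigma \cup \Sigma_{e}}$, so saturation of $M_{2}$ combined with $(\Sigma \cup \Sigma_{e})$-elementarity of $f$ provides the image. The substance lies in extending by a $\Pi$-element $a \in M_{1} \setminus A$.

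For such $a$ I would realise in $\Pi(M_{2})$ a partial type $p(y)$ combining the $\LL$-qf-type of $a$ over $A$ with, for every $g \in \mathcal{F}$ and every $\Pi$-term $u$, the $\Sortrestr{\LL_{e}}{\Sigma \cup \Sigma_{e}}$-type of $g(u(a, A_{\Pi}))$ over $A_{\Sigma}$, where $A_{\Pi}$, $A_{\Sigma}$ denote the $\Pi$- and $(\Sigma \cup \Sigma_{e})$-elements of $A$. Closedness of $\Sigma$ ensures these are exactly the atomic $\LL_{e}$-data that can connect a new $\Pi$-element to $A$, so any realisation $b$ of $p$ gives $f \cup \{a \mapsto b\}$ the structure of a $(\Sigma \cup \Sigma_{e})$-elementary partial $\LL_{e}$-isomorphism. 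By saturation of $M_{2}$ it then suffices to establish finite consistency of $p$.

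Finite consistency reduces to showing that a formula of the form $\exists y\,[\theta(y, A) \wedge \psi(\uple{g}(\uple{u}(y, A_{\Pi})), A_{\Sigma})]$, with $\theta$ an $\LL$-qf formula and $\psi$ a $\Sortrestr{\LL_{e}}{\Sigma \cup \Sigma_{e}}$-qf formula, transfers from $M_{1}$ (where it holds at $y = a$) to $M_{2}$ via $f$. Introducing fresh $\Sigma$-variables $\uple{z}$ for the $\Sigma$-terms $\uple{g}(\uple{u}(y, A_{\Pi}))$ rewrites the formula as
\[
\exists \uple{z}\, \Bigl[\bigl(\exists y\, \theta(y, A) \wedge \bigwedge_{i} z_{i} = g_{i}(u_{i}(y, A_{\Pi}))\bigr) \wedge \psi(\uple{z}, A_{\Sigma})\Bigr],
\]
and the bracketed inner formula is pure $\LL$ with a single $\Pi$-quantifier. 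By the hypothesis that $T$ eliminates quantifiers relatively to $\Sigma$, this inner formula is equivalent modulo $T \subseteq T_{e}$ to an $\LL$-formula $\chi(\uple{z}, A)$ with only $\Sigma$-quantifiers.

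It remains to transfer the $\Pi$-quantifier-free formula $\exists \uple{z}\,[\chi(\uple{z}, A) \wedge \psi(\uple{z}, A_{\Sigma})]$ from $M_{1}$ to $M_{2}$. Closedness of $\Sigma$ in $\LL$ means the $\Pi$-parameters of $\chi$ can only appear through $\mathcal{F}$-terms, so $\chi(\uple{z}, A)$ is equivalent to $\chi'(\uple{z}, \uple{g'}(\uple{u'}(A_{\Pi})), A_{\Sigma})$ with $\chi'$ a quantifier-free formula in a Morleyization of $\Sortrestr{\LL_{e}}{\Sigma \cup \Sigma_{e}}$ on pure $\Sigma$-variables. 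Substituting $\uple{g'}(\uple{u'}(A_{\Pi})) \in A_{\Sigma}$ (as $A$ is a substructure) as parameters turns the whole statement into one in $\Sortrestr{\LL_{e}}{\Sigma \cup \Sigma_{e}}$ with parameters in $(\Sigma \cup \Sigma_{e})(A)$, so $(\Sigma \cup \Sigma_{e})$-elementarity of $f$ delivers the transfer. The main obstacle is this very last packaging: one has to push the hypothesis hard enough to deliver $\chi$ in a form whose entire $\Pi$-content is re-expressed through $\mathcal{F}$-terms, which is exactly what closedness of $\Sigma$ guarantees.
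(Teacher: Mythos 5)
Your argument is sound and proves the statement, but it takes a different route from the paper's. You attack the back-and-forth step directly by constructing a partial type $p(y)$ and realising it by saturation, whereas the paper routes through its two helper lemmas: for a new $\Pi$-element $c_{1}$ it first enlarges the domain of $f$ to cover $\Sigma\cup\Sigma_{e}(\gen{\LL_{e}}[C_{1}]{c_{1}})$ via Lemma\,\ref{lem:ext on lt} in the Morleyized language, observes that the enlarged $f'$ restricted to $\LL$ is then a partial $\Morl*{\LL}[\Sigma]$-isomorphism and hence, by the hypothesis, $\LL$-elementary, so saturation yields an $\LL$-extension containing $c_{1}$, and finally invokes Lemma\,\ref{lem:field ext} to recover a partial $\Morl*{\LL_{e}}[\Sigma\cup\Sigma_{e}]$-isomorphism on the generated $\LL_{e}$-structure. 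Both approaches spend the same three ingredients (relative QE in $T$, closedness of $\Sigma$, saturation); the paper's is more modular, since the two auxiliary lemmas are reused repeatedly elsewhere, while yours inlines their content into the type and its finite-consistency check, using the QE hypothesis to push an $\exists y$ off onto $\Sigma$-variables. One point your last paragraph glides over: after the closedness decomposition, $\chi$ may still contain atomic $\Sortrestr{\LL}{\Pi}$-sentences with parameters in $A_{\Pi}$; their transfer to $M_{2}$ uses that $f$ is an $\LL$-embedding (preserving $\Sortrestr{\LL}{\Pi}$-atomics), not $(\Sigma\cup\Sigma_{e})$-elementarity, but this is easily repaired.

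There is one genuine bookkeeping adjustment you need: as written, $p(y)$ records only the $\Sortrestr{\LL_{e}}{\Sigma\cup\Sigma_{e}}$-type of each \emph{single} $g(u(a,A_{\Pi}))$ over $A_{\Sigma}$. For $f\cup\{a\mapsto b\}$ to extend to a well-defined map on $\gen{\LL_{e}}[A]{a}$, and for that extension to be $(\Sigma\cup\Sigma_{e})$-elementary, you need the \emph{joint} $\Sortrestr{\LL_{e}}{\Sigma\cup\Sigma_{e}}$-type of every finite tuple $\uple{g}(\uple{u}(a,A_{\Pi}))$ over $A_{\Sigma}$, and the full type rather than only the quantifier-free one (harmless if one works in the Morleyization on $\Sigma\cup\Sigma_{e}$). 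Your finite-consistency paragraph already handles tuples, so only the definition of $p$ needs this correction; with it, the argument closes.
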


Let us begin with some remarks and lemmas that will have a more general interest.

\begin{remark}
\begin{thm@enum}
\item \label{rem:decomp formula} Any atomic $\LL$-formula $\phi(\uple{x},\uple{y})$ where $\uple{x}$ are $\Pi$-variables and $\uple{y}$ are $\Sigma$-variables, is either of the form $\psi(\uple{x})$ where $\psi$ is an atomic $\Sortrestr{\LL}{\Pi}$-formula or of the form $\psi(\uple{f}(\uple{u}(\uple{x})),\uple{y})$ where $\psi$ is an atomic $\Sortrestr{\LL}{\Sigma}$-formula, $\uple{u}$ are $\Sortrestr{\LL}{\Pi}$-terms and $\uple{f}$ are functions from $\mathcal{F}$.
\item If $T$ eliminates quantifiers relatively to $\Sigma$, it follows from Remark\,\ref{rem:decomp formula} above that for any $M\models T$, any $\LL(M)$-definable set in a product of sorts from $\Sigma$ is defined by a formula of the form $\phi(\uple{x},\uple{f}(\uple{a}),\uple{b})$ where $\phi$ is a $\Sortrestr{\LL}{\Sigma}$-formula. Hence $\Sigma$ is stably embedded in $T$, i.e. any $\LL(M)$-definable subset of $\Sigma$ is in fact $\LL(\Sigma(M))$-definable. Moreover, these sets are in fact $\Sortrestr{\LL}{\Sigma}(\Sigma(M))$-definable. In that case, we say that $\Sigma$ is a pure $\Sortrestr{\LL}{\Sigma}$-structure.
\end{thm@enum}
\end{remark}

\begin{lemma}[ext on lt]
Suppose $T$ is an $\LL$-theory Morleyized on $\Sigma$, then for any sufficiently saturated $M_{1}$, $M_{2}\models T$, any partial $\LL$-isomorphism $f : M_{1}\to M_{2}$ with small domain $C_{1}$ and any $c_{1}\in\Sigma(M_{1})$, $f$ can be extended to a partial $\LL$-isomorphism whose domain contains $c_{1}$.
\end{lemma}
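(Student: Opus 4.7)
The plan is to reduce the problem to realizing a $\Sortrestr{\LL}{\Sigma}$-type in $M_2$ and then invoking its saturation. The starting point is Remark~\ref{rem:decomp formula}: since $\Sigma$ is closed and $c_1 \in \Sigma(M_1)$, any atomic $\LL$-formula $\phi(c_1,\uple{a})$ with parameters $\uple{a} \in C_1$ decomposes as $\psi(c_1,\uple{d},\uple{g}(\uple{u}(\uple{a}_{\Pi})))$, where $\psi$ is an atomic $\Sortrestr{\LL}{\Sigma}$-formula, $\uple{d} \in \Sigma(C_1)$, the $\uple{g}$ are function symbols in the class $\mathcal{F}$ of the paragraph preceding Remark~\ref{rem:decomp formula}, and $\uple{u}$ are $\Sortrestr{\LL}{\Pi}$-terms applied to the $\Pi$-projection $\uple{a}_{\Pi}$ of $\uple{a}$.

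First, I would enlarge $f$ to a partial $\LL$-isomorphism $\tilde{f}$ whose domain also contains every value $g(\uple{u}(\uple{a}_{\Pi}))$ arising as above, by the forced rule $g(\uple{u}(\uple{a}_{\Pi})) \mapsto g(\uple{u}^{f}(f(\uple{a}_{\Pi})))$, where $\uple{u}^{f}$ transports the parameters of $\uple{u}$ by $f$. Well-definedness, and more generally the preservation of atomic formulas on the enlarged domain, follows because any atomic identity or relation holding among such $\mathcal{F}$-values in $M_1$ is itself an atomic $\LL$-statement about $C_1$, hence transported to $M_2$ by $f$. Write $D_1 \subseteq \Sigma(M_1)$ and $D_2 \subseteq \Sigma(M_2)$ for the $\Sigma$-parts of the enlarged domain and range; the restriction of $\tilde{f}$ to $D_1$ is then, by the decomposition above, a partial $\Sortrestr{\LL}{\Sigma}$-isomorphism $D_1 \to D_2$.

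Finally, consider the $\Sortrestr{\LL}{\Sigma}$-type $p(x)$ of $c_1$ over $D_1$ inside $\Sigma(M_1)$. Because $T$ is Morleyized on $\Sigma$, $p(x)$ is determined by its quantifier-free part, and the push-forward of that part through $\tilde{f}$ is therefore a consistent $\Sortrestr{\LL}{\Sigma}$-type over $D_2$ of size at most $|C_1|+|\LL|$. This is small by hypothesis, so saturation of $M_2$ yields some $c_2 \in \Sigma(M_2)$ realizing it. Setting $\tilde{f}(c_1) := c_2$ and re-applying the decomposition shows that the resulting map is a partial $\LL$-isomorphism with $c_1$ in its domain. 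No real obstacle arises here: the content is bundled into the closedness of $\Sigma$, which powers the decomposition, and the Morleyization hypothesis, which ensures that quantifier-free $\Sortrestr{\LL}{\Sigma}$-information is enough to transport the full type.
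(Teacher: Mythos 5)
Your proof is correct and follows essentially the same route as the paper's: close the domain under the $\mathcal{F}$-values arising from $\Sortrestr{\LL}{\Pi}$-terms (the paper does this more brutally by assuming WLOG that $C_1$ is a substructure of $M_1$), observe that Morleyization makes the induced map on $\Sigma$-parts elementary, use saturation to realize the $\Sortrestr{\LL}{\Sigma}$-type of $c_1$ (the paper extends the whole $\Sigma$-restriction, you add only $c_1$, but these are interchangeable), and verify via the decomposition of atomic formulas. The paper's write-up is slightly more explicit in the final verification, spelling out that the union $\Sortrestr{f}{\Pi}\cup\Sortrestr{f'}{\Sigma}$ respects $\Sortrestr{\LL}{\Pi}$-atomics, the $\mathcal{F}$-functions, and $\Sortrestr{\LL}{\Sigma}$-atomics separately; otherwise the two arguments coincide.
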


\begin{proof}
First we may assume that $C_{1}\substr M_{1}$ and in particular for all $g\in\mathcal{F}$, $g(C_{1})\subseteq\Sigma(C_{1})$. Because $f$ is a partial $\LL$-isomorphism and $T$ is Morleyized on $\Sigma$, $\Sortrestr{f}{\Sigma}$ is a partial elementary $\Sortrestr{\LL}{\Sigma}$-isomorphism. By saturation of $M_{2}$ we can extend $\Sortrestr{f}{\Sigma}$ to $\Sortrestr{f'}{\Sigma} : \Sortrestr{M_{1}}{\Sigma} \to \Sortrestr{M_{2}}{\Sigma}$ a partial elementary $\Sortrestr{\LL}{\Sigma}$-isomorphism whose domain contains $c_{1}$. Let $f' = \Sortrestr{f}{\Pi}\cup \Sortrestr{f'}{\Sigma}$.

As $\Sortrestr{f}{\Pi}$ is a partial $\Sortrestr{\LL}{\Pi}$-isomorphism, $f'$ respects formulae $\phi(\uple{x})$ where $\phi$ is an atomic $\Sortrestr{\LL}{\Pi}$-formula ($\Sortrestr{f}{\Pi}$ also respects $\Sortrestr{\LL}{\Pi}$-terms). Moreover, as for all $g\in\mathcal{F}$, $\restr{f'}{g(C_{1})} = \restr{f}{g(C_{1})}$, $f'$ still respects $g$. As $\Sortrestr{f'}{\Sigma}$ is a partial $\Sortrestr{\LL}{\Sigma}$-isomorphism, it respects all atomic $\Sortrestr{\LL}{\Sigma}$-formulae. It follows that $f'$ also respects formulae of the form $\psi(\uple{g}(\uple{u}(\uple{x})),\uple{y})$ where $\psi$ is an atomic $\Sortrestr{\LL}{\Sigma}$-formula, $\uple{u}$ are $\Sortrestr{\LL}{\Pi}$-terms and $\uple{g}\in\mathcal{F}$. By Remark\,\ref{rem:decomp formula}, $f'$ respects all atomic $\LL$-formulae and hence is a partial $\LL$-isomorphism.
\end{proof}

\begin{definition}[gen](Generated structure)
Let $\LL$ be a language, $M$ an $\LL$-structure and $C\subseteq M$. The $\LL$-structure generated by $C$ will be denoted $\gen!{\LL}{C}$. If $C$ is an $\LL$-structure and $\uple{c}\in M$, the $\LL$-structure generated by $C$ and $\uple{c}$ will be denoted $\gen{\LL}[C]{\uple{c}}$.
\end{definition}

\begin{lemma}[field ext]
Let $M_{1}$, $M_{2}\models T$, $f : M_{1} \to M_{2}$ a partial $\LL$-isomorphism with domain $C_1\substr M_{1}$ and $c_{1}\in \Pi(M_{1})$ such that $\Sigma(\gen{\LL}[C_{1}]{c_{1}}) \subseteq \Sigma(C_{1})$. Suppose that $f'$ is a partial $\Sortrestr{\LL}{\Pi}\cup\mathcal{F}$-isomorphism extending $f$ whose domain is $\gen{\LL}[C_{1}]{c_{1}}$, then $f'$ is also a partial $\LL$-isomorphism.
\end{lemma}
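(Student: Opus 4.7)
The plan is to reduce to checking that $f'$ respects atomic $\LL$-formulas via the decomposition of atomic formulas provided by Remark~\ref{rem:decomp formula}, using closedness of $\Sigma$ in an essential way. Since $f'$ is already a partial $\Sortrestr{\LL}{\Pi}\cup\mathcal{F}$-isomorphism, only the mixed atomic formulas (those involving the sorts of $\Sigma$) require argument, and for these the hypothesis $\Sigma(\gen{\LL}[C_{1}]{c_{1}}) \subseteq \Sigma(C_{1})$ will force $f'$ to agree with $f$ on $\Sigma$, letting us transfer the result from $f$.

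More precisely, I would first fix an atomic $\LL$-formula $\phi(\uple{x},\uple{y})$ with $\uple{x}$ a tuple of $\Pi$-variables and $\uple{y}$ a tuple of $\Sigma$-variables, and tuples $\uple{a},\uple{b}$ from the domain of $f'$. By Remark~\ref{rem:decomp formula}, either $\phi(\uple{x},\uple{y})=\chi(\uple{x})$ for an atomic $\Sortrestr{\LL}{\Pi}$-formula $\chi$, or $\phi(\uple{x},\uple{y})=\psi(\uple{g}(\uple{u}(\uple{x})),\uple{y})$ for atomic $\Sortrestr{\LL}{\Sigma}$ $\psi$, $\Sortrestr{\LL}{\Pi}$-terms $\uple{u}$, and functions $\uple{g}\in\mathcal{F}$. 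The first case is immediate as $f'$ is a $\Sortrestr{\LL}{\Pi}$-isomorphism on $\Pi$.

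For the second case I would first observe that the tuple $\uple{d}:=\uple{g}(\uple{u}(\uple{a}))$ lies in $\Sigma(\gen{\LL}[C_{1}]{c_{1}})\subseteq\Sigma(C_{1})$, and similarly $\uple{b}\in\Sigma(\gen{\LL}[C_{1}]{c_{1}})\subseteq\Sigma(C_{1})$, so both lie in the domain of $f$; since $f'$ extends $f$, $f'(\uple{d})=f(\uple{d})$ and $f'(\uple{b})=f(\uple{b})$. Then, because $f'$ respects $\Sortrestr{\LL}{\Pi}$-terms and the function symbols in $\mathcal{F}$, one has
\[
\uple{g}(\uple{u}(f'(\uple{a})))=\uple{g}(f'(\uple{u}(\uple{a})))=f'(\uple{g}(\uple{u}(\uple{a})))=f(\uple{d}).
\]
Since $f$ is itself a partial $\LL$-isomorphism and $\uple{d},\uple{b}$ are in its domain, $M_{1}\models\psi(\uple{d},\uple{b})$ if and only if $M_{2}\models\psi(f(\uple{d}),f(\uple{b}))$, which by the above identity is precisely $M_{2}\models\psi(\uple{g}(\uple{u}(f'(\uple{a}))),f'(\uple{b}))$. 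This establishes that $f'$ preserves $\phi$.

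There is no real obstacle here beyond careful bookkeeping: the content of the lemma is entirely in the decomposition of Remark~\ref{rem:decomp formula}, which in turn relies on the assumption that $\Sigma$ is closed (so that no atomic $\LL$-formula mixes $\Pi$ and $\Sigma$ other than through a function $g\in\mathcal{F}$ applied to $\Pi$-terms). Once one has that decomposition, the hypothesis $\Sigma(\gen{\LL}[C_{1}]{c_{1}})\subseteq\Sigma(C_{1})$ makes the $\Sigma$-side behaviour of $f'$ inherited from $f$, and the verification is mechanical.
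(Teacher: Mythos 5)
Your proof is correct and takes essentially the same route as the paper: both reduce to atomic formulas via Remark~\ref{rem:decomp formula}, use the hypothesis $\Sigma(\gen{\LL}[C_{1}]{c_{1}})\subseteq\Sigma(C_{1})$ to conclude that $f'$ coincides with $f$ on the $\Sigma$-sorts, and then combine the fact that $f'$ respects $\Sortrestr{\LL}{\Pi}$-terms and $\mathcal{F}$ with the fact that $f$ (hence $\Sortrestr{f'}{\Sigma}$) respects atomic $\Sortrestr{\LL}{\Sigma}$-formulas. You simply spell out the chain of equalities that the paper leaves implicit.
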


\begin{proof}
First, by hypothesis, $f'$ respects atomic $\Sortrestr{\LL}{\Pi}$-formulae. Moreover as $\Sigma(\gen{\LL}[C_{1}]{c_{1}}) \subseteq \Sigma(C_{1})$, $\Sortrestr{f'}{\Sigma} = \Sortrestr{f}{\Sigma}$ and it is a partial $\Sortrestr{\LL}{\Sigma}$-isomorphism. As, by hypothesis, $f'$ respects $g\in\mathcal{F}$, it respects all formulae of the form $\psi(\uple{g}(\uple{u}(\uple{x})),\uple{y})$ where $\psi$ is an atomic $\Sortrestr{\LL}{\Sigma}$-formula, $\uple{u}$ are $\Sortrestr{\LL}{\Pi}$-terms and $g\in\mathcal{F}$. Hence by Remark\,\ref{rem:decomp formula}, $f'$ is a partial $\LL$-isomorphism.
\end{proof}

\begin{proof}[Proposition\,\ref{prop:EQ implies respl}]
We want to show that if $\LL_{e}$ is a $\Sigma$-enrichment of $\LL$ (with new sorts $\Sigma_{e}$) and $T_{e}\supseteq T$ an $\LL_{e}$-theory, then $\Morl*{T_{e}}[\Sigma\cup\Sigma_{e}]$ eliminates quantifiers. It suffices to show that for all $M_{1}$ and $M_{2}\models T_{e}$ that are $|\LL_{e}|^{+}$-saturated, for all partial $\Morl*{\LL_{e}}[\Sigma\cup\Sigma_{e}]$-isomorphism $f : M_{1}\to M_{2}$ of domain $C_{1}$ with $|C_{1}| \leq |\LL_{e}|$, and for all $c_{1}\in M_{1}$, $f$ can be extended to a partial $\Morl*{\LL_{e}}[\Sigma\cup\Sigma_{e}]$-isomorphism whose domain contains $c_{1}$.

Note first that $\Sigma\cup\Sigma_{e}$ is closed. If $c_{1}\in \Sigma\cup\Sigma_{e}(M_{1})$, then we can conclude by Lemma\,\ref{lem:ext on lt} (where $\LL$ is now $\Morl*{\LL_{e}}[\Sigma\cup\Sigma_{e}]$). If $c_{1}\in\Pi(M_{1})$, by repetitively applying Lemma\,\ref{lem:ext on lt}, we can extend $f$ to $f'$ whose domain contains all of $\Sigma\cup\Sigma_{e}(\gen{\LL_{e}}[C_{1}]{c_{1}})$. Then $f'$ is in particular an $\Morl*{\LL}[\Sigma]$-isomorphism and, as $T$ eliminates quantifiers relatively to $\Sigma$, $f'$ is in fact a partial elementary $\LL$-isomorphism that can be extended to a partial $\LL$-isomorphism $f''$ whose domain contain $c_{1}$. But, by Lemma\,\ref{lem:field ext}, $\restr{f''}{\gen{\LL_{e}}[C_{1}]{c_{1}}}$ is also a partial $\Morl*{\LL_{e}}[\Sigma\cup\Sigma_{e}]$-isomorphism.
\end{proof}

\section{Categories of structures}\label{sec:cat}

Recall that structures are always non empty.

\begin{definition}(${\Str!(T)}$)
Let $\LL$ be a language, $T$ an $\LL$-theory. We will denote by $\Str(T)$ the category whose objects are the $\LL$-structures that can be embedded in a model of $T$ --- i.e. models of $T_{\forall}$ --- and whose morphisms are the $\LL$-embeddings between those structures.

Moreover, let $T_{i}$ be an $\LL_{i}$-theory for $i=1,2$, $F : \Str(T_{1})\to\Str(T_{2})$ be a functor and $\kappa$ be a cardinal. We will denote by $\Str[F,\kappa](T_{2})$ the full subcategory of $\Str(T_{2})$ of structures that embed into some $F(M)$ for $M\models T_{1}$ $\kappa$-saturated. 
\end{definition}

A functor $F : \Str(T_{1})\to\Str(T_{2})$ is said to respect:
\begin{itemize}
\item models if for all $M\models T_{1}$, $F(M)\models T_{2}$;
\item $\kappa$-saturated models if for all $\kappa$-saturated $M\models T_{1}$, $F(M)\models T_{2}$;
\item cardinality if for all $C\models T_{1,\forall}$, $|F(C)| \leq |C|$;
\item cardinality up to $\kappa$ if for all $C\models T_{1,\forall}$, $|F(C)| \leq |C|^{\kappa}$;
\item elementary submodels if for all $M_{1}\subsel M_{2}\models T_{1}$, $F(M_{1})\subsel F(M_{2})$.
\end{itemize}

Let $\Sigma_{i}$ be a closed set of $\LL_{i}$-sorts for $i=1,2$. We say that $f : C_{1}\to C_{2}$ in $\Str(T_{1})$ is a $\Sigma_{1}$-extension if $C_{2}\sminus f(C_{1})\subseteq\Sigma_{1}(C_{2})$. We say that the functor $F$ sends $\Sigma_{1}$ to $\Sigma_{2}$ if for all $\Sigma_{1}$-extensions $C_{1}\to C_{2}$, $F(C_{1})\to F(C_{2})$ is a $\Sigma_{2}$-extension.

Let me recall some basic notions of category theory. A natural transformation $\alpha$ between functors $F$, $G : \cat_{1} \to \cat_{2}$ associates a morphism $\alpha_{c}\in\Hom[\cat_{2}]{F(c)}{G(c)}$ to every object $c\in \cat_{1}$ such that for all morphism $f\in\Hom[\cat_{1}]{c}{d}$, we have $G(f)\comp\alpha_{c} = \alpha_{d}\comp F(f)$. A natural transformation is said to be a natural isomorphism if for all $c\in \cat_{1}$, $\alpha_{c}$ is an isomorphism in $\cat_{2}$. It is easy to check that when $\alpha$ is a natural isomorphism, its inverse --- namely the transformation that associates $\alpha^{-1}_{c}$ to any $c\in \cat_{1}$ --- is also natural.

A pair of functors $F:\cat_{1}\to \cat_{2}$ and $G:\cat_{2}\to \cat_{1}$ are said to be an equivalence of categories between $\cat_{1}$ and $\cat_{2}$ if $GF$ and $FG$ are naturally isomorphic to the identity functor of resp. $\cat_{1}$ and $\cat_{2}$. We can always choose the natural isomorphisms $\alpha:FG\to\Id$ and $\beta:GF\to\Id$ such that $\alpha_{F} = F(\beta)$ and $\beta_{G} = G(\alpha)$ where $\alpha_{F} : c\mapsto \alpha_{F(c)}$ and $F(\alpha) : c\mapsto F(\alpha_{c})$.

Until the end of this section, let $\kappa$ be a cardinal, $T_{i}$ be an $\LL_{i}$-theory and $\Sigma_{i}$ be a set of closed $\LL_{i}$-sorts for $i=1,2$ and $\Full$ be a full subcategory of $\Str(T_{1})$ containing the $\kappa^{+}$-saturated models such that for any $C \to M_{1}\models T_{1}$ where $M_{1}$ is $\kappa^{+}$-saturated and $|C|\leq \kappa$, there is some $D$ in $\Full$ such that $C\to D \to M_{1}$ and $C\to D$ is a $\Sigma_{1}$-extension. Let $F : \Str(T_{1})\to\Str(T_{2})$ and $G : \Str(T_{2})\to \Str(T_{1})$ be functors that respect cardinality up to $\kappa$ and induce an equivalence of categories between $\Full$ and $\Str[F,\kappa^{+}](T_{2})$. We will also suppose that $G$ respects models and elementary submodels and sends $\Sigma_{2}$ to $\Sigma_{1}$ and $F$ respects $\kappa^{+}$-saturated models.

The goal of this section is to show that these (somewhat technical) requirements are a way to transfer elimination of quantifiers results from one theory to another and to give a meaning to --- and in fact extend --- the impression that if theories are quantifier free bi-definable (whatever that means) then elimination of quantifiers in one theory should imply elimination in the other. Proposition\,\ref{prop:EQ transfer} will be used, for example, to deduce valued field quantifiers elimination with angular components from valued field quantifiers elimination with sectioned leading terms. It will also be used to reduce the mixed characteristic case to the equicharacteristic zero case.

Proposition\,\ref{prop:iso are elem} is only used to prove Corollary\,\ref{cor:enrich functor} which in turn will be very useful to show that the functors between mixed characteristic and equicharacteristic zero can be modified to take in account Morleyization on $\lt$ while remaining in the right setting to transfer elimination of quantifiers.

\begin{proposition}[iso are elem]
Suppose $T_{1}$ is Morleyized on $\Sigma_{1}$ and let $M_{1}$ and $M_{2}\models T_{1}$ be $(|\LL_{2}|^\kappa)^{+}$-saturated. Then any partial $\LL_{2}$-isomorphism $f: F(M_{1})\to F(M_{2})$ is $\Sigma_{2}$-elementary.
\end{proposition}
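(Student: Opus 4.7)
The plan is to exhibit a back-and-forth family for $\Sigma_2$ between $F(M_1)$ and $F(M_2)$ containing $f$. Since $\Sigma_2$-elementarity is a formula-by-formula condition, by restricting parameters I may assume the domain $C_1$ of $f$ is a substructure of $F(M_1)$ of size at most $|\LL_2|^\kappa$, and similarly for its image $C_2 \substr F(M_2)$. It then suffices to show that the collection $\mathcal{I}$ of all partial $\LL_2$-isomorphisms between substructures of $F(M_1)$ and $F(M_2)$ of size at most $|\LL_2|^\kappa$ has the back-and-forth property for $\Sigma_2$-elements, since every member of $\mathcal{I}$ will then preserve all $\Sortrestr{\LL_2}{\Sigma_2}$-formulas (whose variables and parameters can only be of $\Sigma_2$-sorts).

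For the back-and-forth step, let $f' : C_1' \to C_2'$ be in $\mathcal{I}$ and let $a \in \Sigma_2(F(M_1))$. Let $D_1 \substr F(M_1)$ be the substructure generated by $C_1' \cup \{a\}$; since $\Sigma_2$ is closed, $D_1 \sminus C_1' \subseteq \Sigma_2(F(M_1))$, so $D_1$ is a $\Sigma_2$-extension of $C_1'$, still of size at most $|\LL_2|^\kappa$. Because $F(M_i) \in \Str[F,\kappa^+](T_2)$, so is $D_1$. Applying $G$, I obtain a partial $\LL_1$-isomorphism $G(f') : G(C_1') \to G(C_2')$ and a $\Sigma_1$-extension $G(C_1') \hookrightarrow G(D_1)$ (since $G$ sends $\Sigma_2$ to $\Sigma_1$); identifying $G(F(M_i))$ with $M_i$ via the natural isomorphism coming from the equivalence (which makes sense as $M_i \in \Full$), everything becomes a substructure of $M_i$ of size at most $|\LL_2|^\kappa$.

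Because $T_1$ is Morleyized on $\Sigma_1$, I can iterate Lemma \ref{lem:ext on lt} over the (at most $|\LL_2|^\kappa$ many) new $\Sigma_1$-elements of $G(D_1) \sminus G(C_1')$, using the $(|\LL_2|^\kappa)^+$-saturation of $M_2$, to extend $G(f')$ to a partial $\LL_1$-isomorphism $g : G(D_1) \to E_2 \subseteq M_2$. Applying $F$ yields an $\LL_2$-isomorphism $F(g) : F(G(D_1)) \to F(E_2)$; composing with the natural isomorphism $F(G(D_1)) \to D_1$ from the equivalence and viewing $F(E_2)$ as a substructure of $F(M_2)$ (via the functorial embedding induced by $E_2 \hookrightarrow M_2$), I get a partial $\LL_2$-isomorphism $\tilde f : D_1 \to F(M_2)$. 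Naturality of the two natural isomorphisms guarantees $\tilde f$ extends $f'$; since $\tilde f$ is an $\LL_2$-isomorphism it preserves sorts, so $\tilde f(a) \in \Sigma_2(F(M_2))$ and $\tilde f \in \mathcal{I}$. The symmetric direction, adding $b \in \Sigma_2(F(M_2))$ to the image, is identical after swapping the roles of the two sides. The main obstacle is the diagram chase verifying that the $F$-image $\tilde f$ truly extends $f'$ and lands inside $F(M_2)$; once the formalism of the equivalence is unfolded, the transfer of saturation from $M_2$ to the image under $F$ is forced by the equivalence itself together with Morleyization on $\Sigma_1$.
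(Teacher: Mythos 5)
Your argument is correct and follows essentially the same route as the paper: reduce to extending a small partial $\LL_2$-isomorphism across a $\Sigma_2$-extension, apply $G$ to transport into $M_1\to M_2$, use Morleyization on $\Sigma_1$ together with $(|\LL_2|^{\kappa})^{+}$-saturation and Lemma~\ref{lem:ext on lt} to extend there, then apply $F$ and chase the natural isomorphisms back. The back-and-forth packaging and the explicit mention of iterating the one-element lemma are presentational variants of what the paper does implicitly.
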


\begin{proof}
To show that $f$ is $\Sigma_{2}$-elementary, it suffices to show that the restriction of $f$ to any finitely generated structure is $\Sigma_{2}$-elementary. To do so it suffices to show that the restriction of $f$ can be extended (on both its domain and its image) to any finitely generated $\Sigma_{2}$-extension. By symmetry, it suffices to prove the following property: if $D_{1}$, $D_{2} \substr F(M_{1})$ are such that $D_{1}\to D_{2}$ is a $\Sigma_{2}$-extension, $|D_{2}|\leq |\LL_{2}|$ and $f:D_{1}\to F(M_{2})$ is an $\LL_{2}$-embedding, then $f$ can be extended to some $g:D_{2}\to F(M_{2})$.

Applying $G$ to the initial data, we obtain the following diagram:
\[\xymatrix{GF(M_{1})&M_{2}&GF(M_{2})\ar[l]_(0.65){\beta_{M_{2}}}\\
G(D_{2})\ar[u]\ar@{.>}[ur]^{g}\\
G(D_{1})\ar[u]\ar[uurr]_{G(f)}}\]
where $g$ comes from the fact that, as $T$ is Morleyized on $\Sigma_{1}$, $\Sortrestr{\beta_{M_{2}}\comp G(f)}{\Sigma_{1}}$ is in fact elementary and, as $|G(D_{2})|\leq|\LL_{2}|^{\kappa}$, $M_{2}$ is $(|\LL_{2}|^{\kappa})^{+}$-saturated and $G(D_{1})\to G(D_{2})$ is a $\Sigma_{1}$-extension, by Lemma\,\ref{lem:ext on lt}, $\beta_{M_{2}}\comp G(f)$ can be extended to $g : G(D_{2}) \to M_{2}$. Applying $F$, we now obtain:
\[\xymatrix{D_{2}\ar[r]^{\alpha_{D_{2}}^{-1}}&FG(D_{2})\ar[r]^{F(g)}&F(M_{2})\\
D_{1}\ar[r]\ar[u]\ar@/_40pt/_{f}[rru]&FG(D_{1})\ar[u]\ar[ur]}\]
and $F(g)\comp\alpha^{-1}_{D_{2}}$ is the extension we were looking for.
\end{proof}

\begin{remark}
\begin{thm@enum}
\item One could hope the proposition to be true without the saturation hypothesis. But without some saturation, it is not even true that $M_{1}\subsel M_{2}$ implies $F(M_{1})\subsel F(M_{2})$. Take for example the coarsening functor $\Coar$ of Section\,\ref{sec:coar} and $\Qq_{p}\subsel M$ where $M$ is $\aleph_{0}$-saturated, then $\Coar(\Qq_{p})$ is trivially valued but $\Coar(M)$ is not.
\item One should beware that as $F(M_{1})$ and $F(M_{2})$ are not saturated, we have not proved that $T_{2}$ eliminates quantifiers.
\item We have proved nonetheless that, if $\Sigma_{i}$ is the set of all $\LL_{i}$ sorts (in that case we ask that $T_{2}$ eliminates all quantifiers) then for all $M_{1}$ and $M_{2}\models T_{1}$ sufficiently saturated, $M_{1}\equiv M_{2}$ implies $F(M_{1})\equiv F(M_{2})$.
\end{thm@enum}
\end{remark}

\begin{corollary}[enrich functor]
Let $T_{2}^e$ be a definable $\Sigma_{2}$-enrichment of $T_{2}$ (in the language $\LL_{2}^{e}$). Then $F$ induces a functor $F^{e}:\Str(T_{1})\to\Str(T_{2}^{e})$ and $G$ induces a functor $G^{e}:\Str(T_{2}^{e})\to \Str(T_{1})$. We can also find a full subcategory $\Full^{e}$ of $\Full$ such that $F^{e}$ and $G^{e}$ induce an equivalence of categories between $\Full^{e}$ and $\Str[F^{e},(|\LL_{2}|^{\kappa})^{+}](T_{2}^{e})$. The functor $G^{e}$ still respects cardinality up to $\kappa$, models and elementary submodels and sends $\Sigma_{2}$ to $\Sigma_{1}$ and $F^{e}$ respects cardinality up to $\kappa + |\LL_{2}|$ and $(|\LL_{2}|^{\kappa})^{+}$-saturated models. Finally, $\Full^{e}$ contains all $(|\LL_{2}|^{\kappa})^{+}$-saturated models and any $C$ in $\Str(T_{1})$ has a $\Sigma_{1}$-extension $D$ in $\Full^{e}$. Moreover, if $C\substr M_{1}\models T_{1}$ and $M_{1}$ is $(|\LL_{2}|^{\kappa})^{+}$-saturated, then we can find such a $D\substr M_{1}$.
\end{corollary}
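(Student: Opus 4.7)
The plan is to define $G^e$ by reduct and $F^e$ by canonical expansion, and then pick $\Full^e \subseteq \Full$ so that the equivalence of categories descends. For $G^e$, I will set $G^e(D) := G(\Langrestr{D}{\LL_2})$ on objects and similarly on morphisms. Since $T_2^e$ is a definable enrichment of $T_2$, reduction from $\LL_2^e$ to $\LL_2$ sends $\Str(T_2^e)$ into $\Str(T_2)$ functorially, and all properties required of $G^e$ follow immediately from those of $G$, since the enrichment introduces no new sorts and hence $\Sigma_2$-elements of $D$ are precisely $\Sigma_2$-elements of $\Langrestr{D}{\LL_2}$.

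For $F^e$, given $C \in \Str(T_1)$, I embed $C$ into some $\kappa^+$-saturated $M_1 \models T_1$. Then $F(M_1) \models T_2$ admits a unique expansion $F^e(M_1) \models T_2^e$ obtained by interpreting each new symbol via its defining formula, and I define $F^e(C)$ to be the $\LL_2^e$-substructure of $F^e(M_1)$ generated by $F(C)$. Since all new symbols concern only $\Sigma_2$-sorts, only $\Sigma_2$-elements are added in the closure, giving the crude bound $|F^e(C)| \leq |C|^{\kappa + |\LL_2|}$, so $F^e$ respects cardinality up to $\kappa + |\LL_2|$. Uniqueness of the defining formulas makes the construction independent of $M_1$ and lets $\LL_1$-embeddings extend uniquely to $\LL_2^e$-embeddings. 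Any $(|\LL_2|^{\kappa})^{+}$-saturated $M_1$ is in particular $\kappa^+$-saturated, so $F(M_1) \models T_2$ and $F^e(M_1) \models T_2^e$, establishing that $F^e$ respects $(|\LL_2|^{\kappa})^{+}$-saturated models.

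The heart of the argument is to define $\Full^e$ and verify the equivalence. I plan to take $\Full^e$ to be the full subcategory of $\Full$ consisting of those $C$ for which the $\LL_2$-reduct of $F^e(C)$ equals $F(C)$, equivalently, for which $F(C)$ is already closed under the new definable operations. For $C \in \Full^e$, $G^e F^e(C) = G(F(C)) \cong C$ naturally via $\beta_C$. Conversely, for $D \in \Str[F^e,(|\LL_2|^{\kappa})^{+}](T_2^e)$, embedding $D \hookrightarrow F^e(M_1)$ with $M_1$ being $(|\LL_2|^{\kappa})^{+}$-saturated gives $\Langrestr{D}{\LL_2} \hookrightarrow F(M_1)$, so $G^e(D) = G(\Langrestr{D}{\LL_2}) \in \Full$; the natural isomorphism $\alpha_{\Langrestr{D}{\LL_2}}$ extends uniquely by universality of the canonical expansion to a natural $\LL_2^e$-isomorphism $F^e G^e(D) \to D$, which also shows $G^e(D) \in \Full^e$. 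Any $\kappa^+$-saturated $M_1 \models T_1$ lies in $\Full^e$ because $F(M_1) \models T_2$ is automatically closed under the new operations; in particular all $(|\LL_2|^{\kappa})^{+}$-saturated models are in $\Full^e$.

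The main obstacle I anticipate is the final clause: producing, for any $C \substr M_1$ with $M_1$ being $(|\LL_2|^{\kappa})^{+}$-saturated, a $\Sigma_1$-extension $D \in \Full^e$ with $D \substr M_1$. The plan is a transfinite closure procedure inside $M_1$ of length at most $|\LL_2|^{\kappa} + \kappa$: at each successor stage, first apply the defining hypothesis on $\Full$ to absorb a small $\Sigma_1$-extension landing inside both $M_1$ and $\Full$, then augment by $G$-preimages inside $M_1$ of the $\Sigma_2$-elements of $F^e(M_1)$ required to close $F$ of the current structure under the new definable operations. Because $G$ sends $\Sigma_2$ to $\Sigma_1$, these preimages lie in $\Sigma_1(M_1)$, so the process remains a $\Sigma_1$-extension of $C$; the saturation hypothesis $(|\LL_2|^{\kappa})^{+}$ on $M_1$ is precisely what ensures the needed preimages exist inside $M_1$ and that the cardinality stays below the saturation threshold, so the procedure terminates at a fixed point $D \in \Full^e$, completing the proof.
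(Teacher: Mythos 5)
Your constructions of $F^e$, $G^e$, and $\Full^e$ coincide with the paper's: $G^e$ is $G$ composed with the reduct to $\LL_2$, $F^e(C)$ is the $\LL_2^e$-substructure of $F^e(M_1)$ generated by $F(C)$ inside a suitably saturated $M_1$, and $\Full^e$ consists of those $C\in\Full$ for which $F(C)$ is already closed under the new definable operations (equivalently, $i_C:F(C)\to F^e(C)$ is an isomorphism). The cardinality and model-respecting claims are handled correctly.

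However, your justification for why $F^e$ is well-defined on objects and functorial on morphisms is too thin. You write that ``uniqueness of the defining formulas makes the construction independent of $M_1$ and lets $\LL_1$-embeddings extend uniquely to $\LL_2^e$-embeddings,'' and later appeal to ``universality of the canonical expansion.'' But an $\LL_2$-embedding $F(f):F(C_1)\to F(C_2)$ is not in general required to preserve $\LL_2$-definable relations or functions, so the fact that the new symbols are $\LL_2$-definable does not by itself make $F(f)$ respect them, nor does it make the identity $F(C)\to F(C)$ computed in two different ambient models an $\LL_2^e$-isomorphism. What is actually needed --- and what the paper invokes at exactly these points --- is Proposition~\ref{prop:iso are elem}: under the standing Morleyization-on-$\Sigma_1$ hypothesis and $(|\LL_2|^\kappa)^+$-saturation, partial $\LL_2$-isomorphisms between $F(M_1)$ and $F(M_2)$ are $\Sigma_2$-\emph{elementary}, and this is what lets them preserve the new $\Sigma_2$-symbols. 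In particular your initial embedding of $C$ into a merely $\kappa^+$-saturated $M_1$ is not quite enough for the well-definedness argument; the comparison across models uses $(|\LL_2|^\kappa)^+$-saturation.

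For the final clause you anticipate a difficulty and propose a transfinite closure procedure, which you yourself flag as uncertain. There is a genuine obstacle in that plan: the standing hypothesis on $\Full$ only provides the $\Sigma_1$-absorption inside $M_1$ for structures $C$ of size at most $\kappa$, and your iteration will push the cardinality past $\kappa$ after the first stage, so the absorption step you want to repeat is not licensed at later stages. The paper avoids all of this with a one-shot argument: pick $D\in\Full$ with $C\to D\to M_1$ a $\Sigma_1$-extension (the standing hypothesis), and set $E:=GF^e(D)$. Since $F(D)\to F^e(D)$ is a $\Sigma_2$-extension and $G$ sends $\Sigma_2$ to $\Sigma_1$, $D\cong GF(D)\to GF^e(D)$ is a $\Sigma_1$-extension; $GF^e(D)\to GF^e(M_1)\cong M_1$ places $E$ inside $M_1$; and $F^e(D)$ is by construction an $\LL_2^e$-substructure of $F^e(M_1)$, which is exactly the membership criterion for $\Full^e$ via the characterization $G(\Str[F^e,(|\LL_2|^\kappa)^+](T_2^e))\subseteq\Full^e$ established earlier. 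No transfinite construction is needed.
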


\begin{proof}
Let $C\substr M \models T_{1}$. We can suppose that $M$ is $(|\LL_{2}|^\kappa)^{+}$-saturated. As $F(M)\models T_{2}$, we can enrich $F(M)$ to make it into an $\LL_{2}^{e}$-structure $F(M)^{e}\models T_{2}^{e}$ and we take $F^{e}(C) = \gen{\LL_{2}^{e}}{C}$. Note that if $M_{1}$ and $M_{2}$ are two $(|\LL_{2}|^\kappa)^{+}$-saturated models containing $C$, then Proposition\,\ref{prop:iso are elem} implies that $\id_{F(C)}$ is a partial isomorphism $F(M_{1})\to F(M_{2})$ $\Sigma_{2}$-elementary and hence the generated $\LL_{2}^{e}$-structures are $\LL_{2}^{e}$-isomorphic. As $F^{e}(C)$ does not depend (up to $\LL_{2}^{e}$-isomorphism) on the choice of $(|\LL_{2}|^\kappa)^{+}$-saturated model containing $C$, $F^{e}$ is well-defined on objects. If $f: C_{1}\to C_{2}$ is a morphism in $\Str(T_{1})$, by the same Proposition\,\ref{prop:iso are elem}, $F(f)$ is $\Sigma_{2}$-elementary and can be extended to a $\LL_{2}^{e}$-isomorphism on the $\LL_{2}^{e}$-structure generated by its domain. Note that if we denote by $i_{C}$ the embedding $F(C)\to F^{e}(C)$, we have also defined a natural transformation from $F$ to $F^{e}$ (a meticulous reader might want to add the forgetful functor $\Str(T_{2}^{e})\to\Str(T_{2})$ for it all to make sense).

We define $G^{e}$ to be $G$ (precomposed by the same forgetful functor). All the statements about $G^{e}$ follow immediately from those about $G$. As $\gen{\LL_{2}^{e}}{F(C)}$ has cardinality at most $|C|^{\kappa}|\LL_{2}|\leq |C|^{\kappa+|\LL_{2}|}$, $F$ respect cardinality up to $\kappa + \LL_{2}$ and if $M\models T_{1}$ is $(|\LL_{2}|^{\kappa})^{+}$-saturated then seeing it as a substructure of itself we obtain that $F^{e}(M)\models T_{2}^{e}$.

We define $\Full^{e}$ to be the full-subcategory of $\Full$ containing the $C$ such that $i_{C}$ is an isomorphism. In particular, it contains $(|\LL_{2}|^{\kappa})^{+}$-saturated models. Let $D$ be an $\LL_{2}^{e}$-substructure of $F^{e}(M)$ for some  $(|\LL_{2}|^{\kappa})^{+}$-saturated $M\models T_{1}$. Then $F^{e}G^{e}(D) = \gen{\LL_{2}^{e}}{FG(D)}$, where the generated structure is taken in $F(M)$. By Proposition\,\ref{prop:iso are elem}, the (natural) isomorphism $D \to FG(D)$ is $\Sigma_{2}$-elementary and can be extended (uniquely) into an $\LL_{2}^{e}$-isomorphism between $D = \gen{\LL_{2}^{e}}{D}$ and $F^{e}G^{e}(D)$. This new isomorphism is also natural. It follows that $FG(D) = F^{e}G^{e}(D)$ and that $i_{G(D)}$ is in fact an isomorphism, hence $G(D)\in\Full^{e}$.

If $C\in\Full^{e}$, $\beta_{C}\comp G(i_{C}^{-1}) : G^{e}F^{e}(C)\to C$ is a natural isomorphism. Finally, there remains to show that any $C\to M\models T_{1}$, where $M$ is $(|\LL_{2}|^{\kappa})^{+}$-saturated, can be embedded in some $E\in\Full^{e}$ such that $C\to E$ is a $\Sigma_{1}$-extension and $E\to M$. We already know that there exists $D\in\Full$ such that $C\to D\to M$ and $C\to D$ is a $\Sigma_{1}$-extension. Now $F(D)\to F^{e}(D)$ is a $\Sigma_{2}$-extension hence $D\isom GF(D)\to GF^{e}(D)$ is a $\Sigma_{1}$-extension. Moreover $GF^{e}(D)\to GF^{e}(M)\isom M$ and, as $F^{e}(D)$ is an $\LL_{2}^{e}$-structure of $F^{e}(M)$, $GF^{e}(D)\in \Full^{e}$. Thus we can take $E = GF^{e}(D)$.
\end{proof}

Let us now prove a second result in the spirit of Proposition\,\ref{prop:iso are elem}, but the other way round.

\begin{proposition}[EQ transfer]
If $T_{1}$ is Morleyized on $\Sigma_{1}$ and $T_{2}$ eliminates quantifiers, then $T_{1}$ eliminates quantifiers.
\end{proposition}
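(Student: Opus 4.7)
The plan is to verify the standard back-and-forth criterion for quantifier elimination: for $M_1,M_2\models T_1$ sufficiently saturated, any partial $\LL_1$-isomorphism $f:A_1\to A_2$ between small substructures $A_i\substr M_i$ should extend to a partial $\LL_1$-isomorphism whose domain contains any prescribed $c_1\in M_1$. If $c_1\in\Sigma_1(M_1)$, this is immediate from Lemma~\ref{lem:ext on lt} applied to $T_1$ (which is already Morleyized on $\Sigma_1$), so I may assume $c_1\in\Pi_1(M_1)$.

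First I would reduce to the situation where the domain and codomain of $f$ both lie in $\Full$. Using the hypothesis on $\Full$, enlarge $A_1$ to some $D_1\in\Full$ with $A_1\substr D_1\substr M_1$ and $A_1\to D_1$ a $\Sigma_1$-extension; by transfinitely iterating Lemma~\ref{lem:ext on lt} on the elements of $\Sigma_1(D_1)\setminus A_1$, extend $f$ to a partial $\LL_1$-isomorphism $f_1:D_1\to A_2'$ with $A_2'\substr M_2$. A symmetric enlargement of $A_2'$ into some $D_2\in\Full$ via a $\Sigma_1$-extension, together with a further iteration of Lemma~\ref{lem:ext on lt} on $f_1^{-1}$, and finally one more enlargement to place $c_1$ inside some $E_1\in\Full$ with $D_1\substr E_1\substr M_1$, leaves us with an $\LL_1$-iso $f_1:D_1\to D_2$ between objects of $\Full$ and a $\Sigma_1$-extension $D_1\substr E_1\in\Full$ through which we wish to extend $f_1$.

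Next, push forward via $F$: since $M_1,M_2$ are $\kappa^+$-saturated we have $F(M_i)\models T_2$, and $F(f_1):F(D_1)\to F(D_2)$ is an $\LL_2$-embedding. By quantifier elimination in $T_2$, it is automatically $\LL_2$-elementary, and choosing $M_2$ saturated enough that $F(M_2)$ is $|F(E_1)|^+$-saturated, we extend $F(f_1)$ to an $\LL_2$-embedding $g:F(E_1)\to F(M_2)$. Pull back via $G$ and set
\[f_2 \;=\; \beta_{M_2}\circ G(g)\circ\beta_{E_1}^{-1}\,:\,E_1\longrightarrow M_2,\]
where $\beta:GF\Rightarrow\Id$ is the natural isomorphism valid on $\Full$ (applicable here because $E_1,M_2\in\Full$). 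Naturality of $\beta$ along the inclusion $D_1\hookrightarrow E_1$, combined with the fact that $g$ restricted to $F(D_1)$ agrees with $F(f_1)$ followed by $F$ of the inclusion $D_2\hookrightarrow M_2$, forces $f_2|_{D_1}=f_1$, so $f_2$ is the sought-after extension of $f$ with $c_1\in E_1$ in its domain.

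The main obstacle I expect is the bookkeeping in the reduction step: the passage to $D_1,D_2\in\Full$ requires carefully iterating Lemma~\ref{lem:ext on lt} to absorb $\Sigma_1$-extensions symmetrically on both sides, together with some care about saturation cardinals (picking $M_i$ saturated enough that both $M_i$ and $F(M_i)$ are saturated at the scale of $E_1$ and $F(E_1)$). Once that reduction is carried out, the transfer through the equivalence of categories is formal and the verification that $f_2$ extends $f$ is a direct application of the naturality of $\beta$.
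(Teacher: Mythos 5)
Your overall strategy --- reduce to $\Full$, push through $F$, use quantifier elimination in $T_2$ together with saturation, pull back via $G$ and the naturality of $\beta$ --- is the same as the paper's. The gap is the assertion that one can ``choose $M_2$ saturated enough that $F(M_2)$ is $|F(E_1)|^+$-saturated.'' Nothing in the hypotheses gives any control over the saturation of $F(M_2)$ in terms of that of $M_2$, and in the main intended application it fails outright: as the paper remarks in the proof of the coarsening proposition, $\Coar(M)$ is \emph{never} $\aleph_0$-saturated, no matter how saturated $M$ is, because $\ltinf(M)$ is a genuine inverse limit. So the extension of $F(f_1)$ inside $F(M_2)$ cannot be produced as you describe.

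The paper avoids this obstacle by weakening the target of the back-and-forth criterion: it suffices to extend $f$ to an embedding of $C_2$ into \emph{some elementary extension} of $M_2$. Concretely, one takes an elementary extension $M_2^\star$ of $F(M_2)$ that \emph{is} sufficiently saturated (which is always possible), extends $F(f)$ to $g:F(C_2)\to M_2^\star$ using quantifier elimination in $T_2$, and then applies $G$. Since $G$ respects models and elementary submodels, $G(M_2^\star)$ is an elementary extension of $GF(M_2)$, which is identified with $M_2$ via $\beta_{M_2}$; so the extension lands in an elementary extension of $M_2$, as allowed by the criterion. This is the device your proposal is missing. Two smaller points: you only need $D_1$ and $M_2$ to lie in $\Full$ for the naturality computation, not $D_2$ (since $G$ is applied to the ambient model $M_2^\star$ rather than to the image of $g$), so the symmetric back-and-forth enlargement on the codomain side is unnecessary and it is not clear it terminates, since $\Full$ is not assumed closed under $\Sigma_1$-extensions; and $D_1\substr E_1$ cannot itself be a $\Sigma_1$-extension, since $E_1$ must contain $c_1$, which by assumption is not a $\Sigma_1$-element.
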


\begin{proof}
To show that $T_{1}$ eliminates quantifiers it suffices to show that for all $\kappa^{+}$-saturated $M_{i}\models T_{1}$, $i=1,2$, and $C_{1}\substr C_{2} \subs M_{1}$ and $f: C_{1}\to M_{2}$ an $\LL_{1}$-embedding, then $f$ can be extended to an embedding from $C_{2}$ into some elementary extension of $M_{2}$. Let $D_{1}\in\Full$ be such that $C_{1}\to D_{1}\to M_{1}$ and $C_{1}\to D_{1}$ is a $\Sigma_{1}$-extension. As $T_{1}$ is Morleyized on $\Sigma_{1}$, by Lemma\,\ref{lem:ext on lt}, we can extend $f$ to an embedding from $D_{1}$ into an elementary extension of $M_{2}$. Replacing $C_{1}$ by $D_{1}$, $C_{2}$ by $\gen{\LL_{1}}{D_{1}C_{2}}$ and $M_{2}$ by its elementary extension, we can consider that $C_{1}\in\Full$. Applying $F$, we obtain the following diagram:
\[\xymatrix{F(M_{1})&M_{2}^{\star}\\
F(C_{2})\ar[u]\ar@{.>}[ur]^{g}&F(M_{2})\ar[u]_{\subsel}\\
F(C_{1})\ar[u]\ar[ur]_{F(f)}}\]
where $M_{2}^{\star}$ is a $(|C_{1}|^{\aleph_{0}})^{+}$-saturated extension of $F(M_{2})$ and $g$ comes from quantifier elimination in $T_{2}$ and saturation of $M_{2}^{\star}$. Applying $G$ we obtain:
\[\xymatrix{C_{2}\ar[r]&GF(C_{2})\ar[r]_{G(g)}&G(M_{2}^{\star})&\\
C_{1}\ar[u]\ar[r]\ar@/_{20pt}/[rrr]_{f}&GF(C_{1})\ar[u]\ar[r]_{GF(f)}&GF(M_{2})\ar[u]_{\subsel}\ar@{<->}[r]&M_{2}\ar[ul]_{\subsel}}\]
and we have the required extension.
\end{proof}

\printsymbollist
\printbibli
\end{document}